\theoremstyle{plain}
\newtheorem{theorem}{\bf Theorem}[section]
\newtheorem{lemma}[theorem]{\bf Lemma}
\newtheorem{proposition}[theorem]{\bf Proposition}
\newtheorem*{claim}{\bf Claim}
\newtheorem*{claim1}{\bf Claim 1}
\newtheorem*{claim2}{\bf Claim 2}
\DeclareFontShape{JY1}{mc}{m}{it}{<5> <6> <7> <8> <9> <10> sgen*min
	<10.95><12><14.4><17.28><20.74><24.88> min10 <-> min10}{}
\DeclareFontShape{JT1}{mc}{m}{it}{<5> <6> <7> <8> <9> <10> sgen*tmin
	<10.95><12><14.4><17.28><20.74><24.88> tmin10 <-> tmin10}{}
\DeclareFontShape{JY1}{mc}{m}{sc}{<5> <6> <7> <8> <9> <10> sgen*min
	<10.95><12><14.4><17.28><20.74><24.88> min10 <-> min10}{}
\DeclareFontShape{JY1}{mc}{bx}{sc}{<5> <6> <7> <8> <9> <10> sgen*min
	<10.95><12><14.4><17.28><20.74><24.88> min10 <-> min10}{}
\DeclareFontShape{JT1}{mc}{m}{sc}{<5> <6> <7> <8> <9> <10> sgen*tmin
	<10.95><12><14.4><17.28><20.74><24.88> tmin10 <-> tmin10}{}
\DeclareFontShape{JT1}{mc}{bx}{sc}{<5> <6> <7> <8> <9> <10> sgen*tmin
	<10.95><12><14.4><17.28><20.74><24.88> tmin10 <-> tmin10}{}
\DeclareFontShape{JY2}{mc}{m}{it}{<5> <6> <7> <8> <9> <10> sgen*min
	<10.95><12><14.4><17.28><20.74><24.88> min10 <-> min10}{}
\DeclareFontShape{JT2}{mc}{m}{it}{<5> <6> <7> <8> <9> <10> sgen*tmin
	<10.95><12><14.4><17.28><20.74><24.88> tmin10 <-> tmin10}{}
\DeclareFontShape{JY2}{mc}{m}{sc}{<5> <6> <7> <8> <9> <10> sgen*min
	<10.95><12><14.4><17.28><20.74><24.88> min10 <-> min10}{}
\DeclareFontShape{JT2}{mc}{m}{sc}{<5> <6> <7> <8> <9> <10> sgen*tmin
	<10.95><12><14.4><17.28><20.74><24.88> tmin10 <-> tmin10}{}
\newcommand{\relmiddle}[1]{\mathrel{}\middle#1\mathrel{}}
\def\thefootnote{\ifnum\c@footnote>\z@\leavevmode\lower.5ex%
      \hbox{$^{\@arabic\c@footnote)}$}\fi}
\def\section{\@startsection{section}{1}%
	\z@{.7\linespacing\@plus\linespacing}{.5\linespacing}%
	{\normalfont\bfseries}}
\def\subsection{\@startsection{subsection}{2}%
	\z@{.5\linespacing\@plus.7\linespacing}{.3\linespacing}%
	{\normalfont\bfseries}}
\theoremstyle{definition}
\theoremstyle{remark}
\def\Aut{\mbox{\rm {Aut}}}
\def\det{\mbox{\rm {det}}}
\def\diag{\mbox{\rm {diag}}}
\def\ad{\mbox{\rm {ad}}}
\def\Hom{\mbox{\rm {Hom}}}
\def\Iso{\mbox{\rm {Iso}}}
\def\Ker{\mbox{\rm {Ker}}}
\def\tr{\mbox{\rm {tr}}}
\def\ov{\overline}
\def\ti{\tilde}
\def\dsum{\displaystyle \sum}
\def\dfrac#1#2{\displaystyle \frac{#1}{#2}}
\def\C{\mbox{\boldmath $C$}}
\def\H{\mbox{\boldmath $H$}}
\def\O{\mbox{\boldmath $O$}}
\def\R{\mbox{\boldmath $R$}}
\def\Z{\mbox{\boldmath $Z$}}
\def\sR{\mbox{\boldmath $\scriptstyle{R}$}}
\def\sC{\mbox{\boldmath $\scriptstyle{C}$}}
\def\sH{\mbox{\boldmath $\scriptstyle{H}$}}
\def\0{\mbox{\boldmath {0}}}
\def\1{\mbox{\boldmath {1}}}
\def\2{\mbox{\boldmath {2}}}
\def\3{\mbox{\boldmath {3}}}
\def\4{\mbox{\boldmath {4}}}
\def\5{\mbox{\boldmath {5}}}
\def\6{\mbox{\boldmath {6}}}
\def\7{\mbox{\boldmath {7}}}
\def\8{\mbox{\boldmath {8}}}
\def\9{\mbox{\boldmath {9}}}
\def\a{\mbox{\boldmath $a$}}
\def\tr{\mbox{\rm {tr}}}
\def\ad{\mbox{\rm {ad}}}
\def\det{\mbox{\rm {det}}}
\def\diag{\mbox{\rm {diag}}}
\def\Iso{\mbox{\rm {Iso}}}
\def\Hom{\mbox{\rm {Hom}}}
\def\Ker{\mbox{\rm {Ker}}}
\def\ov{\overline}
\def\dsum{\displaystyle \sum}
\def\sR{\mbox{\boldmath $\scriptstyle{R}$}}
\def\sC{\mbox{\boldmath $\scriptstyle{C}$}}
\def\sH{\mbox{\boldmath $\scriptstyle{H}$}}
\def\dfrac#1#2{\displaystyle \frac{#1}{#2}}
\def\C{\mbox{\boldmath $C$}}
\def\H{\mbox{\boldmath $H$}}
\def\O{\mbox{\boldmath $O$}}
\def\R{\mbox{\boldmath $R$}}
\def\Z{\mbox{\boldmath $Z$}}
\def\sR{\mbox{\boldmath $\scriptstyle{R}$}}
\def\0{\mbox{\boldmath {0}}}
\def\1{\mbox{\boldmath {1}}}
\def\2{\mbox{\boldmath {2}}}
\def\3{\mbox{\boldmath {3}}}
\def\4{\mbox{\boldmath {4}}}
\def\5{\mbox{\boldmath {5}}}
\def\6{\mbox{\boldmath {6}}}
\def\7{\mbox{\boldmath {7}}}
\def\8{\mbox{\boldmath {8}}}
\def\9{\mbox{\boldmath {9}}}
\def\a{\mbox{\boldmath $a$}}
\def\v{\mbox{\boldmath $v$}}
\begin{document}

\title[On realizations of the Lie groups $ G_{2,\sH}, F_{4,\sH},
E_{6,\sH}, E_{7,\sH} ,E_{8,\sH} $  ]
{On realizations of the Lie groups $ G_{2,\sH}, F_{4,\sH},
    E_{6,\sH}, E_{7,\sH} ,E_{8,\sH} $, \\
second edition }

\author[Toshikazu Miyashita]{Toshikazu Miyashita}


\begin{abstract}
In order to define the exceptional compact Lie groups $ G_2,F_4,E_6,E_7,E_8 $, we usually use the Cayley algebra $ \mathfrak{C} $ or its complexification $ \mathfrak{C}^C $. In the present article, we consider replacing the Cayley algebra $ \mathfrak{C} $ with the field of quaternion numbers $ \H $ in the definition of the groups above, and these groups are denoted as in title above. Our aim is to determine the structure of these groups. We call “realization”to determine the structure of the groups.
\end{abstract}

\subjclass[2010]{ 53C30, 53C35, 17B40.}

\keywords{exceptional Lie groups}

\address{1365-3 Bessho onsen      \endgraf
	     Ueda City                \endgraf
	     Nagano Pref. 386-1431    \endgraf
	     Japan}
\email{anarchybin@gmail.com}

\maketitle

\setcounter{section}{0}

\section{Introduction}
The realizations of all exceptional simple Lie groups had been completed by Ichiro Yokota and the members of his school thirty-five years ago. Until now, their study has yielded the valuable and useful results, and those are summarized in \cite{iy0} and \cite{iy11}.

As mentioned in abstract, in order to define the exceptional compact Lie groups $ G_2,F_4,E_6,\allowbreak E_7, E_8 $, we usually use the Cayley algebra $ \mathfrak{C} $ or its complexification $ \mathfrak{C}^C $. We consider replacing $ \mathfrak{C} $ with the field of quaternion numbers $ \H $ in the definition of the  groups above. These groups are denoted by $ G_{2,\sH}, F_{4,\sH},E_{6,\sH},E_{7,\sH},E_{8,\sH} $. In the present article, our aim is to determine the structure of those groups using the results obtained and the ideas of several proofs in the exceptional Lie groups.

Here, we will sketch out the contents of this article. First, we describe the definitions of the groups $ G_{2,\sH}, F_{4,\sH},E_{6,\sH},E_{7,\sH},E_{8,\sH} $. For details of the definition, see the text of the article.
\begin{align*}
G_{2,\sH}&=\left\lbrace \alpha \in \Iso_{\sR}(\H)\relmiddle{|} (\alpha x)(\alpha y)=\alpha(xy)\right\rbrace,
\\
F_{4,\sH}&=\left\lbrace \alpha \in \Iso_{\sR}(\mathfrak{J}(3,\H)) \relmiddle{|} \alpha X \circ \alpha Y=\alpha(X \circ Y) \right\rbrace ,
\\
E_{6,\sH}&=\left\lbrace \alpha \in \Iso_C(\mathfrak{J}(3,\H^C)) \relmiddle{|} \det\,\alpha X=\det\,X, \langle\alpha X,\alpha Y\rangle=\langle X, Y \rangle \right\rbrace ,
\\
E_{7,\sH}&=\left\lbrace \alpha \in \Iso_C((\mathfrak{P}_{\sH})^C) \relmiddle{|} \alpha(P \times Q)\alpha^{-1}=\alpha P \times \alpha Q, \langle\alpha P,\alpha Q\rangle=\langle P, Q \rangle \right\rbrace ,
\\
E_{8,\sH}&=\left\lbrace \alpha \in \Iso_C((\mathfrak{e}_{8,\sH})^C) \relmiddle{|} \alpha[R_1,R_2]=[\alpha R_1,\alpha R_2], \langle\alpha R_1,\alpha R_2 \rangle=\langle R_1, R_2 \rangle \right\rbrace .
\end{align*}
In Section $ 2 $, we describe the definitions of the Cayley algebras $ \mathfrak{C} , \mathfrak{C}^C$, the exceptional Jordan algebras $ \mathfrak{J}(3,\mathfrak{C}), \mathfrak{J}(3,\mathfrak{C}^C) $, the Freudenthal $ C $-vector space $ \mathfrak{P}^C $, the $ 248 $-dimensional $ C $-vector space $ {\mathfrak{e}_8}^C $, the exceptional Lie groups $ G_2,F_4,E_6,E_7,E_8 $ and these complexification, and in addition, we also describe the properties related to algebras, vector spaces or the results related to the exceptional Lie groups.

\noindent In Section $ 3 $, by using the isomorphism $ (G_{2,\sH})^C \cong Sp(1,\H^C) $ which had already been proved by Ichiro Yokota (\cite[Proposition 1.3.1]{iy7}), we will determine the structure of the group $ G_{2,\sH} $.

\noindent In Sections $ 4,5 $, since the structure of the groups $ F_{4,\sH} $ and $ E_{6,\sH} $ had already been proved (\cite[Propositions 2.11.3,3.11.3]{iy0}), these results will be described as theorems only. Moreover, in order to determine the structure of $ E_{8,\sH} $ in final section,
we will determine the root system and the Dynkin diagram of the Lie algebras $ {(\mathfrak{f}_4}^C)^{\varepsilon_1,\varepsilon_2} $ and $ ({\mathfrak{e}_6}^C)^{\varepsilon_1,\varepsilon_2} $ of the groups $ ({F_4}^C)^{\varepsilon_1,\varepsilon_2} $ and $ ({E_6}^C)^{\varepsilon_1,\varepsilon_2} $, respectively.

\noindent In Section $ 6 $, by using the isomorphism $ ({E_7}^C)^{\varepsilon_1,\varepsilon_2} \cong Spin(12,C)$ (\cite[Proposition 1.1.7]{miya1}), we can show the isomorphism $ (E_7)^{\varepsilon_1,\varepsilon_2} \cong Spin(12) $, so that we will determine the structure of the group $ E_{7,\sH} $. Moreover, as in the previous sections, we will determine the root system and the Dynkin diagram of the Lie algebras $ ({\mathfrak{e}_7}^C)^{\varepsilon_1,\varepsilon_2} $ of the group $ ({E_7}^C)^{\varepsilon_1,\varepsilon_2} $.

\noindent In the final section (Section $ 7 $), first the connectedness of the group $ (E_{8,\sH})^C $ is shown, so that we can prove that the connectedness of the group $ E_{8,\sH} $ follows from the connectedness of the group $ (E_{8,\sH})^C $. Subsequently, the connectedness of the group $ ({E_8}^C)^{\varepsilon_1,\varepsilon_2} $ is shown, moreover
we can draw the Dynkin diagram of the Lie algebra $ ({\mathfrak{e}_8}^C)^{\varepsilon_1,\varepsilon_2} $ by using the results of root systems obtained in the previous sections, so that
the type of the group $ {E_8}^C)^{\varepsilon_1,\varepsilon_2} $ as Lie algebras is determined. Hence, together with the result of the center $ z(({E_8}^C)^{\varepsilon_1,\varepsilon_2}) $, we will determine the structure of the group $ ({E_8}^C)^{\varepsilon_1,\varepsilon_2} $. Finally, we will determine the structure of the compact real form $ (E_8)^{\varepsilon_1,\varepsilon_2} $ of $ ({E_8}^C)^{\varepsilon_1,\varepsilon_2} $, and we will determine the structure of the group $ E_{8,\sH} $ by defining a mapping $ f_{{}_8}:(E_8)^{\varepsilon_1,\varepsilon_2} \to E_{8,\sH} $.

Our results are given as follows.
\begin{align*}
    &G_{2,\sH} \cong Sp(1)/\Z_2,
    \\
    &F_{4,\sH} \cong Sp(3)/\Z_2,
    \\
    &E_{6,\sH} \cong SU(6)/\Z_2,
    \\
    &E_{7,\sH} \cong Ss(12)(:=Spin(12)/\Z_2),
    \\
    &E_{8,\sH} \cong E_7/\Z_2.
\end{align*}
These results are in \cite{iy10} except $ G_{2,\sH} \cong Sp(1)/\Z_2 $. However, there exists no proof in there. Thus, since Ichiro Yokota had determined the structure of the groups $ F_{4,\sH} $ and $ E_{6,\sH} $ (\cite{iy0}),
the author have added the proofs for the remaining groups $ G_{2,\sH},E_{7,\sH} $ and $ E_{8,\sH} $ .

We refer the reader to \cite{miya1} and \cite{iy0} for notations.
The author would like to say that one of the features of this  article is to prove the connectedness of various groups by elementary methods.

\section{Preliminaries}

In this section, we will focus on the definitions of the exceptional complex and compact Lie groups of type $ G_2, F_4, E_6, E_7 $ and $ E_8 $, the structure of those Lie algebras and several results related to them. Since the proofs are omitted, if necessary, refer to \cite{iy7}, \cite{iy2}, \cite{iy1},\cite{iy3} for the complex Lie groups and \cite{iy0} for the compact Lie groups.

\subsection{The Lie groups $ {G_2}^C $ and $ G_2 $ of the type  $ G_2 $ }

Let $\mathfrak{C} $ and $ \mathfrak{C}^C $ be the division Cayley algebra and its complexification, where $ \mathfrak{C}=\{e_0 =1, e_1, e_2, e_3, \allowbreak e_4, e_5, e_6, e_7 \}_{\sR} $. In $\mathfrak{C}$ (resp. $ \mathfrak{C}^C $), since the multiplication and the inner product are well known, these are omitted.
\vspace{1mm}

The simply connected complex Lie group $ {G_2}^C $ and the compact Lie group $ G_2 $ are defined by
\begin{align*}
{G_2}^C&=\left\lbrace \alpha \in \Iso_{C}(\mathfrak{C}^C)\relmiddle{|} \alpha(xy)=(\alpha x) (\alpha y) \right\rbrace ,
\\
G_2 &=\left\lbrace \alpha \in \Iso_{\sR}(\mathfrak{C})\relmiddle{|} \alpha(xy)=(\alpha x) (\alpha y) \right\rbrace .
\end{align*}

We define an $ \R $-linear transformation $ \gamma $ of $ \mathfrak{C} $ by
\begin{align*}
  \gamma(m+ne_4)=m-ne_4, \,\, m+ne_4 \in \H \oplus \H e_4=\mathfrak{C}.
\end{align*}
Then we have $ \gamma \in G_2 $ and $ \gamma^2=1 $. Hence $ \gamma $ induces the involutive inner automorphism $ \tilde{\gamma} $ on $ G_2 $: $ \tilde{\gamma}(\alpha)=\gamma \alpha \gamma, \alpha \in G_2 $. Besides, $ \gamma $ is naturally extended to the $ C $-linear transformation of $ \mathfrak{C}^C $.

We need the following well-known result later.

\begin{theorem}[{\cite[Theorem 1.10.1]{iy0}}]\label{theorem 2.1}
 The group $ (G_2)^\gamma $ is isomorphic to the group $ (Sp(1)\times Sp(1))/\Z_2, \Z_2\allowbreak =\{(1,1),(-1,-1)\} ${\rm :} $  (G_2)^\gamma \cong (Sp(1)\times Sp(1))/\Z_2 $.
\end{theorem}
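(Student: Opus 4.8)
The plan is to describe $(G_2)^\gamma$ concretely as the group of octonion automorphisms respecting the splitting $\mathfrak{C} = \H \oplus \H e_4$, and then to exhibit a surjective Lie group homomorphism $\varphi\colon Sp(1)\times Sp(1) \to (G_2)^\gamma$ whose kernel is exactly $\Z_2 = \{(1,1),(-1,-1)\}$.

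First I would note that $(G_2)^\gamma = \{\alpha \in G_2 \mid \gamma\alpha = \alpha\gamma\}$ since $\gamma^2 = 1$. Because $\gamma$ acts as $+1$ on $\H$ and as $-1$ on $\H e_4$, any $\alpha$ commuting with $\gamma$ preserves both eigenspaces, so $\alpha(\H) = \H$ and $\alpha(\H e_4) = \H e_4$. In particular $\alpha|_\H$ is an $\R$-algebra automorphism of the quaternions (as $\H$ is a subalgebra, $\alpha$ is multiplicative and fixes $1$); since $\Aut(\H) \cong SO(3)$ and every automorphism of $\H$ is inner, there is a unit quaternion $p \in Sp(1)$ with $\alpha(x) = px\bar{p}$ for all $x \in \H$.

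Next I would pin down the action on $\H e_4$. Writing $\alpha(e_4) = u e_4$, the relation $\alpha(e_4)^2 = \alpha(e_4^2) = \alpha(-1) = -1$ forces $|u| = 1$, i.e. $u \in Sp(1)$. Using $xe_4 = x\cdot e_4$ and multiplicativity, $\alpha(xe_4) = \alpha(x)\alpha(e_4) = (px\bar{p})(ue_4)$, which by the Cayley--Dickson relations equals $(qx\bar{p})e_4$ once we set $q := up \in Sp(1)$. Thus every $\alpha \in (G_2)^\gamma$ has the form $\alpha_{p,q}$, where
\[
\alpha_{p,q}(x) = px\bar{p}, \qquad \alpha_{p,q}(xe_4) = (qx\bar{p})e_4 \quad (x \in \H),
\]
for some $(p,q) \in Sp(1)\times Sp(1)$. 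Conversely, using the standard relations $x(ye_4) = (yx)e_4$, $(xe_4)y = (x\bar{y})e_4$ and $(xe_4)(ye_4) = -\bar{y}x$, one checks directly that each $\alpha_{p,q}$ is an automorphism commuting with $\gamma$, so $\varphi\colon (p,q) \mapsto \alpha_{p,q}$ is a well-defined map onto $(G_2)^\gamma$.

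Finally I would verify the homomorphism property and compute the kernel. The two defining formulas give at once $\alpha_{p,q}\alpha_{p',q'} = \alpha_{pp',\,qq'}$, so $\varphi$ is a surjective homomorphism. Its kernel consists of $(p,q)$ with $px\bar{p} = x$ and $qx\bar{p} = x$ for all $x \in \H$; the first equation forces $p = \pm 1$ (the center of $\H$), and the second then forces $q = p$, so $\ker\varphi = \{(1,1),(-1,-1)\} = \Z_2$. Since $\varphi$ is a continuous bijective homomorphism from the compact group $(Sp(1)\times Sp(1))/\Z_2$, it is a Lie group isomorphism, giving $(G_2)^\gamma \cong (Sp(1)\times Sp(1))/\Z_2$. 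The main obstacle is the third step: forcing the precise shape $\alpha_{p,q}$ of the action on the $(-1)$-eigenspace $\H e_4$ from multiplicativity, which is what yields surjectivity of $\varphi$; once that explicit parametrization is secured, the composition rule and kernel computation are routine, and the dimension check $\dim(Sp(1)\times Sp(1)) = 6 = \dim SO(4)$ serves as a consistency test.
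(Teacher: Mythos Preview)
Your proof is correct and follows the standard approach: the paper does not give its own proof of this result but cites Yokota's reference, and right after the theorem the paper records exactly the homomorphism $\varphi_{G_2}(p,q)(m+ne_4)=qm\bar q+(pn\bar q)e_4$ you construct (your $(p,q)$ is the paper's $(q,p)$). Your surjectivity argument via the eigenspace decomposition and the inner-automorphism description of $\Aut(\H)$, the Cayley--Dickson check, and the kernel computation are precisely the ingredients of Yokota's proof, so there is no substantive difference.
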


\subsection{The Lie groups $ {F_4}^C $ and $ F_4 $ of the type $ F_4 $}

Let $\mathfrak{J}(3,\mathfrak{C} )$ and $ \mathfrak{J}(3,\mathfrak{C^C}) $) be the exceptional Jordan algebra and its complexification.
In $\mathfrak{J}(3,\mathfrak{C} )$ (recp. $ \mathfrak{J}(3,\mathfrak{C}^C) $), the Jordan multiplication $X \circ Y$, the
inner product $(X,Y)$ and a cross multiplication $X \times Y$, called the Freudenthal multiplication, are defined by
$$
\begin{array}{c}
X \circ Y = \dfrac{1}{2}(XY + YX), \quad (X,Y) = \tr(X \circ Y),
\vspace{1mm}\\
X \times Y = \dfrac{1}{2}(2X \circ Y-\tr(X)Y - \tr(Y)X + (\tr(X)\tr(Y)
- (X, Y))E),
\end{array}
$$
respectively, where $E$ is the $3 \times 3$ unit matrix. Moreover, we define the trilinear form $(X, Y, Z)$, the determinant $\det \,X$ by
$$
(X, Y, Z)=(X, Y \times Z),\quad \det \,X=\dfrac{1}{3}(X, X, X),
$$
respectively, and briefly denote $\mathfrak{J}(3, \mathfrak{C})$ (resp. $ \mathfrak{J}(3,\mathfrak{C}^C) $)
by $\mathfrak{J}^C$ (resp. $ \mathfrak{J} $).

In $\mathfrak{J}$ (resp. $ \mathfrak{J}^C $), we often use the following nations:
\begin{align*}
E_1 &= \begin{pmatrix}
1 & 0 & 0 \\
0 & 0 & 0 \\
0 & 0 & 0
\end{pmatrix},  \,\,\,\,\,\,\,\,
E_2 = \begin{pmatrix}
0 & 0 & 0 \\
0 & 1 & 0 \\
0 & 0 & 0
\end{pmatrix},  \,\,\,\,\,\,\,\,\,\,
E_3 =\begin{pmatrix}
0 & 0 & 0 \\
0 & 0 & 0 \\
0 & 0 & 1
\end{pmatrix},
\\[2mm]
F_1 (x) &= \begin{pmatrix}
0 & 0 & 0 \\
0 & 0 & x \\
0 & \ov{x} & 0
\end{pmatrix},  \,\,
F_2(x) = \begin{pmatrix}
0 & 0 & \ov{x} \\
0 & 0 & 0 \\
x & 0 & 0
\end{pmatrix},  \,\,
F_3 (x) = \begin{pmatrix}
0 & x & 0 \\
\ov{x} & 0 & 0 \\
0 & 0 & 0
\end{pmatrix}.
\end{align*}

The simply connected complex Lie group ${F_4}^C$ and the compact Lie group $ F_4 $ are defined by
\begin{align*}
{F_4}^C
&= \left\lbrace \alpha \in \Iso_C(\mathfrak{J}^C) \relmiddle{|} \alpha(X \circ Y) = \alpha X \circ \alpha Y \right\rbrace ,
\\[1mm]
F_4
&= \left\lbrace \alpha \in \Iso_{\sR}(\mathfrak{J}) \relmiddle{|} \alpha(X \circ Y) = \alpha X \circ \alpha Y \right\rbrace .
 \end{align*}

The Lie algebra ${\mathfrak{f}_4}^C$
of the group ${F_4}^C$
is given by
\begin{align*}
{\mathfrak{f}_4}^C=\{\delta \in \mathrm{Hom}_{C}({\mathfrak{J}}^C)\,|\, \delta(X \circ Y)=\delta X \circ Y + X \circ \delta Y  \},
\end{align*}
and any element $\delta$ of the Lie algebras ${\mathfrak{f}_4}^C$ can be uniquely expressed by
\begin{align*}
\delta =(D_1,D_2,D_3) +\ti{A}_1(a_1) +\ti{A}_2(a_2) +\ti{A}_3(a_3), D_i \in \mathfrak{so}(8,C), a_k \in \mathfrak{C}^C,
\end{align*}
where $D_2, D_3 \in \mathfrak{so}(8,C)$ are uniquely determined by the Principle of triality on $ \mathfrak{so}(8,C) $: $(D_1 x)y+x(D_2 y)=\overline{D_3 (\overline{xy})},\, x,y \in \mathfrak{C}^C$ for a given
$D_1 \in \mathfrak{so}(8,C)$ and $\ti{A}_k(a_k)$ is the $C$-linear mapping of $\mathfrak{J}^C$.

\subsection{The Lie groups $ {E_6}^C $ and $ E_6 $ of the type $ E_6 $}

The simply connected complex Lie group ${E_6}^C$ and the compact Lie group $ E_6 $ are defined by
\begin{align*}
{E_6}^C &= \left\lbrace \alpha \in \Iso_C(\mathfrak{J}^C) \relmiddle{|} \det \, \alpha X = \det \, X \right\rbrace ,
\\[1mm]
E_6 &= \left\lbrace \alpha \in \Iso_C(\mathfrak{J}^C) \relmiddle{|} \det \, \alpha X = \det \, X, \langle \alpha X, \alpha Y \rangle=\langle X, Y \rangle \right\rbrace ,
\end{align*}
where the Hermite inner product $ \langle X, Y \rangle $ is defined by $ (\tau X,Y) $ using $ \tau $ is the complex conjugation in $ \mathfrak{J}^C $: $ \langle X, Y \rangle=(\tau X,Y) $.

Then we have naturally the inclusion ${F_4}^C \hookrightarrow {E_6}^C$ and $ ({E_6}^C)_E = {F_4}^C$ (resp. $ F_4 \hookrightarrow E_6$ and $ (E_6)_E=F_4 $).

The Lie algebra ${\mathfrak{e}_6}^C$ of the group ${E_6}^C$
is given by
\begin{align*}
{\mathfrak{e}_6}^C=\{\phi \in \mathrm{Hom}_{C}({\mathfrak{J}}^C)\,|\, (\phi X,X,X)=0 \},
\end{align*}
and any element $ \phi $ of Lie algebras $ {\mathfrak{e}_6}^C$ can be uniquely expressed by
\begin{align*}
\phi=\delta+\tilde{T},\,\,\delta \in {\mathfrak{f}_4}^C, \, T \in (\mathfrak{J}^C)_0,
\end{align*}
where $(\mathfrak{J}^C)_0=\{X \in \mathfrak{J}^C \,|\,\tr(X)=0 \}$ and
the $C$-linear mapping $\tilde{T}$ of $\mathfrak{J}^C$ is defined by $\tilde{T}X=T \circ X, X \in \mathfrak{J}^C$.

\subsection{The Lie groups $ {E_7}^C $ and $ E_7 $ of the type $ E_7 $}

Let $\mathfrak{P}^C$ be the $56$-dimensional Freudenthal $C$-vector space
$$
\mathfrak{P}^C = \mathfrak{J}^C \oplus \mathfrak{J}^C \oplus C \oplus C,
$$
with the inner product $ (P,Q) $, the Hermite inner product $ \langle P,Q \rangle $ and the skew-symmetric inner product $ \{ P,Q \} $ which are defined by
\begin{align*}
(P,Q):&=(X,Z)+(Y,W)+\xi\zeta+\eta\omega
\\
\langle P,Q \rangle:&=\langle X,Z \rangle+\langle Y,W \rangle+(\tau \xi)+(\tau \eta)\omega,
\\
\{P, Q\}:&=(X,W)-(Y,Z)+\xi\omega-\eta\zeta,
\end{align*}
respectively, where $ P=(X,Y,\xi,\eta), Q=(Z,W,\zeta,\omega) \in
\mathfrak{P}^C $. Note that $ \langle P,Q \rangle=(\tau P,Q)=\{\tau\lambda
P,Q \} $.

In $ \mathfrak{P}^C $, we often use the following notations:
\begin{align*}
\dot{X}:=(X,0,0,0),\,\,\underset{\dot{}}{Y}:=(0,Y,0,0),\,\dot{1}:=(0,0,1,0),\,\,\underset{\dot{}}{1}:=(0,0,0,1).
\end{align*}

For $\phi \in {\mathfrak{e}_6}^C$, $A, B \in \mathfrak{J}^C$ and $\nu \in C$, we define a $C$-linear mapping  $\varPhi(\phi, A, B, \nu) : \mathfrak{P}^C \to \mathfrak{P}^C$ by
$$
\varPhi(\phi, A, B, \nu) \begin{pmatrix}X \vspace{1.5mm}\\
Y \vspace{1.5mm}\\
\xi \vspace{.5mm}\\
\eta
\end{pmatrix}
=  \begin{pmatrix}\phi X - \dfrac{1}{3}\nu X + 2B
\times Y + \eta A \vspace{-0.5mm}\\
2A \times X - {}^t\phi Y + \dfrac{1}{3}\nu Y +
\xi B \vspace{1mm}\\
(A, Y) + \nu\xi \vspace{1.5mm}\\
(B, X) - \nu\eta
\end{pmatrix},
$$
where ${}^t\phi \in {\mathfrak{e}_6}^C$ is the transpose of $\phi$ with respect to the
inner product $(X, Y)$: $({}^t\phi X, Y) = (X, \phi Y)$.

For $ P = (X, Y, \xi, \eta), Q = (Z, W, \zeta, \omega) \in \mathfrak{P}^C $, we define a $C$-linear
mapping $P \times Q: \mathfrak{P}^C \to \mathfrak{P}^C$, called the Freudenthal cross operation, by
$$
P \times Q := \varPhi(\phi, A, B, \nu), \quad
\left \{ \begin{array}{l}
\vspace{1mm}
\phi = - \dfrac{1}{2}(X \vee W + Z \vee Y) \\
\vspace{1mm}
A = - \dfrac{1}{4}(2Y \times W - \xi Z - \zeta X) \\
\vspace{1mm}
B =  \dfrac{1}{4}(2X \times Z - \eta W - \omega Y) \\
\vspace{1mm}
\nu = \dfrac{1}{8}((X, W) + (Z, Y) - 3(\xi\omega + \zeta\eta)),
\end{array} \right.
$$
\noindent where $X \vee W \in {\mathfrak{e}_6}^C$ is defined by
$$
X \vee W = [\tilde{X}, \tilde{W}] + (X \circ W - \dfrac{1}{3}(X, W)E)^{\sim},
$$
\noindent  here the $C$-linear mappings $\tilde{X}, \tilde{W}$ of $\mathfrak{J}^C$ are the same ones as that in ${E_6}^C$.
\vspace{1mm}

The simply connected complex Lie group ${E_7}^C$ and the compact Lie $ E_7 $ are defined by
\begin{align*}
{E_7}^C &=  \left\lbrace \alpha \in \Iso_C(\mathfrak{P}^C) \relmiddle{|}\alpha(P \times Q)\alpha^{-1}=\alpha P \times \alpha Q \right\rbrace ,
\\[1mm]
E_7 &=  \left\lbrace \alpha \in \Iso_C(\mathfrak{P}^C) \relmiddle{|} \alpha(P \times Q)\alpha^{-1}=\alpha P \times \alpha Q, \langle \alpha P. \alpha Q \rangle=\langle P,Q \rangle \right\rbrace .
\end{align*}

The Lie algebra ${\mathfrak{e}_7}^C$ of the group ${E_7}^C$ and the Lie algebra $ \mathfrak{e}_7 $ of the group $ E_7 $ are given by
\begin{align*}
{\mathfrak{e}_7}^C &= \left\lbrace \varPhi(\phi, A, B, \nu) \in \Hom(\mathfrak{P}^C) \relmiddle{|} \phi \in {\mathfrak{e}_6}^C, A, B \in \mathfrak{J}^C, \nu \in C \right\rbrace ,
\\[1mm]
\mathfrak{e}_7 &= \left\lbrace \varPhi(\phi, A, -\tau A, \nu) \in \Hom(\mathfrak{P}^C) \relmiddle{|} \phi \in \mathfrak{e}_6, A \in \mathfrak{J}^C, \nu \in i\R \right\rbrace .
\end{align*}
For $\alpha \in {E_6}^C$, the mapping $\ti{\alpha}: \mathfrak{P}^C \to \mathfrak{P}^C$ is defined by
$$
\ti{\alpha}(X, Y, \xi, \eta)=(\alpha X, {}^t \alpha^{-1}Y, \xi, \eta),
$$
then we have $\ti{\alpha} \in {E_7}^C$, and so $\alpha$ and $\ti{\alpha}$ will be identified. The group ${E_7}^C$ contains ${E_6}^C$ as a subgroup by
$$
{E_6}^C=({E_7}^C)_{\dot{1}, \underset{\dot{}}{1}}(:=\{ \alpha \in {E_7}^C \,|\, \alpha \dot{1}=\dot{1}, \alpha \underset{\dot{}}{1}=\underset{\dot{}}{1} \}\,).
$$
Hence we have the inclusion $ {F_4}^C \hookrightarrow {E_6}^C \hookrightarrow {E_7}^C$.
For $ \alpha \in E_6 $, suppose $ \alpha \underset{\dot{}}{1}=\underset{\dot{}}{1} $, we have $ \alpha \dot{1}=\dot{1} $, so $ E_7 $ contains $ E_6 $ as a subgroup by
\begin{align*}
  E_6=(E_7)_{\text{\d{$ 1 $}}}.
\end{align*}
Hence we have the inclusion $ F_4 \hookrightarrow E_6 \hookrightarrow E_7 $.
Finally, note that $ \alpha \in {E_7}^C $ leaves the skew-symmetric inner product invariant: $ \{\alpha P, \alpha Q \}=\{P,Q \}, P,Q, \in \mathfrak{P}^C $.

\subsection{The Lie group $ {E_8}^C $ and $ E_8 $ of the type $ E_8 $ }

Let ${\mathfrak{e}_8}^C$ be the $248$-dimensional $C$-vector space
$$
{\mathfrak{e}_8}^C = {\mathfrak{e}_7}^C \oplus \mathfrak{P}^C \oplus \mathfrak{P}^C \oplus C \oplus C \oplus C.
$$
We define a Lie bracket $[R_1, R_2], R_1\!=\!(\varPhi_1, P_1, Q_1, r_1, s_1, t_1), R_2\!=\!(\varPhi_2, P_2, Q_2, r_2, s_2, t_2)$, by
\begin{align*}
[R_1, R_2]:= (\varPhi, P, Q, r, s, t),\,\,\left\{\begin{array}{l}
\varPhi = {[}\varPhi_1, \varPhi_2] + P_1 \times Q_2 - P_2 \times Q_1
\vspace{1mm} \\
P = \varPhi_1P_2 - \varPhi_2P_1 + r_1P_2 - r_2P_1 + s_1Q_2 - s_2Q_1
\vspace{1mm} \\
Q = \varPhi_1Q_2 - \varPhi_2Q_1 - r_1Q_2 + r_2Q_1 + t_1P_2 - t_2P_1
\vspace{1mm} \\
r = - \dfrac{1}{8}\{P_1, Q_2\} + \dfrac{1}{8}\{P_2, Q_1\} + s_1t_2 - s_2t_1\vspace{1mm} \\
s = \,\,\, \dfrac{1}{4}\{P_1, P_2\} + 2r_1s_2 - 2r_2s_1
\vspace{1mm} \\
t = - \dfrac{1}{4}\{Q_1, Q_2\} - 2r_1t_2 + 2r_2t_1.
\end{array} \right.
\end{align*}
Then the $C$-vector space ${\mathfrak{e}_8}^C$ becomes a complex simple Lie algebra of type $E_8$.

In ${\mathfrak{e}_8}^C$, we often use the following notations:
$$
\begin{array}{c}
\varPhi = (\varPhi, 0, 0, 0, 0, 0), \quad P^- = (0, P, 0, 0, 0, 0), \quad Q_- = (0, 0, Q, 0, 0, 0),
\vspace{1mm}\\
\tilde{r} = (0, 0, 0, r, 0, 0), \quad s^- = (0, 0, 0, 0, s, 0), \quad t_- = (0, 0, 0, 0, 0, t).
\end{array}
$$

We define a $C$-linear transformation $\lambda_\omega$ of ${\mathfrak{e}_8}^C$
by
$$
\lambda_\omega(\varPhi, P, Q, r, s, t) = (\lambda\varPhi\lambda^{-1}, \lambda Q, - \lambda P, -r, -t, -s),
$$
where a $C$-linear transformation $\lambda$ of $\mathfrak{P}^C$ on the right-hand side is defined by
$ \lambda(X, Y, \xi, \eta)$ $ = (Y, - X, \eta,\allowbreak -\xi)$.
As in $\mathfrak{J}^C$, the complex conjugation in ${\mathfrak{e}_8}^C$ is denoted by $\tau$:
$$
\tau(\varPhi, P, Q, r, s, t) = (\tau\varPhi\tau, \tau P, \tau Q, \tau r, \tau s, \tau t).
$$

The simply connected complex Lie group ${E_8}^C$ and the compact Lie group $E_8$ are defined by
\begin{align*}
{E_8}^C &=\Aut({\mathfrak{e}_8}^C)= \left\lbrace \alpha \in \Iso_C({\mathfrak{e}_8}^C) \relmiddle{|} \alpha[R, R'] = [\alpha R, \alpha R']\right\rbrace ,
\\[1mm]
E_8 &= \left\lbrace \alpha \in {E_8}^C \relmiddle{|} \langle \alpha R_1, \alpha R_2 \rangle=\langle R_1,R_2 \rangle \right\rbrace  =({E_8}^C)^{\tau\lambda_\omega },
\end{align*}
where the Hermite inner product $ \langle R_1,R_2 \rangle $ is defined by $ (-1/15)B_8(\tau\lambda_\omega R_1,R_2) $ using the Killing form $ B_8 $ of $ {\mathfrak{e}_8}^C $: $ \langle R_1,R_2 \rangle=(-1/15)B_8(\tau\lambda_\omega R_1,R_2) $.

The Lie algebra $ {\mathfrak{e}_8}^C $ of the group $ {E_8}^C $ and he Lie algebra $\mathfrak{e}_8$ of the group $E_8$ are given by
\begin{align*}
{\mathfrak{e}_8}^C&=\left\lbrace (\varPhi, P, Q, r, s, t) \relmiddle{|}\varPhi \in {\mathfrak{e}_7}^C, P,Q \in \mathfrak{J}^C, r,s,t \in C \right\rbrace ,
\\[1mm]
\mathfrak{e}_8&=\left\lbrace (\varPhi, P, -\tau\lambda P, r, s, -\tau s)\relmiddle{|}\varPhi \in \mathfrak{e}_7, P \in \mathfrak{J}^C, r \in i\R, s \in C \right\rbrace .
\end{align*}

For $\alpha \in {E_7}^C$, the mapping $\tilde{\alpha} : {\mathfrak{e}_8}^C \to {\mathfrak{e}_8}^C$ is defined by
$$
\tilde{\alpha}(\varPhi, P, Q, r, s, t) = (\alpha\varPhi\alpha^{-1}, \alpha P, \alpha Q, r, s, t),
$$
then we have $\tilde{\alpha} \in {E_8}^C$, and so $\alpha$ and $\tilde{\alpha}$ will be identified. The group ${E_8}^C$ contains ${E_7}^C$ as a subgroup by
$$       {E_7}^C
=({E_8}^C)_{\tilde{1},1^-,1_-}\left( =\left\lbrace  \alpha \in ({E_8}^C) \relmiddle{|}\alpha \tilde{1}=\tilde{1},\alpha 1^-=1^-,\alpha 1_-=1_-\right\rbrace \right) .
$$
Hence we have the inclusion ${F_4}^C \hookrightarrow {E_6}^C \hookrightarrow {E_7}^C \hookrightarrow {E_8}^C$. For $ \alpha \in E_8 $, suppose $ \alpha 1_-=1_- $, we have $ \alpha \tilde{1}=\tilde{1} $ and $ \alpha 1^-=1^- $, so $ E_8 $ contains $ E_7 $ as a subgroup by
\begin{align*}
E_7=(E_8)_{1_-}.
\end{align*}
Hence we have the inclusion $F_4 \hookrightarrow E_6 \hookrightarrow E_7 \hookrightarrow E_8$.

\subsection{Useful lemma}

\begin{lemma}\label{lemma 3.6.1}
	For Lie groups $G, G' $,  let a mapping $\varphi : G \to G'$ be a homomorphism of Lie groups. When $G'$ is connected, if $\Ker\,\varphi$ is discrete and $\dim(\mathfrak{g})=\dim(\mathfrak{g}')$, $\varphi$ is surjective.
\end{lemma}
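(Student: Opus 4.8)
The plan is to reduce the statement to the classical fact that a nonempty open-and-closed subset of a connected space is the whole space. First I would examine the differential $d\varphi_e : \mathfrak{g} \to \mathfrak{g}'$ at the identity. Since $\Ker\,\varphi$ is a closed subgroup of $G$ whose identity component is trivial (being discrete), its Lie algebra is $\{0\}$; as this Lie algebra is exactly $\Ker(d\varphi_e)$, the differential $d\varphi_e$ is injective. The hypothesis $\dim(\mathfrak{g})=\dim(\mathfrak{g}')$ then upgrades injectivity to bijectivity, so $d\varphi_e$ is a linear isomorphism.

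Next I would apply the inverse function theorem. A homomorphism of Lie groups whose differential at the identity is an isomorphism is a local diffeomorphism at $e$, so the image $\varphi(G)$ contains an open neighborhood $U$ of the identity $e'$ of $G'$. From here the argument is purely topological-group-theoretic: since $\varphi(G)$ is a subgroup containing the open set $U\ni e'$, it equals the union $\bigcup_{g\in\varphi(G)} gU$ of translates of $U$, hence is open in $G'$. An open subgroup is automatically closed as well, because its complement is the union of its nontrivial cosets, each of which is open. Thus $\varphi(G)$ is a nonempty open-and-closed subset of $G'$.

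Finally, connectedness of $G'$ forces $\varphi(G)=G'$, which is precisely the assertion that $\varphi$ is surjective. The only step with any genuine content is the identification $\Ker(d\varphi_e)=\mathrm{Lie}(\Ker\,\varphi)$ together with the observation that a discrete subgroup has trivial Lie algebra; once $d\varphi_e$ is known to be an isomorphism, the remainder is the standard open-subgroup argument and presents no real obstacle.
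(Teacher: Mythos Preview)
Your argument is correct and complete: the discreteness of $\Ker\,\varphi$ forces $d\varphi_e$ to be injective, the dimension hypothesis makes it an isomorphism, and the inverse function theorem plus the open-subgroup argument finishes the job. The paper itself omits the proof entirely and simply refers to \cite[Lemma 0.6 (2)]{iy7}, so you have in fact supplied the standard argument that the paper leaves to an external reference.
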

\begin{proof}
	The proof is omitted (cf. \cite[Lemma 0.6 (2)]{iy7}).
\end{proof}

\begin{lemma}[E. Cartan-Ra\v{s}evskii]\label{lemma 3.6.2}
	Let $G$ be a simply connected Lie group with a finite order automorphism $\sigma$
	of $G$. Then $G^\sigma$ is connected.
\end{lemma}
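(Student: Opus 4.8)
The plan is to reduce the statement to the assertion that every $\sigma$-fixed element lies on a one-parameter subgroup contained in $G^\sigma$, and to settle that by the classical geodesic argument of Cartan. First I would dispose of the non-compact directions. The finite cyclic group $\langle\sigma\rangle$ permutes the maximal compact subgroups of $G$ and hence acts on the space they form, which is contractible of nonpositive-curvature type; a finite group acting on such a space has a fixed point, so some maximal compact subgroup $U$ is $\sigma$-invariant. Since $G$ is simply connected and retracts onto $U$, the group $U$ is itself compact, connected and simply connected, and the Cartan decomposition $G=U\cdot\exp\mathfrak p$ is $\sigma$-equivariant; by uniqueness of that decomposition one gets $G^\sigma=U^\sigma\cdot\exp(\mathfrak p^\sigma)$, a product with a Euclidean factor. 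Thus $G^\sigma$ is homotopy equivalent to $U^\sigma$, and it suffices to treat the compact simply connected case, which I assume from now on.

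In the compact case I would fix a bi-invariant metric and average it over $\langle\sigma\rangle$ to make $\sigma$ an isometry; the geodesics through $e$ are then exactly the one-parameter subgroups $t\mapsto\exp(tX)$. Given $k\in G^\sigma$, the decisive reduction is this: if one can write $k=\exp X$ with $\mathrm d\sigma\,X=X$, then $X\in\mathfrak g^\sigma$ and $t\mapsto\exp(tX)$ is a path in $G^\sigma$ joining $e$ to $k$, placing $k$ in the identity component. Because $\sigma$ is an isometry fixing both $e$ and $k$, it permutes the minimizing geodesics from $e$ to $k$; equivalently, the finite-order linear isometry $\mathrm d\sigma$ acts on the compact set $S=\{X\in\mathfrak g : \exp X=k,\ \lvert X\rvert=d(e,k)\}$, and what one wants is a fixed point of $\mathrm d\sigma$ in $S$.

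The \emph{main obstacle} is precisely that minimizing geodesics need not be unique, since the curvature is only nonnegative; so $\mathrm d\sigma$ need not fix any individual vector of $S$, and naive averaging over $\langle\sigma\rangle$ leaves the sphere on which $S$ lies. The clean way to organize the argument is to pass to the homotopy exact sequence of the fibration $G^\sigma\hookrightarrow G\to G/G^\sigma$: as $G$ is connected with $\pi_1(G)=0$, the sequence collapses to an isomorphism $\pi_0(G^\sigma)\cong\pi_1(G/G^\sigma)$, so connectedness of $G^\sigma$ is \emph{equivalent} to simple connectedness of the generalized symmetric space $G/G^\sigma$. Establishing the latter is the heart of the matter and is where simple connectedness of $G$ enters decisively: a loop in $G/G^\sigma$ lifts to a path in $G$ joining a point to its $\sigma$-image, and one contracts it by expressing the endpoints through the exponential map and sliding along geodesic homotopies in the simply connected total space. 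Carrying out this contraction rigorously is the genuinely delicate step; it yields $\pi_1(G/G^\sigma)=0$ and hence the connectedness of $G^\sigma$.
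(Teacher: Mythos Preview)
The paper does not prove this lemma: the proof is explicitly omitted with a reference to \cite[Lemma~0.7]{iy7}. There is therefore no argument in the text to compare yours against.

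Judged on its own, your reduction to the compact simply connected case is correct, and rephrasing the problem via the long exact sequence of the fibration $G^\sigma\hookrightarrow G\to G/G^\sigma$ as the vanishing of $\pi_1(G/G^\sigma)$ is valid. But this rephrasing has exactly the same content as the original statement: a based loop in $G/G^\sigma$ lifts to a path in $G$ from $e$ to some $h\in G^\sigma$ (not ``to its $\sigma$-image'' as you wrote --- the endpoint lands in the fixed subgroup, not in a $\sigma$-orbit), and that loop contracts precisely when $h$ lies in the identity component $(G^\sigma)_0$, which is the assertion you set out to prove. So the fibration step is a restatement, not a reduction. You had already isolated the genuine difficulty earlier --- producing a $d\sigma$-fixed vector in the set $S$ of shortest logarithms of $k$ --- and then traded it for an equivalent formulation rather than resolving it. The phrase ``sliding along geodesic homotopies in the simply connected total space'' is not an argument: simple connectedness of $G$ tells you the lifted path is homotopic rel endpoints to any other path with the same endpoints, but it does not by itself move the endpoint $h$ into $(G^\sigma)_0$. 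As written, the proposal has a genuine gap at exactly the point you yourself flagged as delicate.
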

\begin{proof}
	The proof is omitted (cf. \cite[Lemma 0.7]{iy7}).
\end{proof}
After this, using these lemmas without permission each times, we often prove lemma, proposition or theorem.

\if0
\noindent After this, using these lemmas without permission each times, we often prove lemma, proposition or theorem.

Next, we shall state proposition and theorem with respect to  the Principle of triality on $O(8,C)$, however since we can prove the proof of these in the way similar to the proof of the Principle of triality on $SO(8)$, the proof of these are omitted (as for the Principle of triality on $SO(8)$, see \cite[Subsection 1.14]{/////}).

\begin{theorem}[Principle of triality on $SO(8,C)$]\label{theorem 2.3}
	For any $\alpha_3 \in SO(8,C)$, there exist $\alpha_1, \alpha_2 \in SO(8,C)$ such that
	$$
	(\alpha_1 x)(\alpha_2 y) = \alpha_3 (xy), \,\, x,y \in \mathfrak{C}^C.
	$$
	Moreover, $\alpha_1,\alpha_2$ are determined uniquely up to the sign for $\alpha_3$, that is, for $\alpha_3$, these  $\alpha_1,\alpha_2$ have to be $\alpha_1,\alpha_2$ or $- \alpha_1, - \alpha_2$.
\end{theorem}

\begin{proposition}\label{proposition 2.4}
	If $\alpha_1,\alpha_2,\alpha_3 \in O(8,C)$ satisfy the relational formula
	$$
	(\alpha_1x)(\alpha_2y) = \alpha_3(xy), \,\, x,y \in \mathfrak{C}^C,
	$$
	then $\alpha_1,\alpha_2,\alpha_3 \in SO(8,C)$.
\end{proposition}

\fi

\section{The group $ G_{2,\sH} $ }

Let $ \H $ and $ \H^C $ be the field of quaternion number
and its complexification. We consider the following groups
\begin{align*}
    (G_{2,\sH})^C&:=\left\lbrace \alpha \in \Iso_{C}(\H^C) \relmiddle{|}
    \alpha(xy)=(\alpha x)(\alpha y)\right\rbrace,
\\
     G_{2,\sH}&:=\left\lbrace \alpha \in \Iso_{\sR}(\H) \relmiddle{|}\alpha(xy)=(\alpha x)(\alpha y)\right\rbrace,
\end{align*}

\noindent In addition, we define a subgroup $ ((G_{2,\sH})^C)^\tau $ of $
(G_{2,\sH})^C $ by
\begin{align*}
   ((G_{2,\sH})^C)^\tau:=\left\lbrace\alpha \in (G_{2,\sH})^C \relmiddle{|}
   \tau\alpha=\alpha\tau \right\rbrace.
\end{align*}

Then we have the following proposition.

\begin{lemma}\label{lemma 4.1}
  The group $ ((G_{2,\sH})^C)^\tau $ coincides with the group $ G_{2,\sH} $
  {\rm :} $ ((G_{2,\sH})^C)^\tau=G_{2,\sH} $.
\end{lemma}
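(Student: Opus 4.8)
The plan is to prove the set equality $((G_{2,\sH})^C)^\tau = G_{2,\sH}$ by a double inclusion, using the complex conjugation $\tau$ on $\H^C$ to identify the $\tau$-fixed points of the complex group with the real group. The underlying principle is standard: $\H$ sits inside $\H^C$ as exactly the $\tau$-fixed subspace, so a $C$-linear map commuting with $\tau$ is determined by (and determines) an $\R$-linear map of $\H$, and the multiplicative condition transfers between the two settings.

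For the inclusion $((G_{2,\sH})^C)^\tau \subseteq G_{2,\sH}$, I would start with $\alpha \in (G_{2,\sH})^C$ satisfying $\tau\alpha = \alpha\tau$. The commutation relation forces $\alpha$ to preserve the real form: if $x \in \H = \{x \in \H^C \mid \tau x = x\}$, then $\tau(\alpha x) = \alpha(\tau x) = \alpha x$, so $\alpha x \in \H$. Hence the restriction $\alpha|_{\H}$ is a well-defined $\R$-linear endomorphism of $\H$; it is bijective because $\alpha$ is bijective on $\H^C$ and commutes with $\tau$ (so its inverse also commutes with $\tau$ and restricts to $\H$). The condition $\alpha(xy) = (\alpha x)(\alpha y)$ holds for all $x,y \in \H^C$, in particular for $x,y \in \H$, giving $\alpha|_{\H} \in G_{2,\sH}$.

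For the reverse inclusion $G_{2,\sH} \subseteq ((G_{2,\sH})^C)^\tau$, I would take $\alpha \in G_{2,\sH}$, an $\R$-linear multiplicative bijection of $\H$, and extend it $C$-linearly to a map $\alpha^C$ of $\H^C = \H \oplus i\H$ by $\alpha^C(x + iy) = \alpha x + i\,\alpha y$. This extension is $C$-linear and bijective, it satisfies the multiplicativity $\alpha^C(uv) = (\alpha^C u)(\alpha^C v)$ for all $u,v \in \H^C$ (since both sides are $C$-bilinear in $(u,v)$ and agree on the real generators $\H$, where the identity holds by hypothesis), and it commutes with $\tau$ by construction because $\tau(x+iy) = x - iy$ and $\alpha^C$ acts on real and imaginary parts separately. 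Thus $\alpha^C \in ((G_{2,\sH})^C)^\tau$, and under the identification of $\alpha$ with its canonical extension these are the same element.

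The only point requiring a little care, which I would single out as the main (though mild) obstacle, is the transfer of the multiplicative identity across the extension in the second inclusion: one must check that an identity verified only on the real subspace $\H$ propagates to all of $\H^C$. This is exactly a $C$-bilinearity argument — both $\alpha^C(uv)$ and $(\alpha^C u)(\alpha^C v)$ are $C$-bilinear in the pair $(u,v)$ and coincide on the spanning set $\H \times \H$, hence coincide everywhere — and it relies on $\H^C$ being the complexification of $\H$ so that $\H$ spans $\H^C$ over $C$. Once this is in place, the two inclusions combine to give the asserted equality.
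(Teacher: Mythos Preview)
Your proof is correct and follows essentially the same double-inclusion argument as the paper: restrict a $\tau$-commuting complex automorphism to the $\tau$-fixed subspace $\H$ for one direction, and $C$-linearly extend a real automorphism for the other. The only cosmetic difference is that you verify multiplicativity of the extension via $C$-bilinearity, whereas the paper expands $(\alpha x_1 + i\alpha x_2)(\alpha y_1 + i\alpha y_2)$ directly; both are equivalent.
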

\begin{proof}
    Let $ \alpha \in G_{2,\sH} $. Then we define an action of $ \alpha $ to
    $ x:=x_1+ix_2 \in \H^C, x_i \in \H $ by
    \begin{align*}
        \alpha x=\alpha(x_1+ix_2)=\alpha x_1+i\alpha x_2 \in \H^C.
    \end{align*}
Hence, for $ x=x_1+ix_2,y=y_1+iy_2 \in \H^C, x_i,y_i \H $, it follows that
\begin{align*}
    (\alpha x)(\alpha y)&=\alpha(x_1+ix_2)\alpha(y_1+iy_2)=(\alpha
    x_1+i\alpha x_2)(\alpha y_1+i\alpha y_2)
    \\
    &=((\alpha x_1)(\alpha y_1)-(\alpha x_2)(\alpha y_2))+i((\alpha x_1)
    (\alpha y_2)+(\alpha x_2)(\alpha y_1))
    \\
    &=(\alpha(x_1y_1)-\alpha(x_2y_2))+i(\alpha(x_1y_2)+\alpha(x_2y_1))
    \\
    &=\alpha(x_1y_1-x_2y_2)+i\alpha(x_1y_2+x_2y_1)
    \\
    &=\alpha(x_1y_1-x_2y_2+i(x_1y_2+x_2y_1))
    \\
    &=\alpha((x_1+ix_2)(y_1+iy_2))
    \\
    &=\alpha(xy).
\end{align*}
Hence we have $ \alpha \in (G_{2,\sH})^C $. Moreover, it is easy to verify
that $ \tau\alpha=\alpha\tau $. Indeed, it follows that
\begin{align*}
    \tau\alpha x&=\tau\alpha(x_1+ix_2)=\tau(\alpha x_1+i\alpha
    x_2)=\alpha x_1-i\alpha x_2
    \\
    &=\alpha(x_1-ix_2)=\alpha\tau(x_1+ix_2)=\alpha\tau x,\,\,x=x_1+ix_2 \in
    \H^C,
\end{align*}
that is, $ \tau\alpha=\alpha\tau $. Thus we have $ \alpha \in ((G_{2,
\sH})^C)^\tau $.

Conversely, let $ \beta \in ((G_{2,\sH})^C)^\tau $. From $
\tau\beta=\beta\tau $, $ \beta $ induces an $ \R $-isomorphism of $ \H $.
Indeed, note that $ \tau y=y, y \in \H $, it follows that
\begin{align*}
    \tau(\beta y)=\beta\tau y=\beta y,
\end{align*}
so that $ \beta y \in \H $. Thus we have $ \beta \in G_{2,\sH} $. With
above, the proof of this lemma is completed.
\end{proof}

Here, in order to determine the structure of the group $ G_{2,\sH} $, we
need the following result.

\begin{proposition}{\rm (\cite[Proposition 1.3.1]{iy7})}
\label{proposition 4.2}
 The group $ (G_{2,\sH})^C $ is isomorphic to the group $ Sp(1,\H^C)/\Z_2,
 \allowbreak \Z_2=\{1,-1\}${\rm :} $ (G_{2,\sH})^C \cong Sp(1,\H^C)/\Z_2 $.
\end{proposition}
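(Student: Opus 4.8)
The plan is to realize the isomorphism explicitly through the two-sided conjugation action of the unit group $Sp(1,\H^C)$ on $\H^C$. For $a \in Sp(1,\H^C)$ we have $a\ov{a}=1$, so $a$ is invertible with $a^{-1}=\ov{a}$, and I would define
\[
\varphi \colon Sp(1,\H^C) \longrightarrow \Iso_C(\H^C), \qquad \varphi(a)x = a x a^{-1}.
\]
First I would check that $\varphi(a)$ really lies in $(G_{2,\sH})^C$: it is $C$-linear because left and right multiplications are $C$-linear ($C$ being central in $\H^C$), it is bijective with inverse $\varphi(\ov{a})$, and it is multiplicative since $\varphi(a)(xy)=a(xy)a^{-1}=(axa^{-1})(aya^{-1})=\varphi(a)x\,\varphi(a)y$. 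The computation $\varphi(ab)x=(ab)x(ab)^{-1}=a(bxb^{-1})a^{-1}=\varphi(a)\varphi(b)x$ then shows $\varphi$ is a group homomorphism.

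Next I would identify the kernel. We have $\varphi(a)=1$ exactly when $ax=xa$ for all $x\in\H^C$, i.e. when $a$ lies in the centre of $\H^C$, which is $C\cdot 1$. Combined with the constraint $a\ov{a}=1$ (and $\ov{c}=c$ for scalars $c\in C$), this forces $a^2=1$, hence $a=\pm 1$. Therefore $\Ker\,\varphi=\{1,-1\}=\Z_2$, in agreement with the statement.

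The main obstacle is the surjectivity of $\varphi$, and this is where the real work lies. The cleanest route is to use that $\H^C$ is a central simple $C$-algebra of dimension $4$, so $\H^C\cong M(2,C)$; by the Skolem--Noether theorem every $C$-algebra automorphism of $\H^C$ is inner, of the form $x\mapsto uxu^{-1}$ for some $u\in(\H^C)^\times$. It then only remains to normalise the conjugating unit: writing $N(u)=u\ov{u}\in C^\times$ and using that $C$ is algebraically closed, I would choose $c\in C^\times$ with $c^2=N(u)$ and set $a=c^{-1}u$, so that $a\ov{a}=1$ and $axa^{-1}=uxu^{-1}$; hence the given automorphism equals $\varphi(a)$. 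Alternatively, one can proceed by a dimension count suited to the tools already set up: the Lie algebra of $Sp(1,\H^C)$ is the space of pure quaternions $\{b\in\H^C \mid b+\ov{b}=0\}$, of $C$-dimension $3$, while the Lie algebra of $(G_{2,\sH})^C$ is $\Der(\H^C)$, also of dimension $3$; since $\Ker\,\varphi$ is discrete and $(G_{2,\sH})^C$ is connected (both facts being transparent from $\H^C\cong M(2,C)$), Lemma~\ref{lemma 3.6.1} yields surjectivity.

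Finally, by the first isomorphism theorem $\varphi$ induces an isomorphism $Sp(1,\H^C)/\Z_2 \cong (G_{2,\sH})^C$, which is the assertion. I expect the genuinely delicate point to be surjectivity, i.e. ruling out any automorphism of $\H^C$ that is not a conjugation; the homomorphism and kernel verifications are routine, and the matching of dimensions is a direct computation.
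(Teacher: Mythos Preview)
Your argument is correct. The paper does not actually prove Proposition~\ref{proposition 4.2}; it merely cites \cite[Proposition~1.3.1]{iy7}. The conjugation map $\varphi(a)x = ax\ov{a}$ you construct is exactly the map $\varphi_{2,\sH^C}$ that the paper invokes (without details) in the proof of Theorem~\ref{theorem 4.3}, so your approach coincides with the intended one. One small remark: your alternative route to surjectivity via Lemma~\ref{lemma 3.6.1} is not genuinely independent of Skolem--Noether, since the connectedness of $(G_{2,\sH})^C$ that it requires is most naturally seen from $\Aut_C(M(2,C))\cong PGL(2,C)$, which is Skolem--Noether again; the direct Skolem--Noether argument you give first is the cleaner path.
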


Now, we determine the structure of the group $ G_{2,\sH} $.

\begin{theorem}\label{theorem 4.3}
    The group $ G_{2,\sH} $ is isomorphic to the group $ Sp(1)/
    \Z_2, \Z_2=\{1,-1\} ${\rm :} $ G_{2,\sH} \cong Sp(1)/\Z_2 $.
\end{theorem}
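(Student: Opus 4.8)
The plan is to exhibit $G_{2,\sH}$ as the $\tau$-fixed subgroup of $(G_{2,\sH})^C\cong Sp(1,\H^C)/\Z_2$ and to identify that fixed subgroup with the image of the compact real form $Sp(1)$. I would use the concrete form of Yokota's isomorphism in Proposition \ref{proposition 4.2}: it is realized by conjugation, $\Phi:Sp(1,\H^C)/\Z_2\to(G_{2,\sH})^C$, $\Phi([a])=\alpha_a$, where $\alpha_a(x)=axa^{-1}=ax\bar{a}$ (here $y\mapsto\bar{y}$ is the quaternionic conjugation extended $C$-linearly to $\H^C$, so that $a^{-1}=\bar{a}$ for $a\in Sp(1,\H^C)$). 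Its kernel is $\{1,-1\}=Sp(1,\H^C)\cap C$, the center of $\H^C$ intersected with the unit sphere.

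First I would define $\varphi:Sp(1)\to G_{2,\sH}$ by $\varphi(a)=\alpha_a$. For $a\in Sp(1)\subset\H$ and $x\in\H$ one has $\alpha_a(x)=ax\bar{a}\in\H$, and $\alpha_a$ is an $\R$-algebra automorphism of $\H$, so $\varphi(a)\in G_{2,\sH}$; equivalently, since $\tau a=a$ one checks $\tau\alpha_a=\alpha_a\tau$ and invokes Lemma \ref{lemma 4.1}. The map $\varphi$ is plainly a homomorphism, and $\alpha_a=\mathrm{id}$ forces $a$ to be central in $\H$, whence $\Ker\,\varphi=\{1,-1\}=\Z_2$. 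Thus $\varphi$ descends to an injective homomorphism $\bar{\varphi}:Sp(1)/\Z_2\hookrightarrow G_{2,\sH}$.

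The heart of the argument is surjectivity. Given $\beta\in G_{2,\sH}$, Lemma \ref{lemma 4.1} gives $\tau\beta\tau=\beta$; writing $\beta=\alpha_a$ through $\Phi$ and using $\tau\alpha_a\tau=\alpha_{\tau a}$ (which holds because $\tau$ is a ring automorphism of $\H^C$ fixing $\H$), this yields $\alpha_{\tau a}=\alpha_a$, i.e. $\tau a=\pm a$ in $Sp(1,\H^C)$. Writing $a=a_1+ia_2$ with $a_1,a_2\in\H$, the case $\tau a=a$ forces $a_2=0$, so $a\in Sp(1)$ and $\beta=\varphi(a)$ lies in the image; the case $\tau a=-a$ forces $a_1=0$, so $a=ia_2$ and then $a\bar{a}=(ia_2)(i\bar{a_2})=-|a_2|^2\le 0$, contradicting $a\bar{a}=1$, hence this case is vacuous. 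Therefore $\bar{\varphi}$ is surjective and $G_{2,\sH}\cong Sp(1)/\Z_2$.

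The main obstacle is the careful bookkeeping of the two distinct conjugations on $\H^C$, namely the complex conjugation $\tau$ and the quaternionic conjugation $y\mapsto\bar{y}$, together with the precise form of $\Phi$; once $\tau$ is transported correctly to the $Sp(1,\H^C)$-side as $a\mapsto\tau a$, the only genuine content is the elementary norm computation ruling out $\tau a=-a$. I note that this fixed-point approach sidesteps any separate proof of connectedness of $G_{2,\sH}$; alternatively, once $\bar{\varphi}$ is known to be injective and $\dim Sp(1)=\dim G_{2,\sH}=3$, surjectivity would follow from Lemma \ref{lemma 3.6.1} provided one first establishes that $G_{2,\sH}$ is connected.
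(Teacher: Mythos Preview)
Your proof is correct and follows essentially the same approach as the paper: define the conjugation homomorphism $Sp(1)\to G_{2,\sH}$, identify its kernel as $\{1,-1\}$, and for surjectivity lift $\beta\in G_{2,\sH}$ to $a\in Sp(1,\H^C)$ via Proposition~\ref{proposition 4.2}, use $\tau$-equivariance to obtain $\tau a=\pm a$, and rule out the minus sign by the norm computation $a\bar a=-|a_2|^2\le 0$. The paper's argument is identical up to notation.
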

\begin{proof}
 Let $ Sp(1)=\{q \in \H \,|\, q\ov{q}=1\} \subset Sp(1,\H^C)$. We
 define a mapping $ \varphi_{{}_{2,\scalebox{0.8}{$\sH$}}}: Sp(1) \to G_{2,
 \sH} $ by
 \begin{align*}
     \varphi_{{}_{2,\scalebox{0.8}{$\sH$}}}(q)m=qm\ov{q},\;\; m \in \H.
 \end{align*}
First, we can easily confirm that $ \varphi_{{}_{2,\scalebox{0.8}{$\sH$}}}
$ is well-defined. Indeed, it follows that
\begin{align*}
   (\varphi_{{}_{2,\scalebox{0.8}{$\sH$}}}(q)m)(\varphi_{{}_{2,
   \scalebox{0.8}{$\sH$}}}(q)n)=(qm\ov{q})(qn\ov{q})=q(mn)\ov{q}
   =\varphi_{{}_{2,\scalebox{0.8}{$\sH$}}}(q)(mn).
\end{align*}
Moreover, since this mapping is the restriction of the mapping $
\varphi_{{}_{2,\scalebox{0.8}{$\sH^C$}}}: Sp(1,\H^C) \to (G_{2,\sH})^C $,
it is clear that $ \varphi_{{}_{2,\scalebox{0.8}{$\sH$}}} $ is a
homomorphism.

Next, we will prove that $ \varphi_{{}_{2,\scalebox{0.8}{$\sH$}}} $ is
surjective. Let $ \alpha \in G_{2,\sH} \subset (G_{2,\sH})^C $. There
exists $ q \in Sp(1,\H^C) $ such that $ \alpha=\varphi_{{}_{2,
\scalebox{0.8}{$\sH^C$}}}(q) $ (Proposition \ref{proposition 4.2}). In
addition, $ \alpha $ satisfies the condition $ \tau\alpha=\alpha\tau
$ (Lemma \ref{lemma 4.1}), that is, $ \tau\varphi_{{}_{2,\scalebox{0.8}{$
\sH^C$}}}(q)
\tau=\varphi_{{}_{2,\scalebox{0.8}{$\sH^C$}}}(q) $, so that we have $
\varphi_{{}_{2,\scalebox{0.8}{$\sH^C$}}}(\tau q)=\varphi_{{}_{2,
\scalebox{0.8}{$\sH^C$}}}(q) $.
Hence we  have the following
\begin{align*}
    \tau q=q     \qquad \text{or} \qquad \tau q=-q.
\end{align*}
In the former case, we obtain $ q \in Sp(1) $. In the latter case, we have
$ q=iq', q' \in \H $. However, this case is impossible because of $
1=q\ov{q}=(iq')(i\ov{q'})=-q'\ov{q'} \leq 0 $. The proof of surjective is
completed.

Finally, we will determine $ \Ker\,\varphi_{{}_{2,\scalebox{0.8}{$\sH$}}}
$. Since $ \varphi_{{}_{2,\scalebox{0.8}{$\sH$}}} $ is the restriction of
the mapping $ \varphi_{{}_{2,\scalebox{0.8}{$\sH^C$}}} $, it is clear $
\Ker\,\varphi_{{}_{2,\scalebox{0.8}{$\sH$}}}=\Ker\,\varphi_{{}_{2,
\scalebox{0.8}{$\sH^C$}}}=\{1,-1 \} \cong \Z_2 $.

Therefore we have the required isomorphism
\begin{align*}
    G_{2,\sH} \cong Sp(1)/\Z_2.
\end{align*}
\end{proof}

After this, in all sections, we will determine the root system
and the Dynkin diagram of some Lie algebras, where some Lie algebras are defined later. Its final aim is to determine the type of the complex
Lie group $ (E_{8,\sH})^C $ as Lie algebras,

\section{The group $ F_{4,\sH} $ and the root system, the Dynkin
diagram of the Lie algebra $ ({\mathfrak{f}_{4}}^C)^{\varepsilon_1,
\varepsilon_2} $}

Let $\mathfrak{J}(3,\mathfrak{C})  $ be the exceptional Jordan algebra.
We consider an algebra $ \mathfrak{J}(3,\H) $ which are defined by
replacing $ \mathfrak{C} $ with $ \H $ in $ \mathfrak{J}(3,
\mathfrak{C}) $, and we briefly denote $ \mathfrak{J}(3,\H) $ by $
\mathfrak{J}_{\sH} $.

Then we consider the following group $ F_{4,\sH} $ which is defined by
replacing $ \mathfrak{J} $ with $ \mathfrak{J}_{\sH} $ in $ F_4 $:
\begin{align*}
F_{4,\sH}&=\left\lbrace \alpha \in \Iso_{\sR}(\mathfrak{J}_{\sH})
\relmiddle{|} \alpha(X \circ Y)=\alpha X \circ \alpha Y\right\rbrace
\\
&=\left\lbrace \alpha \in \Iso_{\sR}(\mathfrak{J}_{\sH})\relmiddle{|}
\alpha(X \times Y)=\alpha X \times \alpha Y\right\rbrace.
\end{align*}

This group has been investigated by Ichiro Yokota, so that the following
result was obtained.

\begin{theorem}{\rm (\cite[Proposition 2.11.3]{iy0})}\label{theorem 5.1}
   The group $ F_{4,\sH} $ is isomorphic to the group $ Sp(3)/\Z_2, \allowbreak \Z_2=
   \{E,-E\} ${\rm :} $ F_{4,\sH} \cong Sp(3)/\Z_2 $.
\end{theorem}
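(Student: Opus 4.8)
The plan is to exhibit an explicit homomorphism, compute its kernel, and then obtain surjectivity from a dimension count together with the connectedness of $F_{4,\sH}$ via Lemma \ref{lemma 3.6.1}. Regard $Sp(3)=\{A \in \Iso_{\sR}(\H^3) \mid AA^{*}=E\}$, where $A^{*}={}^{t}\ov{A}$ is the quaternionic conjugate transpose, and define $\varphi : Sp(3) \to \Iso_{\sR}(\mathfrak{J}_{\sH})$ by $\varphi(A)X=AXA^{*}$. First I would check that $\varphi$ is well-defined: since $X^{*}=X$ and $AA^{*}=A^{*}A=E$, the matrix $AXA^{*}$ is again Hermitian, and using $A^{*}A=E$ one gets $(AXA^{*})\circ(AYA^{*})=A(X\circ Y)A^{*}$, so $\varphi(A) \in F_{4,\sH}$; that $\varphi$ is a homomorphism is immediate from $(AB)X(AB)^{*}=A(BXB^{*})A^{*}$.

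The kernel is computed directly. If $\varphi(A)=1$ then $AXA^{*}=X$, i.e. $AX=XA$, for all $X \in \mathfrak{J}_{\sH}$. Commuting with $E_1,E_2,E_3$ forces $A=\diag(a_1,a_2,a_3)$, and commuting with $F_k(x)$ for all $x \in \H$ yields $a_ix=xa_j$; taking $x=1$ gives $a_i=a_j$, and then $a_ix=xa_i$ for all $x$ forces $a_i \in \R$. With $AA^{*}=E$ this gives $A=\pm E$, so $\Ker\,\varphi=\{E,-E\}\cong \Z_2$.

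For surjectivity I would invoke Lemma \ref{lemma 3.6.1}, for which two ingredients remain: the equality $\dim \mathfrak{f}_{4,\sH}=\dim \mathfrak{sp}(3)=21$ of Lie algebras, obtained by describing the derivation algebra of $\mathfrak{J}_{\sH}$ explicitly through its action on the idempotents $E_i$ and the off-diagonal pieces $F_k(x)$, and the connectedness of $F_{4,\sH}$. I expect the connectedness to be the main obstacle. My approach would be to let $F_{4,\sH}$ act on the space of primitive idempotents $\{X \mid X \circ X=X,\ \tr X=1\}$ of $\mathfrak{J}_{\sH}$, which is the connected manifold $\H P^{2}$; once transitivity of the action is shown (it suffices that $\varphi(Sp(3))$ already acts transitively, since $Sp(3)$ acts transitively on $\H P^{2}$) and the isotropy subgroup $(F_{4,\sH})_{E_1}$ is shown to be connected (it is $(Sp(2)\times Sp(1))/\Z_2$, of dimension $13$, via the Peirce decomposition at $E_1$), connectedness of $F_{4,\sH}$ follows from the fibration $(F_{4,\sH})_{E_1} \to F_{4,\sH} \to \H P^{2}$.

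Alternatively --- and closer to the elementary spirit of the paper --- surjectivity can be proved by a direct reduction: given $\alpha \in F_{4,\sH}$ one has $\alpha E=E$, so $\alpha$ permutes primitive idempotents; using transitivity of $\varphi(Sp(3))$ on $\H P^{2}$ one may precompose with a suitable $\varphi(A)^{-1}$ to assume $\alpha$ fixes $E_1,E_2,E_3$, whereupon $\alpha$ preserves each Peirce space and acts as $\alpha F_k(x)=F_k(\beta_k x)$; the multiplicative relations among the $F_k(x)$ then constrain the $\beta_k$ to be realized by conjugation by a diagonal element of $Sp(3)$, placing $\alpha$ in $\varphi(Sp(3))$. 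The hard part in either route is the same: controlling the off-diagonal (Peirce) action, i.e. showing the induced quaternionic maps $\beta_k$ come from $Sp(3)$, which is where the associativity of $\H$ (in contrast to the triality phenomena needed for $\mathbb{O}$) makes the argument tractable. Granting these, Lemma \ref{lemma 3.6.1} yields $F_{4,\sH}\cong Sp(3)/\Z_2$.
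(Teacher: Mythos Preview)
The paper does not prove this theorem; as announced in the introduction, it is quoted from Yokota \cite[Proposition 2.11.3]{iy0} without argument. Your map $\varphi(A)X=AXA^{*}$ is exactly Yokota's construction (it is the $Sp(3)$-factor of the homomorphism $\varphi_4:Sp(1)\times Sp(3)\to (F_4)^\gamma$ that the present paper itself invokes in Proposition~\ref{proposition 8.23}), and your kernel computation is correct.

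Of your two routes to surjectivity, the second (direct reduction) is the standard one and is self-contained: it yields surjectivity outright, with connectedness of $F_{4,\sH}$ as a corollary rather than a hypothesis. The first route via Lemma~\ref{lemma 3.6.1} is sound in principle, but note that identifying $(F_{4,\sH})_{E_1}$ with $(Sp(2)\times Sp(1))/\Z_2$ already requires essentially the same Peirce-decomposition analysis as the direct argument, so nothing is saved. In either case the crux is exactly what you flagged: once $\alpha$ fixes $E_1,E_2,E_3$, the Jordan relations $2F_i(x)\circ F_j(y)=F_k(\overline{xy})$ and $F_k(x)\circ F_k(y)=(x,y)(E_{k+1}+E_{k+2})$ force the induced maps $\beta_k$ to be of the form $x\mapsto p_{k+1}\,x\,\overline{p_{k+2}}$ for some $p_i\in Sp(1)$, and associativity of $\H$ is precisely what makes this step elementary.
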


Now, we move the determination of the root system and the Dynkin diagram of the Lie algebra $ ({\mathfrak{f}_{4}}^C)^{\varepsilon_1,\varepsilon_2} $.

We arrange $ \R $-linear transformations of $ \mathfrak{C} $ below. By using the mapping $ \varphi_{{}_{G_2}}:Sp(1) \times Sp(1) \to (G_2)^\gamma $,
\begin{align*}
    \varphi_{{}_{G_2}}(p,q)(m+ne_4)=qm\ov{q}+(pn\ov{q})e_4,\;m+ne_4 \in \H \oplus \H e_4=\mathfrak{C}
\end{align*}
(Theorem \ref{theorem 2.1}), we define $ \R $-linear transformations $ \varepsilon_1,\varepsilon_2 $ of $ \mathfrak{C} $ by
\begin{align*}
    \varepsilon_1:=\varphi_{{}_{G_2}}(e_1,1),\quad
    \varepsilon_2:=\varphi_{{}_{G_2}}(e_2,1),
\end{align*}
where $ e_1,e_2 $ are two of basis in $ \H \subset \mathfrak{C} $.
Besides, the $ \R $-linear transformation $ \gamma $ of $ \mathfrak{C} $ defined in previous section is expressed by
\begin{align*}
\gamma=\varphi_{{}_{G_2}}(-1,1).
\end{align*}

These $ \R $-linear transformations of $ \mathfrak{C} $ are naturally extended to $ \R $-linear transformations of $ \mathfrak{J} $ as follows:
\begin{align*}
    \varepsilon_i
    \begin{pmatrix}
    \xi_1 & x_3 & \ov{x}_2 \\
    \ov{x}_3 & \xi_2 & x_1 \\
    x_2 & \ov{x}_1 & \xi_3
    \end{pmatrix}
= \begin{pmatrix}
    \xi_1 & \;\varepsilon_i x_3\; & \ov{\varepsilon_i x}_2 \\
    \ov{\varepsilon_i x}_3 & \xi_2 & \varepsilon_i x_1 \\
    \varepsilon_i x_2 & \ov{\varepsilon_i x}_1 & \xi_3
\end{pmatrix},i=1,2, \;
   \gamma
   \begin{pmatrix}
        \xi_1 & x_3 & \ov{x}_2 \\
       \ov{x}_3 & \xi_2 & x_1 \\
       x_2 & \ov{x}_1 & \xi_3
   \end{pmatrix}
   = \begin{pmatrix}
       \xi_1 & \gamma  x_3 & \ov{\gamma  x}_2 \\
       \ov{\gamma  x}_3 & \xi_2 & \gamma  x_1 \\
       \gamma  x_2 & \ov{\gamma  x}_1 & \xi_3
   \end{pmatrix},
\end{align*}
moreover these are also extended to $ C $-linear transformations of $ \mathfrak{J}^C $.
Then we have $ \varepsilon_1,\varepsilon_2,\gamma \in G_2 \subset F_4 \subset {F_4}^C $ and $ {\varepsilon_1}^2={\varepsilon_2}^2=\gamma, {\varepsilon_1}^4={\varepsilon_2}^4=1, \gamma^2=1$.

Now, we consider a subgroup $ ({F_4}^C)^{\varepsilon_1,\varepsilon_2} $ of $ {F_4}^C $ by
\begin{align*}
   ({F_4}^C)^{\varepsilon_1,\varepsilon_2}:=\left\lbrace \alpha \in {F_4}^C \relmiddle{|} \varepsilon_1\alpha=\alpha\varepsilon_1, \varepsilon_2\alpha=\alpha\varepsilon_2 \right\rbrace.
\end{align*}
Then the group $ ({F_4}^C)^{\varepsilon_1,\varepsilon_2} $ is isomorphic to the group $ Sp(3,\H^C) $: $ ({F_4}^C)^{\varepsilon_1,\varepsilon_2} \cong Sp(3,\H^C) $, however since this group does not use directly in this article, the proof of isomorphism is omitted.

Now, we move the determination of the root system and the Dynkin diagram of the Lie algebra $ ({\mathfrak{f}_{4}}^C)^{\varepsilon_1,\varepsilon_2} $ of the group $ ({F_4}^C)^{\varepsilon_1,\varepsilon_2} $.

We prove the following lemma needed later.

\begin{lemma}\label{lemma 5.2}
   The Lie algebra $ ({\mathfrak{f}_4}^C)^{\varepsilon_1,\varepsilon_2} $  of the group $ ({F_4}^C)^{\varepsilon_1,\varepsilon_2} $ is given by
   \begin{align*}
     ({\mathfrak{f}_4}^C)^{\varepsilon_1,\varepsilon_2}
     =\left\lbrace
     \begin{array}{l}
     \delta=(D_1,D_2,D_3) \vspace{1mm}\\
     \quad +\tilde{A}_1(a_1)+\tilde{A}_2(a_2)+\tilde{A}_3(a_3) \in {\mathfrak{f}_4}^C
    \end{array}
     \relmiddle{|}
     \begin{array}{l}
      D_1=d_{01}G_{01}+d_{02}G_{02}+d_{03}G_{03} \\
      \quad +d_{12}G_{12}+d_{13}G_{13}+d_{23}G_{23} \\
      \quad +d_{45}(G_{45}+G_{67})+d_{46}(G_{46}-G_{57}) \\
      \quad +d_{47}(G_{47}+G_{56}),d_{ij} \in C, \\
      D_2=\pi\kappa D_1, D_3=\kappa\pi D_1,\\
      a_k \in \H^C
     \end{array}
     \right\rbrace.
   \end{align*}

In particular, we have $ \dim_C( ({\mathfrak{f}_4}^C)^{\varepsilon_1,\varepsilon_2})=9+4 \times 3=21. $
\end{lemma}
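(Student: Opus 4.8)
The plan is to identify $({\mathfrak{f}_4}^C)^{\varepsilon_1,\varepsilon_2}$ with the centralizer of $\varepsilon_1,\varepsilon_2$ inside ${\mathfrak{f}_4}^C$ and then read off the constraints imposed separately on the two pieces $(D_1,D_2,D_3)$ and $\ti{A}_k(a_k)$ of a general $\delta\in{\mathfrak{f}_4}^C$. First I would record the standard fact that the Lie algebra of the fixed-point group $({F_4}^C)^{\varepsilon_1,\varepsilon_2}=\{\alpha\in{F_4}^C\mid\varepsilon_i\alpha=\alpha\varepsilon_i\}$ is
\[
({\mathfrak{f}_4}^C)^{\varepsilon_1,\varepsilon_2}=\{\delta\in{\mathfrak{f}_4}^C \mid \varepsilon_i\delta=\delta\varepsilon_i,\ i=1,2\},
\]
which follows by differentiating $\exp(t\delta)\varepsilon_i=\varepsilon_i\exp(t\delta)$ at $t=0$ and using $\varepsilon_i^{-1}\exp(t\delta)\varepsilon_i=\exp(t\varepsilon_i^{-1}\delta\varepsilon_i)$.

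Next I would transport the commutation condition $\varepsilon_i\delta\varepsilon_i^{-1}=\delta$ through the decomposition $\delta=(D_1,D_2,D_3)+\ti{A}_1(a_1)+\ti{A}_2(a_2)+\ti{A}_3(a_3)$. Since $\varepsilon_1,\varepsilon_2\in G_2\subset{F_4}^C$, each $\varepsilon_i$ acts on $\mathfrak{C}^C$ as an algebra automorphism, hence as the diagonal triality triple, and the standard conjugation formulas give $\varepsilon_i(D_1,D_2,D_3)\varepsilon_i^{-1}=(\varepsilon_iD_1\varepsilon_i^{-1},\varepsilon_iD_2\varepsilon_i^{-1},\varepsilon_iD_3\varepsilon_i^{-1})$ and $\varepsilon_i\ti{A}_k(a_k)\varepsilon_i^{-1}=\ti{A}_k(\varepsilon_ia_k)$. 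By uniqueness of the decomposition, and because conjugation preserves the triality relation, the condition splits into $\varepsilon_iD_1\varepsilon_i^{-1}=D_1$ (with $D_2,D_3$ then automatically the triality partners of $D_1$, written $\pi\kappa D_1$ and $\kappa\pi D_1$ in the present notation) together with $\varepsilon_ia_k=a_k$ for each $k$. The easy half is the condition on the $a_k$: writing $\mathfrak{C}^C=\H^C\oplus\H^Ce_4$, both $\varepsilon_1,\varepsilon_2$ fix $\H^C$ pointwise while on $\H^Ce_4$ they act by $ne_4\mapsto(e_1n)e_4$ and $ne_4\mapsto(e_2n)e_4$, i.e.\ as left multiplication by $e_1,e_2$, which have no common nonzero fixed vector; hence $a_k\in\H^C$, contributing $4\times3=12$.

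The main work is the condition $\varepsilon_iD_1\varepsilon_i^{-1}=D_1$, i.e.\ determining the centralizer of $\{\varepsilon_1,\varepsilon_2\}$ in $\mathfrak{so}(8,C)$. Identifying $\mathfrak{so}(8,C)$ with $\Lambda^2\mathfrak{C}^C$ and using the splitting $\mathfrak{C}^C=\H^C\oplus\H^Ce_4$, the fixed space decomposes as $\Lambda^2\H^C\oplus(\H^C\otimes\H^Ce_4)\oplus\Lambda^2(\H^Ce_4)$. On $\Lambda^2\H^C$ the action is trivial, giving the six generators $G_{01},G_{02},G_{03},G_{12},G_{13},G_{23}$; on the mixed part the fixed space is zero, since $\varepsilon_1,\varepsilon_2$ have no common fixed vector on $\H^Ce_4$, so all sixteen mixed generators $G_{ij}$ with $i\in\{0,1,2,3\},\,j\in\{4,5,6,7\}$ drop out; and on $\Lambda^2(\H^Ce_4)\cong\mathfrak{so}(4,C)$, where $\varepsilon_i$ acts by conjugation by left multiplication $L_{e_i}$, the fixed part is exactly the right-multiplication subalgebra (left and right multiplications commute, while the only element of the left-multiplication part fixed by conjugation by both $L_{e_1}$ and $L_{e_2}$ is $0$), a three-dimensional space. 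Carrying out the bookkeeping in the $G_{ij}$ basis identifies this space with $\langle G_{45}+G_{67},\,G_{46}-G_{57},\,G_{47}+G_{56}\rangle$, yielding $\dim_C=6+3=9$ for the $D_1$-part.

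I expect the final paragraph to be the main obstacle: converting the conceptual ``right-multiplication subalgebra'' into the precise signed combinations $G_{45}+G_{67}$, $G_{46}-G_{57}$, $G_{47}+G_{56}$, and confirming the vanishing of all sixteen mixed generators, is sensitive to the chosen conventions for the Cayley multiplication table and for the generators $G_{ij}$, so the signs must be checked directly against those definitions rather than argued abstractly. Adding the two contributions gives $\dim_C(({\mathfrak{f}_4}^C)^{\varepsilon_1,\varepsilon_2})=9+4\times3=21$, as claimed.
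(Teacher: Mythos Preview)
Your proof is correct and follows the same overall scheme as the paper: both use the conjugation formula
\[
\varepsilon_i\delta\varepsilon_i^{-1}=(\varepsilon_iD_1\varepsilon_i^{-1},\varepsilon_iD_2\varepsilon_i^{-1},\varepsilon_iD_3\varepsilon_i^{-1})+\tilde{A}_1(\varepsilon_ia_1)+\tilde{A}_2(\varepsilon_ia_2)+\tilde{A}_3(\varepsilon_ia_3)
\]
to split the fixed-point condition into a condition on $D_1$ and a condition on the $a_k$. The difference is in how the $D_1$-part is handled. The paper simply writes out the explicit $8\times8$ matrices of $\varepsilon_1,\varepsilon_2$ in $SO(8)$ and says the result follows by direct matrix computation. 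You instead decompose $\mathfrak{so}(8,C)\cong\Lambda^2\mathfrak{C}^C$ along $\mathfrak{C}^C=\H^C\oplus\H^Ce_4$ and invoke the $\mathfrak{so}(4,C)\cong\mathfrak{su}(2)_L\oplus\mathfrak{su}(2)_R$ splitting to identify the fixed subalgebra on the $\H^Ce_4$ block as the right-multiplication copy. Your argument explains \emph{why} the answer takes the form it does and makes the dimension count transparent without any matrix entries; the paper's approach is shorter to state but leaves the actual verification (and in particular the specific signed combinations $G_{45}+G_{67}$, $G_{46}-G_{57}$, $G_{47}+G_{56}$) to a computation the reader must carry out. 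Your own caveat about checking those signs against the Cayley conventions is exactly the point where the two approaches reconverge: one must, in the end, match the abstract $\mathfrak{su}(2)_R$ with those three explicit $G_{ij}$-combinations, and that is most safely done with the matrices the paper writes down.
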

\begin{proof}
    First, $ \varepsilon_1, \varepsilon_2 $ are expressed as elements of the group $ SO(8) $ as follows:
    \begin{align*}
       \varepsilon_1=\begin{pmatrix}1 & 0 & 0 & 0 & 0 & 0 & 0 & 0\\
          0 & 1 & 0 & 0 & 0 & 0 & 0 & 0\\
          0 & 0 & 1 & 0 & 0 & 0 & 0 & 0\\
          0 & 0 & 0 & 1 & 0 & 0 & 0 & 0\\
          0 & 0 & 0 & 0 & 0 & -1 & 0 & 0\\
          0 & 0 & 0 & 0 & 1 & 0 & 0 & 0\\
          0 & 0 & 0 & 0 & 0 & 0 & 0 & 1\\
          0 & 0 & 0 & 0 & 0 & 0 & -1 & 0\end{pmatrix},\quad
      \varepsilon_2=\begin{pmatrix}1 & 0 & 0 & 0 & 0 & 0 & 0 & 0\\
          0 & 1 & 0 & 0 & 0 & 0 & 0 & 0\\
          0 & 0 & 1 & 0 & 0 & 0 & 0 & 0\\
          0 & 0 & 0 & 1 & 0 & 0 & 0 & 0\\
          0 & 0 & 0 & 0 & 0 & 0 & 1 & 0\\
          0 & 0 & 0 & 0 & 0 & 0 & 0 & 1\\
          0 & 0 & 0 & 0 & -1 & 0 & 0 & 0\\
          0 & 0 & 0 & 0 & 0 & -1 & 0 & 0\end{pmatrix}.
    \end{align*}
For $ \delta \in {\mathfrak{f}_4}^C $, since it follows that
\begin{align*}
    \varepsilon_i\delta{\varepsilon_i}^{-1}=(\varepsilon_i D_1{\varepsilon_i}^{-1},\varepsilon_i D_2{\varepsilon_i}^{-1},\varepsilon_i D_3{\varepsilon_i}^{-1})+\tilde{A}_1(\varepsilon_i a_1)+\tilde{A}_2(\varepsilon_i a_2)+\tilde{A}_3(\varepsilon_i a_3),i=1,2,
\end{align*}
 we obtain the required result using the matrix representations of $ \varepsilon_1, \varepsilon_2 $ above.
\end{proof}

Here, we define a Lie subalgebra $ \mathfrak{h}_4 $ of $ ({\mathfrak{f}_4}^C)^{\varepsilon_1,\varepsilon_2} $ by
\begin{align*}
    \mathfrak{h}_4:=\left\lbrace \delta_4=(L_1,L_2,L_3) \in ({\mathfrak{f}_4}^C)^{\varepsilon_1,\varepsilon_2} \relmiddle{|}
    \begin{array}{l}
        L_1=\lambda_0(iG_{01})+\lambda_1(iG_{23})+\lambda_2(i(G_{45}+G_{67})) \\
        L_2=\pi\kappa L_1, L_3=\kappa\pi L_1,\lambda_i \in C
    \end{array}
    \right\rbrace,
\end{align*}
where the explicit forms of $ L_2, L_3 $ are given as follow:
\begin{align*}
    L_2&=\dfrac{1}{2}(-\lambda_0+\lambda_1+2\lambda_2)(iG_{01})+\dfrac{1}{2}(-\lambda_0+\lambda_1-2\lambda_2)(iG_{23})+\dfrac{1}{2}(-\lambda_0-\lambda_1)(i(G_{45}+G_{67})),
    \\
    L_3&=\dfrac{1}{2}(-\lambda_0-\lambda_1-2\lambda_2)(iG_{01})+\dfrac{1}
    {2}(\lambda_0+\lambda_1-2\lambda_2)(iG_{23})+\dfrac{1}{2}(\lambda_0-
    \lambda_1)(i(G_{45}+G_{67})).
\end{align*}
Then $ \mathfrak{h}_4 $ is a Cartan subalgebra of $ ({\mathfrak{f}_4}^C)^{\varepsilon_1,\varepsilon_2} $. Indeed, it is clear that $ ({\mathfrak{f}_4}^C)^{\varepsilon_1,\varepsilon_2} $ is abelian. Next, by doing straightforward computation, we can confirm that $ [\delta,\delta_4] \in \mathfrak{h}_4, \delta \in ({\mathfrak{f}_4}^C)^{\varepsilon_1,\varepsilon_2} $ implies $ \delta \in \mathfrak{h}_4 $ for any $ \delta_4 \in \mathfrak{h}_4 $.

\begin{theorem}\label{theorem 5.3}
    The rank of the Lie algebra $ ({\mathfrak{f}_4}^C)^{\varepsilon_1,\varepsilon_2} $ is three. The roots $ \varDelta $ of $ ({\mathfrak{f}_4}^C)^{\varepsilon_1,\varepsilon_2} $ relative to $ \mathfrak{h}_4 $ are given by
    \begin{align*}
        \varDelta=\left\lbrace
        \begin{array}{l}
         \pm(\lambda_0-\lambda_1), \;
         \pm(\lambda_0+\lambda_1),\; \pm 2\lambda_2, \;
         \pm\lambda_0, \;\pm\lambda_1,
         \vspace{1mm}\\
         \pm\dfrac{1}{2}(-\lambda_0-\lambda_1-2\lambda_2),\;
         \pm\dfrac{1}{2}(\lambda_0+\lambda_1-2\lambda_2)
        \end{array}
       \right\rbrace.
    \end{align*}
\end{theorem}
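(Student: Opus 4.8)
The plan is to compute the root-space decomposition of $({\mathfrak{f}_4}^C)^{\varepsilon_1,\varepsilon_2}$ under $\ad(\mathfrak{h}_4)$ directly. The rank assertion is immediate: $\mathfrak{h}_4$ has just been shown to be a Cartan subalgebra and $\dim_C\mathfrak{h}_4=3$ (parameters $\lambda_0,\lambda_1,\lambda_2$), so the rank is three. For the roots I would fix a generic $\delta_4=(L_1,L_2,L_3)\in\mathfrak{h}_4$ and diagonalize $\ad(\delta_4)$ on the $\ad(\mathfrak{h}_4)$-stable splitting
\[
({\mathfrak{f}_4}^C)^{\varepsilon_1,\varepsilon_2}=\mathfrak{d}\oplus\tilde{A}_1(\H^C)\oplus\tilde{A}_2(\H^C)\oplus\tilde{A}_3(\H^C),
\]
where $\mathfrak{d}$ is the nine-dimensional $(D_1,D_2,D_3)$-part. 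This splitting is preserved by $\ad(\delta_4)$: the relation $[\delta_4,\tilde{A}_k(a)]=\tilde{A}_k(L_k a)$ keeps each $\tilde{A}_k(\H^C)$ invariant, and $\mathfrak{d}$ is a subalgebra containing $\mathfrak{h}_4$, so $[\mathfrak{h}_4,\mathfrak{d}]\subseteq\mathfrak{d}$. Hence each summand may be treated separately, and the roots are read off as eigenvalues of $\ad(\delta_4)$.

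On $\tilde{A}_k(\H^C)$ the action is $a\mapsto L_k a$. Since the $i(G_{45}+G_{67})$-term of each $L_k$ annihilates $\H^C$ (spanned by $e_0,e_1,e_2,e_3$), the operator $L_k|_{\H^C}$ is a combination of $iG_{01}$ and $iG_{23}$, diagonal in the basis $e_0\pm ie_1,\,e_2\pm ie_3$; its four eigenvalues are $\pm$(the coefficient of $iG_{01}$) and $\pm$(the coefficient of $iG_{23}$) in $L_k$. Reading these off from the displayed forms of $L_1,L_2,L_3$ gives the $\tilde{A}_1$-roots $\pm\lambda_0,\pm\lambda_1$ and the $\tilde{A}_3$-roots $\pm\tfrac12(-\lambda_0-\lambda_1-2\lambda_2),\pm\tfrac12(\lambda_0+\lambda_1-2\lambda_2)$, and in exactly the same manner the roots carried by $\tilde{A}_2(\H^C)$.

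For $\mathfrak{d}$ I would use the $\mathfrak{so}(8,C)$ bracket relations to evaluate $\ad(L_1)$ on the six-dimensional complement of the Cartan directions $iG_{01},iG_{23},i(G_{45}+G_{67})$. This complement splits into two blocks that the Cartan does not mix: on $\langle G_{02},G_{03},G_{12},G_{13}\rangle$ only $\lambda_0\,iG_{01}+\lambda_1\,iG_{23}$ acts, and combinations such as $(G_{02}-iG_{12})\mp i(G_{03}-iG_{13})$ are eigenvectors with eigenvalues $\pm(\lambda_0\pm\lambda_1)$; on $\langle G_{46}-G_{57},\,G_{47}+G_{56}\rangle$ only $\lambda_2\,i(G_{45}+G_{67})$ acts, with eigenvectors $(G_{46}-G_{57})\mp i(G_{47}+G_{56})$ and eigenvalues $\pm2\lambda_2$. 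The one point needing care is that a root vector here is a full triple $(D_1,\pi\kappa D_1,\kappa\pi D_1)$: one must check that an $\ad(L_1)$-eigenvector $D_1$ assembles into an $\ad(\delta_4)$-eigenvector with a single eigenvalue. This holds because $\pi\kappa$ and $\kappa\pi$ are the triality automorphisms carrying $L_1$ to $L_2$ and to $L_3$, so they intertwine $\ad(L_1)$ with $\ad(L_2)$ and $\ad(L_3)$ and preserve eigenvalues.

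Collecting the contributions of $\mathfrak{d}$ and of the three $\tilde{A}_k(\H^C)$ then yields $\varDelta$; as a consistency check the number of nonzero root vectors equals $\dim_C({\mathfrak{f}_4}^C)^{\varepsilon_1,\varepsilon_2}-\mathrm{rank}$, every root space is one-dimensional, and the resulting rank-three system is that of $Sp(3,\H^C)$, in agreement with the isomorphism recorded above. I expect the main obstacle to lie in the $\mathfrak{d}$-part—organizing the $\mathfrak{so}(8,C)$ bracket computations and, above all, using the triality relations $D_2=\pi\kappa D_1,\ D_3=\kappa\pi D_1$ carefully enough to be certain each $D$-eigenvector assembles into a single root vector—whereas the $\tilde{A}_k$-parts are routine once the eigenbasis of $\H^C$ is fixed.
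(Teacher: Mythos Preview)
Your proposal is correct and follows essentially the same approach as the paper: both diagonalize $\ad(\delta_4)$ on the $\ad(\mathfrak{h}_4)$-stable decomposition $\mathfrak{d}\oplus\tilde{A}_1(\H^C)\oplus\tilde{A}_2(\H^C)\oplus\tilde{A}_3(\H^C)$, reading the $\tilde{A}_k$-roots from the coefficients of $iG_{01},iG_{23}$ in $L_k$ and the $\mathfrak{d}$-roots from the $\mathfrak{so}(8,C)$ brackets, with the triality relation $L_k=(\pi\kappa)^{k-1}L_1$ ensuring that an $L_1$-eigenvector $D_1$ assembles into a genuine root vector $(D_1,\pi\kappa D_1,\kappa\pi D_1)$. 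The paper simply writes down the explicit eigenvectors (e.g.\ $U_{01}=(G_{02}+G_{13})+i(G_{03}-G_{12})$ for the root $\lambda_0-\lambda_1$) and verifies the bracket relations directly, whereas you frame the same computation more structurally; the content is identical.
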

\begin{proof}
  Let $ \delta_4=(L_1,L_2,L_3) \in \mathfrak{h}_4 $ and $ \delta_{01}:=(U_{01},\pi\kappa U_{01}, \kappa\pi U_{01}) \in ({\mathfrak{f}_4}^C)^{\varepsilon_1,\varepsilon_2} $ where $ U_{01}:=(G_{02}+G_{13})+i(G_{03}-G_{12}) $. Then it follows that
  \begin{align*}
      [L_1,U_{01}]&=[\lambda_0(iG_{01})+\lambda_1(iG_{23})+\lambda_2(i(G_{45}+G_{67})),(G_{02}+G_{13})+i(G_{03}-G_{12}) ]
      \\
      &=(\lambda_0-\lambda_1)G_{02}+(\lambda_0-\lambda_1)G_{13}+(\lambda_0-\lambda_1)iG_{03}-(\lambda_0-\lambda_1)iG_{12}
      \\
      &=(\lambda_0-\lambda_1)((G_{02}+G_{13})+i(G_{03}-G_{12}))
      \\
      &=(\lambda_0-\lambda_1)U_{01},
  \end{align*}
that is, $ [L_1,U_{01}]=(\lambda_0-\lambda_1)U_{01} $. Note that $ [L_1,U_{01}]=(\lambda_0-\lambda_1)U_{01} $ implies
\begin{align*}
    [L_2,\pi\kappa U_{01}]&=[\pi\kappa L_1,\pi\kappa U_{01}]=(\lambda_0-\lambda_1)\pi\kappa U_{01},
    \\
    [L_3,\pi\kappa U_{01}]&=[\kappa\pi L_1,\kappa\pi U_{01}]=(\lambda_0-\lambda_1)\kappa\pi U_{01},
\end{align*}
so that we have $ [\delta_4,\delta_{01}]=(\lambda_0-\lambda_1)\delta_{01} $. Hence we see that $ \lambda_0-\lambda_1 $ is a root of $ ({\mathfrak{f}_4}^C)^{\varepsilon_1,\varepsilon_2} $ and $ \delta_{01} $ is a root vector associated with its root. Similarly, for the elements of $ ({\mathfrak{f}_4}^C)^{\varepsilon_1,\varepsilon_2} $
\begin{align*}
 \delta_{01}^-:&=(U_{01}^-,\pi\kappa U_{01}^-, \kappa\pi U_{01}^-),U_{01}^-:=i(G_{02}+G_{13})+(G_{03}-G_{12}),
 \\
 \varepsilon_{01}:&=(V_{01},\pi\kappa V_{01}, \kappa\pi V_{01}) ,V_{01}:=(G_{02}-G_{13})-i(G_{03}+G_{12}),
 \\
 \varepsilon_{01}^-:&=(V_{01}^-,\pi\kappa V_{01}^-, \kappa\pi V_{01}^-) ,V_{01}^-:=i(G_{02}-G_{13})-(G_{03}+G_{12}),
 \\
 \varepsilon_{23}:&=(V_{23},\pi\kappa V_{23}, \kappa\pi V_{23}),V_{23}:=(G_{46}-G_{57})-i(G_{47}+G_{56}),
 \\
 \varepsilon_{23}^-:&=(V_{23}^-,\pi\kappa V_{23}^-, \kappa\pi V_{23}^-),V_{23}^-:=i(G_{46}-G_{57})-(G_{47}+G_{56}),
\end{align*}
we have
\begin{align*}
    &[\delta_4,\delta_{01}^-]=-(\lambda_0-\lambda_1)\delta_{01}^-, [\delta_4,\varepsilon_{01}]=(\lambda_0+\lambda_1)\varepsilon_{01},
    [\delta_4,\varepsilon_{01}^-]=-(\lambda_0+\lambda_1)\varepsilon_{01}^-,
    \\
    &[\delta_4,\varepsilon_{23}]=2\lambda_2\varepsilon_{23},
    [\delta_4,\varepsilon_{23}^-]=-2\lambda_2\varepsilon_{23}^-.
\end{align*}
 The remainders of roots and root vectors associated with these roots are obtained as follows and together with the results above:
\begin{longtable}{c@{\qquad}l}
  roots & \hspace{15mm} root vectors associated with roots
\vspace{1mm}\cr
$ -(\lambda_0-\lambda_1) $ & $ (U_{01}^-,\pi\kappa U_{01}^-, \kappa\pi U_{01}^-),U_{01}^-=i(G_{02}+G_{13})+(G_{03}-G_{12}) $
\vspace{1mm}\cr
$ \lambda_0+\lambda_1 $ & $ (V_{01},\pi\kappa V_{01}, \kappa\pi V_{01}) ,V_{01}=(G_{02}-G_{13})-i(G_{03}+G_{12}) $
\vspace{1mm}\cr
$ -(\lambda_0+\lambda_1) $ & $ (V_{01}^-,\pi\kappa V_{01}^-, \kappa\pi V_{01}^-),V_{01}^-=i(G_{02}-G_{13})-(G_{03}+G_{12}) $
\vspace{1mm}\cr
$ 2\lambda_2 $ & $ (V_{23},\pi\kappa V_{23}, \kappa\pi V_{23}),V_{23}=(G_{46}-G_{57})-i(G_{47}+G_{56}) $
\vspace{1mm}\cr
$ -2\lambda_2 $ & $ (V_{23}^-,\pi\kappa V_{23}^-, \kappa\pi V_{23}^-),V_{23}^-=i(G_{46}-G_{57})-(G_{47}+G_{56}) $
\vspace{1mm}\cr
$ \lambda_0 $ & $ \tilde{A}_1(1-ie_1) $
\vspace{1mm}\cr
$ -\lambda_0 $ & $ \tilde{A}_1(1+ie_1) $
\vspace{1mm}\cr
$ \lambda_1 $ & $ \tilde{A}_1(e_2-ie_3) $
\vspace{1mm}\cr
$ -\lambda_1 $ & $ \tilde{A}_1(e_2+ie_3) $
\vspace{1mm}\cr
$ (1/2)(-\lambda_0+\lambda_1+2\lambda_2) $ & $ \tilde{A}_2(1-ie_1) $
\vspace{1mm}\cr
$ -(1/2)(-\lambda_0+\lambda_1+2\lambda_2) $ & $ \tilde{A}_2(1+ie_1) $
\vspace{1mm}\cr
$ (1/2)(-\lambda_0+\lambda_1-2\lambda_2) $ & $ \tilde{A}_2(e_2-ie_3) $
\vspace{1mm}\cr
$ -(1/2)(-\lambda_0+\lambda_1-2\lambda_2) $ & $ \tilde{A}_2(e_2+ie_3) $
\vspace{1mm}\cr
$ (1/2)(-\lambda_0-\lambda_1-2\lambda_2) $ & $ \tilde{A}_3(1-ie_1) $
\vspace{1mm}\cr
$ -(1/2)(-\lambda_0-\lambda_1-2\lambda_2) $ & $ \tilde{A}_3(1+ie_1) $
\vspace{1mm}\cr
$ (1/2)(\lambda_0+\lambda_1-2\lambda_2) $ & $ \tilde{A}_3(e_2-ie_3) $
\vspace{1mm}\cr
$ -(1/2)(\lambda_0+\lambda_1-2\lambda_2) $ & $ \tilde{A}_3(e_2+ie_3). $
\end{longtable}

Thus The Lie algebra $ ({\mathfrak{f}_4}^C)^{\varepsilon_1,\varepsilon_2} $ is spanned by $ \mathfrak{h}_4 $ and the roots vector associated with roots above, the roots obtained above are all.  The rank of the Lie algebra $ ({\mathfrak{f}_4}^C)^{\varepsilon_1,\varepsilon_2} $ follows from the dimension of $ \mathfrak{h}_4 $.
\end{proof}

Subsequently, we prove te following theorem.

\begin{theorem}\label{theorem 5.4}
  In the root system $ \varDelta $ of Theorem {\rm 5.3},
  \begin{align*}
      \varPi=\left\lbrace \alpha_1,\alpha_2, \alpha_3 \right\rbrace
  \end{align*}
is a fundamental root system of $  ({\mathfrak{f}_4}^C)^{\varepsilon_1,\varepsilon_2} $, where $ \alpha_1=(1/2)(-\lambda_0+\lambda_1+2\lambda_2), \alpha_2=(1/2)(-\lambda_0-\lambda_1-2\lambda_2), \alpha_3=\lambda_0+\lambda_1 $. The Dynkin diagram of $ ({\mathfrak{f}_4}^C)^{\varepsilon_1,\varepsilon_2} $ is given by
\vspace{-3mm}

   {
       \setlength{\unitlength}{1mm}
        \scalebox{1.0}
        {\begin{picture}(100,20)
                \put(60,10){\circle{2}} \put(59,6){$\alpha_1$}
                \put(61,10){\line(1,0){8}}
                \put(70,10){\circle{2}} \put(69,6){$\alpha_2$}
                \put(70.7,9.2){$\langle$}
                \put(71.2,10.7){\line(1,0){8}}
                \put(71.2,9.3){\line(1,0){8}}
                \put(80,10){\circle{2}} \put(79,6){$\alpha_3$}

        \end{picture}}
    }
\end{theorem}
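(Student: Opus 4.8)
The plan is to prove the statement in two stages: first that $\varPi=\{\alpha_1,\alpha_2,\alpha_3\}$ is a fundamental system (a base) for the root system $\varDelta$ of Theorem \ref{theorem 5.3}, and then to compute the Cartan integers so as to draw the Dynkin diagram. Since $\lambda_0,\lambda_1,\lambda_2$ form a basis of the dual $\mathfrak{h}_4^{\,*}$ and $\dim_C\mathfrak{h}_4=3$, it suffices to check that $\alpha_1,\alpha_2,\alpha_3$ also span $\mathfrak{h}_4^{\,*}$ (so that they are a basis) and that every element of $\varDelta$ is an integral combination of them whose coefficients are either all $\ge 0$ or all $\le 0$. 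The indecomposable positive roots are then exactly $\alpha_1,\alpha_2,\alpha_3$, which is the meaning of ``fundamental root system''.

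First I would invert the defining relations of $\alpha_1,\alpha_2,\alpha_3$ to obtain
\begin{align*}
\lambda_0=-(\alpha_1+\alpha_2),\qquad
\lambda_1=\alpha_1+\alpha_2+\alpha_3,\qquad
\lambda_2=-\alpha_2-\tfrac{1}{2}\alpha_3,
\end{align*}
which simultaneously shows that $\varPi$ is a basis of $\mathfrak{h}_4^{\,*}$. Substituting these into the eighteen roots of Theorem \ref{theorem 5.3} rewrites each one in the basis $\varPi$; the computation produces exactly nine positive roots,
\begin{align*}
&\alpha_1,\ \alpha_2,\ \alpha_3,\ \alpha_1+\alpha_2,\ \alpha_2+\alpha_3,\\
&\alpha_1+\alpha_2+\alpha_3,\ 2\alpha_2+\alpha_3,\ \alpha_1+2\alpha_2+\alpha_3,\ 2\alpha_1+2\alpha_2+\alpha_3,
\end{align*}
together with their negatives. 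For instance $\alpha_1+\alpha_2=-\lambda_0$, $2\alpha_2+\alpha_3=-2\lambda_2$, $\alpha_1+\alpha_2+\alpha_3=\lambda_1$, and $2\alpha_1+2\alpha_2+\alpha_3=-(\lambda_0-\lambda_1)$. Because each coefficient is a sign-consistent integer, $\varPi$ is a fundamental system.

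For the Dynkin diagram I would avoid choosing an inner product and instead read the Cartan integers $\langle\alpha_j,\alpha_i^{\vee}\rangle$ off the root strings. For simple roots $\alpha_i\neq\alpha_j$ one has $\alpha_j-\alpha_i\notin\varDelta$, so $\langle\alpha_j,\alpha_i^{\vee}\rangle=-q$, where $q$ is the largest integer with $\alpha_j+q\alpha_i\in\varDelta$. From the list of positive roots: $\alpha_2+\alpha_1\in\varDelta$ but $\alpha_2+2\alpha_1\notin\varDelta$, and symmetrically, giving a single bond between $\alpha_1$ and $\alpha_2$; $\alpha_3+\alpha_2$ and $\alpha_3+2\alpha_2$ lie in $\varDelta$ while $\alpha_3+3\alpha_2\notin\varDelta$, whereas $\alpha_2+\alpha_3\in\varDelta$ but $\alpha_2+2\alpha_3\notin\varDelta$, so $\langle\alpha_3,\alpha_2^{\vee}\rangle=-2$ and $\langle\alpha_2,\alpha_3^{\vee}\rangle=-1$, a double bond; and $\alpha_1+\alpha_3\notin\varDelta$, so $\alpha_1$ and $\alpha_3$ are orthogonal. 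This is precisely the Cartan matrix of type $C_3$.

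The one genuinely delicate point is the orientation of the double edge, i.e.\ deciding which of $\alpha_2,\alpha_3$ is long. This is settled by the asymmetry just found: from $\langle\alpha_3,\alpha_2^{\vee}\rangle=-2$ and $\langle\alpha_2,\alpha_3^{\vee}\rangle=-1$ one gets $(\alpha_2,\alpha_2)=\tfrac{1}{2}(\alpha_3,\alpha_3)$, so $\alpha_3$ is the long root and $\alpha_2$ the short one, and the arrow on the double bond points from $\alpha_3$ toward $\alpha_2$, exactly as drawn. The remaining work---substituting into all eighteen roots and checking the sign pattern---is routine bookkeeping with no conceptual difficulty, and the identification of the diagram with $C_3$ is then immediate (and consistent with the isomorphism $({F_4}^C)^{\varepsilon_1,\varepsilon_2}\cong Sp(3,\H^C)$ recorded earlier).
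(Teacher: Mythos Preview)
Your proof is correct. The first half---listing the nine positive roots as nonnegative integer combinations of $\alpha_1,\alpha_2,\alpha_3$---is exactly what the paper does, though you organize it slightly differently by first inverting the relations to express $\lambda_0,\lambda_1,\lambda_2$ in terms of the $\alpha_i$ and then substituting, whereas the paper writes down each positive root directly.

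The genuine difference is in the second half. The paper proceeds by computing the Killing form $B_4$ on the Cartan subalgebra explicitly, determining the canonical elements $\delta_{\alpha_i}\in\mathfrak{h}_4$ dual to each $\alpha_i$, and then evaluating all the inner products $(\alpha_i,\alpha_j)=B_4(\delta_{\alpha_i},\delta_{\alpha_j})$ and the cosines $\cos\theta_{ij}$ numerically. You bypass this entirely by reading the Cartan integers off the $\alpha_i$-strings through $\alpha_j$ inside the list of positive roots you have already tabulated. Your approach is more elementary and self-contained: it needs nothing beyond the root list of Theorem~5.3 and the standard fact that for simple roots $\alpha_i\ne\alpha_j$ the difference $\alpha_j-\alpha_i$ is not a root, so the string length alone gives $\langle\alpha_j,\alpha_i^\vee\rangle$. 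The paper's approach, on the other hand, produces the actual inner products and root lengths in the ambient Killing form, information that is reused in the later sections when the Cartan subalgebras $\mathfrak{h}_4\subset\mathfrak{h}_6\subset\mathfrak{h}_7\subset\mathfrak{h}_8$ are built up and the Killing forms $B_6,B_7,B_8$ are expressed recursively in terms of $B_4$. So the paper's more laborious route is an investment that pays off downstream; yours is cleaner if the goal is only the present theorem.
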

\begin{proof}
   The all positive roots are expressed by $ \alpha_1,\alpha_2,\alpha_3 $
   as follows:
   \begin{align*}
       -(\lambda_0-\lambda_1)&=2\alpha_1+2\alpha_2+\alpha_3,
       \\
       \lambda_0+\lambda_1&=\alpha_3,
       \\
       -2\lambda_2&=3\alpha_2+\alpha_3,
       \\
       -\lambda_0&=\alpha_1+\alpha_2,
       \\
       \lambda_1&=\alpha_1+\alpha_2+\alpha_3,
       \\
       (1/2)(-\lambda_0+\lambda_1+2\lambda_2)&=\alpha_1,
       \\
       (1/2)(-\lambda_0+\lambda_1-2\lambda_2)&=\alpha_1+2\alpha_2+\alpha_3,
       \\
       (1/2)(-\lambda_0-\lambda_1-2\lambda_2)&=\alpha_2,
       \\
       (1/2)(\lambda_0+\lambda_1-2\lambda_2)&=\alpha_2+\alpha_3.
   \end{align*}
Hence $ \varPi $ is a fundamental root system of $  ({\mathfrak{f}_4}
^C)^{\varepsilon_1,\varepsilon_2} $.

Then the Killing form $ B_4 $ of $ {\mathfrak{f}_4}^C $ is given by $
B_4(\delta,\delta')=3\tr(\delta\delta') $ (\cite[Theorem 2.5.3]{iy0}), so
that for $ \delta_{4}:=(L_1,L_2,l_3),{\delta_{4}}':=({L_1}',{L_2}',
{L_3}') \in \mathfrak{h}_4 $, we have
\begin{align*}
  B_4(\delta_{4},{\delta_{4}}')=3\tr(\delta_{4}
  {\delta_{4}}')=18(\lambda_0{\lambda_0}'+\lambda_1{\lambda_1}'
  +2\lambda_2{\lambda_2}').
\end{align*}
Indeed, it follows that
\begin{align*}
   B_4(\delta_{4},{\delta_{4}}')
&=3\tr(\delta_{4}{\delta_{4}}')
   \\
&=3\tr((L_1,L_2,L_3)({L_1}'{L_2}'{L_3}'))
   \\
&=3\tr(L_1{L_1}',L_2{L_2}',L_3{L_3}')
   \\
&=3(\tr(L_1{L_1}')+\tr(L_2{L_2}')+\tr(L_3{L_3}'))
   \\
&=3(3(2\lambda_0{\lambda_0}'+2\lambda_1{\lambda_1}'+4\lambda_2{\lambda_2}')
)
   \\
&=18(\lambda_0{\lambda_0}'+\lambda_1{\lambda_1}'
+2\lambda_2{\lambda_2}').
\end{align*}

Now, the canonical elements $ \delta_{\alpha_1}, \delta_{\alpha_2},
\delta_{\alpha_3} \in \mathfrak{h}_4$ corresponding to $ \alpha_1,\alpha_2,
\alpha_3 $ are determined as follows:
\begin{align*}
    \delta_{\alpha_1}&=-\dfrac{1}{36}(iG_{01})+\dfrac{1}{36}(iG_{23})+
    \dfrac{1}{36}(i(G_{45}+G_{67})),
    \\
    \delta_{\alpha_2}&=-\dfrac{1}{36}(iG_{01})-\dfrac{1}{36}(iG_{23})-
    \dfrac{1}{36}(i(G_{45}+G_{67})),
    \\
    \delta_{\alpha_3}&=\dfrac{1}{18}(iG_{01})+\dfrac{1}{18}(iG_{23}).
\end{align*}
Indeed, as for $ \delta_{\alpha_1} $, set $ \delta_{\alpha_1}:=(L_1, L_2,
L_3) \in \mathfrak{h}_4, L_1:=r_0(iG_{01})+r_1(iG_{23})+r_2(i(G_{45}
+G_{67}))
$, then since $ \delta_{\alpha_1} $ satisfies $ B_4(\delta_{\alpha_1},
\delta_4)=\alpha_1(\delta_4) $ for any $ \delta_4 \in \mathfrak{h}_4 $, it
follows from
\begin{align*}
    B_4(\delta_{\alpha_1},
    \delta_4)=18(r_0\lambda_0+r_1\lambda_1+2r_2\lambda_2),\;\;
    \alpha_1(\delta_4)&=(1/2)(-\lambda_0+\lambda_1+2\lambda_2)
\end{align*}
that
\begin{align*}
    r_0=-\dfrac{1}{36},\;\; r_1=\dfrac{1}{36},\;\; r_2=\dfrac{1}{36},
\end{align*}
that is, $ \delta_{\alpha_1}=-\dfrac{1}{36}(iG_{01})+\dfrac{1}{36}(iG_{23})
+
\dfrac{1}{36}(i(G_{45}+G_{67})) $. As for the others, we have the required
results by doing similar computations.

Hence we have the following
\begin{align*}
    (\alpha_1,\alpha_1)&=B_4(\delta_{\alpha_1},\delta_{\alpha_1})=18\left(
    \left(-\dfrac{1}{36} \right)^2+\left(\dfrac{1}{36} \right)^2+
    \left(\dfrac{1}{36} \right)^2 \right)=\dfrac{1}{18},
    \\
    (\alpha_1,\alpha_2)&=B_4(\delta_{\alpha_1},\delta_{\alpha_2})=18\left(
    \left(-\dfrac{1}{36} \right)^2
    +\left(\dfrac{1}{36} \right)\left(-\dfrac{1}{36} \right)+
     \left(\dfrac{1}{36} \right)\left(-\dfrac{1}{36} \right)
    \right)=-\dfrac{1}{12},
    \\
    (\alpha_1,\alpha_3)&=B_4(\delta_{\alpha_1},\delta_{\alpha_3})=18\left(
    \left(-\dfrac{1}{36} \right)\left( \dfrac{1}{18}\right)+\left(\dfrac{1}
    {36} \right)\left( \dfrac{1}{18}\right)  \right)=0,
    \\
    (\alpha_2,\alpha_2)&=B_4(\delta_{\alpha_2},\delta_{\alpha_2})=18\left(
    \left(-\dfrac{1}{36} \right)^2+\left(-\dfrac{1}{36} \right)^2+
    \left(-\dfrac{1}{36} \right)^2 \right)=\dfrac{1}{18},
    \\
    (\alpha_2,\alpha_3)&=B_4(\delta_{\alpha_2},\delta_{\alpha_3})=18\left(
    \left(-\dfrac{1}{36} \right)\left( \dfrac{1}{18}\right)+\left(-
    \dfrac{1}
    {36} \right)\left( \dfrac{1}{18}\right)  \right)=-\dfrac{1}{18},
    \\
    (\alpha_3,\alpha_3)&=B_4(\delta_{\alpha_3},\delta_{\alpha_3})=18\left(
    \left(\dfrac{1}{18} \right)^2+ \left(\dfrac{1}{18}
    \right)^2\right)=\dfrac{1}{9}.
\end{align*}
Thus, using the inner products above, we have
\begin{align*}
    \cos\theta_{12}&=\dfrac{(\alpha_1,\alpha_2)}{\sqrt{(\alpha_1,\alpha_1)
    (\alpha_2,\alpha_2)}}=-\dfrac{1}{2},\quad
    \cos\theta_{13}=\dfrac{(\alpha_1,\alpha_3)}{\sqrt{(\alpha_1,\alpha_1)
    (\alpha_3,\alpha_3)}}=0,
    \\
    \cos\theta_{23}&=\dfrac{(\alpha_2,\alpha_3)}{\sqrt{(\alpha_2,\alpha_2)
    (\alpha_3,\alpha_3)}}=-\dfrac{1}{\sqrt{2}},\quad (\alpha_2,
    \alpha_2)=\dfrac{1}{18} < \dfrac{1}{9}=(\alpha_3,\alpha_3),
\end{align*}
so that we can draw the Dynkin diagram.
\end{proof}

\section{The group $ E_{6,\sH} $ and the root system, the Dynkin
    diagram of the Lie algebra $ ({\mathfrak{e}_{6}}^C)^{\varepsilon_1,
    \varepsilon_2} $}

Let $\mathfrak{J}(3,\mathfrak{C}^C)  $ be the complex exceptional Jordan
algebra.
As in previous section, we consider an algebra $ \mathfrak{J}(3,\H^C) $
which
are defined by replacing $ \mathfrak{C} $ with $ \H $ in
$ \mathfrak{J}(3,\mathfrak{C}^C) $, and we briefly denote $
\mathfrak{J}(3,\H^C) $ by $ (\mathfrak{J}_{\sH})^C $.

Then we consider the following group $ E_{6,\sH} $ which is defined by
replacing $ \mathfrak{J}^C $ with $ (\mathfrak{J}_{\sH})^C $ in $ E_6 $:
\begin{align*}
    E_{6,\sH}&=\left\lbrace \alpha \in \Iso_{C}((\mathfrak{J}_{\sH})^C)
    \relmiddle{|} \det\,(\alpha X)=\det\,X,\langle \alpha X,\alpha Y
    \rangle=\langle X,Y \rangle \right\rbrace
    \\
    &=\left\lbrace \alpha \in \Iso_{C}((\mathfrak{J}_{\sH})^C)\relmiddle{|}
    \tau\alpha\tau(X \times Y)=\alpha X \times \alpha Y, \langle \alpha X,
    \alpha Y
    \rangle=\langle X,Y \rangle \right\rbrace.
\end{align*}

This group also has been investigated by Ichiro Yokota, so that the
following
result was obtained.

\begin{theorem}{\rm (\cite[Proposition 3.11.3]{iy0})}\label{theorem 6.1} The group $ E_{6,\sH} $ is isomorphic to the group $ SU(6)/\Z_2, \allowbreak \Z_2=\{E,-E\} ${\rm :} $ E_{6,\sH} \cong SU(6)/\Z_2 $.
\end{theorem}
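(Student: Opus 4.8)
The plan is to realize $E_{6,\sH}$ as a quotient of $SU(6)$ by exhibiting an explicit homomorphism and then invoking the dimension/connectedness criterion. The first step is to set up a linear identification of the $15$-dimensional complex Jordan algebra $(\mathfrak{J}_{\sH})^C=\mathfrak{J}(3,\H^C)$ with the second exterior power $\Lambda^{2}V$, $V=C^{6}$, equivalently with the space of $6\times 6$ complex alternating matrices: writing each $\H^C$-entry as a $2\times 2$ complex block turns a $3\times 3$ $\H^C$-Hermitian matrix into a $6\times 6$ alternating matrix $A$. Under this identification I would check that the Freudenthal determinant $\det X$ matches, up to a nonzero constant, the Pfaffian $\mathrm{Pf}(A)$ (a cubic form on $\Lambda^{2}V$), that the complex conjugation $\tau$ matches entrywise conjugation, and hence that the Hermite inner product $\langle X,Y\rangle=(\tau X,Y)$ matches $\tr(\overline{A}^{\,t}B)$. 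Verifying that these three structures correspond exactly is the one genuinely computational ingredient, but it is classical.

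Granting this, I would define $\varphi\colon SU(6)\to E_{6,\sH}$ by $\varphi(g)A=gAg^{t}$. The image is again alternating since $(gAg^{t})^{t}=-gAg^{t}$, and $\mathrm{Pf}(gAg^{t})=(\det g)\,\mathrm{Pf}(A)=\mathrm{Pf}(A)$ because $\det g=1$, so $\varphi(g)$ preserves $\det$; a short trace manipulation using $g^{*}g=E$ and $g^{t}\overline{g}=E$ gives $\langle gAg^{t},gBg^{t}\rangle=\langle A,B\rangle$. Thus $\varphi(g)\in E_{6,\sH}$ and $\varphi$ is a homomorphism. For the kernel, $\varphi(g)=1$ means $g$ acts trivially on $\Lambda^{2}V$, which forces $g=\zeta E$ with $\zeta^{2}=1$; together with $g\in SU(6)$ this yields $\Ker\,\varphi=\{E,-E\}\cong\Z_2$, exactly the central subgroup in the statement.

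It remains to prove surjectivity, for which I would apply Lemma \ref{lemma 3.6.1}. The kernel is discrete, so I need $\dim\mathfrak{su}(6)=\dim\mathfrak{e}_{6,\sH}$ and the connectedness of $E_{6,\sH}$. The dimension equality ($=35$) follows by computing $\mathfrak{e}_{6,\sH}=\{\phi\mid(\phi X,X,X)=0,\ \langle\phi X,Y\rangle+\langle X,\phi Y\rangle=0\}$ and identifying it, via the correspondence above, with the trace-free anti-Hermitian $6\times 6$ matrices, i.e. with $\mathfrak{su}(6)$. The connectedness of $E_{6,\sH}$ is the main obstacle. The natural route is to observe that the common $(+1)$-eigenspace of the finite-order transformations $\varepsilon_1,\varepsilon_2$ on $\mathfrak{J}^C$ is precisely $(\mathfrak{J}_{\sH})^C$ (indeed $\varepsilon_1,\varepsilon_2$ fix $\H^C\subset\mathfrak{C}^C$ pointwise and have no fixed vectors on $\H^C e_4$), so that restriction should identify $E_{6,\sH}$ with the fixed-point subgroup $(E_6)^{\varepsilon_1,\varepsilon_2}$ of the compact simply connected group $E_6$; connectedness would then follow from the E. Cartan–Ra\v{s}evskii lemma (Lemma \ref{lemma 3.6.2}) applied to the finite group of automorphisms generated by $\varepsilon_1,\varepsilon_2$. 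The delicate point is precisely this last step, since a naive iterated application of Lemma \ref{lemma 3.6.2} would require $(E_6)^{\varepsilon_1}$ to be simply connected; alternatively, once $\varphi$ is known to be a local isomorphism with discrete kernel, its compact image is open and closed, hence equal to the identity component $(E_{6,\sH})_0$, and it then suffices to show that the component group of $E_{6,\sH}$ is trivial.
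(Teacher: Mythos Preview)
The paper does not give its own proof of this statement; it simply quotes Yokota \cite[Proposition 3.11.3]{iy0} (see the sentence immediately preceding Theorem \ref{theorem 6.1} and the introduction: ``these results will be described as theorems only''). Your construction via the alternating--matrix model $(\mathfrak{J}_{\sH})^C\cong\Lambda^{2}C^{6}$ with the action $A\mapsto gAg^{t}$, the Pfaffian/determinant match, and the computation $\Ker\varphi=\{E,-E\}$ is precisely Yokota's argument, so on the algebraic side you are in full agreement with the cited proof.

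You are also right that connectedness of $E_{6,\sH}$ is the one real gap, and that Lemma \ref{lemma 3.6.2} does not close it directly. Two remarks on your proposed fixes. First, the restriction map $(E_6)^{\varepsilon_1,\varepsilon_2}\to E_{6,\sH}$ is \emph{not} an identification: since $\gamma=\varepsilon_1^{\,2}=\varepsilon_2^{\,2}$ acts trivially on $\H$, the element $\gamma\in(E_6)^{\varepsilon_1,\varepsilon_2}$ lies in the kernel of restriction (compare the analogous computation for $E_7$ in Claim~1 and Claim~2 of the proof of Theorem \ref{theorem 7.12}, where that kernel is exactly $\{1,\gamma\}$). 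This does not ruin a connectedness argument, but you would then need surjectivity of the restriction, which lands you back in the same Lemma \ref{lemma 3.6.1} circularity. Second, the iterated Cartan--Ra\v{s}evskii route is worse than you indicate: $\varepsilon_1$ and $\varepsilon_2$ do not commute (on $\H e_4$ one has $\varepsilon_1\varepsilon_2=\gamma\,\varepsilon_2\varepsilon_1$, so they generate a copy of the quaternion group $Q_8$ inside $G_2$), hence $\varepsilon_2$ is not even an automorphism of $(E_6)^{\varepsilon_1}$ in the obvious way.

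The way connectedness is actually established in Yokota's framework---and the way this very paper reproduces it one level up---is by a direct transitive-action argument: for $E_{7,\sH}$ the paper proves $E_{7,\sH}/E_{6,\sH}\simeq((\mathfrak{M}_{\sH})^C)_1$ with connected fibre and base (Theorem \ref{theorem 7.6}); Yokota does the parallel thing for $E_{6,\sH}$, exhibiting a transitive $E_{6,\sH}$-action on a connected variety with isotropy $F_{4,\sH}\cong Sp(3)/\Z_2$, whose connectedness comes from Theorem \ref{theorem 5.1}. Adding that step (or citing it) would complete your proof.
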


We consider a subgroup $ ({E_6}^C)^{\varepsilon_1,\varepsilon_2} $ of $ {E_6}^C $ by
\begin{align*}
    ({E_6}^C)^{\varepsilon_1,\varepsilon_2}:=\left\lbrace \alpha
    \in {E_6}^C \relmiddle{|}
    \varepsilon_1\alpha=\alpha\varepsilon_1,
    \varepsilon_2\alpha=\alpha\varepsilon_2 \right\rbrace.
\end{align*}
Then the group $ ({E_6}^C)^{\varepsilon_1,\varepsilon_2} $  has been studied by author in \cite[Proposition 1.1.1]{miya1}, and its result was obtained as follows:
\begin{align*}
    ({E_6}^C)^{\varepsilon_1,\varepsilon_2} \cong SU^*(6,\C^C)(\cong SU(6,\C^C)).
\end{align*}

Now, we move the determination of the root system and the Dynkin diagram of the Lie algebra $ ({\mathfrak{e}_{6}}^C)^{\varepsilon_1,\varepsilon_2} $ of the group $ ({E_6}^C)^{\varepsilon_1,\varepsilon_2} $. After this, we will determine those by the same procedure as in $
({\mathfrak{f}_4}^C)^{\varepsilon_1,\varepsilon_2} $.

We prove the following lemma needed later.

\begin{lemma}\label{lemma 6.2}
    The Lie algebra $ ({\mathfrak{e}_6}
    ^C)^{\varepsilon_1,\varepsilon_2} $ of the group $ ({E_6}^C)^{\varepsilon_1,\varepsilon_2} $ is given by
    \begin{align*}
        ({\mathfrak{e}_6}^C)^{\varepsilon_1,\varepsilon_2}
        =\left\lbrace
        \begin{array}{l}
            \delta=(D_1,D_2,D_3) \vspace{1mm}\\
            \quad +\tilde{A}_1(a_1)+\tilde{A}_2(a_2)+\tilde{A}_3(a_3)\\
            \quad +(\tau_1E_1+\tau_2E_2+\tau_3E_3+F_1(t_1) \\
            \qquad\quad +F_2(t_2)+F_3(t_3))^\sim  \in {\mathfrak{e}_6}^C
        \end{array}
        \relmiddle{|}
        \begin{array}{l}
            D_1=d_{01}G_{01}+d_{02}G_{02}+d_{03}G_{03} \\
            \quad +d_{12}G_{12}+d_{13}G_{13}+d_{23}G_{23} \\
            \quad +d_{45}(G_{45}+G_{67})+d_{46}(G_{46}-G_{57}) \\
            \quad +d_{47}(G_{47}+G_{56}),d_{ij} \in C, \\
            D_2=\pi\kappa D_1, D_3=\kappa\pi D_1,\\
            a_k, t_k \in \H^C ,\\
            \tau_k \in C, \tau_1+\tau_2+\tau_3=0
        \end{array}
        \right\rbrace.
    \end{align*}

    In particular, we have $ \dim_C( ({\mathfrak{e}_6}^C)^{\varepsilon_1,
    \varepsilon_2})=9+4 \times 3+(3-1+4 \times 3)=35. $
\end{lemma}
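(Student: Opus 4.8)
The plan is to realize $({\mathfrak{e}_6}^C)^{\varepsilon_1,\varepsilon_2}=\{\phi\in{\mathfrak{e}_6}^C\mid\varepsilon_i\phi=\phi\varepsilon_i,\ i=1,2\}$ by exploiting the unique decomposition $\phi=\delta+\tilde{T}$ with $\delta\in{\mathfrak{f}_4}^C$ and $T\in(\mathfrak{J}^C)_0$, and showing that conjugation by $\varepsilon_i$ respects this splitting. The essential point is that $\varepsilon_i\in G_2\subset F_4$ preserves the Jordan product, so for any $T$ one computes $\varepsilon_i\tilde{T}\varepsilon_i^{-1}X=\varepsilon_i(T\circ\varepsilon_i^{-1}X)=(\varepsilon_iT)\circ X$, i.e. $\varepsilon_i\tilde{T}\varepsilon_i^{-1}=\widetilde{\varepsilon_iT}$. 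Moreover $\varepsilon_i$ preserves the trace, since $\varepsilon_iE=E$ and $\varepsilon_i$ is orthogonal for $(\,,\,)$, whence $\tr(\varepsilon_iX)=(\varepsilon_iX,\varepsilon_iE)=(X,E)=\tr X$; thus $\varepsilon_iT\in(\mathfrak{J}^C)_0$. Since $F_4$ normalizes ${\mathfrak{f}_4}^C$ we also have $\varepsilon_i\delta\varepsilon_i^{-1}\in{\mathfrak{f}_4}^C$, so $\varepsilon_i\phi\varepsilon_i^{-1}=\varepsilon_i\delta\varepsilon_i^{-1}+\widetilde{\varepsilon_iT}$ is again in canonical form. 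By uniqueness of the decomposition, together with injectivity of $T\mapsto\tilde{T}$ (which holds because $\tilde{T}E=T$), the condition $\varepsilon_i\phi=\phi\varepsilon_i$ is equivalent to the two independent conditions $\varepsilon_i\delta\varepsilon_i^{-1}=\delta$ and $\varepsilon_iT=T$ for $i=1,2$.

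Next I would treat the two conditions separately. The requirement $\delta\in({\mathfrak{f}_4}^C)^{\varepsilon_1,\varepsilon_2}$ produces precisely the $(D_1,D_2,D_3)+\sum_k\tilde{A}_k(a_k)$ part described in Lemma \ref{lemma 5.2}, with $D_1$ of the stated nine-parameter shape, $D_2=\pi\kappa D_1$, $D_3=\kappa\pi D_1$, and $a_k\in\H^C$. For $T$, writing $T=\sum_i\tau_iE_i+\sum_kF_k(t_k)$ with $\tau_i\in C$, $\tau_1+\tau_2+\tau_3=0$, and $t_k\in\mathfrak{C}^C$, the extension of $\varepsilon_i$ to $\mathfrak{J}^C$ fixes the diagonal and acts on the off-diagonal entries, giving $\varepsilon_iT=\sum_j\tau_jE_j+\sum_kF_k(\varepsilon_it_k)$. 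Hence $\varepsilon_iT=T$ reduces to the pointwise condition $\varepsilon_it_k=t_k$ for every $k$, i.e. $t_k\in(\mathfrak{C}^C)^{\varepsilon_1,\varepsilon_2}$.

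The one genuine computation is to identify $(\mathfrak{C}^C)^{\varepsilon_1,\varepsilon_2}$. Here I would use that each $\varepsilon_i=\varphi_{{}_{G_2}}(e_i,1)$ fixes $\H^C$ pointwise while acting on $\H^Ce_4$ by $ne_4\mapsto(e_in)e_4$; since $\varepsilon_i^2=\gamma$ acts as $-1$ on $\H^Ce_4$, the eigenvalues of $\varepsilon_i$ there are $\pm i$, so $\varepsilon_i$ has no nonzero fixed vector in $\H^Ce_4$. Therefore $(\mathfrak{C}^C)^{\varepsilon_1,\varepsilon_2}=\H^C$, forcing $t_k\in\H^C$, which is exactly the claimed form. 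The dimension count then follows: the $\delta$-part contributes $9+4\times3=21$ by Lemma \ref{lemma 5.2}, while the $T$-part contributes $(3-1)+4\times3=14$ from the two free $\tau_i$ and the three $t_k\in\H^C$, so $\dim_C(({\mathfrak{e}_6}^C)^{\varepsilon_1,\varepsilon_2})=35$. The main thing to get right is the clean splitting of the first step — that conjugation carries $\tilde{T}$ to $\widetilde{\varepsilon_iT}$ and preserves each summand separately — since everything afterward is either quoted from Lemma \ref{lemma 5.2} or the short eigenvalue argument on $\mathfrak{C}^C$.
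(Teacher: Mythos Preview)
Your proof is correct and follows essentially the same approach as the paper: decompose $\phi=\delta+\tilde{T}$, observe that $\varepsilon_i\phi\varepsilon_i^{-1}=\varepsilon_i\delta\varepsilon_i^{-1}+(\varepsilon_iT)^\sim$, invoke Lemma \ref{lemma 5.2} for the $\delta$-part, and reduce the $T$-part to $t_k\in(\mathfrak{C}^C)^{\varepsilon_1,\varepsilon_2}=\H^C$. Your version is simply more explicit than the paper's one-line proof, supplying in particular the justification for $(\mathfrak{C}^C)^{\varepsilon_1,\varepsilon_2}=\H^C$ via the eigenvalue argument on $\H^Ce_4$, which the paper leaves to the reader.
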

\begin{proof}
    For $ \phi:=\delta + \tilde{T} \in {\mathfrak{e}_6}^C, \delta \in
    {\mathfrak{f}_4}^C, T \in (\mathfrak{J}^C)_0:=\{T \in \mathfrak{J}^C\,|\, \tr(T)=0 \} $, it follows that
    \begin{align*}
     \varepsilon_i\phi{\varepsilon_i}^{-1}= \varepsilon_i(\delta +
     \tilde{T}){\varepsilon_i}^{-1}=\varepsilon_i\delta{\varepsilon_i}^{-1}
     +(\varepsilon_iT)^\sim,\; i=1,2
    \end{align*}
using Lemma \ref{lemma 5.2}, so that we have the required result.
\end{proof}

Here, we define a Lie subalgebra $ \mathfrak{h}_6 $ of $ ({\mathfrak{e}_6}
^C)^{\varepsilon_1,\varepsilon_2} $ by
\begin{align*}
    \mathfrak{h}_6:=\left\lbrace
    \begin{array}{l}
    \phi_6=\delta_4 +(\mu_1E_1+\mu_2E_2+
    \mu_3E_3)^\sim \in ({\mathfrak{e}_6}^C)^{\varepsilon_1,\varepsilon_2}
    \end{array}
    \relmiddle{|}
    \begin{array}{l}
       \delta_4:=(L_1,L_2,L_3),
       \\
       \quad L_1=\lambda_0(iG_{01})+\lambda_1(iG_{23})
        \\
        \qquad\qquad\;\; +\lambda_2(i(G_{45}+G_{67})),
        \\
       \quad  L_2=\pi\kappa L_1, L_3=\kappa\pi L_1,\lambda_i \in C,
        \\
        \mu_k \in C, \mu_1+\mu_2+\mu_3=0
    \end{array}
    \right\rbrace.
\end{align*}

Then $ \mathfrak{h}_6 $ is a Cartan subalgebra of $ ({\mathfrak{e}_6}
^C)^{\varepsilon_1,\varepsilon_2} $. Indeed, it is clear that $
({\mathfrak{e}_6}^C)^{\varepsilon_1,\varepsilon_2} $ is abelian. Next, by
doing straightforward computation, we can confirm that $ [\phi,\phi_6]
\in \mathfrak{h}_6, \delta \in ({\mathfrak{e}_6}^C)^{\varepsilon_1,
\varepsilon_2} $ implies $ \phi \in \mathfrak{h}_6 $ for any $ \phi_6
\in \mathfrak{h}_6 $.

\begin{theorem}\label{theorem 6.3}
    The rank of the Lie algebra $ ({\mathfrak{e}_6}^C)^{\varepsilon_1,
    \varepsilon_2} $ is five. The roots $ \varDelta $ of $ ({\mathfrak{e}
    _6}^C)^{\varepsilon_1,\varepsilon_2} $ relative to $ \mathfrak{h}_6 $
    are given by
    \begin{align*}
        \varDelta=\left\lbrace
        \begin{array}{l}
            \pm(\lambda_0-\lambda_1), \;
            \pm(\lambda_0+\lambda_1),\; \pm 2\lambda_2, \;
            \vspace{1mm}\\
            \pm\dfrac{1}{2}(-2\lambda_0+\mu_2-\mu_3),\;
            \pm\dfrac{1}{2}(-2\lambda_0-\mu_2+\mu_3),
            \vspace{1mm}\\
            \pm\dfrac{1}{2}(-2\lambda_1+\mu_2-\mu_3),\;
            \pm\dfrac{1}{2}(-2\lambda_1-\mu_2+\mu_3),
            \vspace{1mm}\\
            \pm\dfrac{1}{2}(\lambda_0-\lambda_1-2\lambda_2-\mu_1+\mu_3),\;
            \pm\dfrac{1}{2}(\lambda_0-\lambda_1-2\lambda_2+\mu_1-\mu_3),
            \vspace{1mm}\\
            \pm\dfrac{1}{2}(\lambda_0-\lambda_1+2\lambda_2-\mu_1+\mu_3),\;
            \pm\dfrac{1}{2}(\lambda_0-\lambda_1+2\lambda_2+\mu_1-\mu_3),
            \vspace{1mm}\\
            \pm\dfrac{1}{2}(\lambda_0+\lambda_1+2\lambda_2+\mu_1-\mu_2),\;
            \pm\dfrac{1}{2}(\lambda_0+\lambda_1+2\lambda_2-\mu_1+\mu_2),
            \vspace{1mm}\\
            \pm\dfrac{1}{2}(-\lambda_0-\lambda_1+2\lambda_2+\mu_1-\mu_2),\;
            \pm\dfrac{1}{2}(-\lambda_0-\lambda_1+2\lambda_2-\mu_1+\mu_2).
        \end{array}
        \right\rbrace.
    \end{align*}
\end{theorem}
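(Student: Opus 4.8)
The plan is to follow exactly the procedure used in the proof of Theorem \ref{theorem 5.3}: fix a generic element of the Cartan subalgebra and diagonalize $\ad$ of it on explicit candidate root vectors, reading off the eigenvalues as the roots. Write a generic element of $\mathfrak{h}_6$ as $\phi_6 = \delta_4 + (\mu_1 E_1 + \mu_2 E_2 + \mu_3 E_3)^\sim$ with $\delta_4 = (L_1,L_2,L_3)$, so that for any $X \in ({\mathfrak{e}_6}^C)^{\varepsilon_1,\varepsilon_2}$
\[ [\phi_6, X] = [\delta_4, X] + [(\mu_1 E_1 + \mu_2 E_2 + \mu_3 E_3)^\sim, X]. \]
The first summand is governed by the $\mathfrak{f}_4$-action already analysed in Theorem \ref{theorem 5.3}; the second is the genuinely new ingredient. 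Since $\mathfrak{h}_6$ has been shown to be a Cartan subalgebra just above, the claim on the rank is immediate: it equals $\dim_C \mathfrak{h}_6 = 5$.

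First I would treat the six root vectors $\delta_{01}, \delta_{01}^-, \varepsilon_{01}, \varepsilon_{01}^-, \varepsilon_{23}, \varepsilon_{23}^-$ inherited from Theorem \ref{theorem 5.3}. These lie in the pure $(D_1,D_2,D_3)$-part, which fixes the diagonal idempotents $E_1,E_2,E_3$; hence they are annihilated by $(\mu_1 E_1 + \mu_2 E_2 + \mu_3 E_3)^\sim$ and survive as eigenvectors of $\ad(\phi_6)$ with their old eigenvalues $\pm(\lambda_0-\lambda_1), \pm(\lambda_0+\lambda_1), \pm 2\lambda_2$, which is precisely the first line of $\varDelta$.

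The remaining roots come from the mixed spaces $\tilde{A}_k(\H^C)\oplus (F_k(\H^C))^\sim$ for $k=1,2,3$. On each of these $\ad(\delta_4)$ acts as in the $\mathfrak{f}_4$-computation: it is diagonalized by the basis $\{1\pm ie_1,\,e_2\pm ie_3\}$ of $\H^C$, with the same eigenvalues $\pm\lambda_0,\pm\lambda_1$ occurring on the $\tilde{A}_k$-summand and on the $(F_k)^\sim$-summand. By contrast $\ad((\mu_1 E_1 + \mu_2 E_2 + \mu_3 E_3)^\sim)$ interchanges the two summands: since $F_k(t)$ occupies the off-diagonal positions coupling $E_i$ and $E_j$ with $\{i,j\}=\{1,2,3\}\setminus\{k\}$, the brackets $[(\mu E)^\sim,\tilde{A}_k(a)]$ and $[(\mu E)^\sim,(F_k(a))^\sim]$ each land in the opposite summand and are proportional to $\mu_i-\mu_j$. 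Diagonalizing the resulting $2\times 2$ blocks gives eigenvectors $\tilde{A}_k(a)\pm c_k\,(F_k(a))^\sim$ (with $c_k\neq 0$) and eigenvalues $\ell\pm\frac{1}{2}(\mu_i-\mu_j)$, $\ell\in\{\pm\lambda_0,\pm\lambda_1\}$. For $k=1$ (coupling $E_2,E_3$) this produces $\pm\frac{1}{2}(-2\lambda_0\pm(\mu_2-\mu_3))$ and $\pm\frac{1}{2}(-2\lambda_1\pm(\mu_2-\mu_3))$, and for $k=2,3$ one uses $L_2=\pi\kappa L_1,\,L_3=\kappa\pi L_1$ to rewrite the $\delta_4$-eigenvalues and obtain the remaining entries of $\varDelta$ carrying $\mu_1-\mu_3$ and $\mu_1-\mu_2$ respectively.

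Finally I would count: these yield $6+(8+8+8)=30$ linearly independent eigenvectors spanning a complement of $\mathfrak{h}_6$, and $30 = 35-5 = \dim_C(({\mathfrak{e}_6}^C)^{\varepsilon_1,\varepsilon_2})-\dim_C\mathfrak{h}_6$ by Lemma \ref{lemma 6.2}, so the root-space decomposition is exhausted and the displayed $\varDelta$ is complete. I expect the main obstacle to be the explicit evaluation of the brackets $[(\mu E)^\sim,\tilde{A}_k(a)]$ and $[(\mu E)^\sim,(F_k(a))^\sim]$, i.e. the precise action of $\tilde{A}_k(a)$ and $(F_k(t))^\sim$ on the idempotents $E_1,E_2,E_3$; this rests on the Jordan-multiplication rules in $\mathfrak{J}^C$ and pins down the exact diagonalizing combinations $c_k$. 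Once those are in hand the eigenvalues fall out, and the rest is bookkeeping entirely parallel to Theorem \ref{theorem 5.3}.
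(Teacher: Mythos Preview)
Your proposal is correct and follows essentially the same approach as the paper: both arguments verify that the six $(\mathfrak{f}_4^{\,C})^{\varepsilon_1,\varepsilon_2}$-root vectors in the $(D_1,D_2,D_3)$-part survive unchanged, then diagonalize $\ad(\phi_6)$ on $\tilde A_k(\H^C)\oplus (F_k(\H^C))^\sim$ and finish by a dimension count. The paper simply carries out one bracket $[\phi_6,\tilde A_1(1+ie_1)+\tilde F_1(1+ie_1)]$ explicitly (confirming that indeed $c_k=1$, i.e.\ the eigenvectors are exactly $\tilde A_k(a)\pm\tilde F_k(a)$) and then tabulates the remaining root vectors, whereas you package the same computation as a $2\times 2$ block diagonalization; the content is identical.
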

\begin{proof}
 First, the roots $ \pm(\lambda_0-\lambda_1),\pm(\lambda_0+\lambda_1),
 \pm 2\lambda_2 $ of $ ({\mathfrak{f}_4}
 ^C)^{\varepsilon_1,\varepsilon_2} $ are also the roots of
 $ ({\mathfrak{e}_6}^C)^{\varepsilon_1,\varepsilon_2} $.
 Indeed, for example, let $ \lambda_0-\lambda_1 $ and its associated root
 vector $ \delta_{01} $. Then it follows that
 \begin{align*}
   [\phi_6, \delta_{01}]
   &=[\delta_4+(\tau_1E_1+\tau_2E_2+\tau_3E_3)^\sim,\delta_{01}]
   \\
   &=[\delta_4,\delta_{01}]+[(\tau_1E_1+\tau_2E_2+\tau_3E_3)^\sim,
   \delta_{01}]\;\;([(\tau_1E_1+\tau_2E_2+\tau_3E_3)^\sim,\delta_{01}]=0)
   \\
   &=(\lambda_0-\lambda_1)(\delta_{01}).
 \end{align*}
The similar results are obtained for the others roots above.

We will determine the remainders of roots. Let $ \phi_6 \in \mathfrak{h}_6
$ and $ \tilde{A}_1(1+ie_1)+\tilde{F}_1(1+ie_1) \in ({\mathfrak{f}_4}
^C)^{\varepsilon_1,\varepsilon_2} $. Then it follows that
\begin{align*}
   [\phi_6,\tilde{A}_1(1+ie_1)+\tilde{F}_1(1+ie_1)]&=\![(L_1,L_2,L_3)+
   (\mu_1E_1+\mu_2E_2+\mu_3E_3)^\sim,\tilde{A}_1(1+ie_1)+\tilde{F}
   _1(1+ie_1)]
   \\
   &=\tilde{A}_1(L_1(1+ie_1))+\tilde{F}_1(L_1(1+ie_1))
   \\
   &\qquad +(1/2)\mu_2\tilde{F}_1(1+ie_1)+(1/2)\mu_2\tilde{A}_1(1+ie_1)
   \\
   &\qquad -(1/2)\mu_3\tilde{F}_1(1+ie_1)-(1/2)\mu_3\tilde{A}_1(1+ie_1)
   \\
   &=-\lambda_0\tilde{A}_1(1+ie_1))-\lambda_0\tilde{F}_1(L_1(1+ie_1))
   \\
   &\qquad +(1/2)\mu_2\tilde{F}_1(1+ie_1)+(1/2)\mu_2\tilde{A}_1(1+ie_1)
   \\
   &\qquad -(1/2)\mu_3\tilde{F}_1(1+ie_1)-(1/2)\mu_3\tilde{A}_1(1+ie_1)
   \\
   &=(1/2)(-2\lambda_0+\mu_2-\mu_3)(\tilde{A}_1(1+ie_1)+\tilde{F}
   _1(1+ie_1)),
\end{align*}
that is, $ [\phi_6,\tilde{A}_1(1+ie_1)+\tilde{F}_1(1+ie_1)]=(1/2)
(-2\lambda_0+\mu_2-\mu_3)(\tilde{A}_1(1+ie_1)+\tilde{F}
_1(1+ie_1)) $.
Hence we see that $ (1/2)(-2\lambda_0+\mu_2-\mu_3) $ is a root of $
({\mathfrak{e}_6}^C)^{\varepsilon_1,\varepsilon_2} $ and $ \tilde{A}
_1(1+ie_1)+\tilde{F}_1(1+ie_1) $ is a root vector associated with its root.
Similarly, we have that $ -(1/2)(-2\lambda_0+\mu_2-\mu_3) $ is a root of $
({\mathfrak{e}_6}^C)^{\varepsilon_1,\varepsilon_2} $ and $ \tilde{A}
_1(1-ie_1)-\tilde{F}_1(1-ie_1) $ is a root vector associated with its root.

The remainders of roots and root vectors associated with these roots are
obtained as follows:
\begin{longtable}{c@{\qquad}l}
   roots & root vectors associated with roots
   \vspace{0.3mm}\cr
   $ (1/2)(-2\lambda_0-\mu_2+\mu_3) $ & $ \tilde{A}_1(1+ie_1)
   -\tilde{F}_1(1+ie_1) $
   \vspace{0.3mm}\cr
   $ -(1/2)(-2\lambda_0-\mu_2+\mu_3) $ & $ \tilde{A}_1(1-ie_1)
   +\tilde{F}_1(1-ie_1) $
   \vspace{0.3mm}\cr
   $ (1/2)(-2\lambda_1+\mu_2-\mu_3) $ & $ \tilde{A}_1(e_2+ie_3)
   +\tilde{F}_1(e_2+ie_3) $
   \vspace{0.3mm}\cr
   $ -(1/2)(-2\lambda_1+\mu_2-\mu_3) $ & $ \tilde{A}_1(e_2-ie_3)
   -\tilde{F}_1(e_2-ie_3) $
   \vspace{0.3mm}\cr
   $ (1/2)(-2\lambda_1-\mu_2+\mu_3) $ & $ \tilde{A}_1(e_2+ie_3)
   -\tilde{F}_1(e_2+ie_3) $
   \vspace{0.3mm}\cr
   $ -(1/2)(-2\lambda_1-\mu_2+\mu_3) $ & $ \tilde{A}_1(e_2-ie_3)
   +\tilde{F}_1(e_2-ie_3) $
   \vspace{0.3mm}\cr
   $(1/2)(\lambda_0-\lambda_1-2\lambda_2-\mu_1+\mu_3)  $ & $ \tilde{A}
   _2(1+ie_1)+\tilde{F}_2(1+ie_1))  $
   \vspace{0.3mm}\cr
   $-(1/2)(\lambda_0-\lambda_1-2\lambda_2-\mu_1+\mu_3)  $ & $ \tilde{A}
   _2(1-ie_1)-\tilde{F}_2(1-ie_1))  $
   \vspace{0.3mm}\cr
   $(1/2)(\lambda_0-\lambda_1-2\lambda_2+\mu_1-\mu_3)  $ & $ \tilde{A}
   _2(1+ie_1)-\tilde{F}_2(1+ie_1))  $
   \vspace{0.3mm}\cr
   $-(1/2)(\lambda_0-\lambda_1-2\lambda_2+\mu_1-\mu_3)  $ & $ \tilde{A}
   _2(1-ie_1)+\tilde{F}_2(1-ie_1))  $
   \vspace{0.3mm}\cr
   $(1/2)(\lambda_0-\lambda_1+2\lambda_2-\mu_1+\mu_3)  $ & $ \tilde{A}
   _2(e_2+ie_3)+\tilde{F}_2(e_2+ie_3))  $
   \vspace{0.3mm}\cr
   $-(1/2)(\lambda_0-\lambda_1+2\lambda_2-\mu_1+\mu_3)  $ & $ \tilde{A}
   _2(e_2-ie_3)-\tilde{F}_2(e_2-ie_3))  $
   \vspace{0.3mm}\cr
   $(1/2)(\lambda_0-\lambda_1+2\lambda_2+\mu_1-\mu_3)  $ & $ \tilde{A}
   _2(e_2+ie_3)-\tilde{F}_2(e_2+ie_3))  $
   \vspace{0.3mm}\cr
   $-(1/2)(\lambda_0-\lambda_1+2\lambda_2+\mu_1-\mu_3)  $ & $ \tilde{A}
   _2(e_2-ie_3)+\tilde{F}_2(e_2-ie_3))  $
   \vspace{0.3mm}\cr
   $(1/2)(\lambda_0+\lambda_1+2\lambda_2+\mu_1-\mu_2)  $ & $ \tilde{A}
   _3(1+ie_1)+\tilde{F}_3(1+ie_1))  $
   \vspace{0.3mm}\cr
   $-(1/2)(\lambda_0+\lambda_1+2\lambda_2+\mu_1-\mu_2)  $ & $ \tilde{A}
   _3(1-ie_1)-\tilde{F}_3(1-ie_1))  $
   \vspace{0.3mm}\cr
   $(1/2)(\lambda_0+\lambda_1+2\lambda_2-\mu_1+\mu_2)  $ & $ \tilde{A}
   _3(1+ie_1)-\tilde{F}_3(1+ie_1))  $
   \vspace{0.3mm}\cr
   $-(1/2)(\lambda_0+\lambda_1+2\lambda_2-\mu_1+\mu_2)  $ & $ \tilde{A}
   _3(1-ie_1)+\tilde{F}_3(1-ie_1))  $
   \vspace{0.3mm}\cr
   $(1/2)(-\lambda_0-\lambda_1+2\lambda_2+\mu_1-\mu_2)  $ & $ \tilde{A}
   _3(e_2+ie_3)+\tilde{F}_3(e_2+ie_3))  $
   \vspace{0.3mm}\cr
   $-(1/2)(-\lambda_0-\lambda_1+2\lambda_2+\mu_1-\mu_2)  $ & $ \tilde{A}
   _3(e_2-ie_3)-\tilde{F}_3(e_2-ie_3))  $
   \vspace{0.3mm}\cr
   $(1/2)(-\lambda_0-\lambda_1+2\lambda_2-\mu_1+\mu_2)  $ & $ \tilde{A}
   _3(e_2+ie_3)-\tilde{F}_3(e_2+ie_3))  $
   \vspace{0.3mm}\cr
   $-(1/2)(-\lambda_0-\lambda_1+2\lambda_2-\mu_1+\mu_2)  $ & $ \tilde{A}
   _3(e_2-ie_3)+\tilde{F}_3(e_2-ie_3))  $.
\end{longtable}

Thus, since $ ({\mathfrak{e}_6}^C)^{\varepsilon_1,\varepsilon_2} $ is
spanned by $ \mathfrak{h}_6 $ and the root vectors associated with roots
above, the roots obtained above are all. The rank of the Lie algebra $
({\mathfrak{e}_6}^C)^{\varepsilon_1,\varepsilon_2} $ follows from the
dimension of $ \mathfrak{e}_6 $.
\end{proof}

Subsequently, we prove the following theorem.

\begin{theorem}\label{theorem 6.4}
In the root system $ \varDelta $ of Theorem {\rm \ref{theorem 6.3}}
 \begin{align*}
      \varPi=\left\lbrace \alpha_1,\alpha_2, \alpha_3, \alpha_4,\alpha_5
      \right\rbrace
  \end{align*}
is a fundamental root system of $  ({\mathfrak{e}_6}^C)^{\varepsilon_1,
\varepsilon_2} $, where $ \alpha_1=-(\lambda_0-\lambda_1),
\alpha_2=(1/2)
(\lambda_0-\lambda_1-2\lambda_2+\mu_1-\mu_3),
\alpha_3=2\lambda_2,
\alpha_4=-(1/2)
(\lambda_0+\lambda_1+2\lambda_2+\mu_1-\mu_2),
\alpha_5=\lambda_0+\lambda_1$.
The Dynkin diagram of $ ({\mathfrak{e}_6}^C)^{\varepsilon_1,
\varepsilon_2} $ is given by
\vspace{-3mm}
\begin{center}
\setlength{\unitlength}{1mm}
  \scalebox{1.0}
	{\begin{picture}(80,20)
	\put(0,9){}
	\put(20,10){\circle{2}} \put(19,6){$\alpha_1$}
	\put(21,10){\line(1,0){8}}
	\put(30,10){\circle{2}} \put(29,6){$\alpha_2$}
	\put(31,10){\line(1,0){8}}
	\put(40,10){\circle{2}} \put(39,6){$\alpha_3$}
	\put(41,10){\line(1,0){8}}
	\put(50,10){\circle{2}} \put(49,6){$\alpha_4$}
	\put(51,10){\line(1,0){8}}
	\put(60,10){\circle{2}} \put(59,6){$\alpha_5$}

	\end{picture}}
\end{center}
\end{theorem}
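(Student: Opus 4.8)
The plan is to follow the four-step scheme of the proof of Theorem~\ref{theorem 5.4}, now inside the rank-five Cartan subalgebra $\mathfrak{h}_6$. First I would check that $\varPi=\{\alpha_1,\alpha_2,\alpha_3,\alpha_4,\alpha_5\}$ is a fundamental system by writing each of the fifteen positive roots of Theorem~\ref{theorem 6.3} as a non-negative integral combination of $\alpha_1,\dots,\alpha_5$. Since the asserted diagram is the chain $A_5$, the positive roots should appear exactly as the consecutive sums $\alpha_i+\alpha_{i+1}+\cdots+\alpha_j$ with $1\le i\le j\le 5$; for example $\alpha_2+\alpha_3=(1/2)(\lambda_0-\lambda_1+2\lambda_2+\mu_1-\mu_3)$ and $\alpha_3+\alpha_4=(1/2)(-\lambda_0-\lambda_1+2\lambda_2-\mu_1+\mu_2)$ recover two of the listed roots, and the remaining thirteen are handled identically. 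As $15=\binom{6}{2}$ and $\dim_C\mathfrak{h}_6=5$, this exhausts $\varDelta$ and shows $\varPi$ is fundamental.

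Next I would restrict the Killing form $B_6$ of ${\mathfrak{e}_6}^C$ to $\mathfrak{h}_6$, using $B_6(\phi,\phi')=4\tr(\phi\phi')$. Writing $\phi_6=\delta_4+\tilde{T}$ with $\delta_4=(L_1,L_2,L_3)\in\mathfrak{h}_4$ and $T=\mu_1E_1+\mu_2E_2+\mu_3E_3$, the splitting ${\mathfrak{e}_6}^C={\mathfrak{f}_4}^C\oplus\mathfrak{p}$, where $\mathfrak{p}=\{\tilde{T}\mid T\in(\mathfrak{J}^C)_0\}$, is symmetric in the sense that $[{\mathfrak{f}_4}^C,\mathfrak{p}]\subset\mathfrak{p}$ and $[\mathfrak{p},\mathfrak{p}]\subset{\mathfrak{f}_4}^C$, whence $B_6(\delta_4,\tilde{T})=0$ and the form is block diagonal on $\mathfrak{h}_6$. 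On the $\delta_4$ block it is four times the trace form of Theorem~\ref{theorem 5.4}, giving $B_6(\delta_4,{\delta_4}')=24(\lambda_0{\lambda_0}'+\lambda_1{\lambda_1}'+2\lambda_2{\lambda_2}')$; on the $\tilde{T}$ block I would use that $\tilde{T}$ acts on $\mathfrak{J}^C$ with eigenvalue $\mu_k$ on $E_k$ and $(1/2)(\mu_j+\mu_l)$ on the eight-dimensional space of $F_i(x)$, so that summing eigenvalue products and imposing $\mu_1+\mu_2+\mu_3=0$ collapses the result to $B_6(\tilde{T},\tilde{T}')=12(\mu_1{\mu_1}'+\mu_2{\mu_2}'+\mu_3{\mu_3}')$.

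With the form known I would determine the canonical elements $\delta_{\alpha_i}\in\mathfrak{h}_6$ from $B_6(\delta_{\alpha_i},\phi_6)=\alpha_i(\phi_6)$ for all $\phi_6\in\mathfrak{h}_6$, and then evaluate the scalars $(\alpha_i,\alpha_j)=B_6(\delta_{\alpha_i},\delta_{\alpha_j})$. The expected outcome is $(\alpha_i,\alpha_i)=1/12$ for every $i$, so the algebra is simply laced (all bonds single); $(\alpha_i,\alpha_{i+1})=-1/24$, hence $\cos\theta=-1/2$ for consecutive indices; and $(\alpha_i,\alpha_j)=0$ otherwise. This produces precisely the chain $\alpha_1-\alpha_2-\alpha_3-\alpha_4-\alpha_5$ of type $A_5$, consistent with $\dim_C(({\mathfrak{e}_6}^C)^{\varepsilon_1,\varepsilon_2})=35=\dim_C\mathfrak{sl}(6,C)$.

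The main obstacle is the $\tilde{T}$ part. Unlike the ${\mathfrak{f}_4}^C$ case, $\mathfrak{h}_6$ carries the trace-zero constraint $\mu_1+\mu_2+\mu_3=0$, so $B_6$ must be handled as a form on this constrained space and each $\delta_{\alpha_i}$ must be chosen inside it: concretely the $\mu$-components of $\delta_{\alpha_i}$ have to be recentred by subtracting their mean (a Lagrange-multiplier projection) so that $\sum_k\mu_k=0$ again holds. Getting this projection together with the eigenvalue bookkeeping for $\tr(\tilde{T}\tilde{T}')$ correct is where an error is most likely; the rest is the bilinear-algebra routine already carried out for ${\mathfrak{f}_4}^C$ in Theorem~\ref{theorem 5.4}.
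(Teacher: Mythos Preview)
Your plan is correct and matches the paper's proof in structure: enumerate the fifteen positive roots as consecutive sums $\alpha_i+\cdots+\alpha_j$, compute $B_6$ on $\mathfrak{h}_6$, solve for the canonical elements $\phi_{\alpha_i}$, and read off the Cartan matrix. The only difference is that the paper pulls the Killing form from a cited identity $B_6(\delta+\tilde T,\delta'+\tilde T')=(4/3)B_4(\delta,\delta')+12(T,T')$, whereas you rederive the same formula via $B_6=4\,\tr$ and the eigenvalue count for $\tilde T$; both routes give $24(\lambda_0\lambda_0'+\lambda_1\lambda_1'+2\lambda_2\lambda_2')+12\sum_k\mu_k\mu_k'$, and your anticipated values $(\alpha_i,\alpha_i)=1/12$, $(\alpha_i,\alpha_{i+1})=-1/24$ are exactly what the paper obtains.
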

\begin{proof}
The all positive roots are expressed by $ \alpha_1,\alpha_2,\alpha_3,
\alpha_4,\alpha_5 $ as follows:
   \begin{align*}
-(\lambda_0-\lambda_1)&=\alpha_1,
\\
\lambda_0+\lambda_1&=\alpha_5,
\\
2\lambda_2&=\alpha_3,
\\
(1/2)(-2\lambda_0+\mu_2-\mu_3)&=\alpha_1+\alpha_2+\alpha_3+\alpha_4,
\\
-(1/2)(-2\lambda_0-\mu_2+\mu_3)&=\alpha_2+\alpha_3+\alpha_4+\alpha_5,
\\
(1/2)(-2\lambda\_1+\mu_2-\mu_3)&=\alpha_2+\alpha_3+\alpha_4,
\\
-(1/2)(-2\lambda_1-\mu_2+\mu_3)&=\alpha_1+\alpha_2+\alpha_3+\alpha_4+
\alpha_5,
\\
-(1/2)(\lambda_0-\lambda_1-2\lambda_2-\mu_1+\mu_3)&=\alpha_1+\alpha_2+
\alpha_3,
\\
(1/2)(\lambda_0-\lambda_1-2\lambda_2+\mu_1-\mu_3)&=\alpha_2,
\\
-(1/2)(\lambda_0-\lambda_1+2\lambda_2-\mu_1+\mu_3)&=\alpha_1+\alpha_2,
\\
(1/2)(\lambda_0-\lambda_1+2\lambda_2+\mu_1-\mu_3)&=\alpha_2+\alpha_3,
\\
-(1/2)(\lambda_0+\lambda_1+2\lambda_2+\mu_1-\mu_2)&=\alpha_4,
\\
(1/2)(\lambda_0+\lambda_1+2\lambda_2-\mu_1+\mu_2)&=\alpha_3+\alpha_4+
\alpha_5,
\\
-(1/2)(-\lambda_0-\lambda_1+2\lambda_2+\mu_1-\mu_2)&=\alpha_4+\alpha_5,
\\
(1/2)(-\lambda_0-\lambda_1+2\lambda_2-\mu_1+\mu_2)&=\alpha_3+\alpha_4.
   \end{align*}
Hence $ \varPi $ is a fundamental root system of $  ({\mathfrak{e}_6}
^C)^{\varepsilon_1,\varepsilon_2} $.

Then, for $ \phi,\phi' \in {\mathfrak{e}_6}^C$, the Killing form $ B_6 $ of
$ {\mathfrak{e}_6}^C $ is given by
\begin{align*}
B_6(\phi,\phi')=B_6(\delta+\tilde{T},\delta'+\tilde{T'})=\dfrac{4}{3}
B_4(\delta,
\delta')+12(T,T')
\end{align*}
(\cite[Theorem 3.5.3]{iy0}), so that for $ \phi_6:=\delta_4+(\mu_1E_1+
\mu_2E_2+\mu_3E_3)^\sim, {\phi_6}':={\delta_4}'+({\mu_1}'E_1+{\mu_2}'E_2+
{\mu_3}'E_3)^\sim \in \mathfrak{h}_6 $, we have
\begin{align*}
 B_6(\phi_{4},{\phi_{4}}')&=\dfrac{4}{3}B_4(\delta_{4},{\delta_{4}}')
 +12(\mu_1E_1+\mu_2E_2+\mu_3E_3,{\mu_1}'E_1+{\mu_2}'E_2+
{\mu_3}'E_3)
\\
&=\dfrac{4}{3}\cdot18(\lambda_0{\lambda_0}'+\lambda_1{\lambda_1}'
  +2\lambda_2{\lambda_2}')+12(\mu_1{\mu_1}'+\mu_2{\mu_2}'+\mu_3{\mu_3}')
\\
&=24(\lambda_0{\lambda_0}'+\lambda_1{\lambda_1}'
  +2\lambda_2{\lambda_2}')+12(\mu_1{\mu_1}'+\mu_2{\mu_2}'+\mu_3{\mu_3}').
\end{align*}

Now, the canonical elements $ \phi_{\alpha_1},
\phi_{\alpha_2}, \phi_{\alpha_3}, \phi_{\alpha_4},\phi_{\alpha_5} $
corresponding to $ \alpha_1, \alpha_2,\alpha_3,\alpha_4,\alpha_5 $
 are determined as follows:
\begin{align*}
\phi_{\alpha_1}&=-\dfrac{1}{24}(iG_{01})+\dfrac{1}{24}(iG_{23}),
\\
\phi_{\alpha_2}&=\dfrac{1}{48}(iG_{01})-\dfrac{1}{48}(iG_{23})-\dfrac{1}
{48}(i(G_{45}+G_{67}))+(\dfrac{1}{24}E_1-\dfrac{1}{24}E_3)^\sim,
\\
\phi_{\alpha_3}&=\dfrac{1}{24}(i(G_{45}+G_{67})),
\\
\phi_{\alpha_4}&=-\dfrac{1}{48}(iG_{01})-\dfrac{1}{48}(iG_{23})-\dfrac{1}
{48}(i(G_{45}+G_{67}))+(-\dfrac{1}{24}E_1+\dfrac{1}{24}E_2)^\sim,
\\
\phi_{\alpha_5}&=\dfrac{1}{24}(iG_{01})+\dfrac{1}{24}(iG_{23}).
\end{align*}
Indeed, as for $ \phi_{\alpha_1} $, set $ \phi_{\alpha_1}:=(L_1, L_2,
L_3)+(\nu_1E_1+\nu_2E_2+\nu_3E_3)^\sim \in \mathfrak{h}_6,
L_1:=r_0(iG_{01})+r_1(iG_{23})+r_2(i(G_{45}+G_{67}))
$, then since $ \phi_{\alpha_1} $ satisfies $ B_6(\phi_{\alpha_1},
\phi_6)=\alpha_1(\phi_6) $ for any $ \phi_6 \in \mathfrak{h}_6 $, it
follows from
\begin{align*}
    B_6(\phi_{\alpha_1},
    \phi_6)&=24(r_0\lambda_0+r_1\lambda_1+2r_2\lambda_2)+12(\nu_1\mu_1+
    \nu_2\mu_2+\nu_3\mu_3),
    \\
    \alpha_1(\phi_6)&=-\lambda_0+\lambda_1
\end{align*}
that
\begin{align*}
    r_0=-\dfrac{1}{24},\;\; r_1=\dfrac{1}{24},
\end{align*}
that is, $ \phi_{\alpha_1}=-\dfrac{1}{24}(iG_{01})+\dfrac{1}{24}(iG_{23})
$. As for the others, we have the required results by doing similar
computations.

Hence we have the following
\begin{align*}
    (\alpha_1,\alpha_1)&=B_6(\phi_{\alpha_1},\phi_{\alpha_1})=24\left(
    \left(-\dfrac{1}{24} \right)^2+\left(\dfrac{1}{24} \right)^2
    \right)=\dfrac{1}{12},
    \\
    (\alpha_1,\alpha_2)&=B_6(\phi_{\alpha_1},\phi_{\alpha_2})=24\left(
    \left(-\dfrac{1}{24} \right)\left(\dfrac{1}{48} \right)
    +\left(\dfrac{1}{24} \right)\left(-\dfrac{1}{48} \right)
    \right)=-\dfrac{1}{24},
    \\
    (\alpha_1,\alpha_3)&=B_6(\phi_{\alpha_1},\phi_{\alpha_3})=0,
    \\
    (\alpha_1,\alpha_4)&=B_6(\phi_{\alpha_1},\phi_{\alpha_4})=24\left(
    \left(-\dfrac{1}{24} \right)\left(-\dfrac{1}{48} \right)
    +\left(\dfrac{1}{24} \right)\left(-\dfrac{1}{48} \right)
    \right)=0,
    \\
    (\alpha_1,\alpha_5)&=B_6(\phi_{\alpha_1},\phi_{\alpha_5})=24\left(
    \left(-\dfrac{1}{24} \right)\left(\dfrac{1}{24} \right)+\left(\dfrac{1}
    {24} \right)^2 \right)=0,
    \\
    (\alpha_2,\alpha_2)&=B_6(\phi_{\alpha_2},\phi_{\alpha_2})=24\left(
    \left(\dfrac{1}{48} \right)^2+\left(-\dfrac{1}{48} \right)^2+
    2\left(-\dfrac{1}{48} \right)^2 \right)+12\left(
    \left(\dfrac{1}{24} \right)^2+\left(-\dfrac{1}{24} \right)^2
    \right)=\dfrac{1}{12},
    \\
    (\alpha_2,\alpha_3)&=B_6(\phi_{\alpha_2},
    \phi_{\alpha_3})=24\cdot 2\left(-\dfrac{1}{48} \right)\left(
    \dfrac{1}{24}\right)=-\dfrac{1}{24},
    \\
    (\alpha_2,\alpha_4)&=B_6(\phi_{\alpha_2},\phi_{\alpha_4})=24\left(
    \left(\dfrac{1}{48} \right)\left(-\dfrac{1}{48} \right)+\left(-
    \dfrac{1}{48} \right)^2+
    2\left(-\dfrac{1}{48} \right)^2 \right)+12
    \left(\dfrac{1}{24} \right)\left(-\dfrac{1}{24} \right)=0,
    \\
    (\alpha_2,\alpha_5)&=B_6(\phi_{\alpha_2},\phi_{\alpha_5})=24\left(
    \left(\dfrac{1}{48} \right)\left(\dfrac{1}{24} \right)+\left(-\dfrac{1}
    {48} \right)\left(\dfrac{1}{24} \right) \right)=0,
    \\
    (\alpha_3,\alpha_3)&=B_6(\phi_{\alpha_3},\phi_{\alpha_3})=24\cdot
    2\left(\dfrac{1}{24}\right)^2=\dfrac{1}{12},
    \\
    (\alpha_3,\alpha_4)&=B_6(\phi_{\alpha_3},\phi_{\alpha_4})=24\cdot
    2\left(\dfrac{1}{24}\right)\left(-\dfrac{1}{48} \right)=-\dfrac{1}
    {24},
    \\
    (\alpha_3,\alpha_5)&=B_6(\phi_{\alpha_3},\phi_{\alpha_5})=0,
    \\
    (\alpha_4,\alpha_4)&=B_6(\phi_{\alpha_4},\phi_{\alpha_4})=24\left(
    \left(-\dfrac{1}{48} \right)^2+\left(-\dfrac{1}{48} \right)^2+
    2\left(-\dfrac{1}{48} \right)^2 \right)+12\left(
    \left(-\dfrac{1}{24} \right)^2+\left(\dfrac{1}{24} \right)^2
    \right)=\dfrac{1}{12},
    \\
    (\alpha_4,\alpha_5)&=B_6(\phi_{\alpha_4},\phi_{\alpha_5})=24\left(
    \left(-\dfrac{1}{48} \right)\left(\dfrac{1}{24} \right)+\left(-
    \dfrac{1}{48} \right)\left(\dfrac{1}{24} \right) \right)=-\dfrac{1}
    {24},
    \\
    (\alpha_5,\alpha_5)&=B_6(\phi_{\alpha_5},\phi_{\alpha_5})=24\left(
    \left(\dfrac{1}{24} \right)^2+\left(\dfrac{1}{24} \right)^2
    \right)=\dfrac{1}{12}.
\end{align*}
Thus, using the inner products above, we have
\begin{align*}
    \cos\theta_{12}&=\dfrac{(\alpha_1,\alpha_2)}{\sqrt{(\alpha_1,\alpha_1)
    (\alpha_2,\alpha_2)}}=-\dfrac{1}{2},\quad
    \cos\theta_{13}=\dfrac{(\alpha_1,\alpha_3)}{\sqrt{(\alpha_1,\alpha_1)
    (\alpha_3,\alpha_3)}}=0,
    \\
    \cos\theta_{14}&=\dfrac{(\alpha_1,\alpha_4)}{\sqrt{(\alpha_1,\alpha_1)
    (\alpha_4,\alpha_4)}}=0,\quad
    \cos\theta_{15}=\dfrac{(\alpha_1,\alpha_5)}{\sqrt{(\alpha_1,\alpha_1)
    (\alpha_5,\alpha_5)}}=0,
    \\
    \cos\theta_{23}&=\dfrac{(\alpha_2,\alpha_3)}{\sqrt{(\alpha_2,\alpha_2)
    (\alpha_3,\alpha_3)}}=-\dfrac{1}{2},\quad
    \cos\theta_{24}=\dfrac{(\alpha_2,\alpha_4)}{\sqrt{(\alpha_2,\alpha_2)
    (\alpha_4,\alpha_4)}}=0,
    \\
    \cos\theta_{25}&=\dfrac{(\alpha_2,\alpha_5)}{\sqrt{(\alpha_2,\alpha_2)
    (\alpha_5,\alpha_5)}}=0,\quad
    \cos\theta_{34}=\dfrac{(\alpha_3,\alpha_4)}{\sqrt{(\alpha_3,\alpha_3)
    (\alpha_4,\alpha_4)}}=-\dfrac{1}{2},
    \\
     \cos\theta_{35}&=\dfrac{(\alpha_3,\alpha_5)}{\sqrt{(\alpha_3,\alpha_3)
    (\alpha_5,\alpha_5)}}=0,\quad
     \cos\theta_{45}=\dfrac{(\alpha_4,\alpha_5)}{\sqrt{(\alpha_4,\alpha_4)
    (\alpha_5,\alpha_5)}}=-\dfrac{1}{2}.
\end{align*}
so that we can draw the Dynkin diagram.
\end{proof}

\section{The group $ E_{7,\sH} $ and the root system, the Dynkin
diagram of the Lie algebra $ ({\mathfrak{e}_{7}}^C)^{\varepsilon_1,
\varepsilon_2} $}

Let $ \mathfrak{P}^C $ be the Freudenthal $ C $-vector space. We consider a $ 32 $ dimensional $ C $-vector space $ (\mathfrak{P}_{\sH})^C $ which is defined by replacing $ \mathfrak{C} $ with $ \H $ in $ \mathfrak{P}^C $:
\begin{align*}
(\mathfrak{P}_{\sH})^C:=(\mathfrak{J}_{\sH})^C \oplus (\mathfrak{J}_{\sH})^C \oplus C \oplus C.
\end{align*}

Then we consider the following complex Lie algebra $ (\mathfrak{e}_{7,\sH})^C $
which is given by replacing $ \mathfrak{C} $ with $ \H $ in the complex Lie
algebra $ {\mathfrak{e}_7}^C $:
\begin{align*}
(\mathfrak{e}_{7,\sH})^C=\left\lbrace \varPhi(\phi,A,B,\nu) \in \Hom_{C}((\mathfrak{P}_{\sH})^C) \relmiddle{|} \phi \in (\mathfrak{e}_{6,\sH})^C, A, B \in (\mathfrak{J}_{\sH})^C, \nu \in C \right\rbrace.
\end{align*}
In particular, we have $ \dim_C((\mathfrak{e}_{7,\sH})^C)=35+15\times
2+1=66 $. In $ (\mathfrak{e}_{7,\sH})^C $, we can define a Lie bracket and
can prove its simplicity as in $ {\mathfrak{e}_7}^C $.
In addition, we define a symmetric inner product $ (\varPhi_1,\varPhi_2)_7 $ by
\begin{align*}
(\varPhi_1,\varPhi_2)_7:=-2(\phi_1,\phi_2)_6-4(A_1,B_2)-4(A_2,B_1)-\dfrac{8}{3}\nu_1\nu_2,
\end{align*}
where $ \varPhi_i:=\varPhi(\phi_i,A_i,B_i,\nu_i) \in (\mathfrak{e}_{7,
\sH})^C,i=1,2 $. Then the symmetric inner product $ (\varPhi_1,\varPhi_2)_7 $ is $ (\mathfrak{e}_{7,\sH})^C $-adjoint invariant (cf. \cite[Lemma 4.5.1 (1)]{iy0}).

Then we prove the following theorem needed in Section 8.

\begin{theorem}\label{theorem 7.1}
    The killing form $ B_{7,\sH} $ of $ (\mathfrak{e}_{7,\sH})^C $ is given by
    \begin{align*}
    B_{7,\sH}(\varPhi_1,\varPhi_2)=-5(\varPhi_1,\varPhi_2)_7.
    \end{align*}
\end{theorem}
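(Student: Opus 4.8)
The plan is to exploit the simplicity of $(\mathfrak{e}_{7,\sH})^C$, which (as noted just above) can be established exactly as for ${\mathfrak{e}_7}^C$. On a complex simple Lie algebra the space of $\ad$-invariant symmetric bilinear forms is one-dimensional, spanned by the Killing form. Now $B_{7,\sH}$ is such a form, and $(\,\cdot\,,\cdot\,)_7$ is another, since it is stated to be $(\mathfrak{e}_{7,\sH})^C$-adjoint invariant. As $B_{7,\sH}$ is non-degenerate there is a unique constant $c \in C$ with $B_{7,\sH}=c\,(\,\cdot\,,\cdot\,)_7$, and the whole task reduces to pinning down $c$ by evaluating both forms on a single convenient element.

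First I would take $\varPhi_0:=\varPhi(0,0,0,1)$. Substituting $\phi=0$, $A=B=0$, $\nu=1$ into the definition of $(\,\cdot\,,\cdot\,)_7$ gives at once $(\varPhi_0,\varPhi_0)_7=-\tfrac{8}{3}$, which is in particular nonzero, so the proportionality above is meaningful. It then remains to compute $B_{7,\sH}(\varPhi_0,\varPhi_0)=\tr\bigl((\ad\varPhi_0)^2\bigr)$, and for this I would diagonalize $\ad\varPhi_0$. The key observation is that $\varPhi_0$ acts on $(\mathfrak{P}_{\sH})^C$ diagonally: by the formula for $\varPhi(\phi,A,B,\nu)$ it sends $X\mapsto-\tfrac13 X$, $Y\mapsto\tfrac13 Y$, $\xi\mapsto\xi$, $\eta\mapsto-\eta$. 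Consequently $\ad\varPhi_0$ induces a grading $(\mathfrak{e}_{7,\sH})^C=\mathfrak{g}_{-2/3}\oplus\mathfrak{g}_0\oplus\mathfrak{g}_{2/3}$, where $\mathfrak{g}_0=\{\varPhi(\phi,0,0,\nu)\}$ is the weight-$0$ part of dimension $35+1=36$, while $\mathfrak{g}_{2/3}=\{\varPhi(0,A,0,0)\}$ and $\mathfrak{g}_{-2/3}=\{\varPhi(0,0,B,0)\}$ each have dimension $15$; the eigenvalues $\pm\tfrac23$ are read off either from the Lie bracket of $(\mathfrak{e}_{7,\sH})^C$ or from the way $\varPhi(0,A,0,0)$ and $\varPhi(0,0,B,0)$ shift the $\varPhi_0$-weights on $(\mathfrak{P}_{\sH})^C$.

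It follows that
\begin{align*}
B_{7,\sH}(\varPhi_0,\varPhi_0)=\tr\bigl((\ad\varPhi_0)^2\bigr)=36\cdot 0^2+15\left(\tfrac23\right)^2+15\left(-\tfrac23\right)^2=\tfrac{40}{3},
\end{align*}
and solving $\tfrac{40}{3}=c\cdot\bigl(-\tfrac{8}{3}\bigr)$ yields $c=-5$, which is precisely the claimed identity $B_{7,\sH}=-5\,(\,\cdot\,,\cdot\,)_7$.

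The main obstacle is the bookkeeping in the grading step: one must confirm that the $A$- and $B$-components are genuinely $\ad\varPhi_0$-eigenspaces with eigenvalues exactly $\pm\tfrac23$, that $\ad\varPhi_0$ does not mix the three pieces, and that the weight-$0$ space is exactly the $(\phi,\nu)$-part of dimension $36$. All of these follow from the explicit bracket of $(\mathfrak{e}_{7,\sH})^C$ modelled on that of ${\mathfrak{e}_7}^C$, so beyond this eigenvalue check no genuinely new computation is required; the remainder is the standard argument that two $\ad$-invariant symmetric forms on a simple Lie algebra must be proportional.
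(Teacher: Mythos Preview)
Your proposal is correct and follows essentially the same approach as the paper: both use simplicity to reduce to a single constant, pick the same test element $\varPhi_0=\varPhi(0,0,0,1)$, compute $(\varPhi_0,\varPhi_0)_7=-\tfrac{8}{3}$, and obtain $\tr((\ad\varPhi_0)^2)=\tfrac{40}{3}$. The only cosmetic difference is that the paper computes $(\ad\varPhi_0)^2\varPhi(\phi,A,B,\nu)=\varPhi(0,\tfrac{4}{9}A,\tfrac{4}{9}B,0)$ directly and reads off the trace as $\tfrac{4}{9}\cdot 15\cdot 2$, while you phrase the same calculation as an eigenvalue decomposition of $\ad\varPhi_0$ with weights $0,\pm\tfrac{2}{3}$.
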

\begin{proof}
    Since $ (\mathfrak{e}_{7,\sH})^C $ is simple, there exist
    $ k \in C $ such that
    \begin{align*}
    B_{7,\sH}(\varPhi_1, \varPhi_2)=k (\varPhi_1, \varPhi_2)_7,\;\; \varPhi_i \in (\mathfrak{e}_{7,\sH})^C.
    \end{align*}
    We will determine $ k $. Let $ \varPhi_1=\varPhi_2=\varPhi(0,0,0,1) $. Then we have
    \begin{align*}
    (\varPhi_1, \varPhi_2)_7=-\dfrac{8}{3}.
    \end{align*}
     On the other hand, if follows from
     \begin{align*}
     (\ad \varPhi_1)(\ad \varPhi_2)\varPhi(\phi,A,B,\nu)&=[\varPhi_1,[\varPhi_2, \varPhi(\phi,A,B,\nu)]]
     \\
     &=[\varPhi_1,\varPhi(0,\dfrac{2}{3}A,-\dfrac{2}{3}B,0)]
     \\
     &=\varPhi(0,\dfrac{4}{9}A,\dfrac{4}{9}B,0)
     \end{align*}
     that
   \begin{align*}
      B_{7,\sH}(\ad \varPhi_1\ad \varPhi_2)=\tr(\ad \varPhi_1\ad \varPhi_2)=\dfrac{4}{9}\times 15 \times 2=\dfrac{40}{3}.
   \end{align*}
 Thus we have $ k=-5 $.

Therefore we have
\begin{align*}
    B_{7,\sH}(\varPhi_1,\varPhi_2)=-5(\varPhi_1,\varPhi_2)_7.
\end{align*}

\if0
     Next, we will determine $ k' $. Similarly, it follows from
     \begin{align*}
     \varPhi_0\varPhi_0(X,Y,\xi,\eta)=\varPhi_0(-\dfrac{1}{3}X,\dfrac{1}{3}Y,\xi,-\eta)=(\dfrac{1}{9}X,\dfrac{1}{9}Y,\xi,\eta)
     \end{align*}
     that $ \tr(\varPhi_0\varPhi_0)=(1/9)\times 6\times 2+1+1=10/3 $. Hence we have $ k'=8/5 $.
\fi
\end{proof}

We will study a following Lie group $ E_{7,\sH} $ which is
defined by replacing $ \mathfrak{C} $ with $ \H $ in the compact Lie group
$ E_7 $:
\begin{align*}
    E_{7,\sH}:=\left\lbrace \alpha \in
    \Iso_{C}((\mathfrak{P}_{\sH})^C)\relmiddle{|} \alpha (P\times
    Q)\alpha^{-1}=\alpha P \times \alpha Q, \langle\alpha P,\alpha
    Q\rangle=\langle P,Q \rangle \right\rbrace,
\end{align*}
where $ \langle P,Q \rangle:=(\tau P,Q)=\{\tau\lambda P,Q\}  $.
The group $ E_{7,\sH} $ is a compact Lie group as a closed subgroup of the unitary group $ U(32)=U((\mathfrak{P}_{\sH})^C)=\left\lbrace  \alpha \in \Iso_{C}((\mathfrak{P}_{\sH})^C)\,|\, \langle\alpha P,\alpha Q
\rangle=\langle P,Q \rangle \right\rbrace $.

\begin{lemma}\label{lemma 7.2}
The Lie algebra $ \mathfrak{e}_{7,\sH} $ of the group $ E_{7,\sH} $ is
given by
\begin{align*}
\mathfrak{e}_{7,\sH}=\left\lbrace \varPhi(\phi,A,-\tau A,\nu) \relmiddle{|}
\phi \in \mathfrak{e}_{6,\sH},A \in (\mathfrak{J}_{\sH})^C,\nu \in
i\R \right\rbrace .
\end{align*}
\end{lemma}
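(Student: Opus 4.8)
The plan is to describe $\mathfrak{e}_{7,\sH}$ as the set of skew-Hermitian elements of $(\mathfrak{e}_{7,\sH})^C$ and then to convert that skew-Hermitian condition into explicit constraints on the parameters $\phi,A,B,\nu$. Since $E_{7,\sH}$ is a closed subgroup of the unitary group $U(32)=U((\mathfrak{P}_{\sH})^C)$ cut out by the additional relation $\alpha(P\times Q)\alpha^{-1}=\alpha P\times\alpha Q$, its Lie algebra consists of those $\varPhi$ whose one-parameter subgroup $\exp(t\varPhi)$ preserves both the cross operation and $\langle\,,\,\rangle$. Differentiating the first relation at $t=0$ gives the derivation identity $[\varPhi,P\times Q]=\varPhi P\times Q+P\times\varPhi Q$, which is exactly the condition $\varPhi\in(\mathfrak{e}_{7,\sH})^C$; differentiating $\langle\exp(t\varPhi)P,\exp(t\varPhi)Q\rangle=\langle P,Q\rangle$ gives $\langle\varPhi P,Q\rangle+\langle P,\varPhi Q\rangle=0$. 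Hence
\begin{align*}
\mathfrak{e}_{7,\sH}=\left\lbrace \varPhi\in(\mathfrak{e}_{7,\sH})^C \relmiddle{|} \langle\varPhi P,Q\rangle+\langle P,\varPhi Q\rangle=0,\ P,Q\in(\mathfrak{P}_{\sH})^C \right\rbrace.
\end{align*}

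First I would rewrite the skew-Hermitian condition algebraically. Writing $\langle P,Q\rangle=(\tau P,Q)$, using the symmetry of $(\,,\,)$ and the transpose ${}^t\varPhi$ defined by $({}^t\varPhi P,Q)=(P,\varPhi Q)$, the relation $\langle\varPhi P,Q\rangle+\langle P,\varPhi Q\rangle=0$ reads $(\tau\varPhi P,Q)+({}^t\varPhi\,\tau P,Q)=0$ for all $P,Q$, which is equivalent to ${}^t\varPhi=-\tau\varPhi\tau$.

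The computational core is then to evaluate the two sides for $\varPhi=\varPhi(\phi,A,B,\nu)$. The formulae for $\varPhi$, for $(\,,\,)$, and for $\tau$ on $(\mathfrak{P}_{\sH})^C$ are formally identical to those on $\mathfrak{P}^C$, so exactly the classical computation for $\mathfrak{e}_7$ applies, the only inputs being the total symmetry of the trilinear form $(X,Y,Z)$ and the equivariance $\tau(X\times Y)=\tau X\times\tau Y$. These give
\begin{align*}
{}^t\varPhi(\phi,A,B,\nu)&=\varPhi({}^t\phi,B,A,\nu), \\
\tau\varPhi(\phi,A,B,\nu)\tau&=\varPhi(\tau\phi\tau,\tau A,\tau B,\overline{\nu}).
\end{align*}
Substituting into ${}^t\varPhi=-\tau\varPhi\tau$ and comparing the four slots yields ${}^t\phi=-\tau\phi\tau$, the pair $B=-\tau A$ and $A=-\tau B$ (which are equivalent), and $\nu=-\overline{\nu}$. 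The first is precisely the defining condition for $\phi\in\mathfrak{e}_{6,\sH}$ (the compact form being carved out of $(\mathfrak{e}_{6,\sH})^C$ by skew-Hermiticity, i.e. ${}^t\phi=-\tau\phi\tau$), while the last says $\nu\in i\R$. Reading these off gives the asserted description $\mathfrak{e}_{7,\sH}=\{\varPhi(\phi,A,-\tau A,\nu)\mid\phi\in\mathfrak{e}_{6,\sH},\ A\in(\mathfrak{J}_{\sH})^C,\ \nu\in i\R\}$.

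The step I expect to require the most care is the pair of identities for ${}^t\varPhi$ and $\tau\varPhi\tau$: establishing them means tracking each of the four output slots of $\varPhi$ through the definitions of the inner product and of $\tau$, and invoking the symmetry of the trilinear form in the right places. Everything else is formal bookkeeping. One should also confirm that no further constraint arises — in particular that ${}^t\phi=-\tau\phi\tau$ together with $\phi\in(\mathfrak{e}_{6,\sH})^C$ is genuinely equivalent to $\phi\in\mathfrak{e}_{6,\sH}$, which is the quaternionic analogue of the standard characterization of the compact $\mathfrak{e}_6$ inside ${\mathfrak{e}_6}^C$.
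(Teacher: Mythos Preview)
Your proof is correct. The paper takes a slightly different and terser route: it invokes the identity $E_{7,\sH}=((E_{7,\sH})^C)^{\tau\lambda}$ from Proposition~\ref{proposition 7.3}(1) (a forward reference) and then, implicitly, reads off $\mathfrak{e}_{7,\sH}$ as the $\tau\lambda$-fixed part of $(\mathfrak{e}_{7,\sH})^C$. You instead work directly with the unitarity condition and reduce it to ${}^t\varPhi=-\tau\varPhi\tau$. The two conditions are equivalent because ${}^t\varPhi=-\lambda\varPhi\lambda^{-1}$ for every $\varPhi\in(\mathfrak{e}_{7,\sH})^C$ (infinitesimal invariance of the skew form $\{\,,\,\}$), so your equation rewrites as $(\tau\lambda)\varPhi=\varPhi(\tau\lambda)$. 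Your route has the advantage of being self-contained and of making the parameter constraints ${}^t\phi=-\tau\phi\tau$, $B=-\tau A$, $\nu\in i\R$ explicit; the paper's is shorter but leans on a statement proved afterward.
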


In particular, we have $ \dim(\mathfrak{e}_{7,\sH})=35+30+1 =66$.
\begin{proof}
As in $ {\mathfrak{e}_7}^C $, the complex Lie algebra $ (\mathfrak{e}_{7,
\sH})^C $ of the group $ (E_{7,\sH})^C $ is given by
\begin{align*}
(\mathfrak{e}_{7,\sH})^C=\left\lbrace \varPhi(\phi,A, B,\nu) \relmiddle{|}
\phi \in (\mathfrak{e}_{6,\sH})^C, A,B \in (\mathfrak{J}_{\sH})^C,\nu \in
C \right\rbrace,
\end{align*}
where the group $ (E_{7,\sH})^C $ is defined by
\begin{align*}
(E_{7,\sH})^C =\left\lbrace \alpha \in
    \Iso_{C}((\mathfrak{P}_{\sH})^C)\relmiddle{|} \alpha (P\times
    Q)\alpha^{-1}=\alpha P \times \alpha Q  \right\rbrace.
\end{align*}
Hence, using $ ((E_{7,\sH})^C )^{\tau\lambda}=E_{7,\sH} $ shown in
Proposition \ref{proposition 7.3} (1), we obtain the required result.
\end{proof}

\if0
The semi-linear transformation $ \tau\lambda $ of $ (\mathfrak{P}_{\sH})^C
$ induces the involutive inner automorphism of $ (E_{7,\sH})^C $, so we
consider the following subgroup of $ (E_{7,\sH})^C $:
\begin{align*}
   ((E_{7,\sH})^C)^{\tau\lambda}=\left\lbrace \alpha \in (E_{7,\sH})^C
   \relmiddle{|} (\tau\lambda)\alpha=\alpha(\tau\lambda) \right\rbrace.
\end{align*}

Then, as in \cite[Theorem 4.3.2]{iy7}, we have the following theorem.

\begin{theorem}\label{theorem 4.1}
  The group $ ((E_{7,\sH})^C)^{\tau\lambda} $ coincides with the group $
  E_{7,\sH} ${\rm :} $ ((E_{7,\sH})^C)^{\tau\lambda}=E_{7,\sH} $.
\end{theorem}
\begin{proof}
  Let $ \alpha \in E_{7,\sH} $. Then, note that $ \{\alpha P, \alpha
  Q\}=\{P,Q\}, P,Q \in (\mathfrak{P}_{\sH})^C $, we have $
  {}^t\alpha^{-1}=\lambda\alpha\lambda^{-1} $. Indeed, it follows from
  \begin{align*}
      (P,\lambda Q)=\{P,Q\}=\{\alpha P,\alpha Q\}=(\alpha P,\lambda\alpha
      Q)=(P,{}^t\alpha \lambda\alpha Q)
  \end{align*}
 that $ \lambda={}^t\alpha \lambda\alpha  $, that is, $
 {}^t\alpha^{-1}=\lambda\alpha\lambda^{-1} $.
In addition, we have $ {}^t\alpha \tau \alpha=\tau $ from $ \langle \alpha
P,\alpha Q \rangle=\langle P,Q \rangle $. Hence, by using the formula $
{}^t\alpha^{-1}=\lambda\alpha\lambda^{-1} $ above, that is, $
{}^t\alpha=\lambda\alpha^{-1}\lambda^{-1} $, we obtain $
(\tau\lambda)\alpha=\alpha(\tau\lambda) $. Thus we see $ E_{7,\sH} \subset
((E_{7,\sH})^C)^{\tau\lambda}$.

Conversely, let $ \beta \in ((E_{7,\sH})^C)^{\tau\lambda} $. Then it
follows from $ \langle P, Q \rangle=\{\tau\lambda P,Q \} $ that
\begin{align*}
  \langle \beta P,\beta Q \rangle=\{(\tau\lambda)\beta P,\beta
  Q\}=\{\beta(\tau\lambda)P,\beta Q\}=\{\tau\lambda P,Q\}=\langle P, Q
  \rangle.
\end{align*}
Hence we have $ ((E_{7,\sH})^C)^{\tau\lambda} \subset E_{7,\sH} $.
With above, the proof of this theorem is completed.
\end{proof}
\fi

The immediate aim is to prove the connectedness of $ E_{7,\sH} $.
We explain its procedure. First, since we have the isomorphism $ E_{6,
\sH} \cong SU(6)/\Z_2 $ (Theorem \ref{theorem 6.1}), the connectedness of $ E_{6,\sH} $ follows from
its result. After this, we will prove that the homogeneous space $ E_{7,
\sH}/E_{6,\sH} $ is homeomorphic to a space $ (\mathfrak{M}_{\sH})_1 $,
so that the connectedness of $ E_{7,\sH} $ is finally shown, where the
space $ (\mathfrak{M}_{\sH})_1 $ is defined later.

Now, we will make some preparations.
First, we define a space $ (\mathfrak{M}_{\sH})^C $ by
\begin{align*}
    (\mathfrak{M}_{\sH})^C :
    &=\left\lbrace
    P \in (\mathfrak{P}_{\sH})^C\relmiddle{|}
    P \times P=0, P\not=0  \right\rbrace
    \\
    &=\left\lbrace P=(X,Y,\xi,\eta)\relmiddle{|}
    \begin{array}{l}
        X \vee Y=0,X \times X=\eta Y \\
        Y \times Y=\xi X,(X,Y)=3\xi\eta
    \end{array}\right\rbrace.
\end{align*}
Here, for example, the following elements of $ (\mathfrak{M}_{\sH})^C $
\begin{align*}
    (X,\dfrac{1}{\eta}(X \times X), \dfrac{1}{\eta^2}\det\,X, \eta),\;\;
    (\dfrac{1}{\xi}(Y \times Y), Y, \xi, \dfrac{1}{\xi^2}\det\,Y)\;\;
    \dot{1}=(0,0,1,0),\;\;\text{\d{$ 1 $}}=(0,0,0,1)
\end{align*}
belong to $ (\mathfrak{M}_{\sH})^C $, where $ \xi\not=0, \eta\not=0 $.
Obviously, the group $ E_{7,\sH} $ acts on $ (\mathfrak{M}_{\sH})^C $.
\vspace{1mm}

Here, we will study the following subgroup $ (E_{7,\sH})_{\dot{1}} $ of $
E_{7,
\sH} $:
\begin{align*}
(E_{7,\sH})_{\dot{1}}:=\left\lbrace \alpha \in E_{7,\sH} \relmiddle{|}
\alpha \dot{1}=\dot{1} \right\rbrace.
\end{align*}

\begin{proposition}\label{proposition 7.3}
{\rm (1)} The group $ E_{7,\sH} $ coincides with the group $ ((E_{7,
\sH})^C)^{\tau\lambda} ${\rm :}$ E_{7,\sH}\!=((E_{7,\sH})^C)^{\tau\lambda}
$\!.

{\rm (2)} If $ \alpha \in E_{7,\sH} $ satisfies $ \alpha
\dot{1}
=\dot{1} $, then $ \alpha $ also satisfies $ \alpha
\text{\d{$ 1 $}}=\text{\d{$ 1 $}} $ and conversely.
\end{proposition}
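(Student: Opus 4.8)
The plan is to establish (1) by a double inclusion and then derive (2) as an immediate consequence of the commutation relation with $\tau\lambda$ that (1) provides. Throughout I would use the two elementary identities $\lambda^2 = -1$ and $\tau\lambda = \lambda\tau$, both immediate from the fact that $\lambda(X,Y,\xi,\eta) = (Y,-X,\eta,-\xi)$ has real coefficients.

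For the inclusion $E_{7,\sH} \subset ((E_{7,\sH})^C)^{\tau\lambda}$, take $\alpha \in E_{7,\sH}$. As in ${E_7}^C$, preservation of the Freudenthal cross operation forces $\alpha$ to preserve the skew-symmetric form, $\{\alpha P, \alpha Q\} = \{P, Q\}$. Combining this with $\{P, Q\} = (P, \lambda Q)$ I would read off the adjoint identity ${}^t\alpha^{-1} = \lambda\alpha\lambda^{-1}$ from
\begin{align*}
(P, \lambda Q) = \{P, Q\} = \{\alpha P, \alpha Q\} = (\alpha P, \lambda\alpha Q) = (P, {}^t\alpha\,\lambda\alpha\, Q),
\end{align*}
and from the Hermitian condition together with $\langle P, Q\rangle = (\tau P, Q)$ I would obtain ${}^t\alpha\,\tau\alpha = \tau$. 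Substituting ${}^t\alpha = \lambda\alpha^{-1}\lambda^{-1}$ into the latter and using $\tau\lambda = \lambda\tau$ and $\lambda^{-1} = -\lambda$, these two identities collapse to $\tau\lambda\,\alpha = \alpha\,\tau\lambda$, i.e. $\alpha \in ((E_{7,\sH})^C)^{\tau\lambda}$. For the reverse inclusion, take $\beta \in ((E_{7,\sH})^C)^{\tau\lambda}$; then $\beta$ already preserves the cross operation, hence the skew form, so the only missing condition is the Hermitian one, which I would verify from $\langle P, Q\rangle = \{\tau\lambda P, Q\}$ via
\begin{align*}
\langle\beta P, \beta Q\rangle = \{\tau\lambda\beta P, \beta Q\} = \{\beta\,\tau\lambda P, \beta Q\} = \{\tau\lambda P, Q\} = \langle P, Q\rangle.
\end{align*}

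For (2), I would first record the action of $\tau\lambda$ on the two distinguished vectors. Since $\dot{1} = (0,0,1,0)$ and $\text{\d{$1$}} = (0,0,0,1)$ are real, $\tau$ fixes them, and applying $\lambda$ gives $\tau\lambda\,\dot{1} = -\text{\d{$1$}}$ and $\tau\lambda\,\text{\d{$1$}} = \dot{1}$. Now let $\alpha \in E_{7,\sH}$, so $\alpha\,\tau\lambda = \tau\lambda\,\alpha$ by (1). If $\alpha\dot{1} = \dot{1}$ then
\begin{align*}
\alpha\,\text{\d{$1$}} = \alpha(-\tau\lambda\,\dot{1}) = -\tau\lambda(\alpha\dot{1}) = -\tau\lambda\,\dot{1} = \text{\d{$1$}},
\end{align*}
and conversely, if $\alpha\,\text{\d{$1$}} = \text{\d{$1$}}$ then $\alpha\dot{1} = \alpha(\tau\lambda\,\text{\d{$1$}}) = \tau\lambda(\alpha\,\text{\d{$1$}}) = \tau\lambda\,\text{\d{$1$}} = \dot{1}$.

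The only genuinely delicate step is the first inclusion in (1): one must correctly assemble the three forms and track the sign coming from $\lambda^{-1} = -\lambda$ (and the commuting $\tau\lambda = \lambda\tau$) when passing from the two adjoint identities to the single relation $\tau\lambda\,\alpha = \alpha\,\tau\lambda$. Once that relation is available, both the reverse inclusion and all of (2) are essentially forced.
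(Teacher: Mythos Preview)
Your proof is correct and follows essentially the same approach as the paper: both establish (1) by deriving ${}^t\alpha^{-1}=\lambda\alpha\lambda^{-1}$ from preservation of the skew form and ${}^t\alpha\,\tau\alpha=\tau$ from the Hermitian condition, combining these to get $(\tau\lambda)\alpha=\alpha(\tau\lambda)$, and then using $\langle P,Q\rangle=\{\tau\lambda P,Q\}$ for the reverse inclusion; part (2) is deduced identically from the commutation with $\tau\lambda$. The only minor addition is that you spell out the converse direction of (2) explicitly, while the paper leaves it implicit.
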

\begin{proof}
(1)
Let $ \alpha \in E_{7,\sH} $. Then, note that $ \{\alpha P, \alpha Q\}=
\{P,Q\}, P,Q \in (\mathfrak{P}_{\sH})^C $, so that it follows from
\begin{align*}
(P,\lambda Q)=\{P,Q \}=\{\alpha P,\alpha Q \}=(\alpha P,\lambda\alpha
Q)=(P,{}^t\alpha\lambda\alpha Q)
\end{align*}
that $ \lambda={}^t\alpha\lambda\alpha $, that is, $ {}^t\alpha^{-1}
=\lambda\alpha\lambda^{-1} $. In addition, we have $ {}^t\alpha \tau
\alpha=\tau $ from $ \langle \alpha
P,\alpha Q \rangle=\langle P,Q \rangle $. Hence, together with the formula
$ {}^t\alpha^{-1}=\lambda\alpha\lambda^{-1} $ above, that is, $
{}^t\alpha=\lambda\alpha^{-1}\lambda^{-1} $, we have $(\tau\lambda)
\alpha=\alpha(\tau\lambda) $, so that $ E_{7,\sH} \subset ((E_{7,
\sH})^C)^{\tau\lambda} $.

Conversely, let $ \beta \in ((E_{7,\sH})^C)^{\tau\lambda} $. Then it
follows from $ \langle P, Q \rangle=\{\tau\lambda P,Q \} $ that
\begin{align*}
  \langle \beta P,\beta Q \rangle=\{(\tau\lambda)\beta P,\beta Q\}=
\{\beta(\tau\lambda)P,\beta Q\}=\{\tau\lambda P,Q\}=\langle P, Q \rangle.
\end{align*}
Hence we have $ ((E_{7,\sH})^C)^{\tau\lambda} \subset E_{7,\sH} $. Thus we
have $ E_{7,\sH}=((E_{7,\sH})^C)^{\tau\lambda} $.

(2) By using the result of (1) above, it follows that
$\alpha \text{\d{$ 1 $}}=\alpha (\tau\lambda)(-\dot{1})=(\tau\lambda)
\alpha(-\dot{1})=\tau\lambda (-\dot{1})=\text{\d{$ 1 $}}$.
\end{proof}
Then we have the following theorem.

\begin{proposition}\label{proposition 7.4}
   The group $ (E_{7,\sH})_{\dot{1}} $ is isomorphic to the group $
   E_{6,\sH} ${\rm :} $ (E_{7,\sH})_{\dot{1}} \cong E_{6,\sH}$.

In particular, the group $ (E_{7,\sH})_{\dot{1}} $ is connected.
\end{proposition}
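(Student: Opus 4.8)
The plan is to construct an explicit isomorphism $\varphi : E_{6,\sH} \to (E_{7,\sH})_{\dot 1}$ modelled on the inclusion $E_6 \hookrightarrow E_7$ recalled in Subsection 2.4. For $\beta \in E_{6,\sH}$ I set $\tilde\beta(X,Y,\xi,\eta) = (\beta X, {}^t\beta^{-1}Y, \xi, \eta)$, where ${}^t\beta$ is the transpose with respect to the inner product $(X,Y)$ on $(\mathfrak{J}_\sH)^C$, and put $\varphi(\beta) = \tilde\beta$. First I would check that this is well defined: $\tilde\beta$ preserves the Freudenthal cross operation (so $\tilde\beta \in (E_{7,\sH})^C$) exactly as in the classical case, and it preserves the Hermitian inner product $\langle\cdot,\cdot\rangle$ because the compactness condition $\langle\beta X,\beta Y\rangle=\langle X,Y\rangle$ forces ${}^t\beta^{-1}=\tau\beta\tau$, whence $\tilde\beta\in E_{7,\sH}$ by Proposition \ref{proposition 7.3} (1); moreover $\tilde\beta\dot 1=\dot 1$, so $\tilde\beta\in(E_{7,\sH})_{\dot 1}$. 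That $\varphi$ is an injective homomorphism is immediate from $\tilde\beta\,\widetilde{\beta'}=\widetilde{\beta\beta'}$ and from $\tilde\beta=\mathrm{id}\Rightarrow\beta=\mathrm{id}$.

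The heart of the proof is the surjectivity of $\varphi$. Take $\alpha\in(E_{7,\sH})_{\dot 1}$; by Proposition \ref{proposition 7.3} (2) we also have $\alpha\,\text{\d{$1$}}=\text{\d{$1$}}$. A direct computation from the definition of the cross operation gives $\dot 1\times\text{\d{$1$}}=\varPhi(0,0,0,-\tfrac{3}{8})$, and $\varPhi(0,0,0,1)$ acts on $(\mathfrak{P}_\sH)^C=(\mathfrak{J}_\sH)^C\oplus(\mathfrak{J}_\sH)^C\oplus C\oplus C$ as the scalars $-\tfrac13,\tfrac13,1,-1$ on the four summands respectively. Since $\alpha$ fixes $\dot 1$ and $\text{\d{$1$}}$, the relation $\alpha(P\times Q)\alpha^{-1}=\alpha P\times\alpha Q$ yields $\alpha(\dot 1\times\text{\d{$1$}})\alpha^{-1}=\dot 1\times\text{\d{$1$}}$, so $\alpha$ commutes with $\varPhi(0,0,0,1)$. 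As the four eigenvalues are distinct, $\alpha$ preserves each eigenspace; in particular it preserves the two $(\mathfrak{J}_\sH)^C$-summands, and I may define $C$-linear isomorphisms $\beta,\gamma$ of $(\mathfrak{J}_\sH)^C$ by $\alpha\dot X=\dot{(\beta X)}$ and $\alpha\,\text{\d{$Y$}}=\text{\d{$(\gamma Y)$}}$.

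It then remains to identify $\beta$ and $\gamma$. Computing $\dot X\times\dot Z=\varPhi(0,0,\tfrac12 X\times Z,0)$ and applying the identity $\alpha(\dot X\times\dot Z)\alpha^{-1}=\dot{(\beta X)}\times\dot{(\beta Z)}$ to the vector $\dot{X'}$ (using that $\alpha$ fixes $\text{\d{$1$}}$) I obtain ${}^t\beta^{-1}(X\times Z)=\beta X\times\beta Z$, which is precisely the condition $\det\beta X=\det X$ defining $(E_{6,\sH})^C$. Evaluating the Hermitian invariance of $\alpha$ on $\dot X,\dot Z$ gives $\langle\beta X,\beta Z\rangle=\langle X,Z\rangle$, so in fact $\beta\in E_{6,\sH}$. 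Finally, from $\{\dot X,\text{\d{$W$}}\}=(X,W)$ and the invariance of the skew-symmetric form $\{\cdot,\cdot\}$ under $\alpha$ I get $(\beta X,\gamma W)=(X,W)$ for all $X,W$, i.e. $\gamma={}^t\beta^{-1}$. Hence $\alpha=\tilde\beta=\varphi(\beta)$, proving surjectivity. Therefore $\varphi$ is an isomorphism $E_{6,\sH}\cong(E_{7,\sH})_{\dot 1}$, and the connectedness of $(E_{7,\sH})_{\dot 1}$ follows at once from $E_{6,\sH}\cong SU(6)/\Z_2$ (Theorem \ref{theorem 6.1}).

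The main obstacle I anticipate is the bookkeeping in the surjectivity step: one must carefully evaluate the operator identities coming from $\alpha(P\times Q)\alpha^{-1}=\alpha P\times\alpha Q$ on the various vectors $\dot{X'},\text{\d{$W$}},\dot 1,\text{\d{$1$}}$ and track which summand each term lands in, so that the three conditions witnessing membership in $E_{6,\sH}$ (the cross-product/$\det$ relation, Hermitian invariance, and $\gamma={}^t\beta^{-1}$) are extracted cleanly. The well-definedness of $\varphi$ is comparatively routine, but it does genuinely use compactness through the identity ${}^t\beta^{-1}=\tau\beta\tau$, which is what guarantees that $\tilde\beta$ preserves the Hermitian form and hence lands in $E_{7,\sH}$ rather than merely in $(E_{7,\sH})^C$.
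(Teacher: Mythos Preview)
Your argument is correct and in fact cleaner than the paper's. Both proofs define the same embedding $\beta\mapsto\tilde\beta$ and both reduce surjectivity to showing that any $\alpha\in(E_{7,\sH})_{\dot 1}$ has block-diagonal form $\diag(\beta,{}^t\beta^{-1},1,1)$ with $\beta\in E_{6,\sH}$. The difference is in how the block structure is obtained. The paper writes $\alpha$ as a general $4\times4$ block matrix, uses the skew form $\{\,\cdot\,,\cdot\,\}$ on pairs such as $(\alpha\dot X,\alpha\dot 1)$ to kill the scalar rows and columns, and then invokes the rank-one variety $(\mathfrak{M}_{\sH})^C$: by applying $\alpha$ to the family $(X,\eta^{-1}X\times X,\eta^{-2}\det X,\eta)$ and varying $\eta$, it forces the off-diagonal blocks $\delta,\varepsilon$ to vanish and extracts the cross-product identity for $\beta$.

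Your route replaces all of this by a single eigenspace observation: since $\alpha$ fixes $\dot 1$ and $\text{\d{$1$}}$, it commutes with $\dot 1\times\text{\d{$1$}}=\varPhi(0,0,0,-\tfrac38)$, and the four eigenvalues $-\tfrac13,\tfrac13,1,-1$ of $\varPhi(0,0,0,1)$ on the summands of $(\mathfrak{P}_{\sH})^C$ are distinct, so $\alpha$ preserves each summand automatically. You then read off the determinant condition from $\dot X\times\dot Z=\varPhi(0,0,\tfrac12 X\times Z,0)$ and recover $\gamma={}^t\beta^{-1}$ from the invariance of $\{\,\cdot\,,\cdot\,\}$. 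This avoids the somewhat ad hoc manipulation with $(\mathfrak{M}_{\sH})^C$ and the density argument the paper needs for the case $\det X=0$. The paper's approach has the mild advantage of being a direct transcription of the standard $E_6\subset E_7$ argument from \cite{iy0}, but yours is shorter and more conceptual.
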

\begin{proof}
We associate an element $ \beta \in E_{6,\sH} $  with the element
  \begin{align*}
      \tilde{\beta} =
      \begin{pmatrix}
          \beta  &  0 &  0  & 0  \\
          0 & \tau\beta\tau & 0 & 0 \\
          0 & 0 & 1 & 0 \\
          0 & 0 & 0 & 1
      \end{pmatrix}
      \in (E_{7,\sH})_{\dot{1}}.
  \end{align*}
  Then, for $ P=(X,Y,\xi,\eta), Q=(Z,W,\zeta,\omega) \in
  (\mathfrak{P}_{\sH})^C$,
  using the formula $ \beta X \vee \tau\beta\tau W=\beta(X \vee W)
  \beta^{-1} $, it follows that
  \begin{align*}
      \tilde{\beta} P \times \tilde{\beta} Q&=(\beta X,
      \tau\beta\tau Y, \xi,\eta) \times (\beta Z,\tau\beta\tau W,\zeta,
      \omega)
      \\
      &=\left(
      \begin{array}{l}
         (-1/2)(\beta X \vee \tau\beta\tau W+\beta Z \vee \tau\beta\tau Y)
         \\[1mm]
        (-1/4)(2\tau\beta\tau Y \times \tau\beta\tau W-\xi\beta
        Z-\zeta\beta X)
        \\[1mm]
        (1/4)(2\beta X \times \beta Z-\eta\tau\beta\tau W-\omega
        \tau\beta\tau Y)
        \\[1mm]
        (-1/8)((\beta X, \tau\beta\tau W)+(\beta Z, \tau\beta\tau
        Y)-3(\xi\omega+\zeta\eta))
      \end{array}
      \right)
      \\[1mm]
      &=\left(
       \begin{array}{l}
           (-1/2)(\beta (X \vee W)\beta^{-1}+\beta (Z \vee Y)
           \beta^{-1}
           \\[1mm]
           (-1/4)(\beta (2Y \times  W-\xi Z-\zeta X))
           \\[1mm]
           (1/4)(\tau\beta\tau(2X \times Z-\eta W-\omega Y))
           \\[1mm]
           (-1/8)((X, W)+(Z, Y)-3(\xi\omega+\zeta\eta))
       \end{array}
      \right).
  \end{align*}
Here, set $ P \times Q=:\varPhi(\phi,A,B,\nu) $, where
\begin{align*}
    &\phi=(-1/2)(X \vee W+Z \vee Y),\;A=(-1/4)(2Y \times W-\xi Z=\zeta
    X),
    \\
    & B=(1/4)(2X\times Z-\eta W-\omega Y),\;\nu=(1/8)((X,W)+
    (Z,Y)-3(\xi\omega+\zeta\eta)),
\end{align*}
then we have $ \tilde{\beta} P \times \tilde{\beta} Q
=\varPhi(\beta\phi\beta^{-1},\beta A,\tau\beta\tau B,\nu) $.

On the other hand, for $ R:=(S,T,\varsigma,\varrho) \in
(\mathfrak{P}_{\sH})^C $, using the formula $ (\tau\beta\tau){}
^t\phi(\tau\beta^{-1}\tau)={}^t(\beta\phi\beta^{-1}) $, it
follows that
  \begin{align*}
      \tilde{\beta}(P \times Q)\tilde{\beta}^{-1}R
      &=\tilde{\beta}(P \times Q)(\beta^{-1} S,
       \tau\beta^{-1}\tau T, \varsigma,\varrho)
      \\
      &=:\tilde{\beta}\varPhi(\phi,A,B,\nu)(\beta^{-1} S,\tau\beta^{-1}\tau
      T, \varsigma,\varrho)\;\;(P \times Q=:\varPhi(\phi,A,B,
      \nu))
      \\
      &=\beta\left(
      \begin{array}{c}
        \phi(\beta^{-1}S)-(1/3)\nu(\beta^{-1}S)+2B \times (\tau\beta^{-1}
        \tau T)+\varrho A \\
        2A \times (\beta^{-1}S)-{}^t\phi(\tau\beta^{-1}\tau T)+(1/3)
        \nu(\tau\beta^{-1}\tau T)+\varsigma B \\
        (A,\tau\beta^{-1}\tau T)+\nu\varsigma \\
        (B,\beta^{-1}S)-\nu\varrho
      \end{array}\right)
      \\
       &=\left(
       \begin{array}{c}
          \beta\phi\beta^{-1}S-(1/3)\nu S+\beta(2B \times (\tau\beta^{-1}
          \tau T))+\varrho \beta A
          \\
          \tau\beta\tau(2A \times (\beta^{-1}S))-(\tau\beta\tau){}
         ^t\phi(\tau\beta^{-1}\tau )T+(1/3)\nu T+\varsigma {}^t\beta^{-1} B
          \\
           (\beta A, T)+\nu\varsigma
          \\
           (\tau\beta\tau B,S)-\nu\varrho
       \end{array}\right)
       \\
       &=\left(
       \begin{array}{c}
           \beta\phi\beta^{-1}S-(1/3)\nu S+2(\tau\beta\tau) B \times T+
           \varrho \beta A \\
           2\beta A \times S-{}^t(\beta\phi\beta^{-1}) T+(1/3)\nu T+
           \varsigma (\tau\beta\tau) B \\
           (\beta A, T)+\nu\varsigma \\
           ((\tau\beta\tau)B,S)-\nu\varrho
       \end{array}\right)
    \\
    &=\varPhi(\beta\phi\beta^{-1},\beta A,(\tau\beta\tau) B,\nu)(S,T,
    \varsigma,\varrho),
  \end{align*}
that is, $ \tilde{\beta}(P \times Q)\tilde{\beta}^{-1}
=\varPhi(\beta\phi\beta^{-1},\beta A,{}^t\beta^{-1}B,\nu) $.

\noindent Hence we have $ \tilde{\beta} P \times \tilde{\beta} Q
=\tilde{\beta}(P \times Q)\tilde{\beta}^{-1} $. Moreover, it is
easy to verify that $ \langle\tilde{\beta} P,\tilde{\beta} Q\rangle=\langle
P,Q \rangle $. Indeed, for $ P=(X,Y,\xi,\eta), Q=(Z,W,\zeta,\omega) \in
(\mathfrak{P}_{\sH})^C $, note that $ \tau\beta\tau \in E_{7,\sH} $, it
follows that
\begin{align*}
\langle\tilde{\beta} P,\tilde{\beta} Q\rangle
&=\langle (\beta X,(\tau\beta\tau)Y,\xi,\eta), (\beta Z,(\tau\beta\tau)W,
\zeta,\omega) \rangle
\\
&=\langle \beta X,\beta Z \rangle +\langle (\tau\beta\tau)Y,
(\tau\beta\tau)W \rangle +(\tau\xi)\eta +(\tau\eta)\omega
\\
&=\langle X, Z \rangle +\langle Y,W \rangle +(\tau\xi)\eta +(\tau\eta)
\omega
\\
&=\langle P,Q \rangle,
\end{align*}
so that we have $ \tilde{\beta} \in E_{7,\sH} $. Moreover, since it is
trivial that $ \tilde{\beta}\dot{1}=\dot{1} $, we obtain $ \tilde{\beta}
\in (E_{7,\sH})_{\dot{1}} $.

Conversely, let $ \alpha  \in (E_{7,\sH})_{\dot{1}} $. Then $ \alpha $ is
of the form
\begin{align*}
\alpha=
\begin{pmatrix}
    \beta  &  \varepsilon  & 0  &  0 \\
    \delta & \beta' & 0 & 0     \\
    0 & 0 & 1 & 0               \\
    0 & 0 & 0 & 1
\end{pmatrix},\;\;\beta,\beta',\delta,\varepsilon \in
\Hom_{C}((\mathfrak{J}_{\sH})^C).
\end{align*}
Indeed, set
\begin{align*}
\alpha=
\begin{pmatrix}
    \beta & \varepsilon &  C  &  A   \\
    \delta & \beta' & B & D     \\
    c & a & \nu & \lambda       \\
    b & d & \kappa & \mu
\end{pmatrix},\;\;
\begin{array}{l}
 \beta,\beta',\delta,\varepsilon \in \Hom_{C}((\mathfrak{J}_{\sH})^C), \\
 a,b.c,d \in \Hom_{C}((\mathfrak{J}_{\sH})^C,C), \\
 A,B,C,D \in (\mathfrak{J}_{\sH})^C, \\
 \nu,\mu,\lambda,\kappa \in C.
\end{array}
\end{align*}
Then, since $ \alpha \dot{1}=\dot{1} $ implies $ \alpha \text{\d{$ 1 $}}
=\text{\d{$ 1 $}} $ (Lemma \ref{lemma 7.2}(2)), it follows that
\begin{align*}
\dot{1}&=\alpha
\dot{1}=\dot{C}+\text{\d{$ B $}}+\dot{\nu}+\text{\d{$ \kappa $}}
\\
\text{\d{$ 1 $}}&=\alpha \text{\d{$ 1 $}}=\dot{A}+
\text{\d{$ D $}}+\dot{\lambda}+\text{\d{$ \mu $}}.
\end{align*}
Hence we have $ A=B=C=D=\bm{O}{(\text{$ \bm{O} $ is zero matrix})},
\kappa=\lambda=0 $ and $ \nu=\mu=1 $. Moreover, it follows from
\begin{align*}
    \{\alpha \dot{X}, \alpha \dot{1}\}=\{\alpha \dot{X}, \alpha
    \text{\d{$ 1 $}}\}=0,\;\;
    \{\alpha \text{\d{$ X $}}, \alpha \dot{1}\}=\{\alpha \text{\d{$ X
    $}} , \alpha \text{\d{$ 1 $}}\}=0
\end{align*}
that
\begin{align*}
0&=\{\alpha \dot{X}, \alpha \dot{1}\}=\{(\beta X, \delta X,c(X),
b(X)),(0,0,1,0)\}=-b(X),
\\
0&=\{\alpha \dot{X}, \alpha \text{\d{$ 1 $}}\}=\{(\beta X, \delta
X,c(X), b(X)),(0,0,0,1)\}=c(X),
\\
0&=\{\alpha \text{\d{$ 1 $}}, \alpha \dot{1}\}=\{(\varepsilon X,
\beta'
X,a(X), d(X)),(0,0,1,0)\}=-d(X),
\\
0&=\{\alpha \text{\d{$ 1 $}}, \alpha \text{\d{$ 1 $}}\}=\{(\varepsilon
X, \beta' X,a(X), d(X)),(0,0,0,1)\}=a(X),
\end{align*}
that is, $ -b(X)=c(X)=-d(X)=a(X)=0 $ hold for all $ X \in (\mathfrak{J}
_{\sH})^C $. Hence we have $ a=b=c=d=0 $. With above, $ \alpha $ is of the form above.

The group $ (E_{7,\sH})_{\dot{1}} $ act on the space
$ (\mathfrak{M}_{\sH})^C $, obviously. Hence,
for $ (X, \dfrac{1}{\eta}(X \times X), \dfrac{1}{\eta^2}\det X, \eta)
\in (\mathfrak{M}_{\sH})^C,\eta \not=0 $, since it follows that
\begin{align*}
    (*)\cdots  \alpha(X, \dfrac{1}{\eta}(X \times X), \dfrac{1}
    {\eta^2}\det X, \eta)=(\beta X+\dfrac{1}{\eta}\varepsilon(X \times
    X),
    \delta X+\dfrac{1}{\eta}\beta'(X \times X), \dfrac{1}{\eta^2}\det
    X,
    \eta) \in (\mathfrak{M}_{\sH})^C,
\end{align*}
the following formula
\begin{align*}
    (\beta X+\dfrac{1}{\eta}\varepsilon(X \times X))\times (\beta X+
    \dfrac{1}{\eta}\varepsilon(X \times X))=\eta(\delta X+\dfrac{1}
    {\eta}
    \beta'(X \times X))
\end{align*}
holds for all $ \eta \in C, \eta \not=0 $.  Hence the formula above
holds for $ \eta=1,-1,2,-2,1/4 $, so that we have that $ \delta X=0 $
holds for all $ X \in (\mathfrak{J}_{\sH})^C $. Thus we obtain $ \delta=0
$.
Moreover, for $ (\dfrac{1}{\xi}(Y \times Y),Y,\xi, \dfrac{1}{\xi^2}\det Y)
\in (\mathfrak{M}_{\sH})^C, \xi \not=0 $, by doing a similar computation
above, we have that the formula $ \beta' Y \times \beta' Y=\beta(Y \times
Y)+\xi\varepsilon (Y) $ holds for all $ \xi \in C, \xi \not=0 $. Hence
the formula above holds for $ \xi=1,-1 $, so that we have that $
\varepsilon (Y)=0 $ holds for all $ Y \in (\mathfrak{J}_{\sH})^C $. Thus
we obtain $ \varepsilon=0 $. Therefore $ \alpha $ is of the form
\begin{align*}
    \alpha=
    \begin{pmatrix}
        \beta  & 0 & 0 & 0 \\
        0 & \beta' & 0 & 0     \\
        0 & 0 & 1 & 0               \\
        0 & 0 & 0 & 1
    \end{pmatrix},\;\;\beta,\beta'\in \Hom_{C}((\mathfrak{J}_{\sH})^C).
\end{align*}

\noindent Again, in the formula $ (*) $ above, set $ \eta=1 $. Then
the condition $ \alpha(X,X \times X,\det\,X,1) \in (\mathfrak{M}_{\sH})^C$
implies
\begin{align*}
    \left\lbrace
    \begin{array}{l}
        \beta X \times \beta X=\beta'(X \times X)
        \\[1mm]
        (\beta X,\beta'(X \times X))=3\det X.
    \end{array}
    \right.
\end{align*}
Hence it follows from two formulas above
that
\begin{align*}
    3\det \, (\beta X)=(\beta X,\beta X \times \beta X)=(\beta
    X,\beta'(X \times X))=3\det \, X,
\end{align*}
that is, $ \det\,(\beta X)=\det\,X $. Moreover, $ \langle \alpha \dot{X},
\alpha \dot{X'} \rangle =\langle \dot{X}, \dot{X'} \rangle$ implies $
\langle \beta X, \beta X'\rangle=\langle X, X'\rangle $. Thus we have $
\beta \in E_{6,\sH} $.

Next, as for $ \beta' $, using the formula $ \beta X \times \beta
X=\beta'(X \times X) $, it follows from $ \beta \in E_{6, \sH} $ that
\begin{align*}
    \beta'(X \times X)=\beta X \times \beta X=\tau\beta\tau(X \times X),
\end{align*}
that is $ \beta'(X \times X)=\tau\beta\tau(X \times X) \cdots(**) $.

Therefore we can obtain
\begin{align*}
    \beta'=\tau\beta\tau.
\end{align*}
Indeed, set $ X \times X $ instead of $ X $ in the formula $ (**) $,
then we have the following formula
\begin{align*}
    (\det\, X)\beta' X=(\det\,X)\tau\beta\tau X.
\end{align*}
In the case where $ \det\,X \not=0$, we have $ \beta'X=\tau\beta\tau X $
for all $ X \in (\mathfrak{J}_{\sH})^C $, that is $ \beta'=\tau\beta\tau
$.
In the case $ \det\,X=0 $. We consider a space $ S:=\left\lbrace X \in
(\mathfrak{J}_{\sH})^C \relmiddle{|} \det\, X \not=0 \right\rbrace  $.
Then it follows from $ \ov{S}=(\mathfrak{J}_{\sH})^C $ that $ S $ is dense
in $ (\mathfrak{J}_{\sH})^C $, and since $ \beta', \tau\beta\tau \in
\Hom_{C}((\mathfrak{J}_{\sH})^C) $  are continuous mappings and agree on $
S $,these also agree on $ (\mathfrak{J}_{\sH})^C $. Namely, $
\beta'=\tau\beta\tau $ holds even if $ \det\,X=0 $.

Finally, the connectedness of the group $ (E_{7,\sH})_{\dot{1}} $ follows
from the connectedness of the group $ E_{6,\sH} $.

Consequently, the proof of this proposition is completed.
\end{proof}

Here, we prove the useful lemma in order to prove Theorem \ref{theorem 7.6}
below.

\begin{lemma}\label{lemma 7.5}
Any element $ P \in (\mathfrak{M}_{\sH})^C $ can be transformed to a
diagonal form by some element $ \alpha \in (E_{7,\sH})_0 ${\rm :}
\begin{align*}
\alpha P=(X,Y,\xi,\eta),\;\; X,Y {\text{{\rm :} diagonal forms}}, \xi >0.
\end{align*}
\end{lemma}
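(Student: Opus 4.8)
The plan is to adapt Yokota's canonical-form argument for the action of $E_7$ on the Freudenthal variety: first use the connected group $(E_{7,\sH})_0$ to push $P$ into the open stratum where the scalar $\xi$ is nonzero, and then diagonalize the Jordan-algebra part by means of the isotropy subgroup $(E_{7,\sH})_{\dot{1}} \cong E_{6,\sH}$ of Proposition \ref{proposition 7.4}. All group elements I use will be built from exponentials of $\mathfrak{e}_{7,\sH}$, so that membership in $(E_{7,\sH})_0$ is automatic.

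First I would arrange $\xi \neq 0$. For each real element $A$ of the Jordan algebra (the $E_k$ and the $F_k(a)$, $a \in \H$) one has $\varPhi(0, A, -\tau A, 0) \in \mathfrak{e}_{7,\sH}$, and the corresponding one-parameter subgroup $g_A(t) = \exp(t\,\varPhi(0, A, -\tau A, 0))$ lies in $(E_{7,\sH})_0$. Along this flow the scalar obeys $\dot{\xi} = (A, Y)$, while $Y$ itself evolves by $\dot{Y} = 2 A \times X - \xi A$, so that $\ddot{\xi} = 2(A \times A, X)$ when $\xi = Y = 0$; thus the nonzero data carried by $P$ can be propagated into the $\xi$-slot through these first- and second-order effects. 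Since $P \neq 0$ and $P$ obeys the relations $X \vee Y = 0$, $X \times X = \eta Y$, $Y \times Y = \xi X$, $(X,Y) = 3\xi\eta$, a finite case analysis on the vanishing pattern of $(X, Y, \xi, \eta)$ shows that a suitable composite of the $g_A(t)$ always reaches $\xi \neq 0$. Once $\xi \neq 0$, $P$ is determined by its pair $(Y, \xi)$ via $P = (\frac{1}{\xi}(Y \times Y),\, Y,\, \xi,\, \frac{1}{\xi^2}\det Y)$.

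Next I would diagonalize. By Proposition \ref{proposition 7.4} an element $\tilde{\beta} \in (E_{7,\sH})_{\dot{1}}$, with $\beta \in E_{6,\sH}$, acts by $\tilde{\beta}(X, Y, \xi, \eta) = (\beta X, \tau\beta\tau Y, \xi, \eta)$; in particular it fixes $\xi$ and $\eta$. Using the isomorphism $E_{6,\sH} \cong SU(6)/\Z_2$ of Theorem \ref{theorem 6.1} and the standard identification of $(\mathfrak{J}_{\sH})^C$ with the space of complex alternating $6 \times 6$ matrices, on which $SU(6)$ acts by $Y \mapsto g Y g^t$, the unitary (alternating Takagi, i.e.\ Youla) factorization provides $\beta$ making $\tau\beta\tau Y$ a diagonal form. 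Because $\xi \neq 0$ forces $X = \frac{1}{\xi}(Y \times Y)$, and the Freudenthal product of diagonal elements is again diagonal, $X$ is then diagonal as well. Finally the phase rotation $\exp(t\,\varPhi(0,0,0,\nu))$ with $\nu \in i\R$, which multiplies $\xi$ by a unit and leaves $X, Y$ diagonal, can be applied to normalize $\xi > 0$. Composing these maps yields the required $\alpha \in (E_{7,\sH})_0$ with $\alpha P = (X, Y, \xi, \eta)$, $X, Y$ diagonal and $\xi > 0$.

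The main obstacle is the first step. Establishing that $\xi \neq 0$ is reachable \emph{within the connected compact group} requires genuinely checking every degenerate configuration — for instance when $\xi = \eta = 0$ and the diagonal parts of $X$ and $Y$ both vanish — rather than only the generic case, and this is where the defining relations of $(\mathfrak{M}_{\sH})^C$ must be used carefully to guarantee that some $g_A(t)$ activates $\xi$. The secondary subtlety is to ensure the diagonalizing transformation is produced inside the compact real form $E_{6,\sH}$, not merely inside its complexification; this is exactly what the $SU(6)$ picture and the unitary Youla factorization secure, and it is what makes the real normalization $\xi > 0$ attainable.
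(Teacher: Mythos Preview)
Your overall strategy matches the paper's: reduce to $\xi \neq 0$ using one-parameter subgroups $\exp(\varPhi(0,A,-\tau A,0))$, diagonalize via the isotropy $E_{6,\sH}$, then rotate the phase of $\xi$. The paper, however, executes the two main steps in the \emph{opposite order} and in a more concrete way. Rather than an infinitesimal argument with $\dot\xi$ and $\ddot\xi$, it first applies the $E_{6,\sH}$-diagonalization (its Claim, proved as in \cite[Proposition~3.8.2]{iy0}) to whichever of $Y$ or $X$ is nonzero, and then uses the explicit elements $\alpha_i(a)=\exp(\varPhi(0,aE_i,-\tau aE_i,0))$ evaluated at $a=-\pi/2$ to step through four cases $(\xi\neq 0)\leftarrow(Y\neq 0)\leftarrow(X\neq 0)\leftarrow(\eta\neq 0)$. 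Diagonalizing first is what makes each reduction a single clean application of one $\alpha_i(-\pi/2)$, because the nonzero diagonal entry tells you which index $i$ to use; this neatly resolves exactly the ``main obstacle'' you flagged. Your derivative-based argument works too, but you would have to spell out the higher-order cases (e.g.\ $\xi=0,\ Y=0,\ X\neq 0$ needs $\ddot\xi=2(A\times A,X)\neq 0$, and the case $X=Y=0,\ \eta\neq 0$ requires a further step).

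For the diagonalization itself, the paper simply transports Yokota's $E_6$ argument to $E_{6,\sH}$, whereas you invoke the $SU(6)$ model of $(\mathfrak{J}_{\sH})^C$ as alternating $6\times 6$ matrices and the unitary Youla normal form. That is a legitimate alternative and arguably more transparent, since it reduces the claim to standard linear algebra; the paper's route has the advantage of staying entirely inside the exceptional-group formalism already set up. Your final phase normalization via $\exp(t\,\varPhi(0,0,0,\nu))$, $\nu\in i\R$, is exactly the paper's map $\phi(\theta)$ with $\theta=e^{\nu t/3}$.
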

\begin{proof}
Before proving this lemma, we prove the following claim.
\vspace{-3mm}
\begin{quote}
\begin{claim}\label{claim}
Any element $ P \in (\mathfrak{J}_{\sH})^C $ can be transformed to a
diagonal form by some element $ \beta \in E_{6,\sH} ${\rm :}
\begin{align*}
\beta X=\begin{pmatrix}
\xi_1 & 0 & 0 \\
0 & \xi_2 & 0 \\
0 & 0 & \xi_3
\end{pmatrix},\;\; \xi_i \in C.
\end{align*}
\end{claim}
\begin{proof}
The proof of this claim can be proved in exactly the same way as the proof
of \cite[Proposition 3.8.2]{iy0}, so its proof is omitted.
\end{proof}
\end{quote}

We start to prove the proof and will first show that $ P $ can be
transformed to a diagonal form with $ \xi \not=0 $.

Let $ P=(X,Y,\xi,\eta) \in (\mathfrak{M}_{\sH})^C $.

Case (i) where $ P=(X,Y,\xi,\eta), \xi\not=0 $. Then we choose $ \beta \in
E_{6,\sH} $ such that $ \tau\beta\tau Y $ is a diagonal form (Claim). Hence
since it follows from $ X=(1/\xi)(Y \times Y) $ that
\begin{align*}
\beta X=\dfrac{1}{\xi}\beta (Y \times Y)=\dfrac{1}{\xi}(\tau\beta\tau Y
\times \tau\beta\tau Y),
\end{align*}
$ \beta X $ is also a diagonal form. Thus $ P $ is a diagonal form.

Case (ii) where $ P=(X,Y,0,\eta), Y \not=0 $. As in the Case (i), we choose
$ \beta \in
E_{6,\sH} $ such that $ \tau\beta\tau Y $ is a diagonal form:
\begin{align*}
\tau\beta\tau Y:=\diag(\eta_1,\eta_2,\eta_3),\;\;\eta_i \in C.
\end{align*}
Hence, from $ Y \not=0 $, some $ \eta_i $ is non-zero: $ \eta_i \not=0 $.
For this $ \eta_i \in C $, we choose $ \alpha_i(a):=\exp(\varPhi(0,aE_i,
\allowbreak-\tau aE_i,0)) \in (E_{7,\sH})_0, a \in C, a \not=0 $ (Lemma
\ref{lemma 7.2}). The action of $ \alpha_i(a) $ to $ (\mathfrak{P}_{\sH})^C
$ is given as follows:
\begin{align*}
 \alpha_i(a)(X,Y,\xi,\eta)
=\left(
\begin{array}{c}
(1+(\cos|a|-1)p_i)X-2\tau a\dfrac{\sin|a|}{|a|}E_i \times Y+\eta
a\dfrac{\sin|a|}{|a|}E_i
\\
2a\dfrac{\sin|a|}{|a|}E_i \times X+(1+(\cos|a|-1)p_i)Y-\xi\tau
a\dfrac{\sin|a|}{|a|}E_i
\\
(a\dfrac{\sin|a|}{|a|}E_i,Y)+\xi \cos|a|
\\
(-\tau a\dfrac{\sin|a|}{|a|}E_i,X )+\eta \cos|a|
\end{array}\right),
\end{align*}
where $ p_i:(\mathfrak{J}_{\sH})^C \to (\mathfrak{J}_{\sH})^C $ is defined by $ p_i(X)=(X,E_i)E_i+4E_i\times (E_i \times X) $
(cf. \cite[Lemma 4.3.4]{iy7}).

Apply $ \alpha_i(-\pi/2) $ on $ \beta
P=(\beta X, \tau\beta\tau Y,0,\eta) $, then we have the following
\begin{align*}
\alpha_i\left(-\dfrac{\pi}{2}\right)(\beta P)&=\alpha_i\left(-\dfrac{\pi}
{2}\right)(\beta X, \tau\beta\tau Y,0,\eta)
\\
&=\left(
\begin{array}{c}
(1-p_i)(\beta X)+2E_i \times (\tau\beta\tau Y)-\eta E_i \\
-2E_i \times (\beta X)+(1-p_i)(\tau\beta\tau Y) \\
(-E_i,\tau\beta\tau Y) \\
(E_i,\beta X)
\end{array}
\right)
\\
&=\left(
\begin{array}{c}
 * \\
 * \\
-\eta_i \\
 *
\end{array}\right),\,\,\eta_i\not=0.
\end{align*}
Hence this case reduces to Case (i).

Case (iii) where $ P=(X,0,0,\eta), X \not=0 $. We choose $ \beta \in E_{6,
\sH} $ such that $ \beta X=\diag(\xi_1,\xi_2,\xi_3), \allowbreak \xi_i \in C $. Then
some $ \xi_i $ is non-zero from $ \beta X\not=0 $: $ \xi_i\not=0 $. Apply $
\alpha_{i+1}(-\pi/2) $ on $ \beta P=(\beta X,0,0,\eta) $, then we have the
following
\begin{align*}
\alpha_{i+1}\left( -\dfrac{\pi}{2}\right)(\beta P)
&=\alpha_{i+1}\left( -
\dfrac{\pi}{2}\right)(\beta X,0,0,\eta)
\\
&=\alpha_{i+1}\left(-\dfrac{\pi}{2}\right)(\xi_iE_i+\xi_{i+1}E_{i+1}+
\xi_{i+2}E_{i+2},0,0,
\eta)
\\
&=((1-p_{i+1})(\xi_iE_i+\xi_{i+1}E_{i+1}+
\xi_{i+2}E_{i+2})-\eta E_{i+1},
\\
&\quad -2E_{i+1}\times (\xi_iE_i+\xi_{i+1}E_{i+1}+
\xi_{i+2}E_{i+2}),0, (E_{i+1},\xi_iE_i+\xi_{i+1}E_{i+1}+
\xi_{i+2}E_{i+2}) )
\\
&=(*,-\xi_iE_{i+2}-\xi_{i+2}E_i,0,*),\;\;\xi_i\not=0.
\end{align*}
Hence this case reduces to Case (ii).

Case (iv) where $ P=(0,0,0,\eta), \eta \not=0 $.
We choose $ \alpha_1(a) \in (E_{7,\sH})_0 $, set $ a=-\pi/2 $. Apply $
\alpha_1(-\pi/2) $ on $ P $, then we have the following
\begin{align*}
\alpha_1\left(-\dfrac{\pi}{2} \right)P&=\alpha_1\left(-\dfrac{\pi}{2}
\right)(0,0,0,\eta)
\\
&=(\eta E_1,0,0,0),\;\;\eta\not=0.
\end{align*}
Hence this case reduces to Case (iii).

With above, it was shown that  $ P $ can be transformed to a diagonal form
with $ \xi \not=0 $. Finally, for $ \theta \in U(1):=\{\theta \in C \,|\,
(\tau\theta)\theta=1\} $, we define a mapping
$ \phi:(\mathfrak{P}
_{\sH})^C \to (\mathfrak{P}_{\sH})^C $ by
\begin{align*}
\phi(\theta)(X,Y,\xi,\eta)=(\theta^{-1}X,\theta Y, \theta^3\xi,\theta^{-3}
\eta).
\end{align*}
Then we have $ \phi(\theta) \in E_{7,\sH} $. For a given $ P=(X,Y,\xi,\eta
) $ transformed to a diagonal form with $ \xi \not=0 $, since we can choose
some $ \theta \in U(1) $ such that $ \xi\theta^3 >0 $, we obtain $ P \in
(\mathfrak{M}_{\sH})^C $ of the required form. The proof is completed.
\end{proof}

We define a space $ ((\mathfrak{M}_{\sH})^C)_1  $ by
\begin{align*}
({\mathfrak{M}_{\sH}}^C)_1 :=\left\lbrace P \in (\mathfrak{M}_{\sH})^C
\relmiddle{|} \langle P, P \rangle=1 \right\rbrace.
\end{align*}

Then we have the following theorem.

\begin{theorem}\label{theorem 7.6}
The homogeneous space $ E_{7,\sH}/E_{6,\sH} $ is homeomorphic to the space
$ ((\mathfrak{M}_{\sH})^C)_1 ${\rm:} \allowbreak $ E_{7,\sH}/E_{6,\sH}
\allowbreak \simeq ((\mathfrak{M}_{\sH})^C)_1 $.

In particular, the group $ E_{7,\sH} $ is connected.
\end{theorem}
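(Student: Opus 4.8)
The plan is to exhibit $((\mathfrak{M}_{\sH})^C)_1$ as a single orbit of $E_{7,\sH}$ through the base point $\dot{1}=(0,0,1,0)$ and to identify the stabilizer there with $E_{6,\sH}$. First I would check that the orbit map $\pi\colon E_{7,\sH}\to((\mathfrak{M}_{\sH})^C)_1$, $\pi(\alpha)=\alpha\dot{1}$, is well defined. Every $\alpha\in E_{7,\sH}$ satisfies $\alpha(P\times Q)\alpha^{-1}=\alpha P\times\alpha Q$, so $\alpha P\times\alpha P=\alpha(P\times P)\alpha^{-1}$; hence $\alpha$ carries the rank-one condition $P\times P=0$ into itself and maps $(\mathfrak{M}_{\sH})^C$ into itself, while $\alpha\dot{1}\neq 0$. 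Moreover $\alpha$ preserves $\langle\,,\,\rangle$, and since $\langle\dot{1},\dot{1}\rangle=(\tau1)1=1$ we get $\alpha\dot{1}\in((\mathfrak{M}_{\sH})^C)_1$. The stabilizer of $\dot{1}$ is exactly $(E_{7,\sH})_{\dot{1}}$, which is isomorphic to $E_{6,\sH}$ by Proposition \ref{proposition 7.4}; this group is connected because $E_{6,\sH}\cong SU(6)/\Z_2$ by Theorem \ref{theorem 6.1}.

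The core of the argument is transitivity, for which I would lean on Lemma \ref{lemma 7.5}. Given $P\in((\mathfrak{M}_{\sH})^C)_1$, that lemma produces $\alpha\in(E_{7,\sH})_0$ with $\alpha P=(X,Y,\xi,\eta)$ in diagonal form and $\xi>0$. It then remains to carry such a normalized diagonal rank-one element to $\dot{1}$, again using only elements of $(E_{7,\sH})_0$. Here I would exploit the defining relations $Y\times Y=\xi X$, $X\times X=\eta Y$, $(X,Y)=3\xi\eta$ together with the explicit one-parameter subgroups $\alpha_i(a)=\exp(\varPhi(0,aE_i,-\tau aE_i,0))$ and $\phi(\theta)$ appearing in the proof of Lemma \ref{lemma 7.5}: the $\alpha_i(a)$ are used to annihilate the diagonal entries of $Y$, which by $X=\tfrac{1}{\xi}(Y\times Y)$ and $\eta=\tfrac{1}{\xi^2}\det Y$ forces $X=0$ and $\eta=0$, reducing $P$ to $(0,0,\xi,0)$ with $\langle P,P\rangle=(\tau\xi)\xi=1$; a suitable $\phi(\theta)$ with $\theta\in U(1)$ then rotates $\xi$ to $1$, yielding $\dot{1}$. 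Since every step lies in $(E_{7,\sH})_0$, this shows that already $(E_{7,\sH})_0$ acts transitively on $((\mathfrak{M}_{\sH})^C)_1$ (in particular this space is connected, being a continuous image of a connected group).

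With transitivity and the stabilizer computation in hand, $\pi$ induces a continuous bijection $E_{7,\sH}/E_{6,\sH}\to((\mathfrak{M}_{\sH})^C)_1$; since $E_{7,\sH}$ is compact, being a closed subgroup of $U(32)$, and the target is Hausdorff, this bijection is a homeomorphism, which is the first assertion. For connectedness I would argue directly: let $\alpha\in E_{7,\sH}$. Because $(E_{7,\sH})_0$ acts transitively, choose $\beta\in(E_{7,\sH})_0$ with $\beta\dot{1}=\alpha\dot{1}$; then $\beta^{-1}\alpha\in(E_{7,\sH})_{\dot{1}}\cong E_{6,\sH}$, which is connected and therefore contained in $(E_{7,\sH})_0$. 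Hence $\alpha=\beta(\beta^{-1}\alpha)\in(E_{7,\sH})_0$, so $E_{7,\sH}=(E_{7,\sH})_0$ is connected.

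I expect the main obstacle to be the final stage of transitivity, namely carrying a general normalized diagonal rank-one element down to $\dot{1}$ while keeping all transformations inside $(E_{7,\sH})_0$: Lemma \ref{lemma 7.5} only delivers a diagonal form with $\xi>0$, so the remaining reduction by the subgroups $\alpha_i(a)$ and $\phi(\theta)$ must be organized carefully, and one must verify that the relations defining $(\mathfrak{M}_{\sH})^C$ leave no residual freedom beyond what these one-parameter subgroups absorb. This bookkeeping is the delicate point; everything else is either quoted from earlier results or is the standard compact-to-Hausdorff and connected-fibre reasoning.
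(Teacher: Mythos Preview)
Your proposal is correct and follows essentially the same strategy as the paper: the orbit map through $\dot 1$, transitivity of $(E_{7,\sH})_0$ via Lemma~\ref{lemma 7.5}, stabilizer identification via Proposition~\ref{proposition 7.4}, and the compactness/connected-fibre conclusion all match. For the step you flag as delicate, the paper does not kill entries sequentially but instead uses the norm-$1$ condition on the diagonal form to obtain $\bigl(1+|\eta_1|^2/\xi^2\bigr)\bigl(1+|\eta_2|^2/\xi^2\bigr)\bigl(1+|\eta_3|^2/\xi^2\bigr)=1/\xi^2$, sets $\tan r_i=|\eta_i|/\xi$ (so $\xi=\cos r_1\cos r_2\cos r_3$) and $a_i=(\eta_i/|\eta_i|)r_i$, and then checks directly that $\alpha P=\alpha_3(a_3)\alpha_2(a_2)\alpha_1(a_1)\dot 1$, so no final $\phi(\theta)$ is required.
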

\begin{proof}
For $\alpha \in E_{7,\sH}$ and $P \in ((\mathfrak{M}_{\sH})^C)_1$, we have
$\alpha P \in ((\mathfrak{M}_{\sH})^C)_1$.
Hence $E_{7,\sH}$ acts on $((\mathfrak{M}_{\sH})^C)_1$. We will prove that
the group $(E_7)_0$ acts transitively on $((\mathfrak{M}_{\sH})^C)_1$. In
order to prove this, it is sufficient to show that any
element $P \in ((\mathfrak{M}_{\sH})^C)_1$ can be transformed to $(0, 0, 1,
0) \in ((\mathfrak{M}_{\sH})^C)_1$ by
some $\alpha \in (E_{7,\sH})_0$. Now, $P \in((\mathfrak{M}_{\sH})^C)_1$ can
be transformed to a diagonal form by $\alpha \in (E_{7,\sH})_0$:
\begin{align*}
\alpha P=(\dfrac{1}{\xi}\begin{pmatrix}\eta_2\eta_3 & 0 & 0 \\
                                       0 & \eta_3\eta_1 & 0 \\
                                       0 & 0 & \eta_1 \eta_2
                         \end{pmatrix},
                \begin{pmatrix}\;\eta_1\; & \;0\; & \;0\; \\
                                         0 & \eta_2 & 0 \\
                                         0 & 0 & \eta_3
                \end{pmatrix},
           \xi, \dfrac{1}{\xi^2}\eta_1\eta_2\eta_3 ),\;\; \xi > 0
\end{align*}
(Lemma \ref{lemma 7.5}). From the condition $\langle \alpha P,\alpha P
\rangle = \langle P, P \rangle = 1,$  we have
\begin{align*}
 \dfrac{1}{\xi^2}(|\eta_2\eta_3|^2 + |\eta_3\eta_1|^2 + |\eta_1\eta_2|^2) +
(|\eta_1|^2 + |\eta_2|^2 + |\eta_3|^2) + \xi^2 + \dfrac{1}{\xi^4}|
\eta_1\eta_2\eta_3|^2 = 1,
\end{align*}
that is,
\begin{equation*}
\displaylines{\hfill
      \Big(1 + \dfrac{|\eta_1|^2}{\xi^2}\Big)\Big(1 +
      \frac{|\eta_2|^2}{\xi^2}\Big)\Big(1 + \dfrac{|\eta_3|^2}{\xi^2}\Big)
      = \dfrac{1}{\xi^2}.
\hfill{(*)}}
\end{equation*}
We choose $r_1, r_2, r_3 \in \R$, $0 \le r_i < \pi/2$ such that
 $  \tan r_i = |\eta_i|/\xi,\; i = 1, 2, 3 $,
then $ (*) $ becomes
$$
           \xi = \cos r_1\cos r_2\cos r_3.$$
By setting $  a_i :=( \eta_i/|\eta_i|)r_i,\; i = 1, 2, 3 $
(if $\eta_i = 0$, then $a_i$ means $0$), we have
\begin{align*}
 r_i=|a_i|,\;\; \eta_i =\dfrac{1}{|a_i|}\dfrac{\eta_i}{|\eta_i|}r_i
\dfrac{|\eta_i|}{\xi}\xi = \dfrac{a_i}{|a_i|}\tan r_i\cos r_1\cos r_2\cos
r_3.
\end{align*}
Hence we see that $\alpha P$ is of the form
\begin{align*}
&\quad  \alpha P
\\
&=\left(  \begin{array}{c}
\begin{pmatrix}
 \cos|a_1|a_2\dfrac{\sin|a_2|}{|a_2|}a_3\dfrac{\sin|a_3|}{|a_3|} & 0 & 0 \\
 0 & a_1\dfrac{\sin|a_1|}{|a_1|}\cos|a_2|a_3\dfrac{\sin|a_3|}{|a_3|} & 0 \\
 0 & 0 & a_1\dfrac{\sin|a_1|}{|a_1|}a_2\dfrac{\sin|a_2|}{|a_2|}\cos|a_3|
\end{pmatrix}
\vspace{1mm}\\
\begin{pmatrix}
a_1\dfrac{\sin|a_1|}{|a_1|}\cos|a_2|\cos|a_3| & 0 & 0 \\
0 & \cos|a_1|a_2\dfrac{\sin|a_2|}{|a_2|}\cos|a_3| & 0 \\
0 & 0 &\cos|a_1|\cos|a_2|a_3\dfrac{\sin|a_3|}{|a_3|}
\end{pmatrix}
\vspace{1mm}\\
         \cos|a_1|\cos|a_2|\cos|a_3|
\vspace{2mm}\\
        a_1\dfrac{\sin|a_1|}{|a_1|}a_2\dfrac{\sin|a_2|}{|a_2|}
        a_3\dfrac{\sin|a_3|}{|a_3|}
\end{array}\right)
\end{align*}
Thus, using $ \alpha_i(a) $ defined in the proof of Lemma \ref{lemma 7.5},
$ \alpha P $ is expressed by
\begin{align*}
\alpha P=\alpha_3(a_3)\alpha_2(a_2)\alpha_1(a_1)(0, 0, 1, 0),
\end{align*}
that is,
\begin{align*}
\alpha_1(a_1)^{-1}\alpha_2(a_2)^{-1}\alpha_3(a_3)^{-1}(\alpha P) = (0,
      0, 1, 0).
\end{align*}
This shows the transitivity of $(E_{7,\sH})_0$. Since we have $
((\mathfrak{M}_{\sH})^C)_1=(E_{7,\sH})_0(0, 0, 1, 0)$, $((\mathfrak{M}
_{\sH})^C)_1$ is connected. The group $E_{7,\sH}$ acts transitively on
$((\mathfrak{M}_{\sH})^C)_1$ and the isotropy subgroup of $E_{7,\sH}$ at $
\dot{1}=(0, 0, 1, 0) \in ((\mathfrak{M}_{\sH})^C)_1$ is
$E_{6,\sH}$ (Proposition \ref{proposition 7.3}).
Therefore we have the required homeomorphism
\begin{align*}
E_{7,\sH}/E_{6,\sH} \simeq ((\mathfrak{M}_{\sH})^C)_1.
\end{align*}

Finally, the connectedness of $E_{7,\sH}$ follows from the connectedness
of $((\mathfrak{M}_{\sH})^C)_1$ and $E_{6,\sH}$.
\end{proof}

Here, we prove the following lemma.

\begin{lemma}\label{lemma 7.7}
The group $ (E_{7,\sH})^C $ is an algebraic subgroup of the general linear group $ GL(32,C)\!=\Iso_{C}((\mathfrak{P}_{\sH})^C) $ and satisfies the condition that $ \alpha \in (E_{7,\sH})^C $ implies $ \alpha^*=(\tau\lambda)\alpha^{-1}(\lambda^{-1}\tau) \in (E_{7,\sH})^C  $, where $ \alpha^* $ is the conjugate transpose of $ \alpha $ with respect to the Hermite inner product $ \langle P,Q \rangle ${\rm :}$ \langle \alpha^* P,Q \rangle=\langle P,\alpha Q \rangle $.
\end{lemma}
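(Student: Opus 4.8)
The plan is to establish the two assertions separately, treating algebraicity first and then the self-adjointness formula together with the closure under $\ast$.

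\emph{Algebraicity.} For a fixed pair $P,Q\in(\mathfrak{P}_{\sH})^C$ the defining relation $\alpha(P\times Q)\alpha^{-1}=\alpha P\times\alpha Q$ is equivalent, after multiplying on the right by $\alpha$, to $\alpha(P\times Q)=(\alpha P\times\alpha Q)\alpha$. Entrywise the left-hand side is linear and the right-hand side is cubic in the matrix entries of $\alpha$, so this is a system of polynomial equations. Letting $P,Q$ run over a basis of the $32$-dimensional space $(\mathfrak{P}_{\sH})^C$ yields finitely many such equations; hence $(E_{7,\sH})^C$ is the intersection of $GL(32,C)$ with a Zariski-closed set, i.e. an algebraic subgroup.

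\emph{The formula for $\alpha^*$.} For an arbitrary $\alpha\in GL(32,C)$, from $\langle P,Q\rangle=(\tau P,Q)$ and the defining equation $\langle\alpha^*P,Q\rangle=\langle P,\alpha Q\rangle$ I get $(\tau\alpha^*P,Q)=(\tau P,\alpha Q)=({}^t\alpha\,\tau P,Q)$ for all $P,Q$, where ${}^t\alpha$ is the transpose with respect to $(\,\cdot\,,\,\cdot\,)$; hence $\tau\alpha^*={}^t\alpha\,\tau$, that is, $\alpha^*=\tau\,{}^t\alpha\,\tau$. Now restrict to $\alpha\in(E_{7,\sH})^C$. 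As for ${E_7}^C$, such $\alpha$ leaves the skew-symmetric form invariant, $\{\alpha P,\alpha Q\}=\{P,Q\}$; using $\{P,Q\}=(P,\lambda Q)$ this gives $({}^t\alpha\,\lambda\,\alpha\,P,Q)=(\lambda P,Q)$ for all $P,Q$, so ${}^t\alpha\,\lambda\,\alpha=\lambda$ and therefore ${}^t\alpha=\lambda\alpha^{-1}\lambda^{-1}$. Substituting into $\alpha^*=\tau\,{}^t\alpha\,\tau$ and regrouping by associativity gives $\alpha^*=\tau\lambda\alpha^{-1}\lambda^{-1}\tau=(\tau\lambda)\alpha^{-1}(\lambda^{-1}\tau)$, which is the asserted formula.

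\emph{Closure under $\ast$.} Since $(\tau\lambda)^{-1}=\lambda^{-1}\tau^{-1}=\lambda^{-1}\tau$, the formula reads $\alpha^*=(\tau\lambda)\alpha^{-1}(\tau\lambda)^{-1}$, i.e. $\alpha^*$ is obtained from $\alpha^{-1}\in(E_{7,\sH})^C$ by conjugation with the semilinear map $\tau\lambda$. It thus suffices to show that conjugation by $\tau\lambda$ preserves $(E_{7,\sH})^C$, which I would split into two steps. (a) The Freudenthal cross operation is defined over $\R$, so $\tau(P\times Q)\tau=\tau P\times\tau Q$ as operators on $(\mathfrak{P}_{\sH})^C$; a short computation then shows $\tau\beta\tau\in(E_{7,\sH})^C$ whenever $\beta\in(E_{7,\sH})^C$, so $\tau$ induces an automorphism. (b) The map $\lambda$ itself lies in $(E_{7,\sH})^C$ (the $\H$-analogue of the classical fact $\lambda\in{E_7}^C$), so conjugation by $\lambda$ is an inner automorphism. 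Composing (a) and (b), conjugation by $\tau\lambda$ is an automorphism of $(E_{7,\sH})^C$, and therefore $\alpha^*\in(E_{7,\sH})^C$.

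The routine parts are the algebraicity and the index bookkeeping of the second paragraph. The genuine obstacle is step (b), namely verifying $\lambda\in(E_{7,\sH})^C$, i.e. $\lambda(P\times Q)\lambda^{-1}=\lambda P\times\lambda Q$: this requires pushing the identity through the explicit definition $P\times Q=\varPhi(\phi,A,B,\nu)$, tracking how $\lambda(X,Y,\xi,\eta)=(Y,-X,\eta,-\xi)$ permutes the four components and interacts with $\phi\mapsto{}^t\phi$ and with the cross products $X\times Z$ and $Y\times W$. I expect this to reduce to the same relations already underlying the skew-form invariance, so it should go through, but this is where the computational weight sits.
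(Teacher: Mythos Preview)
Your proof is correct and follows essentially the same route as the paper: the paper derives the formula for $\alpha^*$ directly from $\langle P,Q\rangle=\{\tau\lambda P,Q\}$ and the $\{\,\cdot\,,\,\cdot\,\}$-invariance of $\alpha$ (you instead pass through ${}^t\alpha$ first, which is the same content rearranged), and for closure under $\ast$ it uses exactly the two facts you isolate, namely $\tau P\times\tau Q=\tau(P\times Q)\tau$ and $\lambda\in(E_{7,\sH})^C$. The point you flag as the ``genuine obstacle'' is simply asserted in the paper without proof---it is the $\H$-analogue of the standard fact $\lambda\in{E_7}^C$, and the verification goes through verbatim since $\lambda$ never touches the Cayley entries---so you can safely cite it rather than redo the computation.
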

\begin{proof}
First, it is easy to verify that $ \alpha^*=(\tau\lambda)\alpha^{-1}(\lambda^{-1}\tau) $. Indeed, for any $ P,Q \in (\mathfrak{P}_{\sH})^C $, it follows from
\begin{align*}
\langle \alpha^* P,Q \rangle&=\langle P,\alpha Q \rangle =\{\tau\lambda P, \alpha Q \}=\{\alpha \alpha^{-1}\tau\lambda P, \alpha Q \}
\\
&=\{\alpha^{-1}\tau\lambda P, Q \}=\{(\tau\lambda)(\lambda^{-1}\tau)\alpha^{-1}\tau\lambda P, Q \}
\\
&=\langle(\lambda^{-1}\tau)\alpha^{-1}(\tau\lambda) P, Q \rangle
\end{align*}
that $ \alpha^*=(\lambda^{-1}\tau)\alpha^{-1}(\tau\lambda) $,
that is, $ \alpha^*=(\tau\lambda)\alpha^{-1}(\lambda^{-1}\tau) $. Subsequently, we will show $ \alpha^* \in (E_{7,\sH})^C $. It is clear that $ \alpha^* \in \Iso_{C}((\mathfrak{P}_{\sH})^C) $ and it follows from $ \tau P \times \tau Q=\tau (P \times Q)\tau $ and $ \lambda \in (E_{7,\sH})^C $
that
\begin{align*}
\alpha^*P \times \alpha^* Q
&=(\tau\lambda)\alpha^{-1}(\lambda^{-1}\tau)P \times (\tau\lambda)\alpha^{-1}(\lambda^{-1}\tau)Q
\\
&=\tau(\lambda\alpha^{-1}(\lambda^{-1}\tau)P \times \lambda\alpha^{-1}(\lambda^{-1}\tau)Q)\tau
\\
&=\tau\lambda(\alpha^{-1}(\lambda^{-1}\tau)P \times \alpha^{-1}(\lambda^{-1}\tau)Q)\lambda^{-1}\tau
\\
&=\tau\lambda\alpha^{-1}((\lambda^{-1}\tau)P \times (\lambda^{-1}\tau)Q)\alpha\lambda^{-1}\tau
\\
&=(\tau\lambda)\alpha^{-1}(\lambda\tau)(P \times Q)(\tau\lambda^{-1})\alpha(\lambda^{-1}\tau)
\\
&=\alpha^*(P \times Q){\alpha^*}^{-1}.
\end{align*}
Hence we have $ \alpha^* \in (E_{7,\sH})^C $.

Finally, the group $ (E_{7,\sH})^C $ is defined by the algebraic relation $ \alpha P \times \alpha Q=\alpha(P \times Q)\alpha^{-1} $, so that $ (E_{7,\sH})^C $ is algebraic.
\end{proof}

Let $ O((\mathfrak{P}_{\sH})^C) $ be the orthogonal subgroup $ GL(32,C)=\Iso_{C}((\mathfrak{P}_{\sH})^C) $:
\begin{align*}
O(32,C)=O((\mathfrak{P}_{\sH})^C):=\left\lbrace \alpha \in \Iso_{C}((\mathfrak{P}_{\sH})^C) \relmiddle{|} \langle \alpha P,\alpha Q\rangle=\langle P,Q \rangle\right\rbrace.
\end{align*}

Then we have the following
\begin{align*}
(E_{7,\sH})^C \cap O((\mathfrak{P}_{\sH})^C)&=\left\lbrace \alpha \in \Iso_{C}((\mathfrak{P}_{\sH})^C) \relmiddle{|} \alpha P \times \alpha Q=\alpha(P \times Q)\alpha^{-1}, \langle \alpha P,\alpha Q\rangle=\langle P,Q \rangle\right\rbrace
\\
&=E_{7,\sH}.
\end{align*}

Using Chevalley's lemma (\cite[Lemma 2]{che}), we have a homeomorphism
\begin{align*}
(E_{7,\sH})^C &\simeq ((E_{7,\sH})^C \cap O((\mathfrak{P}_{\sH})^C)) \times \R^d
\\
&=E_{7,\sH} \times \R^d,
\end{align*}
where the dimension $ d $ of the Euclidian part is computed by Lemma \ref{lemma 7.2} as follows.
\begin{align*}
d=\dim((E_{7,\sH})^C)-\dim(E_{7,\sH})=66 \times 2-66=66.
\end{align*}

With above, we have the following theorem needed in the next section.

\begin{theorem}\label{theorem 7.8}
The group $ (E_{7,\sH})^C $ is homeomorphic to the topological product of the group $ E_{7,\sH} $ and a $ 66 $-dimensional Euclidian space $ \R^{66} $:
\begin{align*}
  (E_{7,\sH})^C \simeq E_{7,\sH} \times \R^{66}.
\end{align*}

In particular, the group $ (E_{7,\sH})^C $ is connected.
\end{theorem}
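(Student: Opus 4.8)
The plan is to read off the decomposition directly from Chevalley's lemma on self-adjoint algebraic groups, since every hypothesis has already been assembled in the discussion preceding the statement. Recall that Chevalley's lemma (\cite[Lemma 2]{che}) produces, for any algebraic subgroup $G$ of $GL(n,C)$ that is stable under the conjugate transpose $\alpha \mapsto \alpha^*$ relative to a fixed Hermite inner product, a homeomorphism $G \simeq (G \cap U) \times \R^d$, where $U$ is the unitary group of that inner product and $d = \dim_\R G - \dim_\R(G \cap U)$. My first step is therefore to certify that $(E_{7,\sH})^C$ satisfies these two structural requirements.

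Both requirements are exactly the content of Lemma \ref{lemma 7.7}: the group $(E_{7,\sH})^C$ is an algebraic subgroup of $GL(32,C) = \Iso_C((\mathfrak{P}_{\sH})^C)$ because it is cut out by the polynomial relation $\alpha(P \times Q) = (\alpha P \times \alpha Q)\alpha$, and it is invariant under $\alpha \mapsto \alpha^* = (\tau\lambda)\alpha^{-1}(\lambda^{-1}\tau)$, the conjugate transpose with respect to $\langle P,Q \rangle$. Next I would identify the compact factor. Writing $O((\mathfrak{P}_{\sH})^C)$ for the unitary group preserving $\langle P,Q \rangle$, intersecting it with $(E_{7,\sH})^C$ merely reinstates the extra defining condition $\langle \alpha P,\alpha Q\rangle = \langle P,Q \rangle$ of $E_{7,\sH}$, so that $(E_{7,\sH})^C \cap O((\mathfrak{P}_{\sH})^C) = E_{7,\sH}$. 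Chevalley's lemma then delivers $(E_{7,\sH})^C \simeq E_{7,\sH} \times \R^d$.

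It remains to pin down $d$ by a dimension count. Since $(E_{7,\sH})^C$ is a complex Lie group whose Lie algebra $(\mathfrak{e}_{7,\sH})^C$ has complex dimension $66$, its real dimension is $132$; on the other hand $\dim_\R E_{7,\sH} = \dim \mathfrak{e}_{7,\sH} = 66$ by Lemma \ref{lemma 7.2}. Hence $d = 132 - 66 = 66$, which gives the claimed homeomorphism $(E_{7,\sH})^C \simeq E_{7,\sH} \times \R^{66}$. The connectedness assertion is then immediate: $E_{7,\sH}$ is connected by Theorem \ref{theorem 7.6} and $\R^{66}$ is connected, so the product, and therefore $(E_{7,\sH})^C$, is connected.

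The argument is essentially bookkeeping once Chevalley's lemma is available, and the one step carrying genuine mathematical weight, namely the self-adjointness $\alpha^* \in (E_{7,\sH})^C$, has already been discharged in Lemma \ref{lemma 7.7}. The only remaining pitfall I would guard against is keeping the real-versus-complex dimension convention straight in the final count, so that the Euclidean factor comes out as $\R^{66}$ rather than $\R^{132}$.
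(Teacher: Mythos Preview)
Your proof is correct and follows essentially the same approach as the paper: both invoke Lemma \ref{lemma 7.7} to verify the hypotheses of Chevalley's lemma, identify the compact intersection $(E_{7,\sH})^C \cap O((\mathfrak{P}_{\sH})^C)$ with $E_{7,\sH}$, compute $d = 132 - 66 = 66$, and deduce connectedness from Theorem \ref{theorem 7.6}.
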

\begin{proof}
The first half has already been shown above. The connectedness of the group $ (E_{7,\sH})^C  $ follows from the connectedness of the group $ E_{7,\sH} $ (Theorem \ref{theorem 7.6}).
\end{proof}

Using the $ C $-linear transformations $ \varepsilon_1,\varepsilon_2 $
defined in previous section and the semi-linear transformation  $
\tau\lambda $ of $ (\mathfrak{P}_{\sH})^C $,
we define a $ 12 $-dimensional $ \R $-vector space $ V^{12} $ by
\begin{align*}
    V^{12}
    &:=\left\lbrace P \in \mathfrak{P}^C \relmiddle{|} \varepsilon_1
    P=-iP, \varepsilon_2(\tau\lambda) P=P \right\rbrace
    \\
    &=\left\lbrace P=(X,-\varepsilon_2\tau X,0,0)
    \relmiddle{|}X=F_1(x_1)+F_2(x_2)+F_3(x_3), x_i \in
    (\H^Ce_4)_{\varepsilon_1} \right\rbrace
    \\
    &=\left\lbrace P=(
    \begin{pmatrix}
     0 &  x_3 & \ov{x_2} \\
     \ov{x_3} & 0 & x_1 \\
     x_2 & \ov{x_1} & 0 \\
    \end{pmatrix},
    \begin{pmatrix}
  0 & -\varepsilon_2\tau x_3 & -\ov{\varepsilon_2\tau x_2} \\
  -\ov{\varepsilon_2\tau x_3} & 0 & -\varepsilon_2\tau x_1  \\
  -\varepsilon_2\tau x_2  & -\ov{\varepsilon_2\tau x_1} & 0
    \end{pmatrix},
    0,0)\relmiddle{|} x_i \in (\H^Ce_4)_{\varepsilon_1} \right\rbrace
\end{align*}
with the norm
\begin{align*}
    (P,P)_{\varepsilon_2}:=\dfrac{1}{8}\left\lbrace P,\varepsilon_2 P
    \right\rbrace ,
\end{align*}
where $ (\H^Ce_4)_{\varepsilon_1}:=\left\lbrace x \in \mathfrak{C}^C
\relmiddle{|} x=p(e_4+ie_5)+q(e_6-ie_7),p,q \in C\right\rbrace  $. Let $
x=p(e_4+ie_5)+q(e_6-ie_7) \in  (\H^Ce_4)_{\varepsilon_1} $, then note that
$ -\varepsilon_2\tau x=-\tau q(e_4+ie_5)+\tau p(e_6-ie_7) $. Hence, for $
P \in V^{12} $, the explicit form of $  (P,P)_{\varepsilon_2} $ as $
x_i:=p_i(e_4+ie_5)+q_i(e_6-ie_7),i=1,2,3 $ is given by
\begin{align*}
     (P,P)_{\varepsilon_2}=(p_1(\tau p_1)+q_1(\tau q_1))+(p_2(\tau
     p_2)+q_2(\tau q_2))+(p_3(\tau p_3)+q_3(\tau q_3)).
\end{align*}

Next, we move on to study a group $ (E_7)^{\varepsilon_1,\varepsilon_2} $ defined below.

We define a subgroup $ (E_7)^{\varepsilon_1,\varepsilon_2} $ of $ E_7 $ by
\begin{align*}
    (E_7)^{\varepsilon_1,\varepsilon_2}:=\left\lbrace \alpha \in E_7
    \relmiddle{|} \varepsilon_1\alpha=\alpha\varepsilon_1,
    \varepsilon_2\alpha=\alpha\varepsilon_2 \right\rbrace,
\end{align*}

Then we have the following lemma and proposition.

\begin{lemma}\label{lemma 7.9}
  The Lie algebra $ (\mathfrak{e}_7)^{\varepsilon_1,\varepsilon_2} $ of
  the group $ (E_7)^{\varepsilon_1,\varepsilon_2} $ is given by
  \begin{align*}
   (\mathfrak{e}_7)^{\varepsilon_1,\varepsilon_2}=\left\lbrace
   \varPhi(\phi,A,-\tau A,\nu) \in \mathfrak{e}_7 \relmiddle{|}
   \begin{array}{l}
       \phi=(D_1,D_2,D_3)+\tilde{A}_1(a_1)+\tilde{A}_2(a_2)+\tilde{A}
       _3(a_3)+i\tilde{T}
       \\
       \quad
       D_1=d_{01}G_{01}+d_{02}G_{02}+d_{03}G_{03}+d_{12}G_{12}+d_{13}G_{13}
       \\
       \qquad +d_{23}G_{23}+d_{45}(G_{45}+G_{67})+d_{46}(G_{46}-G_{57})
       \\
       \qquad +d_{47}(G_{47}+G_{56}),d_{ij} \in \R,
       \\
       \quad D_2=\pi\kappa D_1,D_3=\kappa\pi D_1,
       \\
       \quad a_i \in \H, T \in (\mathfrak{J}_{\sH})_0,
       \\[1mm]
       A \in (\mathfrak{J}_{\sH})^C.
       \\[1mm]
       \nu \in i\R
   \end{array} \right\rbrace.
  \end{align*}

 In particular, we have $ \dim(
 (\mathfrak{e}_7)^{\varepsilon_1,\varepsilon_2})=(9+4\times
 3+15-1)+30+1=66 $.
\end{lemma}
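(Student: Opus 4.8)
The plan is to realize $(\mathfrak{e}_7)^{\varepsilon_1,\varepsilon_2}$ as the fixed-point subalgebra of $\mathfrak{e}_7$ under the adjoint action of $\varepsilon_1,\varepsilon_2$. Since $(E_7)^{\varepsilon_1,\varepsilon_2}$ is the centralizer in $E_7$ of the two commuting elements $\varepsilon_1,\varepsilon_2$, its Lie algebra is
\[
(\mathfrak{e}_7)^{\varepsilon_1,\varepsilon_2}=\left\{\varPhi \in \mathfrak{e}_7 \relmiddle{|} \varepsilon_i\varPhi=\varPhi\varepsilon_i,\ i=1,2 \right\},
\]
where each $\varepsilon_i$ is read as the operator $\tilde{\varepsilon_i}$ on $\mathfrak{P}^C$ arising from $\varepsilon_i \in E_6 \subset E_7$. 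The first task is therefore to describe how conjugation by $\tilde{\varepsilon_i}$ acts on a general $\varPhi(\phi,A,B,\nu)$.

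First I would record the action of $\tilde{\varepsilon_i}$. Because $\varepsilon_i \in G_2 \subset F_4$ lies in $SO(8)$ we have $\,{}^t\varepsilon_i^{-1}=\varepsilon_i$, hence $\tilde{\varepsilon_i}(X,Y,\xi,\eta)=(\varepsilon_iX,\varepsilon_iY,\xi,\eta)$. Using that $\varepsilon_i \in F_4$ preserves both the Freudenthal cross product and the inner product on $\mathfrak{J}^C$, together with $\,{}^t(\varepsilon_i\phi\varepsilon_i^{-1})=\varepsilon_i\,{}^t\phi\,\varepsilon_i^{-1}$, a direct substitution into the defining formula for $\varPhi(\phi,A,B,\nu)$ gives
\[
\tilde{\varepsilon_i}\,\varPhi(\phi,A,B,\nu)\,\tilde{\varepsilon_i}^{-1}=\varPhi(\varepsilon_i\phi\varepsilon_i^{-1},\varepsilon_iA,\varepsilon_iB,\nu).
\]
Thus $\varepsilon_i\varPhi=\varPhi\varepsilon_i$ is equivalent to $\varepsilon_i\phi\varepsilon_i^{-1}=\phi$, $\varepsilon_iA=A$ and $\varepsilon_iB=B$, while $\nu$ is untouched. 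Since $\varepsilon_i$ is a real transformation it commutes with $\tau$, so for $B=-\tau A$ the condition on $B$ reduces to that on $A$ and imposes nothing new.

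Next I would identify the three fixed spaces. The condition $\varepsilon_i\phi\varepsilon_i^{-1}=\phi$ with $\phi\in\mathfrak{e}_6$ means precisely $\phi\in(\mathfrak{e}_6)^{\varepsilon_1,\varepsilon_2}$ in its compact form; taking the compact real form of the description in Lemma \ref{lemma 6.2} yields $\phi=(D_1,D_2,D_3)+\tilde{A}_1(a_1)+\tilde{A}_2(a_2)+\tilde{A}_3(a_3)+i\tilde{T}$ with $D_1$ of the stated shape and real coefficients $d_{ij}\in\R$, with $a_k\in\H$, and with $T\in(\mathfrak{J}_{\sH})_0$. For the condition $\varepsilon_iA=A$, I would read off from the $SO(8)$-matrices of $\varepsilon_1,\varepsilon_2$ in Lemma \ref{lemma 5.2} that their common fixed subspace of $\mathfrak{C}^C$ is exactly $\H^C$; hence the common fixed subspace of $\mathfrak{J}^C$ is $\mathfrak{J}(3,\H^C)=(\mathfrak{J}_{\sH})^C$, so $A\in(\mathfrak{J}_{\sH})^C$ with no reality restriction. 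Finally $\nu\in i\R$ is inherited unchanged from $\mathfrak{e}_7$. Assembling these three descriptions gives the displayed form of $(\mathfrak{e}_7)^{\varepsilon_1,\varepsilon_2}$.

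The dimension count then follows at once: the $\mathfrak{f}_4$-part of $\phi$ contributes $9+4\times 3=21$ real parameters, the $i\tilde{T}$-part contributes $\dim_{\R}(\mathfrak{J}_{\sH})_0=15-1=14$, the slot $A\in(\mathfrak{J}_{\sH})^C$ contributes $2\times 15=30$, and $\nu\in i\R$ contributes $1$, for a total of $(9+4\times 3+15-1)+30+1=66$. The hard part will be the careful bookkeeping in the conjugation formula and, above all, keeping the reality conditions straight: the $\mathfrak{e}_6$-part $\phi$ must be taken in the compact real form (real $d_{ij}$, $a_k\in\H$, $T\in(\mathfrak{J}_{\sH})_0$), whereas the slot $A$ ranges over the full complex space $(\mathfrak{J}_{\sH})^C$ because it sits in the $B=-\tau A$ pairing rather than inside the compact $\mathfrak{e}_6$; conflating the two is the easiest way to get the dimension wrong.
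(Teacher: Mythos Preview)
Your proposal is correct and follows essentially the same route as the paper: compute the conjugation $\tilde{\varepsilon_i}\varPhi(\phi,A,B,\nu)\tilde{\varepsilon_i}^{-1}=\varPhi(\varepsilon_i\phi\varepsilon_i^{-1},\varepsilon_iA,\varepsilon_iB,\nu)$ and read off the fixed subspaces, then count dimensions. The paper's own proof is a one-line appeal to \cite[Theorem~4.3.4]{iy0} and ``straightforward computation'', so you have simply written out the details that the paper leaves implicit.
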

\begin{proof}
     Using the result of \cite[Theorem 4.3.4]{iy0}, by straightforward
     computation, we can obtain the required result.
\end{proof}

\begin{proposition}\label{proposition 7.10}
    The group $ (E_7)^{\varepsilon_1,\varepsilon_2} $ is connected.
\end{proposition}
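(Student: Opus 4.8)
The plan is to exhibit $(E_7)^{\varepsilon_1,\varepsilon_2}$ as the fixed-point subgroup of a finite-order automorphism of a simply connected group, so that E.~Cartan--Ra\v{s}evskii (Lemma~\ref{lemma 3.6.2}) gives connectedness immediately. The natural candidate is the group $E_{7,\sH}$ itself, which is already known to be connected (Theorem~\ref{theorem 7.6}) but is \emph{not} simply connected, so Lemma~\ref{lemma 3.6.2} does not apply directly to it. Instead I would run the argument in two stages, mirroring the strategy the paper uses for the other groups: first relate $(E_7)^{\varepsilon_1,\varepsilon_2}$ to the complex group $(({E_7}^C))^{\varepsilon_1,\varepsilon_2}$ via the compactness-by-involution mechanism of Proposition~\ref{proposition 7.3}, and then use a transitive action on a connected space with a connected isotropy subgroup.

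\medskip
\noindent\textbf{Step 1 (set-up).} First I would observe that $\varepsilon_1,\varepsilon_2$ are finite-order elements of $E_7$ (since ${\varepsilon_1}^4={\varepsilon_2}^4=1$), so the inner automorphisms $\alpha\mapsto\varepsilon_i\alpha{\varepsilon_i}^{-1}$ are commuting finite-order automorphisms of $E_7$, and $(E_7)^{\varepsilon_1,\varepsilon_2}$ is their common fixed-point set. However, $E_7$ is not simply connected, so I cannot invoke Lemma~\ref{lemma 3.6.2} at this level. The remedy is to lift to the universal cover or, more practically for this paper's methods, to prove connectedness directly by a transitivity argument analogous to the proof of Theorem~\ref{theorem 7.6}.

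\medskip
\noindent\textbf{Step 2 (transitive action).} The plan is to let $(E_7)^{\varepsilon_1,\varepsilon_2}$ act on the unit sphere $S^{11}$ of the $12$-dimensional real vector space $V^{12}$ introduced just above, equipped with the positive-definite norm $(P,P)_{\varepsilon_2}$. Since the elements of $(E_7)^{\varepsilon_1,\varepsilon_2}$ commute with $\varepsilon_1$ and $\varepsilon_2$ and lie in $E_7$ (hence preserve $\langle\,,\,\rangle$ and $\{\,,\,\}$), they preserve $V^{12}$ and the norm $(P,P)_{\varepsilon_2}$, so the action on the sphere $S^{11}=\{P\in V^{12}\mid (P,P)_{\varepsilon_2}=1\}$ is well-defined. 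I would then show this action is transitive. The key computational input is the explicit one-parameter subgroups of the form $\exp(\varPhi(0,aE_i,-\tau aE_i,0))$ together with the $G_2$-level maps $\varphi_{{}_{G_2}}(p,q)$ restricted to $(\H^Ce_4)_{\varepsilon_1}$, which should move any unit vector of $V^{12}$ to a fixed base point; this parallels exactly the diagonalization-and-sphere argument carried out in Lemma~\ref{lemma 7.5} and Theorem~\ref{theorem 7.6}.

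\medskip
\noindent\textbf{Step 3 (isotropy and conclusion).} Having transitivity on the connected sphere $S^{11}$, I would identify the isotropy subgroup at the chosen base point and show it is connected — ideally by recognizing it as a group of the same ``quaternionic'' type already handled, or by iterating the sphere argument to reduce its dimension. Then the standard fibration $(E_7)^{\varepsilon_1,\varepsilon_2}/(\text{isotropy})\simeq S^{11}$ forces $(E_7)^{\varepsilon_1,\varepsilon_2}$ to be connected, since both base and fiber are connected. \textbf{The main obstacle} I anticipate is Step 3: verifying that the isotropy subgroup is connected is not automatic, and getting it to close the induction may require either a second auxiliary sphere action or an appeal to the structure results for $({E_7}^C)^{\varepsilon_1,\varepsilon_2}\cong Spin(12,C)$ cited in the introduction (\cite[Proposition 1.1.7]{miya1}), from which one reads off that the compact form is $Spin(12)$, manifestly connected. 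If the direct sphere computation becomes unwieldy, the cleanest fallback is to invoke that isomorphism: since $Spin(12)$ is connected and one expects $(E_7)^{\varepsilon_1,\varepsilon_2}\cong Spin(12)$, connectedness follows, and the sphere action above serves to pin down the homeomorphism type rather than to prove connectedness from scratch.
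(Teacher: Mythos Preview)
Your main approach (Steps 2--3, a transitive sphere action on $S^{11}\subset V^{12}$ with connected isotropy) is a genuinely different route from the paper's, and as you yourself flag, Step~3 is a real gap: identifying the stabilizer and proving it connected would require essentially another induction of the same type. Your fallback is closer to the paper's argument but is circular as you phrase it: the isomorphism $(E_7)^{\varepsilon_1,\varepsilon_2}\cong Spin(12)$ is Proposition~\ref{proposition 7.11}, and its proof \emph{uses} Proposition~\ref{proposition 7.10} (connectedness is needed to know the image of $p$ lands in $SO(12)$ rather than $O(12)$, and to apply Lemma~\ref{lemma 3.6.1}).

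The paper's proof is a one-liner that you almost reached in Step~1 when you wrote ``lift to the universal cover,'' but then abandoned. The point is not to lift $E_7$, but to change which group you view as ambient: one checks that $\tau\lambda$ commutes with $\varepsilon_1,\varepsilon_2$ on $\mathfrak{P}^C$, hence
\[
(E_7)^{\varepsilon_1,\varepsilon_2}=\bigl(({E_7}^C)^{\tau\lambda}\bigr)^{\varepsilon_1,\varepsilon_2}=\bigl(({E_7}^C)^{\varepsilon_1,\varepsilon_2}\bigr)^{\tau\lambda}.
\]
Now $({E_7}^C)^{\varepsilon_1,\varepsilon_2}\cong Spin(12,C)$ by \cite[Proposition 1.1.7]{miya1}, which is simply connected, and $\tau\lambda$ is an involution on it; Lemma~\ref{lemma 3.6.2} (Cartan--Ra\v{s}evskii) then gives connectedness of the fixed-point set immediately. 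No sphere action, no isotropy computation, no appeal to the as-yet-unproven compact isomorphism is needed.
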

\begin{proof}
 First, we will confirm that $ \tau\lambda $ commutes with $
 \varepsilon_i,i=1,2 $, respectively. Indeed, it follows that
 \begin{align*}
     (\tau\lambda)\varepsilon_i(X,Y,\xi,\eta)
     &=(\tau\lambda)(\varepsilon_iX,\varepsilon_iY,\xi,\eta),(X,Y,\xi,\eta)
      \in \mathfrak{P}^C
     \\
     &=(\varepsilon_i\tau Y,-\varepsilon_i\tau X , \tau\eta, -\tau\xi)
     \\
     &=\varepsilon_i(\tau\lambda)(X,Y,\xi,\eta),
 \end{align*}
that is, $ (\tau\lambda)\varepsilon_i=\varepsilon_i(\tau\lambda) $. Hence
we have
\begin{align*}
    (({E_7}^C)^{\varepsilon_1,\varepsilon_2})^{\tau\lambda}=(({E_7}
    ^C)^{\tau\lambda})^{\varepsilon_1,\varepsilon_2}=(E_7)^{\varepsilon_1,
    \varepsilon_2}.
\end{align*}
Thus, since $ ({E_7}^C)^{\varepsilon_1,\varepsilon_2} (\cong Spin(12,C)) $
is simply connected (\cite[Proposition 1.1.7]{miya1}), the group $
(E_7)^{\varepsilon_1,\varepsilon_2}\allowbreak =(({E_7}^C)^{\varepsilon_1,
\varepsilon_2})^{\tau\lambda}
 $ is connected.
\end{proof}

Now, we will construct the spinor group $ Spin(12) $ in $ E_7 $.

\begin{proposition}\label{proposition 7.11}
  The group $ (E_7)^{\varepsilon_1,\varepsilon_2} $ is isomorphic to the
  group $ Spin(12) ${\rm :}$  (E_7)^{\varepsilon_1,\varepsilon_2} \cong
  Spin(12) $.
\end{proposition}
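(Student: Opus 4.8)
The plan is to let $(E_7)^{\varepsilon_1,\varepsilon_2}$ act on the $12$-dimensional real space $V^{12}$ and to identify it with the connected double covering of $SO(12)$. First I would verify the action. Any $\alpha \in (E_7)^{\varepsilon_1,\varepsilon_2}$ commutes with $\varepsilon_1,\varepsilon_2$ and, since $(E_7)^{\varepsilon_1,\varepsilon_2} = (({E_7}^C)^{\varepsilon_1,\varepsilon_2})^{\tau\lambda}$ as established in Proposition \ref{proposition 7.10}, also commutes with $\tau\lambda$; hence $\alpha$ preserves both conditions $\varepsilon_1 P = -iP$ and $\varepsilon_2(\tau\lambda)P = P$ defining $V^{12}$. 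Because $\alpha$ leaves the skew-symmetric form $\{\,,\,\}$ invariant and commutes with $\varepsilon_2$, one has $\{\alpha P,\varepsilon_2\alpha P\} = \{\alpha P,\alpha\varepsilon_2 P\} = \{P,\varepsilon_2 P\}$, so $\alpha$ preserves the norm $(P,P)_{\varepsilon_2} = \tfrac18\{P,\varepsilon_2 P\}$, which the explicit formula exhibits as positive definite. This yields a homomorphism $\rho : (E_7)^{\varepsilon_1,\varepsilon_2} \to O(V^{12}) = O(12)$, whose image lies in $SO(12)$ since the domain is connected (Proposition \ref{proposition 7.10}).

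Next I would show $\rho$ is surjective with discrete kernel. By Lemma \ref{lemma 7.9} the Lie algebra $(\mathfrak{e}_7)^{\varepsilon_1,\varepsilon_2}$ has dimension $66 = \dim\mathfrak{so}(12)$, and it is simple, being a real form of the simple complex Lie algebra $\mathfrak{so}(12,C)$ of $({E_7}^C)^{\varepsilon_1,\varepsilon_2} \cong Spin(12,C)$. The differential $d\rho$ is nonzero (for instance $\rho(\gamma) = -\mathrm{id} \neq \mathrm{id}$, so $\rho$ is nonconstant), hence $\Ker\,d\rho$ is a proper ideal of a simple algebra and must vanish; thus $d\rho$ is injective and, by equality of dimensions, an isomorphism onto $\mathfrak{so}(12)$. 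In particular $\Ker\,\rho$ is discrete, and Lemma \ref{lemma 3.6.1} then forces $\rho$ to be surjective.

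Finally I would pin down $\Ker\,\rho$ and conclude. Set $-\gamma := (-1)\gamma$, where $-1$ is the central element of $E_7$ and $\gamma = {\varepsilon_1}^2$; this element lies in $(E_7)^{\varepsilon_1,\varepsilon_2}$, has order $2$, and is $\neq 1$. Both $-1$ and $\gamma$ act as $-\mathrm{id}$ on $V^{12}$ (for $\gamma$ because $\gamma = -1$ on $\H^Ce_4 \supseteq (\H^Ce_4)_{\varepsilon_1}$, and likewise on the $Y$-component $-\varepsilon_2\tau X$), so $-\gamma$ acts trivially, giving $-\gamma \in \Ker\,\rho$. Since $\rho$ is a surjection with discrete kernel between groups of equal dimension, it is a covering map; as $(E_7)^{\varepsilon_1,\varepsilon_2}$ is connected, it is a connected covering of $SO(12)$, so $\Ker\,\rho$ is a quotient of $\pi_1(SO(12)) = \Z_2$ and has order at most $2$. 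Combined with $\{1,-\gamma\} \subseteq \Ker\,\rho$ this forces $\Ker\,\rho = \{1,-\gamma\} \cong \Z_2$, and $(E_7)^{\varepsilon_1,\varepsilon_2}$ is the unique connected double covering of $SO(12)$, namely $Spin(12)$.

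The step I expect to demand the most care is confirming both the invariance and the positive-definiteness of $(\,,\,)_{\varepsilon_2}$ on $V^{12}$ and checking $d\rho \neq 0$; the simplicity argument is precisely what lets one avoid an explicit computation of the full kernel. An alternative route, bypassing $V^{12}$ altogether, would note that $(E_7)^{\varepsilon_1,\varepsilon_2}$ is the compact, connected (Proposition \ref{proposition 7.10}) real form of the simply connected group $({E_7}^C)^{\varepsilon_1,\varepsilon_2} \cong Spin(12,C)$, and that a Chevalley-type homotopy equivalence as in Theorem \ref{theorem 7.8} gives $\pi_1((E_7)^{\varepsilon_1,\varepsilon_2}) = \pi_1(Spin(12,C)) = 1$, so that the simply connected compact group with Lie algebra $\mathfrak{so}(12)$ is $Spin(12)$.
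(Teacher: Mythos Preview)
Your proposal is correct and follows essentially the same approach as the paper: restrict $\alpha$ to $V^{12}$ to obtain a homomorphism $p:(E_7)^{\varepsilon_1,\varepsilon_2}\to SO(12)$, use connectedness (Proposition~\ref{proposition 7.10}), the dimension count from Lemma~\ref{lemma 7.9}, and Lemma~\ref{lemma 3.6.1} for surjectivity, and then identify the group with $Spin(12)$ as the connected double cover of $SO(12)$. The only notable difference is in the kernel step: the paper simply observes that $p$ is the restriction of the complex map $\pi$ from \cite[Proposition~1.1.7]{miya1} and quotes $\Ker\,\pi=\{1,-\gamma\}$, whereas you argue self-containedly by exhibiting $-\gamma\in\Ker\,\rho$ and bounding $|\Ker\,\rho|$ via $\pi_1(SO(12))=\Z_2$.
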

\begin{proof}
Let the orthogonal group $ O(12)=O(V^{12})=\{\beta \in
\Iso_{\sR}(V^{12})\,|\,(\beta P,\beta
Q)_{\varepsilon_2}=(P,Q)_{\varepsilon_2} \} $. We consider the restriction
$ \alpha \big|_{V^{12}} $ of $ \alpha \in
(E_7)^{\varepsilon_1,\varepsilon_2} $ to $ V^{12} $, then since $
(E_7)^{\varepsilon_1,\varepsilon_2} $ acts on $ V^{12} $, we see $ \alpha
\big|_{V^{12}} \in O(12) $. Moreover, since $
(E_7)^{\varepsilon_1,\varepsilon_2} $ is connected (Proposition
\ref{proposition 7.10}), we can define a homomorphism $ p:
(E_7)^{\varepsilon_1,\varepsilon_2} \to SO(V^{12})=SO(12) $ by
\begin{align*}
    p(\alpha)=\alpha \big|_{V^{12}}.
\end{align*}

We will determine $ \Ker\,p $. However, since $ p $ is the restriction of
the mapping $ \pi $ defined in \cite[Proposition 1.1.7]{miya1}, we easily
obtain $ \Ker\,p=\Ker\,\pi=\{1,-\gamma \} \cong \Z_2 $.

Finally, we will prove that $ p $ is surjective.
Since the group $ (E_7)^{\varepsilon_1,\varepsilon_2} $ ic connected
(Proposition \ref{proposition 7.10}) and $ \Ker\,p $ is discrete,
together with $
\dim((\mathfrak{e}_7)^{\varepsilon_1,\varepsilon_2})=66=\dim(\mathfrak{so}
(12))$ (Lemma \ref{lemma 7.7}), we see that $ p $ is surjective.
Thus we have the isomorphism
\begin{align*}
 (E_7)^{\varepsilon_1,\varepsilon_2}/\Z_2 \cong SO(12).
\end{align*}

Therefore the group $ (E_7)^{\varepsilon_1,\varepsilon_2} $ is isomorphic
to $ Spin(12) $ as the universal covering group of $ SO(12) $, that is,
\begin{align*}
    (E_7)^{\varepsilon_1,\varepsilon_2} \cong Spin(12).
\end{align*}
\end{proof}

Let $ \alpha \in (E_7)^{\varepsilon_1,\varepsilon_2} $. Then, as in $
({E_7}^C)^{\varepsilon_1,\varepsilon_2} $, for $ P \in
(\mathfrak{P}_{\sH})^C $, we have $ \alpha P \in (\mathfrak{P}_{\sH})^C $
because of $ (E_7)^{\varepsilon_1,\varepsilon_2} \subset (E_7)^\gamma $.
Hence $ \alpha $ induces a $ C $-linear isomorphism $ \alpha:
(\mathfrak{P}_{\sH})^C \to (\mathfrak{P}_{\sH})^C $.
\vspace{2mm}

Now, we will determine the structure of the group $ E_{7,\sH} $.

\begin{theorem}\label{theorem 7.12}
    The group $ E_{7,\sH} $ is isomorphic to the semi-spinor group $
    Ss(12) ${\rm :} $ E_{7,\sH} \cong Ss(12) $.
\end{theorem}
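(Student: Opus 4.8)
The plan is to construct, exactly as in the treatment of $G_{2,\sH}$ in Theorem~\ref{theorem 4.3}, a homomorphism from $(E_7)^{\varepsilon_1,\varepsilon_2}\cong Spin(12)$ onto $E_{7,\sH}$ by restriction, and then to read off the structure from its kernel. Since every $\alpha \in (E_7)^{\varepsilon_1,\varepsilon_2}$ induces a $C$-linear isomorphism of $(\mathfrak{P}_{\sH})^C$ (noted just before the statement), I would define
\begin{align*}
f_7:(E_7)^{\varepsilon_1,\varepsilon_2}\to E_{7,\sH},\quad f_7(\alpha)=\alpha\big|_{(\mathfrak{P}_{\sH})^C}.
\end{align*}
First I would check that $f_7$ is well defined, i.e. that $f_7(\alpha)\in E_{7,\sH}$: the relations $\alpha(P\times Q)\alpha^{-1}=\alpha P\times\alpha Q$ and $\langle\alpha P,\alpha Q\rangle=\langle P,Q\rangle$ hold on all of $\mathfrak{P}^C$ because $\alpha\in E_7$, and they persist after restricting to the $\alpha$-invariant subspace $(\mathfrak{P}_{\sH})^C$. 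That $f_7$ is a homomorphism is then immediate, as the restriction of a composite to the invariant subspace is the composite of the restrictions.

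Next I would prove that $f_7$ is surjective. The differential $df_7$ sends $(\mathfrak{e}_7)^{\varepsilon_1,\varepsilon_2}$ into $\mathfrak{e}_{7,\sH}$ by restriction; an element $\varPhi(\phi,A,-\tau A,\nu)$ of $(\mathfrak{e}_7)^{\varepsilon_1,\varepsilon_2}$ that annihilates $(\mathfrak{P}_{\sH})^C$ forces $\phi$, $A$ and $\nu$ to vanish there (inspect the four components on $\dot X,\underset{\dot{}}{Y},\dot 1,\underset{\dot{}}{1}$), hence $\varPhi=0$, so $df_7$ is injective. As both algebras have dimension $66$ (Lemmas~\ref{lemma 7.9} and \ref{lemma 7.2}), $df_7$ is an isomorphism; in particular $\Ker f_7$ is discrete. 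Since $E_{7,\sH}$ is connected (Theorem~\ref{theorem 7.6}) and $\dim(\mathfrak{e}_7)^{\varepsilon_1,\varepsilon_2}=\dim\mathfrak{e}_{7,\sH}$, Lemma~\ref{lemma 3.6.1} yields surjectivity.

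The crux is the determination of $\Ker f_7$, which I expect to be the main obstacle. Recall that $\gamma=\varphi_{{}_{G_2}}(-1,1)$ fixes $\H$ pointwise and hence acts as the identity on $(\mathfrak{J}_{\sH})^C$ and on $(\mathfrak{P}_{\sH})^C$, while $\gamma^2=1$ and $\gamma\neq 1$; therefore $\gamma\in\Ker f_7$. Being a discrete normal subgroup of the connected group $(E_7)^{\varepsilon_1,\varepsilon_2}$, $\Ker f_7$ is central, so it lies in the center $\{1,-1,\gamma,-\gamma\}$ of $(E_7)^{\varepsilon_1,\varepsilon_2}\cong Spin(12)$, which has order $4$ because $12\equiv 0 \bmod 4$. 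Among these four elements only $1$ and $\gamma$ restrict to the identity on $(\mathfrak{P}_{\sH})^C$, since $-1$ and $-\gamma$ both act there as $-1$; hence $\Ker f_7=\{1,\gamma\}\cong\Z_2$. The delicate point is precisely to distinguish $\gamma$, which acts trivially on the $\H$-part $(\mathfrak{P}_{\sH})^C$ and is therefore killed by the (half-spin type) representation $f_7$, from the element $-\gamma=\Ker p$ of Proposition~\ref{proposition 7.11}, which is killed by the vector representation $p$ onto $SO(12)$; it is this difference that makes the present quotient the semi-spinor group rather than $SO(12)$.

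Finally, combining surjectivity with $\Ker f_7\cong\Z_2$ and the isomorphism $(E_7)^{\varepsilon_1,\varepsilon_2}\cong Spin(12)$ (Proposition~\ref{proposition 7.11}), I would conclude
\begin{align*}
E_{7,\sH}\cong (E_7)^{\varepsilon_1,\varepsilon_2}/\Z_2\cong Spin(12)/\Z_2=Ss(12),
\end{align*}
which is the required structure.
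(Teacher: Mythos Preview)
Your proposal is correct and follows essentially the same approach as the paper: define $f_7$ by restriction to $(\mathfrak{P}_{\sH})^C$, show its differential is injective so that $\Ker f_7$ is discrete and hence central, identify the center of $(E_7)^{\varepsilon_1,\varepsilon_2}\cong Spin(12)$ as $\{1,-1,\gamma,-\gamma\}$, observe that only $1$ and $\gamma$ restrict to the identity, and conclude via the dimension and connectedness argument. Your phrasing for why $-1,-\gamma\notin\Ker f_7$ (they act as $-1$ on $(\mathfrak{P}_{\sH})^C$) is in fact cleaner than the paper's.
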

\begin{proof}
   Let $ Spin(12) $ as the group $ (E_7)^{\varepsilon_1,\varepsilon_2} $
   (Proposition \ref{proposition 7.11}). Then we define a mapping $ f_7:
   (E_7)^{\varepsilon_1,\varepsilon_2} \to E_{7,\sH} $ by
  \begin{align*}
      f_7(\alpha)=\alpha \big |_{(\mathfrak{P}_{\sH})^C}.
  \end{align*}
  As mentioned above, since $ \alpha \in (E_7)^{\varepsilon_1,
  \varepsilon_2} $ induces the $ C $-linear isomorphism $ \alpha:
  (\mathfrak{P}_{\sH})^C \to (\mathfrak{P}_{\sH})^C $, we easily see $
  \alpha
  \big |_{(\mathfrak{P}_{\sH})^C} \in E_{7,\sH}  $. Hence $ f_7 $ is well-
  defined. Moreover, it is clear that $ f_7 $ is a homomorphism.

Next, we will determine $ \Ker\, f_7 $. In order to determine it, we will
prove the following claim.
\vspace{-3mm}

    \begin{quote}
          \begin{claim1}\label{claim 1} The kernel of the differential
          mapping $
          f_{7_*}:(\mathfrak{e}_7)^{\varepsilon_1,\varepsilon_2} \to
          \mathfrak{e}_{7,\sH} $ of $ f_7 $  is trivial: $ \Ker\,
          f_{7_*}=\{0\}$.
          \end{claim1}
          \begin{proof}
         Let $ \varPhi:=\varPhi(\phi,A,-\tau A,\nu) \in \Ker\,
           f_{7_*} $. Then $ \varPhi $ satisfies the condition
          \begin{align*}
          \varPhi P=0\,\,\text{for all}\,\,P \in (\mathfrak{P}_{\sH})^C.
          \end{align*}

    Set $ P:=(0,0,0,1) $, then it follows from $ 0=\varPhi
    P=(A,0,0,-\nu) $ that $ A=0 $ and $ \nu=0 $.
    Hence $ \varPhi $ is of the form  $ \varPhi(\phi,0,0,0) $: $
    \varPhi=\varPhi(\phi,0,0,0) $.
    In addition, set $ P:=(F_1(x),0,0,0), x \in \H^C $, then as $
    \phi=(D_1,D_2,D_3)+\tilde{A}_1(a_1)+\tilde{A}_2(a_2)+\tilde{A}_3(a_3)+
    (\tau_1E_1+\tau_2E_2+\tau_3E_3+F_1(t_1)+F_2(t_2)+F_3(t_3))^\sim
     $, it follows from
   \begin{align*}
    0=\varPhi P
    &=(\phi F_1(x), 0, 0, 0)
    \\
    &=(((a_1,x)+(t_1,x))E_2+((a_1,x)-(t_1,x))E_3
    \\
    &\quad
    +F_1(D_1x-(1/2)\tau_1x)+(1/2)F_2(\ov{a_3x}+\ov{t_3x})+(1/2)F_3(-
    \ov{xa_2}+\ov{xt_2}),0,0,0)
   \end{align*}
   that
   \begin{align*}
   &(a_1,x)+(t_1,x)=(a_1,x)-(t_1,x)=0,\;\; D_1x-(1/2)\tau_1x=0,
   \\
   &\ov{a_3x}+\ov{t_3x}=0,\;\; -\ov{xa_2}+\ov{xt_2}=0 \,\,\text{for all}\,
   \, x \in (\mathfrak{P}_{\sH})^C.
   \end{align*}
   Moreover, set $ P:=(F_2(x),0,0,0),(F_3(x),0,0,0), x \in \H^C $,
   then similarly it follows from $ \varPhi P=0 $ that
   \begin{align*}
  & \left\lbrace
   \begin{array}{l}
   (a_2,x)+(t_2,x)=(a_2,x)-(t_2,x)=0,\;\; D_2x-(1/2)\tau_2x=0,
   \\[1mm]
   \ov{a_1x}+\ov{t_1x}=0, \;\;-\ov{xa_3}+\ov{xt_3}=0,
   \end{array}\right.
   \\
   &\left\lbrace
   \begin{array}{l}
   (a_3,x)+(t_3,x)=(a_3,x)-(t_3,x)=0,\;\; D_3x-(1/2)\tau_3x=0,
   \\[1mm]
   \ov{a_2x}+\ov{t_2x}=0, \;\;-\ov{xa_1}+\ov{xt_1}=0
   \end{array}\right.
   \end{align*}
   for all $ x \in (\mathfrak{P}_{\sH})^C $ , respectively.

   \noindent Hence we have
   \begin{align*}
   (*)\,\left\lbrace
   \begin{array}{l}
    a_1=a_2=a_3=0,\;\;t_1=t_2=t_3=0,
    \\[1mm]
    D_1x-(1/2)\tau_1x=0,\;\;D_2x-(1/2)\tau_2x=0,\;\;D_3x-(1/2)\tau_3x=0.
   \end{array}\right.
  \end{align*}

  Finally, set $ P:=(0,F_k(x),0,0),x \in \H^C $, then as results
  different from $ (*) $, we have
  \begin{align*}
  D_1x+(1/2)\tau_1x=0,\;\;D_2x+(1/2)\tau_2x=0,\;\;D_3x+(1/2)\tau_3x=0
  \end{align*}
  Therefore, together with $ (*) $, we have
  \begin{align*}
  D_1=D_2=D_3=0,\;\;\tau_1=\tau_2=\tau_3=0.
  \end{align*}

With above, we obtain the required result.
\end{proof}

\begin{claim2}\label{claim 2}
The center $ z((E_7)^{\varepsilon_1,\varepsilon_2}) $ of the group $
(E_7)^{\varepsilon_1,\varepsilon_2} $ coincides with the center $ z(({E_7}^C)^{\varepsilon_1,\varepsilon_2}) $ of the group $ ({E_7}
^C)^{\varepsilon_1,\varepsilon_2} ${\rm  :} $
z((E_7)^{\varepsilon_1,\varepsilon_2})=\{1,\gamma\}\times \{1,-\gamma\} \cong \Z_2 \times \Z_2 $.
\end{claim2}
\begin{proof}
First, from Proposition \ref{proposition 7.11} we can confirm
\begin{align*}
z((E_7)^{\varepsilon_1,\varepsilon_2}) \cong z(Spin(12)) \cong \Z_2 \times \Z_2,
\end{align*}
and moreover from \cite[Proposition 1.1.6]{miya1} and $
(E_7)^{\varepsilon_1,\varepsilon_2} \subset ({E_7}^C)^{\varepsilon_1,
\varepsilon_2} $ we have
\begin{align*}
z((E_7)^{\varepsilon_1,\varepsilon_2})
\supset z(({E_7}^C)^{\varepsilon_1,\varepsilon_2}&=\{1,\gamma\}\times \{1,-\gamma\}
\\
&\cong \Z_2 \times \Z_2.
\end{align*}
Hence we obtain the required result.
\end{proof}
\end{quote}
\vspace{2mm}

Here, the group $ (E_7)^{\varepsilon_1,\varepsilon_2} $ is connected
(Proposition \ref{proposition 7.10}) and $ \Ker\,f_7 $ is discrete (Claim
1), so that together with Claim 2, we have
\begin{align*}
\Ker\,f_7 \subset z((E_7)^{\varepsilon_1,\varepsilon_2})=\{1,\gamma\}\times \{1,-\gamma\}.
\end{align*}
However, since $ -1, -\gamma $ do not leave $ (\mathfrak{P}_{\sH})^C $
, we see that $ -1, -\gamma \notin \Ker\,f_7  $. Thus we have
\begin{align*}
\Ker\,f_7=\{1,\gamma\}=\Z_2.
\end{align*}


Finally, we will prove that $ f_7 $ is surjective. Since the group $
E_{7,\sH} $ is connected (Theorem \ref{theorem 7.6}) and $
\Ker\,f_7 $ is discrete, together with $ \dim((\mathfrak{e}
_7)^{\varepsilon_1,\varepsilon_2})=66=\dim(\mathfrak{e}_{7,
\sH}) $(Lemmmas \ref{lemma 7.7}, \ref{lemma 7.2}), $ f_7 $ is
surjective.

Therefore, from Proposition \ref{proposition 7.11}, we have the isomorphism
\begin{align*}
    E_{7,\sH} \cong Spin(12)/\Z_2,
\end{align*}
that is, we have the required isomorphism
\begin{align*}
    E_{7,\sH} \cong Ss(12).
\end{align*}
\end{proof}

We consider a subgroup $ ({E_7}^C)^{\varepsilon_1,\varepsilon_2} $ of $ {E_7}^C $ by
\begin{align*}
({E_7}^C)^{\varepsilon_1,\varepsilon_2}:=\left\lbrace \alpha \in {E_7}^C \relmiddle{|} \varepsilon_1\alpha=\alpha\varepsilon_1, \varepsilon_2\alpha=\alpha\varepsilon_2
\right\rbrace.
\end{align*}
Then the group $ ({E_7}^C)^{\varepsilon_1,\varepsilon_2} $ has been studied by author in  \cite[Proposition 1.1.7]{miya1}, and its result was obtained as follows:
\begin{align*}
    ({E_7}^C)^{\varepsilon_1,\varepsilon_2} \cong Spin(12,C).
\end{align*}

Now, we move the determination of the root system and the Dynkin diagram of the Lie algebra $ ({\mathfrak{e}_{7}}^C)^{\varepsilon_1,\varepsilon_2} $ of the group $({E_7}^C)^{\varepsilon_1,\varepsilon_2} $.
After this, we will determine those by the same procedure as in $
({\mathfrak{f}_4}^C)^{\varepsilon_1,\varepsilon_2} $.

Then we need the following lemma used later.

\begin{lemma}{\rm(\cite[Lemma 1.1.2]{miya1})}\label{lemma 7.13}
    The Lie algebra $ ({\mathfrak{e}_7}
    ^C)^{\varepsilon_1,\varepsilon_2} $ of the group $ ({E_7}^C)^{\varepsilon_1,\varepsilon_2} $ is given by
    \begin{align*}
        ({\mathfrak{e}_7}^C)^{\varepsilon_1,\varepsilon_2}
        =\left\lbrace
        \begin{array}{l}
            \varPhi(\phi,A,B,\nu)  \in {\mathfrak{e}_7}^C
        \end{array}
        \relmiddle{|}
        \begin{array}{l}
            \phi=(D_1,D_2,D_3) \vspace{1mm}\\
            \quad +\tilde{A}_1(a_1)+\tilde{A}_2(a_2)+\tilde{A}_3(a_3)\\
            \quad +(\tau_1E_1+\tau_2E_2+\tau_3E_3+F_1(t_1) \\
            \quad +F_2(t_2)+F_3(t_3))^\sim,
            \\
            \qquad D_1=d_{01}G_{01}+d_{02}G_{02}+d_{03}G_{03} \\
            \qquad +d_{12}G_{12}+d_{13}G_{13}+d_{23}G_{23} \\
            \qquad +d_{45}(G_{45}+G_{67})+d_{46}(G_{46}-G_{57}) \\
            \qquad +d_{47}(G_{47}+G_{56}),d_{ij} \in C, \\
            \qquad D_2=\pi\kappa D_1, D_3=\kappa\pi D_1,\\
            \qquad a_k, t_k \in \H^C ,\\
            \qquad \tau_k \in C, \tau_1+\tau_2+\tau_3=0,\\
          A,B \in (\mathfrak{J}_{\sH})^C, \\
          \nu \in C
        \end{array}
        \right\rbrace.
    \end{align*}

    In particular, we have $ \dim_C( ({\mathfrak{e}_7}^C)^{\varepsilon_1,
    \varepsilon_2})=(9+4 \times 3+(3-1)+4 \times 3)+15 \times 2+1=66. $
\end{lemma}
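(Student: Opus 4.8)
The plan is to impose the two commutation conditions $\varepsilon_i\varPhi=\varPhi\varepsilon_i$ ($i=1,2$) on a general element $\varPhi=\varPhi(\phi,A,B,\nu)\in{\mathfrak{e}_7}^C$ and read off the resulting restrictions on $\phi,A,B,\nu$. Each $\varepsilon_i$, being an element of $G_2\subset{F_4}^C\subset{E_6}^C$, acts on $\mathfrak{P}^C$ through the embedding $\alpha\mapsto\tilde\alpha$, $\tilde\alpha(X,Y,\xi,\eta)=(\alpha X,{}^t\alpha^{-1}Y,\xi,\eta)$. The key computational input is the standard conjugation formula
\[
\tilde\alpha\,\varPhi(\phi,A,B,\nu)\,\tilde\alpha^{-1}=\varPhi(\alpha\phi\alpha^{-1},\,\alpha A,\,{}^t\alpha^{-1}B,\,\nu),\qquad \alpha\in{E_6}^C,
\]
which follows from the definition of $\varPhi(\phi,A,B,\nu)$ together with the identity $\alpha X\times\alpha Y={}^t\alpha^{-1}(X\times Y)$ valid on ${E_6}^C$ (the same kind of bookkeeping already carried out for $\tilde\beta$ in the proof of Proposition \ref{proposition 7.4}).

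Taking $\alpha=\varepsilon_i$ and equating $\tilde\varepsilon_i\varPhi\tilde\varepsilon_i^{-1}$ with $\varPhi$ separates into independent conditions on the four data. First, $\varepsilon_i\phi\varepsilon_i^{-1}=\phi$ for $i=1,2$ says precisely $\phi\in({\mathfrak{e}_6}^C)^{\varepsilon_1,\varepsilon_2}$, whose explicit description --- the $(D_1,D_2,D_3)+\sum_k\tilde A_k(a_k)+(\sum_k\tau_kE_k+\sum_kF_k(t_k))^\sim$ shape with $\tau_1+\tau_2+\tau_3=0$ --- is exactly Lemma \ref{lemma 6.2}. Next I would observe that $\varepsilon_i\in{F_4}^C$ preserves the inner product $(X,Y)$, so ${}^t\varepsilon_i=\varepsilon_i^{-1}$ and hence ${}^t\varepsilon_i^{-1}=\varepsilon_i$; consequently the $A$-condition $\varepsilon_iA=A$ and the $B$-condition ${}^t\varepsilon_i^{-1}B=B$ both collapse to $\varepsilon_1A=\varepsilon_2A=A$ and $\varepsilon_1B=\varepsilon_2B=B$. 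Finally $\nu$ is left completely free.

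It then remains to identify the common fixed space of $\varepsilon_1,\varepsilon_2$ in $\mathfrak{J}^C$. Using the explicit $SO(8)$-matrices of $\varepsilon_1,\varepsilon_2$ recorded in Lemma \ref{lemma 5.2}, both fix $e_0,e_1,e_2,e_3$ and rotate the complementary $4$-space, $\varepsilon_1$ mixing $(e_4,e_5),(e_6,e_7)$ and $\varepsilon_2$ mixing $(e_4,e_6),(e_5,e_7)$, so their common fixed set in $\mathfrak{C}^C$ is exactly $\H^C=\langle e_0,e_1,e_2,e_3\rangle$. Extending entrywise to the off-diagonal slots of $\mathfrak{J}^C$ gives $\{X\in\mathfrak{J}^C\mid\varepsilon_1X=\varepsilon_2X=X\}=(\mathfrak{J}_{\sH})^C$, whence $A,B\in(\mathfrak{J}_{\sH})^C$. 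Combined with the previous paragraph this yields the asserted description, and the dimension count is immediate: $\dim_C(({\mathfrak{e}_6}^C)^{\varepsilon_1,\varepsilon_2})=35$ from Lemma \ref{lemma 6.2}, plus $\dim_C(\mathfrak{J}_{\sH})^C=15$ for each of $A,B$, plus $1$ for $\nu$, giving $35+30+1=66$.

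I expect the only real friction to be the conjugation formula: one must make sure the transpose-inverse lands on the correct ($B$) slot and that the cross-product identity $\alpha X\times\alpha Y={}^t\alpha^{-1}(X\times Y)$ is invoked with the right variance, since a sign or transpose error there would corrupt the $B$-condition. Everything else --- the reduction to Lemma \ref{lemma 6.2} and the fixed-point computation --- is routine once that formula is in hand. Since this statement is in fact Lemma 1.1.2 of \cite{miya1}, in the paper it suffices to cite that reference.
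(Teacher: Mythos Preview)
Your argument is correct: the conjugation formula $\tilde\alpha\,\varPhi(\phi,A,B,\nu)\,\tilde\alpha^{-1}=\varPhi(\alpha\phi\alpha^{-1},\alpha A,{}^t\alpha^{-1}B,\nu)$ together with ${}^t\varepsilon_i^{-1}=\varepsilon_i$ (since $\varepsilon_i\in F_4$) reduces the problem to Lemma~\ref{lemma 6.2} for $\phi$ and to the fixed-point computation in $\mathfrak{J}^C$ for $A,B$, exactly as you outline. The paper itself gives no proof here --- it simply cites \cite[Lemma~1.1.2]{miya1} --- and you correctly anticipate this in your final sentence; your sketch is the natural argument and is undoubtedly what that reference contains.
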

\vspace{-3mm}

Here, we define a Lie subalgebra $ \mathfrak{h}_7 $ of $ ({\mathfrak{e}_7}
^C)^{\varepsilon_1,\varepsilon_2} $ by
\begin{align*}
    \mathfrak{h}_7:=\left\lbrace
\varPhi_7=\varPhi(\phi,0,0,\mu) \in ({\mathfrak{e}_7}^C)^{\varepsilon_1,
\varepsilon_2}
    \relmiddle{|}
    \begin{array}{l}
\phi=\delta +(\mu_1E_1+\mu_2E_2+\mu_3E_3)^\sim
\\
\quad \delta:=(L_1,L_2,L_3),
\\
\qquad L_1=\lambda_0(iG_{01})+\lambda_1(iG_{23})
\\
\qquad\qquad\;\; +\lambda_2(i(G_{45}+G_{67})),
\\
\qquad  L_2=\pi\kappa L_1, L_3=\kappa\pi L_1,\lambda_i \in C,
\\
\quad \mu_k \in C, \mu_1+\mu_2+\mu_3=0,
\\
\mu \in C
    \end{array}
    \right\rbrace.
\end{align*}

Then $ \mathfrak{h}_7 $ is a Cartan subalgebra of $ ({\mathfrak{e}_7}
^C)^{\varepsilon_1,\varepsilon_2} $. Indeed, it is clear that $
({\mathfrak{e}_7}^C)^{\varepsilon_1,\varepsilon_2} $ is abelian. Next, by
doing straightforward computation, we can confirm that $ [\varPhi,
\varPhi_7]\in \mathfrak{h}_7, \varPhi \in ({\mathfrak{e}_7}
^C)^{\varepsilon_1,\varepsilon_2} $ implies $ \varPhi \in \mathfrak{h}_7 $
for
any $ \varPhi_7 \in \mathfrak{h}_7 $.

\begin{theorem}\label{theorem 7.14}
    The rank of the Lie algebra $ ({\mathfrak{e}_7}^C)^{\varepsilon_1,
    \varepsilon_2} $ is six. The roots $ \varDelta $ of $ ({\mathfrak{e}
    _7}^C)^{\varepsilon_1,\varepsilon_2} $ relative to $ \mathfrak{h}_7 $
    are given by
    \begin{align*}
        \varDelta=\left\lbrace
        \begin{array}{l}
            \pm(\lambda_0-\lambda_1), \;
            \pm(\lambda_0+\lambda_1),\; \pm 2\lambda_2, \;
            \vspace{1mm}\\
            \pm\dfrac{1}{2}(-2\lambda_0+\mu_2-\mu_3),\;
            \pm\dfrac{1}{2}(-2\lambda_0-\mu_2+\mu_3),
            \vspace{1mm}\\
            \pm\dfrac{1}{2}(-2\lambda_1+\mu_2-\mu_3),\;
            \pm\dfrac{1}{2}(-2\lambda_1-\mu_2+\mu_3),
            \vspace{1mm}\\
            \pm\dfrac{1}{2}(\lambda_0-\lambda_1-2\lambda_2-\mu_1+\mu_3),\;
            \pm\dfrac{1}{2}(\lambda_0-\lambda_1-2\lambda_2+\mu_1-\mu_3),
            \vspace{1mm}\\
            \pm\dfrac{1}{2}(\lambda_0-\lambda_1+2\lambda_2-\mu_1+\mu_3),\;
            \pm\dfrac{1}{2}(\lambda_0-\lambda_1+2\lambda_2+\mu_1-\mu_3),
            \vspace{1mm}\\
            \pm\dfrac{1}{2}(\lambda_0+\lambda_1+2\lambda_2+\mu_1-\mu_2),\;
            \pm\dfrac{1}{2}(\lambda_0+\lambda_1+2\lambda_2-\mu_1+\mu_2),
            \vspace{1mm}\\
            \pm\dfrac{1}{2}(-\lambda_0-\lambda_1+2\lambda_2+\mu_1-\mu_2),\;
            \pm\dfrac{1}{2}(-\lambda_0-\lambda_1+2\lambda_2-\mu_1+\mu_2),
\vspace{1mm}\\
\pm(\mu_1+\dfrac{2}{3}\mu),\;\pm(\mu_2+\dfrac{2}{3}\mu),\;\pm(\mu_3+
\dfrac{2}{3}\mu),
\vspace{1mm}\\
\pm(-\lambda_0-\dfrac{1}{2}\mu_1+\dfrac{2}{3}\mu),\,
\pm(\lambda_0-\dfrac{1}
{2}\mu_1+\dfrac{2}{3}\mu),
\pm(-\lambda_1-\dfrac{1}{2}\mu_1+\dfrac{2}{3}
\mu),\,
\pm(\lambda_1-\dfrac{1}{2}\mu_1+\dfrac{2}{3}\mu),
\vspace{1mm}\\
\pm(-\dfrac{1}{2}(-\lambda_0+\lambda_1+2\lambda_2)-\dfrac{1}{2}\mu_2+
\dfrac{2}{3}\mu),\;
\pm(\dfrac{1}{2}(-\lambda_0+\lambda_1+2\lambda_2)-\dfrac{1}{2}\mu_2+
\dfrac{2}{3}\mu),
\vspace{1mm}\\
\pm(-\dfrac{1}{2}(-\lambda_0+\lambda_1-2\lambda_2)-\dfrac{1}{2}\mu_2+
\dfrac{2}{3}\mu),\;
\pm(\dfrac{1}{2}(-\lambda_0+\lambda_1-2\lambda_2)-
\dfrac{1}{2}\mu_2+
\dfrac{2}{3}\mu),
\vspace{1mm}\\
\pm(-\dfrac{1}{2}(-\lambda_0-\lambda_1-2\lambda_2)-\dfrac{1}{2}\mu_3+
\dfrac{2}{3}\mu),\;
\pm(\dfrac{1}{2}(-\lambda_0-\lambda_1-2\lambda_2)-\dfrac{1}{2}\mu_3+
\dfrac{2}{3}\mu),
\vspace{1mm}\\
\pm(-\dfrac{1}{2}(\lambda_0+\lambda_1-2\lambda_2)-\dfrac{1}{2}\mu_3+
\dfrac{2}{3}\mu),\;
\pm(\dfrac{1}{2}(\lambda_0+\lambda_1-2\lambda_2)-\dfrac{1}{2}\mu_3+
\dfrac{2}{3}\mu)
        \end{array}
        \right\rbrace.
    \end{align*}
\end{theorem}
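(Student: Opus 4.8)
The plan is to diagonalise $\ad\,\varPhi_7$ for a generic element $\varPhi_7=\varPhi(\phi_0,0,0,\mu)\in\mathfrak{h}_7$, with $\phi_0=\delta+(\mu_1E_1+\mu_2E_2+\mu_3E_3)^\sim$, exactly along the lines of Theorems \ref{theorem 5.3} and \ref{theorem 6.3}, exploiting the splitting of $({\mathfrak{e}_7}^C)^{\varepsilon_1,\varepsilon_2}$ described in Lemma \ref{lemma 7.13} into the $\phi$-part and the $(A,B,\nu)$-part. First I would record how $\ad\,\varPhi_7$ acts on the three basic types of elements $\varPhi(\psi,0,0,0)$, $\varPhi(0,A,0,0)$ and $\varPhi(0,0,B,0)$, using the Lie bracket of ${\mathfrak{e}_7}^C$. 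A key preliminary remark is that the $\nu$-direction $\varPhi(0,0,0,\nu)$ already lies in $\mathfrak{h}_7$ (the parameter $\mu$ is free there), which together with the three parameters $\lambda_0,\lambda_1,\lambda_2$ and two independent $\mu_k$ (recall $\mu_1+\mu_2+\mu_3=0$) is why the rank is expected to be $6=\dim_C\mathfrak{h}_7$.

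For the first type, since $\varPhi_7$ has $A=B=0$ and the root vector $\varPhi(\psi,0,0,0)$ likewise has $A=B=0$, all $A\vee B$ cross terms drop out and $\mu$ does not enter the $\phi$-slot, so $[\varPhi_7,\varPhi(\psi,0,0,0)]=\varPhi([\phi_0,\psi],0,0,0)$. Hence every root of $({\mathfrak{e}_6}^C)^{\varepsilon_1,\varepsilon_2}$ from Theorem \ref{theorem 6.3}, together with its root vector, lifts verbatim to a root of $({\mathfrak{e}_7}^C)^{\varepsilon_1,\varepsilon_2}$. These account for the first fifteen $\pm$-pairs in the list, none of which involves $\mu$.

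For the second and third types I would compute directly that $[\varPhi_7,\varPhi(0,A,0,0)]=\varPhi\big(0,(\phi_0+\tfrac23\mu)A,0,0\big)$ and, symmetrically, $[\varPhi_7,\varPhi(0,0,B,0)]=\varPhi\big(0,0,-({}^t\phi_0+\tfrac23\mu)B,0\big)$; the scalar $\tfrac23\mu$ emerges from the $\mp\tfrac13\nu$ terms in the definition of $\varPhi(\phi,A,B,\nu)$ combined with the $\eta A$ and $\xi B$ couplings. Consequently each weight vector of the $15$-dimensional ${\mathfrak{e}_6}^C$-module $(\mathfrak{J}_{\sH})^C$ produces a new root: for $A=E_k$ one has $\phi_0E_k=\mu_kE_k$, giving $\mu_k+\tfrac23\mu$; for $A=F_k(1\pm ie_1),F_k(e_2\pm ie_3)$ the diagonal part contributes $-\tfrac12\mu_k$ while $\delta$ contributes the $L_k$-weight, computed as in Theorem \ref{theorem 5.3} (equivalently read off from the explicit forms of $L_2=\pi\kappa L_1$ and $L_3=\kappa\pi L_1$), which yields exactly the remaining entries carrying $+\tfrac23\mu$. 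The $B$-directions give the corresponding negatives with $-\tfrac23\mu$, producing the last fifteen pairs.

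Finally I would count: the $30$ roots inherited from Theorem \ref{theorem 6.3} plus the $30$ roots from the $A,B$ directions give $60$ roots, and $60+\dim_C\mathfrak{h}_7=60+6=66=\dim_C({\mathfrak{e}_7}^C)^{\varepsilon_1,\varepsilon_2}$ by Lemma \ref{lemma 7.13}; since $({\mathfrak{e}_7}^C)^{\varepsilon_1,\varepsilon_2}$ is spanned by $\mathfrak{h}_7$ and these root vectors, the list is complete and the rank is $6$. The main obstacle is the identity verification in the third step: one must check that $[\varPhi_7,\varPhi(0,A,0,0)]$ has vanishing $\phi$-component and that its $A$-component is precisely $(\phi_0+\tfrac23\mu)A$, which relies on the defining ${\mathfrak{e}_6}^C$-identities, in particular the linearised relation $\phi_0A\times X+A\times\phi_0X+{}^t\phi_0(A\times X)=0$ and the vanishing of the relevant $\vee$-terms. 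Once these are in hand the eigenvalue bookkeeping is routine.
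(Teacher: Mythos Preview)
Your proposal is correct and follows essentially the same approach as the paper: both lift the roots of $({\mathfrak{e}_6}^C)^{\varepsilon_1,\varepsilon_2}$ via $[\varPhi_7,\varPhi(\psi,0,0,0)]=\varPhi([\phi_0,\psi],0,0,0)$, then compute $[\varPhi_7,\varPhi(0,A,0,0)]=\varPhi(0,(\phi_0+\tfrac23\mu)A,0,0)$ (and the $B$-analogue) on the weight vectors $E_k$, $F_k(1\pm ie_1)$, $F_k(e_2\pm ie_3)$ of $(\mathfrak{J}_{\sH})^C$, and finish by a dimension count against Lemma~\ref{lemma 7.13}. One small remark: the vanishing of the $\phi$-component in $[\varPhi_7,\varPhi(0,A,0,0)]$ is immediate from the bracket formula since $\varPhi_7$ has $A=B=0$ and the other factor has $\phi=B=0$, so the identity $\phi_0A\times X+A\times\phi_0X+{}^t\phi_0(A\times X)=0$ you flag is not actually needed here.
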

\begin{proof}
The roots of $ ({\mathfrak{e}_6}^C)^{\varepsilon_1,\varepsilon_2} $ is also
the roots of $ ({\mathfrak{e}_7}^C)^{\varepsilon_1,\varepsilon_2} $.
Indeed, let the root $ \alpha $ of $ ({\mathfrak{e}_6}^C)^{\varepsilon_1,
\varepsilon_2} $ and its associated root vector $ V \in ({\mathfrak{e}_6}
^C)^{\varepsilon_1,\varepsilon_2} \subset ({\mathfrak{e}
_7}^C)^{\varepsilon_1,\varepsilon_2}$. Then it follows that
\begin{align*}
[\varPhi_7, V]&=[\varPhi(\phi,0,0,\mu), \varPhi(V,0,0,0)]=\varPhi([\phi,V],
0,0,0)
\\
&=\varPhi(\alpha(\phi)V,0,0,0)=\alpha(\phi)\varPhi(V,0,0,0)
\\
&=\alpha(\varPhi_7)V.
\end{align*}

We will determine the remainders of roots and give a few examples for that.
    First, let $ \varPhi_7=\varPhi(\phi,0,\allowbreak 0,\nu) \in
    \mathfrak{h}_7 $ and $ \varPhi(0,E_1,0,0) \in ({\mathfrak{e}_7}
    ^C)^{\varepsilon_1,\varepsilon_2}$. Then it
    follows that
    \begin{align*}
    [\varPhi_7, \varPhi(0,E_1,0,0)]&=[\varPhi(\phi, 0,0,\mu),
    \varPhi(0,E_1,0,0)]
    \\
    &=\varPhi(0,(\phi+(2/3)\mu)E_1,0,0)
    \\
    &=\varPhi(0,(\mu_1+(2/3)\mu)E_1,0,0)
    \\
    &=(\mu_1+(2/3)\mu)\varPhi(0,E_1,0,0),
    \end{align*}
that is, $ [\varPhi_7, \varPhi(0,E_1,0,0)]=(\mu_1+(2/3)\mu)\varPhi(0,E_1,
0,0) $. Hence we see that $ \mu_1+(2/3)\mu $ is a root and $
\varPhi(0,E_1,0,0) $ is an associated root vector. Next, let $ \varPhi(0,
F_1(1+ie_1),0,0) \in ({\mathfrak{e}_7}^C)^{\varepsilon_1,\varepsilon_2} $.
Then
it follows that
    \begin{align*}
    [\varPhi, \varPhi(0,F_1(1+ie_1),0,0)]&=[\varPhi(\phi, 0,0,\mu),
    \varPhi(0,F_1(1),0,0)]
    \\
    &=\varPhi(0,(\phi+(2/3)\mu)F_1(1+ie_1),0,0)
    \\
    &=\varPhi(0,\phi F_1(1+ie_1)+(2/3)\mu F_1(1+ie_1), 0,0)
    \\
    &=\varPhi(0,(-\lambda_0+(1/2)(\mu_2+\mu_3))F_1(1+ie_1)+(2/3)\mu
    F_1(1+ie_1),0,0)
    \\
    &=(-\lambda_0+(1/2)(\tau_2+\tau_3)+(2/3)\mu)\varPhi(0,F_1(1+ie_1),0,0),
    \end{align*}
    that is, $ [\varPhi_7, \varPhi(0,F_1(1+ie_1),0,0)]=(-\lambda_0-(1/2)
    \mu_1+(2/3)\mu)\varPhi(0,F_1(1+ie_1),0,0) $. Hence we see that $-
    \lambda_0-(1/2)\mu_1+(2/3)\mu $ is a root and $  \varPhi(0,F_1(1+ie_1),
    0,0) $ is an associated root vector.

Together with two cases above, the remainders of roots and root vectors
associated with these roots are obtained as follows:
\begin{longtable}[c] {l@{\qquad\qquad}l}
   \hspace{20mm} roots & \hspace{-10mm}root vectors associated with roots
   \vspace{0.5mm}\cr
   $ \mu_1+(2/3)\mu $ & $ \varPhi(0,E_1,0,0) $
   \vspace{0.5mm}\cr
   $ -(\mu_1+(2/3)\mu) $ & $ \varPhi(0,0,E_1,0) $
   \vspace{0.5mm}\cr
   $ \mu_2+(2/3)\mu $ & $ \varPhi(0,E_2,0,0) $
   \vspace{0.5mm}\cr
   $ -(\mu_2+(2/3)\mu) $ & $ \varPhi(0,0,E_2,0) $
   \vspace{0.5mm}\cr
   $ \mu_3+(2/3)\mu $ & $ \varPhi(0,E_3,0,0) $
   \vspace{0.5mm}\cr
   $ -(\mu_3+(2/3)\mu) $ & $ \varPhi(0,0,E_3,0) $
   \vspace{0.5mm}\cr
   $ -\lambda_0-(1/2)\mu_1+(2/3)\mu $ & $ \varPhi(0,F_1(1+ie_1),0,0) $
   \vspace{0.5mm}\cr
   $ -(-\lambda_0-(1/2)\mu_1+(2/3)\mu) $ & $ \varPhi(0,0,F_1(1-ie_1),0) $
   \vspace{0.5mm}\cr
   $ \lambda_0-(1/2)\mu_1+(2/3)\mu $ & $ \varPhi(0,F_1(1-ie_1),0,0) $
   \vspace{0.5mm}\cr
   $ -(\lambda_0-(1/2)\mu_1+(2/3)\mu) $ & $ \varPhi(0,0,F_1(1+ie_1),0) $
   \vspace{0.5mm}\cr
   $ -\lambda_1-(1/2)\mu_1+(2/3)\mu $ & $ \varPhi(0,F_1(e_2+ie_3),0,0) $
   \vspace{0.5mm}\cr
   $ -(-\lambda_1-(1/2)\mu_1+(2/3)\mu) $ & $ \varPhi(0,0,F_1(e_2-ie_3),0) $
   \vspace{0.5mm}\cr
   $ \lambda_1-(1/2)\mu_1+(2/3)\mu $ & $ \varPhi(0,F_1(e_2-ie_3),0,0) $
   \vspace{0.5mm}\cr
   $ -(\lambda_1-(1/2)\mu_1+(2/3)\mu) $ & $ \varPhi(0,0,F_1(e_2+ie_3),0) $
   \vspace{0.5mm}\cr
   $ (-1/2)(-\lambda_0+\lambda_1+2\lambda_2)-(1/2)\mu_2+(2/3)\mu $ & $
   \varPhi(0,F_2(1+ie_1),0,0) $
   \vspace{0.5mm}\cr
   $ -((-1/2)(-\lambda_0+\lambda_1+2\lambda_2)-(1/2)\mu_2+(2/3)\mu) $ & $
   \varPhi(0,0,F_2(1-ie_1),0) $
   \vspace{0.5mm}\cr
   $ (1/2)(-\lambda_0+\lambda_1+2\lambda_2)-(1/2)\mu_2+(2/3)\mu $ & $
   \varPhi(0,F_2(1-ie_1),0,0) $
   \vspace{0.5mm}\cr
   $ -((1/2)(-\lambda_0+\lambda_1+2\lambda_2)-(1/2)\mu_2+(2/3)\mu) $ & $
   \varPhi(0,0,F_2(1+ie_1),0) $
   \vspace{0.5mm}\cr
   $ (-1/2)(-\lambda_0+\lambda_1-2\lambda_2)-(1/2)\mu_2+(2/3)\mu $ & $
   \varPhi(0,F_2(e_2+ie_3),0,0) $
   \vspace{0.5mm}\cr
   $ -((-1/2)(-\lambda_0+\lambda_1-2\lambda_2)-(1/2)\mu_2+(2/3)\mu) $ & $
   \varPhi(0,0,F_2(e_2-ie_3),0) $
   \vspace{0.5mm}\cr
   $ (1/2)(-\lambda_0+\lambda_1-2\lambda_2)-(1/2)\mu_2+(2/3)\mu $ & $
   \varPhi(0,F_2(e_2-ie_3),0,0) $
   \vspace{0.5mm}\cr
   $ -((1/2)(-\lambda_0+\lambda_1-2\lambda_2)-(1/2)\mu_2+(2/3)\mu) $ & $
   \varPhi(0,0,F_2(e_2+ie_3),0) $
   \vspace{0.5mm}\cr
   $ (-1/2)(-\lambda_0-\lambda_1-2\lambda_2)-(1/2)\mu_3+(2/3)\mu $ & $
   \varPhi(0,F_3(1+ie_1),0,0) $
   \vspace{0.5mm}\cr
   $ -((-1/2)(-\lambda_0-\lambda_1-2\lambda_2)-(1/2)\mu_3+(2/3)\mu) $ & $
   \varPhi(0,0,F_3(1-ie_1),0) $
   \vspace{0.5mm}\cr
   $ (1/2)(-\lambda_0-\lambda_1-2\lambda_2)-(1/2)\mu_3+(2/3)\mu $ & $
   \varPhi(0,F_3(1-ie_1),0,0) $
   \vspace{0.5mm}\cr
   $ -((1/2)(-\lambda_0-\lambda_1-2\lambda_2)-(1/2)\mu_3+(2/3)\mu) $ & $
   \varPhi(0,0,F_3(1+ie_1),0) $
   \vspace{0.5mm}\cr
   $ (-1/2)(\lambda_0+\lambda_1-2\lambda_2)-(1/2)\mu_3+(2/3)\mu $ & $
   \varPhi(0,F_3(e_2+ie_3),0,0) $
   \vspace{0.5mm}\cr
   $ -((-1/2)(\lambda_0+\lambda_1-2\lambda_2)-(1/2)\mu_3+(2/3)\mu) $ & $
   \varPhi(0,0,F_3(e_2-ie_3),0) $
   \vspace{0.5mm}\cr
   $ (1/2)(\lambda_0+\lambda_1-2\lambda_2)-(1/2)\mu_3+(2/3)\mu $ & $
   \varPhi(0,F_3(e_2-ie_3),0,0) $
   \vspace{0.5mm}\cr
   $ -((1/2)(\lambda_0+\lambda_1-2\lambda_2)-(1/2)\mu_3+(2/3)\mu) $ & $
   \varPhi(0,0,F_3(e_2+ie_3),0) $.
\end{longtable}

Thus, since $ ({\mathfrak{e}_7}^C)^{\varepsilon_1,\varepsilon_2} $ is
spanned by $ \mathfrak{h}_7 $ and the root vectors associated with roots
above, the roots obtained above are all. The rank of the Lie algebra $
({\mathfrak{e}_7}^C)^{\varepsilon_1,\varepsilon_2} $ follows from the
dimension of $ \mathfrak{e}_7 $.
\end{proof}

Subsequently, we prove the following theorem.

\begin{theorem}\label{theorem 7.15}
In the root system $ \varDelta $ of Theorem {\rm \ref{theorem 7.14}}
 \begin{align*}
      \varPi=\left\lbrace \alpha_1,\alpha_2, \alpha_3, \alpha_4,\alpha_5,
      \alpha_6 \right\rbrace
  \end{align*}
is a fundamental root system of $  ({\mathfrak{e}_7}^C)^{\varepsilon_1,
\varepsilon_2} $, where
$
\alpha_1=-(\lambda_0-\lambda_1),
\alpha_2=(1/2)(\lambda_0-\lambda_1-2\lambda_2+\mu_1-\mu_3),
\alpha_3=2\lambda_2,
\alpha_4=-(1/2)(\lambda_0+\lambda_1+2\lambda_2+\mu_1-\mu_2),
\alpha_5=\lambda_0+\lambda_1,
\alpha_6=-(\mu_2+(2/3)\mu)$.
The Dynkin diagram of $ ({\mathfrak{e}_7}^C)^{\varepsilon_1,\varepsilon_2}
$ is given by
\begin{center}
\setlength{\unitlength}{1mm}
  \scalebox{1.0}
	{\begin{picture}(80,20)
	\put(0,9){}
	\put(20,10){\circle{2}} \put(19,6){$\alpha_1$}
	\put(21,10){\line(1,0){8}}
	\put(30,10){\circle{2}} \put(29,6){$\alpha_2$}
	\put(31,10){\line(1,0){8}}
	\put(40,10){\circle{2}} \put(39,6){$\alpha_3$}
	\put(41,10){\line(1,0){8}}
	\put(50,10){\circle{2}} \put(49,6){$\alpha_4$}
	\put(51,10){\line(2,1){8}}
	\put(60,14.5){\circle{2}} \put(59,10.5){$\alpha_5$}
   \put(51,10){\line(2,-1){8}}
	\put(60,5.5){\circle{2}} \put(59,1.5){$\alpha_6$}

	\end{picture}}
\end{center}
\end{theorem}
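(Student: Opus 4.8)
The plan is to follow verbatim the two-stage strategy used for Theorems \ref{theorem 5.4} and \ref{theorem 6.4}: first confirm that $\varPi$ is a fundamental system by writing every positive root listed in Theorem \ref{theorem 7.14} as a non-negative integral combination of $\alpha_1,\dots,\alpha_6$, and then compute the inner products $(\alpha_i,\alpha_j)$ via canonical elements of $\mathfrak{h}_7$ in order to read off the Dynkin diagram. The consistency check in the background is that $({\mathfrak{e}_7}^C)^{\varepsilon_1,\varepsilon_2}\cong\mathfrak{so}(12,C)$ is of type $D_6$, which has $2\cdot 6\cdot 5=60$ roots, matching the $30$ pairs in Theorem \ref{theorem 7.14}.

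For the first stage I would observe that the fifteen pairs of roots not involving $\mu$ are exactly the roots of $({\mathfrak{e}_6}^C)^{\varepsilon_1,\varepsilon_2}$, and since $\alpha_1,\dots,\alpha_5$ coincide with the fundamental system of Theorem \ref{theorem 6.4}, their expansions are already recorded there and reproduce the $A_5$ subchain $\alpha_1\!-\!\alpha_2\!-\!\alpha_3\!-\!\alpha_4\!-\!\alpha_5$. Thus the only genuinely new work is to expand the remaining fifteen pairs carrying the $+\tfrac23\mu$ term; using $\alpha_6=-(\mu_2+\tfrac23\mu)$ together with $\mu_1+\mu_2+\mu_3=0$, each such root collapses to a short combination of the $\alpha_i$ in which $\alpha_6$ occurs. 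Exhibiting these (and their negatives) with coefficients of a single sign, and noting that the six $\alpha_i$ are linearly independent on the six-dimensional $\mathfrak{h}_7$, shows that $\varPi$ is a fundamental root system.

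For the second stage I would restrict an invariant form of $({\mathfrak{e}_7}^C)^{\varepsilon_1,\varepsilon_2}$ to $\mathfrak{h}_7$. Using the adjoint-invariant form $(\cdot,\cdot)_7$ together with Theorem \ref{theorem 7.1}, and recalling that for $\varPhi_7=\varPhi(\phi,0,0,\mu)\in\mathfrak{h}_7$ one has $(\varPhi_7,\varPhi_7')_7=-2(\phi,\phi')_6-\tfrac83\mu\mu'$, I would evaluate $B_7(\varPhi_7,\varPhi_7')$ as an explicit quadratic form in the coordinates $\lambda_0,\lambda_1,\lambda_2,\mu_1,\mu_2,\mu_3,\mu$ subject to $\mu_1+\mu_2+\mu_3=0$; here the $\mathfrak{e}_6$ part is supplied by the $\mathfrak{h}_6$-computation of $B_6$ reproduced in Theorem \ref{theorem 6.4}, and the only new feature is the single extra diagonal term in $\mu$ coming from the $-\tfrac83\nu_1\nu_2$ summand. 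From this form I would solve $B_7(\varPhi_{\alpha_i},\varPhi_7)=\alpha_i(\varPhi_7)$ for the canonical elements $\varPhi_{\alpha_1},\dots,\varPhi_{\alpha_6}$, then compute $(\alpha_i,\alpha_j)=B_7(\varPhi_{\alpha_i},\varPhi_{\alpha_j})$, the cosines $\cos\theta_{ij}$, and the relative lengths. I expect all $(\alpha_i,\alpha_i)$ to come out equal, with $\cos\theta_{ij}=-\tfrac12$ precisely on the connected pairs $\{1,2\},\{2,3\},\{3,4\},\{4,5\},\{4,6\}$ and orthogonality otherwise, giving the simply-laced $D_6$ diagram drawn in the statement.

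The main obstacle will be the bookkeeping of the second stage. Because $\mathfrak{h}_7$ carries the trace constraint $\mu_1+\mu_2+\mu_3=0$, inverting the restricted form to produce the $\varPhi_{\alpha_i}$ must be carried out modulo that constraint, exactly as for $\phi_{\alpha_i}$ in Theorem \ref{theorem 6.4}; and the decisive new verifications are the inner products involving $\alpha_6$, namely that $(\alpha_4,\alpha_6)$ is nonzero of the same magnitude as the other edges while $(\alpha_1,\alpha_6)=(\alpha_2,\alpha_6)=(\alpha_3,\alpha_6)=(\alpha_5,\alpha_6)=0$. These are precisely the relations that attach $\alpha_6$ to the branch node $\alpha_4$ and thereby promote the $A_5$ chain of Theorem \ref{theorem 6.4} to the branched $D_6$ diagram.
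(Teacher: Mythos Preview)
Your proposal follows the paper's proof almost exactly: the paper first lists every positive root of Theorem \ref{theorem 7.14} as a non-negative integer combination of $\alpha_1,\dots,\alpha_6$ (with the $A_5$ block already handled by Theorem \ref{theorem 6.4}), then computes the restriction of the Killing form to $\mathfrak{h}_7$, solves for the canonical elements $\varPhi_{\alpha_i}$, and reads off the cosines to obtain the $D_6$ diagram with branch at $\alpha_4$.

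One small correction of attribution: you should not invoke Theorem \ref{theorem 7.1} here. That theorem computes $B_{7,\sH}=-5(\,\cdot\,,\,\cdot\,)_7$, the Killing form of the \emph{different} Lie algebra $(\mathfrak{e}_{7,\sH})^C$, whereas $\mathfrak{h}_7$ sits inside $({\mathfrak{e}_7}^C)^{\varepsilon_1,\varepsilon_2}\subset {\mathfrak{e}_7}^C$. The paper instead uses the Killing form $B_7$ of the full ${\mathfrak{e}_7}^C$ (cited from \cite[Theorem 4.5.2]{iy0}), which on $\mathfrak{h}_7$ reads $B_7(\varPhi_7,\varPhi_7')=\tfrac{3}{2}B_6(\phi,\phi')+24\mu\mu'$; the $B_6$ term is then supplied exactly as you say, from the $\mathfrak{h}_6$-computation in Theorem \ref{theorem 6.4}. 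Since any invariant form on a simple Lie algebra determines the same angles and length ratios, your route via $(\,\cdot\,,\,\cdot\,)_7$ would still produce the correct Dynkin diagram, but the scalar you would extract from Theorem \ref{theorem 7.1} is the wrong one for $B_7$, so it is cleaner to quote the $B_7$ formula directly.
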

\begin{proof}
The all positive roots are expressed by $ \alpha_1,\alpha_2,\alpha_3,
\alpha_4,\alpha_5,\alpha_6 $ as follows:
   \begin{align*}
-(\lambda_0-\lambda_1)&=\alpha_1,
\\
\lambda_0+\lambda_1&=\alpha_5,
\\
2\lambda_2&=\alpha_3,
\\
(1/2)(-2\lambda_0+\mu_2-\mu_3)&=\alpha_1+\alpha_2+\alpha_3+\alpha_4,
\\
-(1/2)(-2\lambda_0-\mu_2+\mu_3)&=\alpha_2+\alpha_3+\alpha_4+\alpha_5,
\\
(1/2)(-2\lambda\_1+\mu_2-\mu_3)&=\alpha_2+\alpha_3+\alpha_4,
\\
-(1/2)(-2\lambda_1-\mu_2+\mu_3)&=\alpha_1+\alpha_2+\alpha_3+\alpha_4+
\alpha_5,
\\
-(1/2)(\lambda_0-\lambda_1-2\lambda_2-\mu_1+\mu_3)&=\alpha_1+\alpha_2+
\alpha_3,
\\
(1/2)(\lambda_0-\lambda_1-2\lambda_2+\mu_1-\mu_3)&=\alpha_2,
\\
-(1/2)(\lambda_0-\lambda_1+2\lambda_2-\mu_1+\mu_3)&=\alpha_1+\alpha_2,
\\
(1/2)(\lambda_0-\lambda_1+2\lambda_2+\mu_1-\mu_3)&=\alpha_2+\alpha_3,
\\
-(1/2)(\lambda_0+\lambda_1+2\lambda_2+\mu_1-\mu_2)&=\alpha_4,
\\
(1/2)(\lambda_0+\lambda_1+2\lambda_2-\mu_1+\mu_2)&=\alpha_3+\alpha_4+
\alpha_5,
\\
-(1/2)(-\lambda_0-\lambda_1+2\lambda_2+\mu_1-\mu_2)&=\alpha_4+\alpha_5,
\\
(1/2)(-\lambda_0-\lambda_1+2\lambda_2-\mu_1+\mu_2)&=\alpha_3+\alpha_4
\\
-(\mu_1+(2/3)\mu)&=\alpha_3+2\alpha_4+\alpha_5+\alpha_6,
\\
-(\mu_2+(2/3)\mu)&=\alpha_6,
\\
-(\mu_3+(2/3)\mu)&=\alpha_1+2\alpha_2+2\alpha_3+2\alpha_4+\alpha_5+
\alpha_6,
\\
-(-\lambda_0-(1/2)\mu_1+(2/3)\mu)&=\alpha_2+\alpha_3+\alpha_4+\alpha_5+
\alpha_6,
\\
-(\lambda_0-(1/2)\mu_1+(2/3)\mu)&=\alpha_1+\alpha_2+\alpha_3+\alpha_4+
\alpha_6,
\\
-(-\lambda_1-(1/2)\mu_1+(2/3)\mu)&=\alpha_1+\alpha_2+\alpha_3+\alpha_4+
\alpha_5+\alpha_6,
\\
-(\lambda_1-(1/2)\mu_1+(2/3)\mu)&=\alpha_2+\alpha_3+\alpha_4+\alpha_6,
\\
-((-1/2)(-\lambda_0+\lambda_1+2\lambda_2)-(1/2)\mu_2+(2/3)\mu)&=\alpha_1+
\alpha_2+2\alpha_3+2\alpha_4+\alpha_5+\alpha_6,
\\
-((1/2)(-\lambda_0+\lambda_1+2\lambda_2)-(1/2)\mu_2+(2/3)\mu)&=\alpha_2+
\alpha_3+\2\alpha_4+\alpha_5+\alpha_6,
\\
-((-1/2)(-\lambda_0+\lambda_1-2\lambda_2)-(1/2)\mu_2+(2/3)\mu)
&=\alpha_1\alpha_2+\alpha_3+2\alpha_4+\alpha_5+\alpha_6,
\\
-((1/2)(-\lambda_0+\lambda_1-2\lambda_2)-(1/2)\mu_2+(2/3)\mu)
&=\alpha_2+2\alpha_3+2\alpha_4+\alpha_5+\alpha_6,
\\
-((-1/2)(-\lambda_0-\lambda_1-2\lambda_2)-(1/2)\mu_3+(2/3)\mu)
&=\alpha_4+\alpha_6,
\\
-((1/2)(-\lambda_0-\lambda_1-2\lambda_2)-(1/2)\mu_3+(2/3)\mu)
&=\alpha_3+\alpha_4+\alpha_5+\alpha_6,
\\
-((-1/2)(\lambda_0+\lambda_1-2\lambda_2)-(1/2)\mu_3+(2/3)\mu)
&=\alpha_4+\alpha_5+\alpha_6,
\\
-((1/2)(\lambda_0+\lambda_1-2\lambda_2)-(1/2)\mu_3+(2/3)\mu)
&=\alpha_3+\alpha_4+\alpha_6.
\end{align*}
Hence $ \varPi $ is a fundamental root system of $  ({\mathfrak{e}_7}
^C)^{\varepsilon_1,\varepsilon_2} $.

Then, for $ \varPhi,\varPhi' \in {\mathfrak{e}_7}^C$, the Killing form $
B_7 $ of $ {\mathfrak{e}_7}^C $ is given by
\begin{align*}
B_7(\varPhi,\varPhi')=B_7(\varPhi(\phi,A,B.\nu),\varPhi(\phi,A,B.
\nu))=\dfrac{3}{2} B_6(\phi,\phi')+36(A,B')+36(A',B)+24\nu\nu'
\end{align*}
(\cite[Theorem 4.5.2]{iy0}), so that for $
\varPhi_7:=\varPhi(\phi,0,0,\mu),
{\varPhi_7}':=\varPhi(\phi',0,0,\mu') \in \mathfrak{h}_7$, we have
\begin{align*}
 B_7(\varPhi_7,{\varPhi_7}')&=\dfrac{3}{2}B_6(\phi,\phi')+24\mu\mu'
\\
&=\dfrac{3}{2}(24(\lambda_0{\lambda_0}'+\lambda_1{\lambda_1}'
  +2\lambda_2{\lambda_2}')+12(\mu_1{\mu_1}'+\mu_2{\mu_2}'+\mu_3{\mu_3}'))
  +24\mu\mu'
\\
&=36(\lambda_0{\lambda_0}'+\lambda_1{\lambda_1}'
  +2\lambda_2{\lambda_2}')+18(\mu_1{\mu_1}'+\mu_2{\mu_2}'+\mu_3{\mu_3}')
  +24\mu\mu',
\end{align*}
where $ \phi:=\lambda_0(iG_{01})+
 \lambda_1(iG_{23})+\lambda_2(i(G_{45}+G_{67})+(\mu_1E_1+\mu_2E_2+
 \mu_3E_3)^\sim ,\phi'={\lambda_0}'(iG_{01})+{\lambda_1}'(iG_{23})+
{\lambda_2}'(i(G_{45}+G_{67})+({\mu_1}'E_1+{\mu_2}'E_2+{\mu_3}'E_3)^\sim $.
Now, the canonical elements $ \varPhi_{\alpha_1},
\varPhi_{\alpha_2}, \allowbreak \varPhi_{\alpha_3}, \varPhi_{\alpha_4},
\varPhi_{\alpha_5},\varPhi_{\alpha_6}
$ corresponding to $ \alpha_1, \alpha_2,\alpha_3,\alpha_4,\alpha_5,\alpha_6
$ are determined as follows:
\begin{align*}
\varPhi_{\alpha_1}&=\varPhi(-\dfrac{1}{36}(iG_{01})+\dfrac{1}{36}(iG_{23}),
0,0,0),
\\
\varPhi_{\alpha_2}&=\varPhi(\dfrac{1}{72}(iG_{01})-\dfrac{1}{72}(iG_{23})-
\dfrac{1}{72}(i(G_{45}+G_{67}))+(\dfrac{1}{36}E_1-\dfrac{1}{36}E_3)^\sim,
0,0,0),
\\
\varPhi_{\alpha_3}&=\varPhi(\dfrac{1}{36}(i(G_{45}+G_{67})),0,0,0),
\\
\varPhi_{\alpha_4}&=\varPhi(-\dfrac{1}{72}(iG_{01})-\dfrac{1}{72}(iG_{23})-
\dfrac{1}{72}(i(G_{45}+G_{67}))+(-\dfrac{1}{36}E_1+\dfrac{1}{36}E_2)^\sim,
0,0,0),
\\
\varPhi_{\alpha_5}&=\varPhi(\dfrac{1}{36}(iG_{01})+\dfrac{1}{36}(iG_{23}),
0,0,0),
\\
\varPhi_{\alpha_6}&=\varPhi(\dfrac{1}{54}(E_1-2E_2+E_3)^\sim,
0,0,\dfrac{1}{36}).
\end{align*}

Hence we have the following
\begin{align*}
    (\alpha_1,\alpha_1)&=B_7(\varPhi_{\alpha_1},
    \varPhi_{\alpha_1})=36\left(
    \left(-\dfrac{1}{36} \right)^2+\left(\dfrac{1}{36} \right)^2
    \right)=\dfrac{1}{18},
    \\
    (\alpha_1,\alpha_2)&=B_7(\varPhi_{\alpha_1},
    \varPhi_{\alpha_2})=36\left(
    \left(-\dfrac{1}{36} \right)\left(\dfrac{1}{72} \right)
    +\left(\dfrac{1}{36} \right)\left(-\dfrac{1}{72} \right)
    \right)=-\dfrac{1}{36},
    \\
    (\alpha_1,\alpha_3)&=B_7(\varPhi_{\alpha_1},\varPhi_{\alpha_3})=0,
    \\
    (\alpha_1,\alpha_4)&=B_7(\varPhi_{\alpha_1},
    \varPhi_{\alpha_4})=36\left(
    \left(-\dfrac{1}{36} \right)\left(-\dfrac{1}{72} \right)
    +\left(\dfrac{1}{36} \right)\left(-\dfrac{1}{72} \right)
    \right)=0,
    \\
    (\alpha_1,\alpha_5)&=B_7(\varPhi_{\alpha_1},
    \varPhi_{\alpha_5})=36\left(
    \left(-\dfrac{1}{36} \right)\left(\dfrac{1}{36} \right)+\left(\dfrac{1}
    {36} \right)^2 \right)=0,
    \\
   (\alpha_1,\alpha_6)&=B_7(\varPhi_{\alpha_1},\varPhi_{\alpha_6})=0,
    \\
    (\alpha_2,\alpha_2)&=B_7(\varPhi_{\alpha_2},
    \varPhi_{\alpha_2})=36\left(
    \left(\dfrac{1}{72} \right)^2+\left(-\dfrac{1}{72} \right)^2+
    2\left(-\dfrac{1}{72} \right)^2 \right)+18\left(
    \left(\dfrac{1}{36} \right)^2+\left(-\dfrac{1}{36} \right)^2
    \right)=\dfrac{1}{18},
    \\
    (\alpha_2,\alpha_3)&=B_7(\varPhi_{\alpha_2},
    \varPhi_{\alpha_3})=36\cdot 2\left(-\dfrac{1}{72} \right)\left(
    \dfrac{1}{36}\right)=-\dfrac{1}{36},
    \\
    (\alpha_2,\alpha_4)&=B_6(\varPhi_{\alpha_2},
    \varPhi_{\alpha_4})=36\left(
    \left(\dfrac{1}{72} \right)\left(-\dfrac{1}{72} \right)+\left(-
    \dfrac{1}{72} \right)^2+
    2\left(-\dfrac{1}{72} \right)^2 \right)+18
    \left(\dfrac{1}{36} \right)\left(-\dfrac{1}{36} \right)=0,
    \\
    (\alpha_2,\alpha_5)&=B_6(\varPhi_{\alpha_2},
    \varPhi_{\alpha_5})=36\left(
    \left(\dfrac{1}{72} \right)\left(\dfrac{1}{36} \right)+\left(-\dfrac{1}
    {72} \right)\left(\dfrac{1}{36} \right) \right)=0,
    \\
    (\alpha_2,\alpha_6)&=B_7(\varPhi_{\alpha_2},
    \varPhi_{\alpha_6})=18\left(
    \left(\dfrac{1}{36} \right)\left(\dfrac{1}{54} \right)+\left(-\dfrac{1}
    {36} \right)\left(\dfrac{1}{54} \right) \right)=0,
    \\
    (\alpha_3,\alpha_3)&=B_7(\varPhi_{\alpha_3},\varPhi_{\alpha_3})=36\cdot
    2\left(\dfrac{1}{36}\right)^2=\dfrac{1}{18},
    \\
    (\alpha_3,\alpha_4)&=B_7(\varPhi_{\alpha_3},\varPhi_{\alpha_4})=36\cdot
    2\left(\dfrac{1}{36}\right)\left(-\dfrac{1}{72} \right)=-\dfrac{1}{36},
    \\
    (\alpha_3,\alpha_5)&=B_7(\varPhi_{\alpha_3},\varPhi_{\alpha_5})=0,
    \\
    (\alpha_3,\alpha_6)&=B_7(\varPhi_{\alpha_3},\varPhi_{\alpha_6})=0,
    \\
    (\alpha_4,\alpha_4)&=B_7(\varPhi_{\alpha_4},
    \varPhi_{\alpha_4})=36\left(
    \left(-\dfrac{1}{72} \right)^2+\left(-\dfrac{1}{72} \right)^2+
    2\left(-\dfrac{1}{72} \right)^2 \right)+18\left(
    \left(-\dfrac{1}{36} \right)^2+\left(\dfrac{1}{36} \right)^2
    \right)=\dfrac{1}{18},
    \\
    (\alpha_4,\alpha_5)&=B_7(\varPhi_{\alpha_4},
    \varPhi_{\alpha_5})=36\left(
    \left(-\dfrac{1}{72} \right)\left(\dfrac{1}{36} \right)+\left(-
    \dfrac{1}{72} \right)\left(\dfrac{1}{36} \right) \right)=-\dfrac{1}
    {36},
    \\
    (\alpha_4,\alpha_6)&=B_7(\varPhi_{\alpha_4},
    \varPhi_{\alpha_6})=18\left(
    \left(-\dfrac{1}{36} \right)\left(\dfrac{1}{54} \right)+\left(\dfrac{1}
    {36} \right)\left(-\dfrac{2}{54} \right) \right)=-\dfrac{1}{36},
    \\
    (\alpha_5,\alpha_5)&=B_7(\varPhi_{\alpha_5},
    \varPhi_{\alpha_5})=36\left(
    \left(\dfrac{1}{36} \right)^2+\left(\dfrac{1}{36} \right)^2
    \right)=\dfrac{1}{18},
    \\
    (\alpha_5,\alpha_6)&=B_7(\varPhi_{\alpha_5},\varPhi_{\alpha_6})=0,
    \\
    (\alpha_6,\alpha_6)&=B_7(\varPhi_{\alpha_6},
    \varPhi_{\alpha_6})=18\left(
    \left(\dfrac{1}{54} \right)^2+\left(-\dfrac{2}{54} \right)^2+
    \left(\dfrac{1}{54} \right)^2 \right)+24\left(\dfrac{1}{36} \right)^2
    =\dfrac{1}{18}.
\end{align*}

Thus, using the inner products above, we have
\begin{align*}
    \cos\theta_{12}&=\dfrac{(\alpha_1,\alpha_2)}{\sqrt{(\alpha_1,\alpha_1)
    (\alpha_2,\alpha_2)}}=-\dfrac{1}{2},\quad
    \cos\theta_{13}=\dfrac{(\alpha_1,\alpha_3)}{\sqrt{(\alpha_1,\alpha_1)
    (\alpha_3,\alpha_3)}}=0,
    \\
    \cos\theta_{14}&=\dfrac{(\alpha_1,\alpha_4)}{\sqrt{(\alpha_1,\alpha_1)
    (\alpha_4,\alpha_4)}}=0,\quad
    \cos\theta_{15}=\dfrac{(\alpha_1,\alpha_5)}{\sqrt{(\alpha_1,\alpha_1)
    (\alpha_5,\alpha_5)}}=0,
    \\
    \cos\theta_{16}&=\dfrac{(\alpha_1,\alpha_6)}{\sqrt{(\alpha_1,\alpha_1)
    (\alpha_6,\alpha_6)}}=0, \quad
    \cos\theta_{23}=\dfrac{(\alpha_2,\alpha_3)}{\sqrt{(\alpha_2,\alpha_2)
    (\alpha_3,\alpha_3)}}=-\dfrac{1}{2},
    \\
    \cos\theta_{24}&=\dfrac{(\alpha_2,\alpha_4)}{\sqrt{(\alpha_2,\alpha_2)
    (\alpha_4,\alpha_4)}}=0,\quad \cos\theta_{25}=\dfrac{(\alpha_2,
    \alpha_5)}{\sqrt{(\alpha_2,\alpha_2)
    (\alpha_5,\alpha_5)}}=0
    \\
    \cos\theta_{26}&=\dfrac{(\alpha_2,\alpha_6)}{\sqrt{(\alpha_2,\alpha_2)(\alpha_6,\alpha_6)}}=0,\quad
    \cos\theta_{34}=\dfrac{(\alpha_3,\alpha_4)}{\sqrt{(\alpha_3,\alpha_3)
    (\alpha_4,\alpha_4)}}=-\dfrac{1}{2},
\\
    \cos\theta_{35}&=\dfrac{(\alpha_3,\alpha_5)}{\sqrt{(\alpha_3,\alpha_3)(\alpha_5,\alpha_5)}}=0,\quad
     \cos\theta_{36}=\dfrac{(\alpha_3,\alpha_6)}{\sqrt{(\alpha_3,\alpha_3)(\alpha_6,\alpha_6)}}=0,
\\
     \cos\theta_{45}&=\dfrac{(\alpha_4,\alpha_5)}{\sqrt{(\alpha_4,\alpha_4)(\alpha_5,\alpha_5)}}=-\dfrac{1}{2},\quad
    \cos\theta_{46}=\dfrac{(\alpha_4,\alpha_6)}{\sqrt{(\alpha_4,\alpha_4)
    (\alpha_6,\alpha_6)}}=-\dfrac{1}{2},
\\
    \cos\theta_{56}&=\dfrac{(\alpha_5,\alpha_6)}{\sqrt{(\alpha_5,\alpha_5)(\alpha_6,\alpha_6)}}=0.
\end{align*}
so that we can draw the required Dynkin diagram.
\end{proof}

\section{The group $ E_{8,\sH} $ and the root system, the Dynkin
diagram of the Lie algebra $ ({\mathfrak{e}_{8}}^C)^{\varepsilon_1,
\varepsilon_2} $}

We consider the following complex Lie algebra $ (\mathfrak{e}_{8,\sH})^C $
which is given by replacing $ \mathfrak{C} $ with $ \H $ in the complex Lie
algebra $ {\mathfrak{e}_8}^C $:
\begin{align*}
(\mathfrak{e}_{8,\sH})^C&=(\mathfrak{e}_{7,\sH})^C \oplus (\mathfrak{P}
_{\sH})^C \oplus (\mathfrak{P}_{\sH})^C \oplus C \oplus C \oplus C
\\
&=\left\lbrace R=(\varPhi,P,Q,r,s,t) \relmiddle{|}
\begin{array}{l}
\varPhi \in (\mathfrak{e}_{7,\sH})^C, P,Q \in (\mathfrak{P}_{\sH})^C, \\
r,s,t \in C
\end{array}
\right\rbrace
\end{align*}
In particular, we have $ \dim_C((\mathfrak{e}_{8,\sH})^C)=66+32\times
2+3=133 $. In $ (\mathfrak{e}_{8,\sH})^C $, we can define a Lie bracket and
can prove its simplicity as in $ {\mathfrak{e}_8}^C $.
In addition, we define a symmetric inner product $ (R_1,R_2)_8 $ by
\begin{align*}
(R_1,R_2)_8:=(\varPhi_1,\varPhi_2)_7-\{Q_1, P_2\}+
\{P_1,Q_2\}-8r_1r_2-4t_1s_2-4s_1t_2,
\end{align*}
where $ R_i:=(\varPhi_i,P_i,Q_i,r_i,s_i,t_i) \in (\mathfrak{e}_{8,
\sH})^C,i=1,2 $. Then $ (\mathfrak{e}_{8,\sH})^C $ leaves the symmetric
inner product $ (R_1,R_2)_8 $ invariant (cf. \cite[Lemma 5.3.1]{iy0}).

Then we have the following theorem.

\begin{theorem}\label{theorem 8.1}
    The Killing form $ B_{8,\sH} $ of $ (\mathfrak{e}_{8,\sH})^C $ is
    given by
    \begin{align*}
    B_{8,\sH}(R_1,R_2)&=-9(R_1,R_2)_8
    \\
    &=-9(\varPhi_1,\varPhi_2)_7+9\{Q_1,
    P_2\}-9\{P_1,Q_2\}+72r_1r_2+36t_1s_2+36s_1t_2
    \\
    &=\dfrac{9}{5}B_{7,\sH}(\varPhi_1,\varPhi_2)+9\{Q_1,
    P_2\}-9\{P_1,Q_2\}+72r_1r_2+36t_1s_2+36s_1t_2.
    \end{align*}
\end{theorem}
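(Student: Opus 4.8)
The plan is to follow the strategy of the proof of Theorem \ref{theorem 7.1}. Since $(\mathfrak{e}_{8,\sH})^C$ is a complex simple Lie algebra, the space of invariant symmetric bilinear forms on it is one-dimensional; as $(R_1,R_2)_8$ is an invariant symmetric form that is not identically zero, the Killing form must be a scalar multiple of it. Thus there exists $k \in C$ with
\begin{align*}
B_{8,\sH}(R_1,R_2)=k(R_1,R_2)_8,\quad R_i \in (\mathfrak{e}_{8,\sH})^C,
\end{align*}
and it only remains to determine $k$ by evaluating both sides on one convenient element.

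I would take $R_1=R_2=\tilde{1}=(0,0,0,1,0,0)$. From the definition of $(\cdot,\cdot)_8$ every term except $-8r_1r_2$ vanishes, so $(\tilde{1},\tilde{1})_8=-8$. To compute the left-hand side I would apply $\ad\tilde{1}$ to a general element $R=(\varPhi,P,Q,r,s,t)$; since $\varPhi_1=P_1=Q_1=s_1=t_1=0$ and $r_1=1$, every cross term in the bracket drops out and one is left with
\begin{align*}
(\ad\tilde{1})(\varPhi,P,Q,r,s,t)=(0,P,-Q,0,2s,-2t).
\end{align*}
Squaring this operator gives
\begin{align*}
(\ad\tilde{1})^2(\varPhi,P,Q,r,s,t)=(0,P,Q,0,4s,4t),
\end{align*}
which acts as $0$ on the $\varPhi$- and $r$-blocks, as the identity on each $(\mathfrak{P}_{\sH})^C$-block, and as multiplication by $4$ on the $s$- and $t$-lines.

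Reading off the trace block by block, with $\dim_C (\mathfrak{P}_{\sH})^C=32$, I would obtain
\begin{align*}
B_{8,\sH}(\tilde{1},\tilde{1})=\tr\big((\ad\tilde{1})^2\big)=32+32+4+4=72,
\end{align*}
whence $k=72/(-8)=-9$ and $B_{8,\sH}(R_1,R_2)=-9(R_1,R_2)_8$. The second displayed formula then follows by expanding the definition of $(\cdot,\cdot)_8$, and the third by substituting the relation $(\varPhi_1,\varPhi_2)_7=-\tfrac{1}{5}B_{7,\sH}(\varPhi_1,\varPhi_2)$ supplied by Theorem \ref{theorem 7.1}, which turns $-9(\varPhi_1,\varPhi_2)_7$ into $\tfrac{9}{5}B_{7,\sH}(\varPhi_1,\varPhi_2)$. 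There is no serious obstacle; the only steps requiring care are the appeal to simplicity (to guarantee uniqueness of the invariant form up to scalar) and the verification that the test element $\tilde{1}$ annihilates every cross term of the Lie bracket, so that the trace splits cleanly over the six blocks.
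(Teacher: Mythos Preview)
Your proof is correct and follows essentially the same approach as the paper: both use simplicity to reduce to a single scalar, evaluate at $R_1=R_2=\tilde{1}$, compute $(\tilde 1,\tilde 1)_8=-8$ and $\tr((\ad\tilde 1)^2)=32\cdot 2+4\cdot 2=72$, and then invoke Theorem~\ref{theorem 7.1} for the final form.
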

\begin{proof}
    Since $ (\mathfrak{e}_{8,\sH})^C $ is simple, there exists $k \in C$
    such that
    \begin{align*}
    B_{8,\sH}(R_1, R_2) = k(R_1, R_2)_8, \quad R_i \in
    (\mathfrak{e}_{8,\sH})^C.
    \end{align*}
    We will determine $k$. Let $ R_1 = R_2= (0, 0, 0, 1, 0, 0) =
    \tilde{1}$. Then we have
    \begin{align*}
    (\tilde{1},\tilde{1})_8=-8.
    \end{align*}
    On the other hand, it follows from
    \begin{align*}
    (\ad\,\tilde{1})(\ad\,\tilde{1})R
    &=[\tilde{1}, [\tilde{1}, (\varPhi, P, Q, r, s, t)]\,],\,\, R \in
    (\mathfrak{e}_{8,\sH})^C
    \\
    &= [\tilde{1}, (0, P, - Q, 0, 2s, - 2t)]
    \\
    &= (0, P, Q, 0, 4s, 4t)
    \end{align*}
    that
    \begin{align*}
    B_{8,\sH}(\tilde{1}, \tilde{1})=\tr((\ad\,\tilde{1})(\ad\,\tilde{1})) = 32 \times 2 + 4 \times 2 = 72.
    \end{align*}
    Thus we have $k = - 9$.

    Therefore we have
    \begin{align*}
    B_{8,\sH}(R_1, R_2) = -9(R_1, R_2)_8
    \end{align*}
    and together with Theorem \ref{theorem 7.1}, the remainders of formulas can be easily obtained.
\end{proof}

\if0

We define a subgroup
$({E_8}^C)^{w_3,\varepsilon_1,\varepsilon_2,\gamma_3}$ of the group
$({E_8}^C)^{w_3}$ by
\begin{align*}
({E_8}^C)^{w_3,\varepsilon_1,\varepsilon_2,\gamma_3} = \left\{\alpha
\in {E_8}^C \, \left| \,
w_3\alpha = \alpha w_3,
\varepsilon_1\alpha=\alpha\varepsilon_1,\varepsilon_2\alpha=\alpha\varepsilon_2,
\gamma_3\alpha=\alpha\gamma_3
\right. \right\}.
\end{align*}
Then the Lie algebra $
({\mathfrak{e}_8}^C)^{w_3,\varepsilon_1,\varepsilon_2,\gamma_3} $ of the
group $({E_8}^C)^{w_3,\varepsilon_1,\varepsilon_2,\gamma_3}$ is given by
\begin{align*}
({\mathfrak{e}_8}^C)^{w_3,\varepsilon_1,\varepsilon_2,\gamma_3}&=\left\lbrace
  R \in {\mathfrak{e}_8}^C \relmiddle{|} w_3R=R, \varepsilon_1R=R,
\varepsilon_2R=R, \gamma_3R=R \right\rbrace
\\
&=\left\lbrace  R=(\varPhi(\phi,A,B,\nu),P,Q,r,s,t) \relmiddle{|}
\begin{array}{l}
\phi \in ({\mathfrak{e}_6}^C)^{w_3,\varepsilon_1,\varepsilon_2,\gamma_3},
A,B \in (\mathfrak{J}_{\sC})^C, \nu \in C,
\\
P,Q \in (\mathfrak{P}_{\sC})^C, r,s,t \in C
\end{array}
\right\rbrace .
\end{align*}
In particular, we have $ \dim_C((\mathfrak{e}_{8,\sC})^C)=35+20\times
2+3=78 $.

Then we have the following theorem.

\begin{theorem}\label{Theorem 7.0.1}
    The Lie algebra $ (\mathfrak{e}_{8,\sC})^C $ is isomorphic to the Lie
    algebra $
    ({\mathfrak{e}_8}^C)^{w_3,\varepsilon_1,\varepsilon_2,\gamma_3} $ {\rm
    :} $ (\mathfrak{e}_{8,\sC})^C \cong
    ({\mathfrak{e}_8}^C)^{w_3,\varepsilon_1,\varepsilon_2,\gamma_3} $.
\end{theorem}
\begin{proof}
    We define a mapping $
    h_{8,\sC_*}:({\mathfrak{e}_8}^C)^{w_3,\varepsilon_1,\varepsilon_2,\gamma_3}
     \to (\mathfrak{e}_{8,\sC})^C  $ by
    \begin{align*}
    h_{8,\sC_*}((\varPhi(\phi,A,B,\nu),P,Q,r,s,t))=(\varPhi(\phi\vert_{(\mathfrak{J}_{\sC})^C},A,B,\nu),P,Q,r,s,t).

    \end{align*}

    First, we will prove that $ h_{8,\sC_*} $ is well-defined. For $ \phi
    \in ({\mathfrak{e}_6}^C)^{w_3,\varepsilon_1,\varepsilon_2,\gamma_3} $,
    there exist $ B, C \in \mathfrak{su}(3,\C^C) $ such that $
    \phi=f_{6,{w_3}_*}(B,C) $ (Theorem \ref{theorem 5.0.3}). Then we see $
    \phi\vert_{(\mathfrak{J}_{\sC})^C}=f_{6,{w_3}_*}(B,C)\vert_{(\mathfrak{J}_{\sC})^C}=f_{6,\sC_*}(B,C)
     \in (\mathfrak{e}_{6,\sC})^C $ (Theorem \ref{theorem 5.0.2}). Hence $
    h_{8,\sC_*} $ is well-defined. Subsequently,
    we will prove that $ h_{8,\sC_*} $ is a homomorphism. It follows that
    \begin{align*}
    &\quad [h_{8,\sC_*}((\varPhi(\phi,A,B,\nu),P,Q,r,s,t)),
    h_{8,\sC_*}((\varPhi(\phi',A',B',\nu'),P',Q',r',s',t'))]
    \\
    &=[(\varPhi(\phi\vert_{(\mathfrak{J}_{\sC})^C},A,B,\nu),P,Q,r,s,t),
    (\varPhi(\phi'\vert_{(\mathfrak{J}_{\sC})^C},A',B',\nu'),P',Q',r',s',t')]
    \\
    &=:(\varPhi'',P'',Q'',r'',s'',t''),
    \end{align*}
    where
    \begin{align*}
    \varPhi''&:=[\varPhi(\phi\vert_{(\mathfrak{J}_{\sC})^C},A,B,\nu),\varPhi(\phi'\vert_{(\mathfrak{J}_{\sC})^C},A',B',\nu')]+P
     \times Q'-P'\times Q,
    \\
    P''&:=\varPhi(\phi\vert_{(\mathfrak{J}_{\sC})^C},A,B,\nu)P'-\varPhi(\phi'\vert_{(\mathfrak{J}_{\sC})^C},A',B',\nu')P+rP'-r'P+sQ'-s'Q,
    \\
    Q''&:=\varPhi(\phi\vert_{(\mathfrak{J}_{\sC})^C},A,B,\nu)Q'-\varPhi(\phi'\vert_{(\mathfrak{J}_{\sC})^C},A',B',\nu')Q-rQ'+r'Q+tP'-t'P,
    \\
    r''&:=(-1/8)\left\lbrace P,Q' \right\rbrace+(1/8)\left\lbrace P',Q
    \right\rbrace +st'-s't,
    \\
    s''&:=(1/4)\left\lbrace P,P' \right\rbrace+2rs'-2r's,
    \\
    t''&:=(-1/4)\left\lbrace Q,Q' \right\rbrace-2rt'+2r't.
    \end{align*}
    Here, note that $ \phi\vert_{(\mathfrak{J}_{\sC})^C}X_{\sC}=\phi
    X_{\sC}, X_{\sC} \in (\mathfrak{J}_{\sC})^C $, we do the following
    computation of parts relative to $ \phi\vert_{(\mathfrak{J}_{\sC})^C}
    $:
    \begin{align*}
    &\quad
    [\varPhi(\phi\vert_{(\mathfrak{J}_{\sC})^C},A,B,\nu),\varPhi(\phi'\vert_{(\mathfrak{J}_{\sC})^C},A',B',\nu')]
    \\[1mm]
    &=\varPhi\left(
    \begin{array}{c}
    [\phi\vert_{(\mathfrak{J}_{\sC})^C},\phi'\vert_{(\mathfrak{J}_{\sC})^C}]+2A\vee
     B'-2A'\vee B
    \\[1mm]
    (\phi\vert_{(\mathfrak{J}_{\sC})^C}+(2/3)\nu)A'-(\phi'\vert_{(\mathfrak{J}_{\sC})^C}+(2/3)\nu')A
    \\[1mm]
    (-{}^t\phi\vert_{(\mathfrak{J}_{\sC})^C}-(2/3)\nu)B'-(-{}^t\phi'\vert_{(\mathfrak{J}_{\sC})^C}-(2/3)\nu')B
    \\[1mm]
    (A,B')-(B,A')
    \end{array}
    \right)
    \\[1mm]
    &=\varPhi\left(
    \begin{array}{c}
    [\phi,\phi']\vert_{(\mathfrak{J}_{\sC})^C}+2A\vee B'-2A'\vee B
    \\[1mm]
    (\phi+(2/3)\nu)A'-(\phi+(2/3)\nu')A
    \\[1mm]
    (-{}^t\phi-(2/3)\nu)B'-(-{}^t\phi'-(2/3)\nu')B
    \\[1mm]
    (A,B')-(B,A')
    \end{array}
    \right),
    \\[3mm]
    &\quad \varPhi(\phi\vert_{(\mathfrak{J}_{\sC})^C},A,B,\nu)P', \,\,
    P':=(X'_{\sC},Y'_{\sC},\xi,\eta)
    \\
    &=\varPhi(\phi\vert_{(\mathfrak{J}_{\sC})^C},A,B,\nu)(X'_{\sC},Y'_{\sC},\xi,\eta)

    \\
    &=\varPhi\left(
    \begin{array}{c}
    \phi\vert_{(\mathfrak{J}_{\sC})^C}X'_{\sC}-(1/3)\nu X'_{\sC}+2B\times
    Y'_{\sC}+\eta' A
    \\[1mm]
    2A\times X'_{\sC}-{}^t\phi\vert_{(\mathfrak{J}_{\sC})^C}
    Y'_{\sC}+(1/3)\nu Y'_{\sC}+\xi' B
    \\[1mm]
    (A,Y'_{\sC})+\nu \xi'
    \\[1mm]
    (B,X'_{\sC})-\nu\eta'
    \end{array}
    \right)
    \\
    &=\varPhi\left(
    \begin{array}{c}
    \phi X'_{\sC}-(1/3)\nu X'_{\sC}+2B\times Y'_{\sC}+\eta' A
    \\[1mm]
    2A\times X'_{\sC}-{}^t\phi Y'_{\sC}+(1/3)\nu Y'_{\sC}+\xi' B
    \\[1mm]
    (A,Y'_{\sC})+\nu \xi'
    \\[1mm]
    (B,X'_{\sC})-\nu \eta'
    \end{array}
    \right)
    \\
    &=\varPhi(\phi,A,B,\nu)P'.
    \end{align*}
    The computations of the remaining parts relative to $
    \phi\vert_{(\mathfrak{J}_{\sC})^C} $ are same.

    On the other hand, it follows that
    \begin{align*}
    &\quad h_{8,\sC_*} [(\varPhi(\phi,A,B,\nu),P,Q,r,s,t),
    (\varPhi(\phi',A',B',\nu'),P',Q',r',s',t')]
    \\
    &=:h_{8,\sC_*}((\varPhi(\phi^{(3)},A^{(3)},B^{(3)},\nu^{(3)})+P\times
    Q'-P'\times Q,P^{(3)},Q^{(3)},r^{(3)},s^{(3)},t^{(3)})),
    \end{align*}
    where
    \begin{align*}
    \phi^{(3)}&:=[\phi,\phi']+2A\vee B'-2A'\vee B,
    \\
    A^{(3)}&:=(\phi+(2/3)\nu)A'-(\phi'+(2/3)\nu')A,
    \\
    B^{(3)}&:=(-{}^t\phi-(2/3)\nu)B'-(-{}^t\phi'-(2/3)\nu')B,
    \\
    \nu^{(3)}&:=(A,B')-(B,A'),
    \\
    P^{(3)}&:=\varPhi(\phi,A,B.\nu)P'-\varPhi(\phi',A',B'.\nu')P+rP'-r'P+sQ'-s'Q,
    \\
    Q^{(3)}&:=\varPhi(\phi,A,B.\nu)Q'-\varPhi(\phi',A',B'.\nu')Q-rQ'+r'Q+tP'-t'P,
    \\
    r^{(3)}&:=(-1/8)\left\lbrace P,Q' \right\rbrace+(1/8)\left\lbrace P',Q
    \right\rbrace +st'-s't,
    \\
    s^{(3)}&:=(1/4)\left\lbrace P,P' \right\rbrace+2rs'-2r's,
    \\
    t^{(3)}&:=(-1/4)\left\lbrace Q,Q' \right\rbrace-2rt'+2r't.
    \end{align*}
    Hence, note that $ A\vee B', A'\vee B \in (\mathfrak{e}_{6,\sC})^C $
    and $ P\times Q', P'\times Q \in (\mathfrak{e}_{7,\sC})^C $, since we
    see
    \begin{align*}
    &\quad [h_{8,\sC_*}((\varPhi(\phi,A,B,\nu),P,Q,r,s,t)),
    h_{8,\sC_*}((\varPhi(\phi',A',B',\nu'),P',Q',r',s',t'))]
    \\
    &=h_{8,\sC_*} [(\varPhi(\phi,A,B,\nu),P,Q,r,s,t),
    (\varPhi(\phi',A',B',\nu'),P',Q',r',s',t')],
    \end{align*}
    $ h_{8,\sC_*} $ is a homomorphism.

    Next, we will prove that $ h_{8,\sC_*} $ is injective. Since we easily
    see $ \Ker\,h_{8,\sC_*}=\{0\} $, it is clear. Finally, we will prove
    that $ h_{8,\sC_*} $ is surjective. Since $ h_{8,\sC_*} $ is injective
    and $
    \dim_C(({\mathfrak{e}_8}^C)^{w_3,\varepsilon_1,\varepsilon_2,\gamma_3})=78=\dim_C((\mathfrak{e}_{8,\sC})^C
     ) $, we see $ h_{8,\sC_*} $ is surjective.

    Therefore we have the required isomorphism
    \begin{align*}
    (\mathfrak{e}_{8,\sC})^C \cong
    ({\mathfrak{e}_8}^C)^{w_3,\varepsilon_1,\varepsilon_2,\gamma_3}.
    \end{align*}
\end{proof}

We will move the determination of the root system and Dynkin diagram of
the Lie algebra $ (\mathfrak{e}_{8,\sR})^C $.
\vspace{1mm}

We define a Lie subalgebra $ \mathfrak{h}_8 $ of $
(\mathfrak{e}_{8,\sR})^C $ by
\begin{align*}
\mathfrak{h}_8=\left\lbrace R=(\varPhi,0,0,r,0,0) \relmiddle{|}
\begin{array}{l}
\varPhi=\varPhi(\phi,0,0,\nu) \in \mathfrak{h}_7,
\\
\quad \phi=\tilde{T}_0 \in \mathfrak{h}_6,
\\
\qquad T_0=\tau_1E_1+\tau_2E_2+\tau_3E_3 \in ({\mathfrak{J}_{\sR}}^C)_0,
\\
\qquad\qquad \tau_1+\tau_2+\tau_3=0, \tau_i \in C,
\\
\quad \nu \in C,
\\
r \in C
\end{array}
\right\rbrace.
\end{align*}
Then the Lie subalgebra $ \mathfrak{h}_8 $ is a Cartan subalgebra of $
(\mathfrak{e}_{8,\sR})^C $.

\begin{theorem}\label{Theorem 7.0.2}
    The roots $ \varDelta $ of $ (\mathfrak{e}_{8,\sR})^C  $ relative to $
    \mathfrak{h}_8 $ are given by
    \begin{align*}
    \varDelta=\left\lbrace
    \begin{array}{l}
    \pm\dfrac{1}{2}(\tau_2-\tau_3),\pm\dfrac{1}{2}(\tau_3-\tau_1),\pm\dfrac{1}{2}(\tau_1-\tau_2)
    \\[2mm]
    \pm(\tau_1+\dfrac{2}{3}\nu), \pm(\tau_2+\dfrac{2}{3}\nu),
    \pm(\tau_3+\dfrac{2}{3}\nu),
    \\[2mm]
    \pm(\dfrac{1}{2}\tau_1-\dfrac{2}{3}\nu),
    \pm(\dfrac{1}{2}\tau_2-\dfrac{2}{3}\nu),
    \pm(\dfrac{1}{2}\tau_3-\dfrac{2}{3}\nu),
    \\[2mm]
    \pm(\tau_1-\dfrac{1}{3}\nu+r),\pm(\tau_2-\dfrac{1}{3}\nu+r),\pm(\tau_3-\dfrac{1}{3}\nu+r),
    \\[2mm]
    \pm(-\dfrac{1}{2}\tau_1-\dfrac{1}{3}\nu+r),\pm(-\dfrac{1}{2}\tau_2-\dfrac{1}{3}\nu+r),\pm(-\dfrac{1}{2}\tau_3-\dfrac{1}{3}\nu+r),
    \\[2mm]
    \pm(\dfrac{1}{2}\tau_1+\dfrac{1}{3}\nu+r),\pm(\dfrac{1}{2}\tau_2+\dfrac{1}{3}\nu+r),\pm(\dfrac{1}{2}\tau_3+\dfrac{1}{3}\nu+r),
    \\[2mm]
    \pm(-\tau_1+\dfrac{1}{3}\nu+r),\pm(-\tau_2+\dfrac{1}{3}\nu+r),\pm(-\tau_3+\dfrac{1}{3}\nu+r),
    \\[2mm]
    \pm(\nu+r),\pm(-\nu+r),\pm 2r
    \end{array}
    \right\rbrace .
    \end{align*}
\end{theorem}
\begin{proof}
    The roots of $ (\mathfrak{e}_{7,\sR})^C $ are also the roots of $
    (\mathfrak{e}_{8,\sR})^C $. Indeed, let the root $ \alpha $ of $
    (\mathfrak{e}_{7,\sR})^C $ and its associated root vector $ \varPhi_s
    \in (\mathfrak{e}_{7,\sR})^C \subset (\mathfrak{e}_{8,\sR})^C $. Then
    we have
    \begin{align*}
    [R, \varPhi_s]&=[(\varPhi,0,0,r,0,0) (\varPhi_s,0,0,0,0,0)]
    \\
    &=([\varPhi,\varPhi_s],0,0,0,0,0)
    \\
    &=(\alpha(\varPhi)\varPhi_s,0,0,0,0,0)
    \\
    &=\alpha(\varPhi)(\varPhi_s,0,0,0,0,0)
    \\
    &=\alpha(R)\varPhi_s.
    \end{align*}

    We will determine the remainders of roots. We will show a few
    examples.
    First, let $ R=(\varPhi,0,0,r,0,0) \in \mathfrak{h}_8 $ and $
    {R^-}_{\dot{E}_1}:=(0,\dot{E}_1,0,0,0,0) \in (\mathfrak{e}_{8,\sR})^C
    $, where $ \dot{E}_1:=(E_1,0,0,0) \in (\mathfrak{P}_{\sR})^C $. Then
    it follows that
    \begin{align*}
    [R,{R^-}_{\dot{E}_1}]&=[(\varPhi,0,0,r,0,0),(0,\dot{E}_1,0,0,0,0)]
    \\
    &=(0,\varPhi\dot{E}_1+r\dot{E}_1,0,0,0,0)
    \\
    &=(0,\varPhi(\phi,0,0,\nu)(E_1,0,0,0)+r(E_1,0,0,0),0,0,0,0)
    \\
    &=(0,(\phi-(1/3)\nu+r)E_1,0,0,0),0,0,0,0),\,\,(\phi=\tilde{T}_0)
    \\
    &=(0,((\tau_1-(1/3)\nu+r)E_1,0,0,0),0,0,0,0)
    \\
    &=(\tau_1-(1/3)\nu+r)(0,\dot{E}_1,0,0,0,0)
    \\
    &=(\tau_1-(1/3)\nu+r){R^-}_{\dot{E}_1},
    \end{align*}
    that is, $ [R,{R^-}_{\dot{E}_1}]=(\tau_1-(1/3)\nu+r){R^-}_{\dot{E}_1}
    $. Hence we see that $ \tau_1-(1/3)\nu+r $ is a root and $
    (0,\dot{E}_1,0,0,0,0)$ is an associated root vector.
    Next, let $ {R^-}_{\dot{F}_1(1)}:=(0,\dot{F}_1(1),0,0,0,0) \in
    (\mathfrak{e}_{8,\sR})^C $, where $ \dot{F}_1(1):=(F_1(1),0,0,0) \in
    (\mathfrak{P}_{\sR})^C $. Then it follows that
    \begin{align*}
    [R,{R^-}_{\dot{F}_1(1)}]&=[(\varPhi,0,0,r,0,0),(0,\dot{F}_1(1),0,0,0,0)]
    \\
    &=(0,\varPhi\dot{F}_1(1)+r\dot{F}_1(1),0,0,0,0)
    \\
    &=(0,\varPhi(\phi,0,0,\nu)(F_1(1),0,0,0)+r(F_1(1),0,0,0),0,0,0,0)
    \\
    &=(0,(\phi-(1/3)\nu+r)F_1(1),0,0,0),0,0,0,0),\,\,(\phi=\tilde{T}_0)
    \\
    &=(0,(((1/2)(\tau_2+\tau_3)-(1/3)\nu+r)F_1(1),0,0,0),0,0,0,0)
    \\
    &=(0,(-(1/2)\tau_1-(1/3)\nu+r)\dot{F}_1(1),0,0,0,0)
    \\
    &=(-(1/2)\tau_1-(1/3)\nu+r){R^-}_{\dot{F}_1(1)},
    \end{align*}
    that is, $
    [R,{R^-}_{\dot{F}_1(1)}]=(-(1/2)\tau_1-(1/3)\nu+r){R^-}_{\dot{F}_1(1)}
    $. Hence we see that $ -(1/2)\tau_1-(1/3)\nu+r $ is a root and $
    (0,\dot{F}_1(1),0,0,0,0) $ is an associated root vector. The
    remainders of roots and these associated root vectors except ones of
    the Lie algebra $ (\mathfrak{e}_{8,\sR})^C $ and above are obtained as
    follows:
    \begin{longtable}[c]{ll}
        \hspace{6mm}
        $ \text{roots}  $
        & \hspace{-5mm}
        $ \text{associated root vectors} $
        \cr
        $ -(\tau_1-(1/3)\nu+r) $
        &
        $ (0,0,\underset{\dot{}}{E_1},0,0,0) $
        \cr
        $ \tau_2-(1/3)\nu+r $
        &
        $ (0,\dot{E}_2,0,0,0,0) $
        \cr
        $ -(\tau_2-(1/3)\nu+r) $
        &
        $ (0,0,\underset{\dot{}}{E_2},0,0,0) $
        \cr
        $ \tau_3-(1/3)\nu+r $
        &
        $ (0,\dot{E}_3,0,0,0,0) $
        \cr
        $ -(\tau_3-(1/3)\nu+r) $
        &
        $ (0,0,\underset{\dot{}}{E_3},0,0,0) $
        \cr
        $ -(-
        (1/2)\tau_1-(1/3)\nu+r) $
        &
        $ (0,0,\underset{\dot{}}{F_1}(1),0,0,0) $
        \cr
        $ -(1/2)\tau_2-(1/3)\nu+r $
        &
        $ (0,\dot{F_2}(1),0,0,0,0) $
        \cr
        $ -(-(1/2)\tau_2-(1/3)\nu+r) $
        &
        $ (0,0,\underset{\dot{}}{F_2}(1),0,0,0) $
        \cr
        $ -(1/2)\tau_3-(1/3)\nu+r $
        &
        $ (0,\dot{F_3}(1),0,0,0,0) $
        \cr
        $ -(-(1/2)\tau_3-(1/3)\nu+r) $
        &
        $ (0,0,\underset{\dot{}}{F_3}(1),0,0,0) $
        \cr
        $ (1/2)\tau_1+(1/3)\nu+r $
        &
        $ (0,\underset{\dot{}}{F_1}(1),0,0,0,0) $
        \cr
        $ -((1/2)\tau_1+(1/3)\nu+r) $
        &
        $ (0,0,\dot{F_1}(1),0,0,0) $
        \cr
        $ -\tau_1+(1/3)\nu+r $
        &
        $ (0,\underset{\dot{}}{E_1},0,0,0,0) $
        \cr
        $ -(-\tau_1+(1/3)\nu+r) $
        &
        $ (0,0,\dot{E_1},0,0,0) $
        \cr
        $ -\tau_2+(1/3)\nu+r $
        &
        $ (0,\underset{\dot{}}{E_2},0,0,0,0) $
        \cr
        $ -(-\tau_2+(1/3)\nu+r) $
        &
        $ (0,0,\dot{E_2},0,0,0) $
        \cr
        $ -\tau_3+(1/3)\nu+r $
        &
        $ (0,\underset{\dot{}}{E_3},0,0,0,0) $
        \cr
        $ -(-\tau_3+(1/3)\nu+r) $
        &
        $ (0,0,\dot{E_3},0,0,0) $
        \cr
        $ (1/2)\tau_2+(1/3)\nu+r $
        &
        $ (0,\underset{\dot{}}{F_2}(1),0,0,0,0) $
        \cr
        $ -((1/2)\tau_2+(1/3)\nu+r) $
        &
        $ (0,0,\dot{F_2}(1),0,0,0) $
        \cr
        $ (1/2)\tau_3+(1/3)\nu+r $
        &
        $ (0,\underset{\dot{}}{F_3}(1),0,0,0,0) $
        \cr
        $ -((1/2)\tau_3+(1/3)\nu+r) $
        &
        $ (0,0,\dot{F_3}(1),0,0,0) $
        \cr
        $ \nu+r $
        &
        $ (0,\dot{1},0,0,0,0) $
        \cr
        $ -(\nu+r) $
        &
        $ (0,0,\underset{\dot{}}{1},0,0,0) $
        \cr
        $ -\nu+r $
        &
        $ (0,\underset{\dot{}}{1},0,0,0,0) $
        \cr
        $ -(-\nu+r) $
        &
        $ (0,0,\dot{1},0,0,0) $
        \cr
        $ 2r $
        &
        $ (0,0,0,0,1,0)  $
        \cr
        $ -2r $
        &
        $ (0,0,0,0,0,1)  $,
        \cr
    \end{longtable}
    where $ \dot{E_k}:=(E_k,0,0,0),\underset{\dot{}}{E_k}:=(0,E_k,0,0),
    \dot{F_k}(1):=(F_k(1),0,0,0),\underset{\dot{}}{F_k}(1):=(0,F_k(1),0,0),\dot{1}:=(0,0,1,\allowbreak
     0), \underset{\dot{}}{1}:=(0,0,0,1) $ in $ (\mathfrak{P}_{\sR})^C $.

    Therefore, since $ (\mathfrak{e}_{8,\sR})^C $ is spanned by $
    \mathfrak{h}_8 $ and associated root vectors above, the roots obtained
    above are all.
\end{proof}

Subsequently, we have the following theorem.

\begin{theorem}\label{theorem 7.0.3}
    In the root system $ \varDelta $ of Theorem {\rm \ref{Theorem 7.0.2}},
    \begin{align*}
    \varPi=\{\alpha_1, \alpha_2, \alpha_3, \alpha_4 \}
    \end{align*}
    is a fundamental root system of $ (\mathfrak{e}_{8,\sR})^C $, where
    $ \alpha_1=-\tau_1+(1/3)\nu+r,
    \alpha_2=-2r,
    \alpha_3=(-1/2)\tau_2-(1/3)\nu+r,
    \alpha_4=(-1/2)\tau_3+(2/3)\nu $.
    The Dynkin diagram of $ (\mathfrak{e}_{8,\sR})^C $ is given by

    {\setlength{\unitlength}{1mm}
        \scalebox{1.0}
        {\setlength{\unitlength}{1mm}
            \begin{picture}(100,20)
            \put(50,10){\circle{2}} \put(49,6){$\alpha_1$}
            \put(51,10){\line(1,0){8}}
            \put(60,10){\circle{2}} \put(59,6){$\alpha_2$}
            \put(60.7,10.7){\line(1,0){8}}
            \put(60.7,9.3){\line(1,0){8}}
            \put(68.5,9.2){$\rangle$}
            \put(70,10){\circle{2}} \put(69,6){$\alpha_3$}
            \put(71,10){\line(1,0){8}}
            \put(80,10){\circle{2}} \put(79,6){$\alpha_4$}
            \end{picture}}}
\end{theorem}
\begin{proof}
    The remainders of positive roots are expressed by $
    \alpha_1,\alpha_2,\alpha_3,\alpha_4, $ as follows:
    \begin{align*}
    (-1/2)(\tau_2-\tau_3)&=\alpha_1+\alpha_2+\alpha_3+\alpha_4
    \\
    (1/2)(\tau_3-\tau_1)&=\alpha_1+\alpha_2+\alpha_3
    \\
    (1/2)(\tau_1-\tau_2)&=\alpha_2+2\alpha_3+\alpha_4
    \\
    \tau_1+(2/3)\nu&=\alpha_2+2\alpha_3+2\alpha_4
    \\
    -\tau_2-(2/3)\nu&=\alpha_2+2\alpha_3
    \\
    \tau_3+(2/3)\nu&=2\alpha_1+3\alpha_2+4\alpha_3+2\alpha_4
    \\
    (-1/2)\tau_1+(2/3)\nu&=\alpha_1+\alpha_2+\alpha_3+\alpha_4
    \\
    (-1/2)\tau_2+(2/3)\nu&=\alpha_1+2\alpha_2+3\alpha_3+2\alpha_4
    \\
    -\tau_1+(1/3)\nu-r&=\alpha_1+\alpha_2
    \\
    -\tau_2+(1/3)\nu-r&=\alpha_1+3\alpha_2+4\alpha_3+2\alpha_4
    \\
    \tau_3+(1/3)\nu+r&=\alpha_1+\alpha_2+2\alpha_3
    \\
    (1/2)\tau_1+(1/3)\nu-r&=\alpha_2+\alpha_3+\alpha_4
    \\
    (1/2)\tau_3+(1/3)\nu-r&=\alpha_1+2\alpha_2+2\alpha_3+\alpha_4
    \\
    (1/2)\tau_1+(1/3)\nu+r&=\alpha_3+\alpha_4
    \\
    (-1/2)\tau_2-(1/3)\nu-r&=\alpha_2+\alpha_3
    \\
    (1/2)\tau_3+(1/3)\nu+r&=\alpha_1+\alpha_2+2\alpha_3+\alpha_4
    \\
    -\tau_2+(1/3)\nu+r&=\alpha_1+2\alpha_2+4\alpha_3+2\alpha_4
    \\
    \tau_3-(1/3)\nu+r&=\alpha_1+2\alpha_2+2\alpha_3
    \\
    \nu+r&=\alpha_1+\alpha_2+2\alpha_3+2\alpha_4
    \\
    \nu-r&=\alpha_1+2\alpha_2+2\alpha_3+2\alpha_4.
    \end{align*}
    Hence we see that $ \varPi=\{\alpha_1, \alpha_2, \alpha_3, \alpha_4 \}
    $ is a fundamental root system of $ (\mathfrak{e}_{8,\sR})^C $. Let
    the real part $ \mathfrak{h}_{8,\sR} $ of $ \mathfrak{h}_8 $:

    \begin{align*}
    \mathfrak{h}_{8,\sR}=\left\lbrace R=(\varPhi,0,0,r,0,0) \relmiddle{|}
    \begin{array}{l}
    \varPhi=\varPhi(\phi,0,0,\nu) \in \mathfrak{h}_{7,\sR},
    \\
    \quad \phi=\tilde{T}_0 \in \mathfrak{h}_{6,\sR},
    \\
    \qquad T_0=\tau_1E_1+\tau_2E_2+\tau_3E_3 \in
    ({\mathfrak{J}_{\sR}}^C)_0,
    \\
    \qquad\qquad \tau_1+\tau_2+\tau_3=0, \tau_i \in \R,
    \\
    \quad \nu \in \R,
    \\
    r \in \R
    \end{array}
    \right\rbrace.
    \end{align*}

    The Killing form $ B_{8,\sR} $ of $ (\mathfrak{e}_{8,\sR})^C $ is
    given by $
    B_{8,\sR}(R_1,R_2)=(9/4)B_{7,\sR}(\varPhi_1,\varPhi_2)+(9/2)\{Q_1,P_2\}-(9/2)\{P_1,Q_2\}+36r_1r_2+18t_1s_2+18s_1t_2,
     R_i=(\varPhi_i,P_i,Q_i,r_i,s_i,t_i),i=1,2 $ (Theorem \ref{theorem
    7.0.1}), so in $ \mathfrak{h}_{8,\sR} $.
    Hence, for $
    R=(\varPhi(\tau_1\tilde{E}_1+\tau_2\tilde{E}_2+\tau_3\tilde{E}_3,0,0,
    \nu),0,0,r,0,0),{R}'=({\varPhi}'({\tau_1}'\tilde{E}_1+
    {\tau_2}'\tilde{E}_2+
    {\tau_3}'\tilde{E}_3,0,0,\nu'),0,0,r',0,0)
     \in \mathfrak{h}_{8,\sR} $, we have
    \begin{align*}
    B_{8,\sR}(R,
    {R}')=9(\tau_1{\tau_1}'+\tau_2{\tau_2}'+\tau_3{\tau_3}')
    +12\nu\nu'+36rr'.
    \end{align*}
    Indeed, it follows from $ B_{7,\sR}(\varPhi,
    {\varPhi}')=4(\tau_1{\tau_1}'+\tau_2{\tau_2}'+\tau_3{\tau_3}')+(16/3)
    \nu\nu'$
     that
    \begin{align*}
    B_{8,\sR}(R, {R}')&=\dfrac{9}{4}B_{7,\sR}(\varPhi, {\varPhi}')+36rr'
    \\
    &=\dfrac{9}{4}\cdot \left(
    4(\tau_1{\tau_1}'+\tau_2{\tau_2}'+\tau_3{\tau_3}')+\dfrac{16}{3}\nu\nu'\right)
     +36rr'
    \\
    &=9(\tau_1{\tau_1}'+\tau_2{\tau_2}'+\tau_3{\tau_3}')+12\nu\nu'+36rr'.
    \end{align*}

    Now, the canonical elements $ R_{\alpha_1}, R_{\alpha_2},
    R_{\alpha_3}, R_{\alpha_4} \in \mathfrak{h}_{8,\sR} $ corresponding to
    $ \alpha_1,\alpha_2,\alpha_3,\alpha_4 $ are determined as follows:
    \begin{align*}
    R_{\alpha_1}&=(\varPhi(\dfrac{1}{54}(-4\tilde{E}_1+2\tilde{E}_2+\tilde{E}_3),0,0,\dfrac{1}{36}),0,0,\dfrac{1}{36},0,0),
    \\
    R_{\alpha_2}&=(0,0,0,-\dfrac{2}{36},0,0),
    \\
    R_{\alpha_3}&=(\varPhi(\dfrac{1}{54}(\tilde{E}_1-2\tilde{E}_2+\tilde{E}
    _3),0,0,-\dfrac{1}{36}),0,0,\dfrac{1}{36},0,0),
    \\
    R_{\alpha_4}&=(\varPhi(\dfrac{1}{54}(\tilde{E}_1+\tilde{E}_2-2\tilde{E}
    _3),0,0,\dfrac{2}{36}),0,0,0,0,0).
    \end{align*}
    Indeed, let $ R=(\varPhi',0,0,r',0,0) \in (\mathfrak{e}_{8,\sR})^C $.
    Set $ R_{\alpha_1}=(\varPhi,0,0,r,0,0)\in \mathfrak{h}_{8,\sR}$, then
    from $ B_{8,\sR}(R_{\alpha_1},R)=\alpha_1(R) $, we have
    \begin{align*}
    \tau_1=-\dfrac{4}{54},\tau_2=\tau_3=\dfrac{2}{54}, \nu=\dfrac{1}{36},
    r=\dfrac{1}{36}.
    \end{align*}
    Hence we have the required one as $ R_{\alpha_1} $. For the remainders
    of $ R_{\alpha_i},i=2,3,4,5,6 $, as in the case above, we obtain the
    required results.
    Note that $ R_{\alpha_1} $ is also given as follows:
    \begin{align*}
    R_{\alpha_1}=\left(
    B_{8,\sR}({\underset{\dot{}}{E_1}}^-,{\dot{E_1}}_-)\right)^{-1}
    [{\underset{\dot{}}{E_1}}^-,{\dot{E_1}}_-],
    \end{align*}
    where $
    {\underset{\dot{}}{E_1}}^-:=(0,\dot{E_1},0,0,0,0),{\dot{E_1}}_-:=(0,0,
    \underset{\dot{}}{E_1},0,0,0)
     $ are the root vectors associated the roots $ \alpha_1,-\alpha_1 $,
    respectively. For the remainders of $ R_{\alpha_i}, i=2,3,4 $, as in
    the case above, we obtain the required results.

    With above, we see that
    \begin{align*}
    (\alpha_1,\alpha_1)&=B_{8,\sR}(R_{\alpha_1},R_{\alpha_1})
    =9\cdot\left( \left(-\dfrac{4}{54} \right)\cdot\left(-\dfrac{4}{54}
    \right)+\dfrac{2}{54}\cdot\dfrac{2}{54}+\dfrac{2}{54}\cdot\dfrac{2}{54}
      \right)
    \\
    &\hspace{25mm}+12\cdot\dfrac{1}{36}\cdot\dfrac{1}{36}+36\cdot\dfrac{1}{36}\cdot\dfrac{1}{36}
    =\dfrac{1}{9},
    \\
    (\alpha_1,\alpha_2)&=B_{8,\sR}(R_{\alpha_1},R_{\alpha_2})=36\cdot\dfrac{1}{36}
     \cdot\left(-\dfrac{2}{36}\right) =-\dfrac{1}{18},
    \\
    (\alpha_1,\alpha_3)&=B_8(R_{\alpha_1},R_{\alpha_3})=9\cdot\left(
    \left(-\dfrac{4}{54} \right)\cdot\dfrac{1}{54}
    +\dfrac{2}{54}\cdot\left( -\dfrac{2}{54}\right)
    +\dfrac{2}{54}\cdot\dfrac{1}{54}  \right)
    \\
    &\hspace{25mm}+12\cdot\dfrac{1}{36}\cdot\left( -\dfrac{1}{36}\right)
    +36\cdot\dfrac{1}{36}\cdot\dfrac{1}{36}
    =0,
    \\
    (\alpha_1,\alpha_4)&=B_8(R_{\alpha_1},R_{\alpha_4})=9\cdot\left(
    \left(-\dfrac{4}{54} \right)\cdot\dfrac{1}{54}
    +\dfrac{2}{54}\cdot\dfrac{1}{54} +\dfrac{2}{54}\cdot\left(
    -\dfrac{2}{54}\right)   \right)
    \\
    &\hspace{25mm}+12\cdot\dfrac{1}{36}\cdot\left( -\dfrac{1}{36}\right)
    =0,
    \\
    (\alpha_2,\alpha_2)&=B_8(R_{\alpha_2},R_{\alpha_2})
    =36\cdot\left(-\dfrac{2}{36} \right)\cdot\left(-\dfrac{2}{36} \right)
    =\dfrac{1}{9},
    \\
    (\alpha_2,\alpha_3)&=B_8(R_{\alpha_2},R_{\alpha_3})=36\cdot\dfrac{1}{36}
     \cdot\left(-\dfrac{2}{36} \right)
    =-\dfrac{1}{18},
    \\
    (\alpha_2,\alpha_4)&=B_8(R_{\alpha_2},R_{\alpha_4})=0,
    \\
    (\alpha_3,\alpha_3)&=B_8(R_{\alpha_3},R_{\alpha_3})=9\cdot\left(
    \dfrac{1}{54}\cdot\dfrac{1}{54}+\left( -\dfrac{2}{54}\right)
    \cdot\left( -\dfrac{2}{54}\right) +\dfrac{1}{54}\cdot\dfrac{1}{54}
    \right)
    \\
    &\hspace{25mm}+12\cdot\left( -\dfrac{1}{36}\right) \cdot\left(
    -\dfrac{1}{36}\right) +36\cdot\dfrac{1}{36}\cdot\dfrac{1}{36}
    =\dfrac{1}{18},
    \\
    (\alpha_3,\alpha_4)&=B_8(R_{\alpha_3},R_{\alpha_4})=9\cdot\left(
    \dfrac{1}{54} \cdot\dfrac{1}{54} +\left( -\dfrac{2}{54}\right)
    \cdot\dfrac{1}{54} +\dfrac{1}{54}\cdot\left( -\dfrac{2}{54}\right)
    \right)
    \\
    &\hspace{25mm}+12\cdot\left( -\dfrac{1}{36}\right)\cdot\dfrac{2}{36}
    =-\dfrac{1}{36},
    \\
    (\alpha_4,\alpha_4)&=B_8(R_{\alpha_4},R_{\alpha_4})=9\cdot\left(
    \dfrac{1}{54} \cdot\dfrac{1}{54} +\dfrac{1}{54} \cdot\dfrac{1}{54}
    +\left( -\dfrac{2}{54}\right) \cdot\left( -\dfrac{2}{54}\right)
    \right)
    \\
    &\hspace{25mm}+12\cdot\dfrac{2}{36}\cdot\dfrac{2}{36}
    =\dfrac{1}{18}.
    \end{align*}
    Hence, since we have
    \begin{align*}
    \cos\theta_{12}&=\dfrac{(\alpha_1,
    \alpha_2)}{\sqrt{(\alpha_1,\alpha_1)(\alpha_2,\alpha_2)}}=-\dfrac{1}{2},\quad
    \cos\theta_{13}=\dfrac{(\alpha_1,
    \alpha_3)}{\sqrt{(\alpha_1,\alpha_1)(\alpha_3,\alpha_3)}}=0,
    \\
    \cos\theta_{14}&=\dfrac{(\alpha_1,
    \alpha_4)}{\sqrt{(\alpha_1,\alpha_1)(\alpha_4,\alpha_4)}}=0,\quad
    \cos\theta_{23}=\dfrac{(\alpha_2,
    \alpha_3)}{\sqrt{(\alpha_2,\alpha_2)(\alpha_3,\alpha_3)}}=-\dfrac{1}{\sqrt{2}},
    \\
    \cos\theta_{24}&=\dfrac{(\alpha_2,
    \alpha_4)}{\sqrt{(\alpha_2,\alpha_2)(\alpha_4,\alpha_4)}}=0, \quad
    \cos\theta_{34}=\dfrac{(\alpha_3,
    \alpha_4)}{\sqrt{(\alpha_3,\alpha_3)(\alpha_4,\alpha_4)}}=-\dfrac{1}{2},
    \\
    (\alpha_3,\alpha_3)&=\dfrac{1}{18}<\dfrac{1}{9}=(\alpha_2,\alpha_2),
    \end{align*}
    we have the required Dynkin diagram.
\end{proof}

\fi

Now, we consider a Lie group $ E_{8,\sH} $ which is defined by
\begin{align*}
E_{8,\sH}=\left\lbrace \alpha \in \Iso_{C}((\mathfrak{e}_{8,\sH})^C) \relmiddle{|}\alpha[R_1,R_2]=[\alpha R_1. \alpha R_2],
\langle \alpha R_1,\alpha R_2 \rangle=\langle R_1,R_2\rangle
\right\rbrace,
\end{align*}
where $ \langle R_1,R_2\rangle:=(-1/15)B_{8,\sH}((\tau\lambda_\omega)R_1,R_2) $. The group $ E_{8,\sH} $ is a compact Lie group as a closed subgroup of the unitary group $ U(133)=U((\mathfrak{e}_{8,\sH})^C)=\{\alpha \in \Iso_{C}((\mathfrak{e}_{8,\sH})^C) \,|\, \langle \alpha R_1,\alpha R_2 \rangle=\langle R_1,R_2\rangle \} $. Moreover, $ E_{8,\sH} $ is nothing but the group which is replaced $ \mathfrak{C} $ with $ \H $ in the definition of the compact Lie group of type $ E_8 $ (Subsection 3.5). The main aim of this section is to determine the structure of the group $ E_{8,\sH} $.
\vspace{1mm}

We give the explicit form of the Lie algebra $ \mathfrak{e}_{8,\sH} $ of the group $ E_{8,\sH} $ as lemma.

\begin{lemma}\label{lemma 8.2}
The Lie algebra $ \mathfrak{e}_{8,\sH} $ of the group $ E_{8,\sH} $ is given by
\begin{align*}
\mathfrak{e}_{8,\sH}=\left\lbrace R=(\varPhi,P, -\tau\lambda P,r,s,-\tau s) \relmiddle{|} \varPhi \in \mathfrak{e}_{7,\sH},P \in (\mathfrak{P}_{\sH})^C, r \in i\R, s \in C \right\rbrace.
\end{align*}

In particular, we have $ \dim(\mathfrak{e}_{8,\sH})=66+32 \times 2+1+2=133 $.
\end{lemma}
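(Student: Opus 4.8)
The plan is to identify $\mathfrak{e}_{8,\sH}$ as the fixed-point set of the semi-linear involution $\tau\lambda_\omega$ acting on $(\mathfrak{e}_{8,\sH})^C$, exactly as $\mathfrak{e}_8$ is cut out of ${\mathfrak{e}_8}^C$ in Subsection 3.5. First I would record the analogue of Proposition \ref{proposition 7.3}(1), namely $E_{8,\sH}=((E_{8,\sH})^C)^{\tau\lambda_\omega}$: every $\alpha\in(E_{8,\sH})^C$ is an automorphism of $(\mathfrak{e}_{8,\sH})^C$ and so preserves the Killing form $B_{8,\sH}$, whence from $\langle R_1,R_2\rangle=(-1/15)B_{8,\sH}((\tau\lambda_\omega)R_1,R_2)$ the invariance $\langle\alpha R_1,\alpha R_2\rangle=\langle R_1,R_2\rangle$ is equivalent, by non-degeneracy of $B_{8,\sH}$, to $(\tau\lambda_\omega)\alpha=\alpha(\tau\lambda_\omega)$. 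At the infinitesimal level, $\exp(t\,\ad R)$ lies in $E_{8,\sH}$ iff $\ad R$ is skew-Hermitian for $\langle\cdot,\cdot\rangle$; using ad-invariance of $B_{8,\sH}$ this condition becomes $[\,(\tau\lambda_\omega)R-R,\,A\,]=0$ for all $A\in(\mathfrak{e}_{8,\sH})^C$, and simplicity forces $(\tau\lambda_\omega)R=R$. Hence $\mathfrak{e}_{8,\sH}=\{R\in(\mathfrak{e}_{8,\sH})^C\mid(\tau\lambda_\omega)R=R\}$.

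Next I would solve this fixed-point equation componentwise. Writing $R=(\varPhi,P,Q,r,s,t)$ and using $\lambda_\omega(\varPhi,P,Q,r,s,t)=(\lambda\varPhi\lambda^{-1},\lambda Q,-\lambda P,-r,-t,-s)$ together with the formula for $\tau$, the equation $(\tau\lambda_\omega)R=R$ unfolds into the six relations $\varPhi=(\tau\lambda)\varPhi(\tau\lambda)^{-1}$, $P=\tau\lambda Q$, $Q=-\tau\lambda P$, $r=-\tau r$, $s=-\tau t$, $t=-\tau s$. Since $\lambda$ is a real operation we have $\tau\lambda=\lambda\tau$ and therefore $(\tau\lambda)^2=\lambda^2=-1$; this makes $P=\tau\lambda Q$ and $Q=-\tau\lambda P$ consistent and reduces them to the single relation $Q=-\tau\lambda P$ with $P\in(\mathfrak{P}_{\sH})^C$ free, and likewise reduces $s=-\tau t$, $t=-\tau s$ to $t=-\tau s$ with $s\in C$ free. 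The relation $r=-\tau r$ is precisely $r\in i\R$.

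For the first slot, $\varPhi=(\tau\lambda)\varPhi(\tau\lambda)^{-1}$ is exactly the condition that singles out $\mathfrak{e}_{7,\sH}$ inside $(\mathfrak{e}_{7,\sH})^C$ (Lemma \ref{lemma 7.2} and Proposition \ref{proposition 7.3}), so $\varPhi\in\mathfrak{e}_{7,\sH}$. Collecting the four conditions yields exactly the asserted form $R=(\varPhi,P,-\tau\lambda P,r,s,-\tau s)$, and the dimension follows by adding the real dimensions $\dim\mathfrak{e}_{7,\sH}=66$, $2\times 32=64$ for the free parameter $P$, $1$ for $r\in i\R$, and $2$ for $s\in C$, giving $66+32\times2+1+2=133$.

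I expect the main obstacle to be the first paragraph: cleanly justifying the passage from the group of fixed points $((E_{8,\sH})^C)^{\tau\lambda_\omega}$ to the fixed-point set of $\tau\lambda_\omega$ on the algebra, i.e. translating the skew-Hermitian infinitesimal condition into $(\tau\lambda_\omega)R=R$ through ad-invariance and simplicity of $(\mathfrak{e}_{8,\sH})^C$. By contrast, the componentwise computation is routine once the identities $\tau\lambda=\lambda\tau$ and $(\tau\lambda)^2=-1$ are available, and the recognition of the $\varPhi$-component as $\mathfrak{e}_{7,\sH}$ is immediate from the previous section.
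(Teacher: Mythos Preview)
Your proof is correct and follows exactly the same approach as the paper: identify $\mathfrak{e}_{8,\sH}$ with the fixed-point set $((\mathfrak{e}_{8,\sH})^C)^{\tau\lambda_\omega}$ and then solve $(\tau\lambda_\omega)R=R$ componentwise. The paper's version is simply terser, asserting the fixed-point identification by direct analogy with $({\mathfrak{e}_8}^C)^{\tau\lambda_\omega}=\mathfrak{e}_8$ rather than spelling out the skew-Hermitian argument you give in your first paragraph.
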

\begin{proof}
We can confirm $ ((\mathfrak{e}_{8,\sH})^C)^{\tau\lambda_\omega}=\mathfrak{e}_{8,\sH} $ as that in $ ({\mathfrak{e}_8}^C)^{\tau\lambda_\omega}=\mathfrak{e}_8 $. For $ R=(\varPhi,P,Q,r,s,t) \in (\mathfrak{e}_{8,\sH})^C $, we have the required result by getting $ R $ satisfying $ (\tau\lambda_\omega)R=R $.
\end{proof}

First, we will study the complexification $ (E_{8,\sH})^C $ of $ E_{8,\sH} $:
\begin{align*}
(E_{8,\sH})^C&=\Aut((\mathfrak{e}_{8,\sH})^C)
\\
&=\left\lbrace \alpha \in \Iso_{C}((\mathfrak{e}_{8,\sH})^C)
\relmiddle{|}\alpha[R_1,R_2]=[\alpha R_1,\alpha R_2]\right\rbrace.
\end{align*}

The immediate aim is to prove the connectedness of the group $
(E_{8,\sH})^C $. In order to prove it, we will use the manner used in
\cite{iy7}.

First, we consider a subgroup $
((E_{8,\sH})^C)_{\tilde{1},1^-,1_-} $ of $ (E_{8,\sH})^C $:
\begin{align*}
((E_{8,\sH})^C)_{\tilde{1},1^-,1_-}=\left\lbrace \alpha \in (E_{8,\sH})^C
\relmiddle{|} \alpha \tilde{1}=\tilde{1},\alpha 1^-=1^-, \alpha
1_-=1_-\right\rbrace.
\end{align*}

Then we have the following proposition.

\begin{proposition}\label{proposition 8.3}
    The group $ ((E_{8,\sH})^C)_{\tilde{1},1^-,1_-} $ is isomorphic to the group $ (E_{7,\sH})^C ${\rm :} \\ $ ((E_{8,\sH})^C)_{\tilde{1},1^-,1_-}
    \allowbreak \cong (E_{7,\sR})^C $.
\end{proposition}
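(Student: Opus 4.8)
The plan is to realize the claimed isomorphism through the standard embedding $(E_{7,\sH})^C\hookrightarrow(E_{8,\sH})^C$, $\beta\mapsto\tilde{\beta}$, defined exactly as in Subsection 3.5 by $\tilde{\beta}(\varPhi,P,Q,r,s,t)=(\beta\varPhi\beta^{-1},\beta P,\beta Q,r,s,t)$. First I would check that $\tilde{\beta}$ is well-defined, i.e. that it preserves the bracket of $(\mathfrak{e}_{8,\sH})^C$; this is a routine verification entirely parallel to the computation in Proposition \ref{proposition 7.4}, using that $\beta\in(E_{7,\sH})^C$ satisfies $\beta(P\times Q)\beta^{-1}=\beta P\times\beta Q$ together with the skew-invariance $\{\beta P,\beta Q\}=\{P,Q\}$. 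Since $\tilde{\beta}$ manifestly fixes the scalar-slot generators $\tilde{1},1^-,1_-$, and since $\tilde{\beta}P^-=(\beta P)^-$ recovers $\beta$ (so the map is injective), this produces an injective homomorphism $(E_{7,\sH})^C\to((E_{8,\sH})^C)_{\tilde{1},1^-,1_-}$. The whole content of the proposition therefore lies in proving surjectivity.

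For surjectivity I would exploit the grading of $(\mathfrak{e}_{8,\sH})^C$ by $\ad\tilde{1}$. From the bracket already recorded in the proof of Theorem \ref{theorem 8.1}, $[\tilde{1},(\varPhi,P,Q,r,s,t)]=(0,P,-Q,0,2s,-2t)$, so the eigenspaces of $\ad\tilde{1}$ are $\mathfrak{g}_0=(\mathfrak{e}_{7,\sH})^C\oplus C\tilde{1}$, $\mathfrak{g}_{+1}=\{P^-\}$, $\mathfrak{g}_{-1}=\{Q_-\}$, $\mathfrak{g}_{+2}=C1^-$ and $\mathfrak{g}_{-2}=C1_-$. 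Any $\alpha\in((E_{8,\sH})^C)_{\tilde{1},1^-,1_-}$ fixes $\tilde{1}$, hence commutes with $\ad\tilde{1}$ and preserves each eigenspace; identifying $(\mathfrak{P}_{\sH})^C$ with $\mathfrak{g}_{\pm1}$ via $P\mapsto P^-$, $Q\mapsto Q_-$, this gives $\alpha P^-=(\beta_1 P)^-$ and $\alpha Q_-=(\beta_2 Q)_-$ for some $\beta_1,\beta_2\in\Iso_C((\mathfrak{P}_{\sH})^C)$.

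Next I would pin everything down. Applying $\alpha$ to $[1^-,Q_-]=Q^-$ and using $\alpha 1^-=1^-$ gives $(\beta_1 Q)^-=\alpha(Q^-)=\alpha[1^-,Q_-]=[1^-,(\beta_2 Q)_-]=(\beta_2 Q)^-$, so $\beta_1=\beta_2=:\beta$. For $\varPhi\in(\mathfrak{e}_{7,\sH})^C$ write $\alpha\varPhi=\psi+c\tilde{1}$ with $\psi\in(\mathfrak{e}_{7,\sH})^C$; then the relations $[\varPhi,P^-]=(\varPhi P)^-$ and $[\tilde{1},P^-]=P^-$ force $\psi=\beta\varPhi\beta^{-1}-cI$ as operators on $(\mathfrak{P}_{\sH})^C$. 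Since every $\varPhi(\phi,A,B,\nu)$ has vanishing trace on $(\mathfrak{P}_{\sH})^C$ (immediate from its explicit shape: the diagonal contributions $\tr\phi-5\nu$, $-\tr\phi+5\nu$, $\nu$, $-\nu$ cancel), taking traces yields $32c=0$, hence $c=0$ and $\alpha\varPhi=\beta\varPhi\beta^{-1}$. Finally, comparing the $\varPhi$- and $\tilde{1}$-components of $\alpha[P^-,Q_-]=[(\beta P)^-,(\beta Q)_-]$, where $[P^-,Q_-]=(P\times Q)-\frac{1}{8}\{P,Q\}\tilde{1}$, gives $\beta(P\times Q)\beta^{-1}=\beta P\times\beta Q$ and $\{\beta P,\beta Q\}=\{P,Q\}$; the first is precisely the defining condition for $\beta\in(E_{7,\sH})^C$. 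As $\alpha$ now agrees with $\tilde{\beta}$ on each graded summand, and these span $(\mathfrak{e}_{8,\sH})^C$, I conclude $\alpha=\tilde{\beta}$, establishing the isomorphism $((E_{8,\sH})^C)_{\tilde{1},1^-,1_-}\cong(E_{7,\sH})^C$.

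The hard part will be controlling the possible drift of $\alpha$ into the central direction $C\tilde{1}$ when it acts on the subalgebra $(\mathfrak{e}_{7,\sH})^C\subset\mathfrak{g}_0$: the grading alone shows only that $\alpha$ preserves $\mathfrak{g}_0$, not the splitting $\mathfrak{g}_0=(\mathfrak{e}_{7,\sH})^C\oplus C\tilde{1}$. The trace identity $\tr_{(\mathfrak{P}_{\sH})^C}\varPhi=0$ is the decisive device that forces $c=0$; once this is in hand, recognizing $\beta$ as an element of $(E_{7,\sH})^C$ and verifying $\alpha=\tilde{\beta}$ are formal. The remaining labor, namely checking that $\tilde{\beta}$ respects all six components of the $(\mathfrak{e}_{8,\sH})^C$-bracket, is lengthy but strictly parallel to Proposition \ref{proposition 7.4}, so I would only indicate it rather than carry it out in full.
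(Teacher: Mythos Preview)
Your argument is correct and is essentially the standard proof the paper is deferring to: the paper's own ``proof'' is a one-line reference to \cite[Proposition~5.7.1]{iy0}, to be reproduced with $\mathfrak{C}$ replaced by $\H$, and what you have written is precisely that reconstruction via the $\ad\tilde{1}$-grading.

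One small simplification: the step you flag as the hard part---killing the $C\tilde{1}$-drift $c$---does not need the trace identity. Since $\alpha$ fixes $1^-$ and $[\varPhi,1^-]=0$ for $\varPhi\in(\mathfrak{e}_{7,\sH})^C$ while $[\tilde{1},1^-]=2\cdot 1^-$, applying $\alpha$ to $[\varPhi,1^-]=0$ gives $0=[\psi+c\tilde{1},1^-]=2c\cdot 1^-$, hence $c=0$ immediately. (Alternatively, running your $P^-$ computation again with $Q_-$ yields $\psi-cI=\beta\varPhi\beta^{-1}$, and subtracting from $\psi+cI=\beta\varPhi\beta^{-1}$ gives $2cI=0$.) Either route avoids the trace calculation entirely.
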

\begin{proof}
    This proposition can be proved as in the proof of \cite[Proposition
    5.7.1]{iy0} by replacing $ \mathfrak{C} $ with $ \H $.
\end{proof}

Subsequently, we consider a subgroup $ ((E_{8,\sH})^C)_{1_-}
$ of $ (E_{8,\sH})^C $:
\begin{align*}
((E_{8,\sH})^C)_{1_-}=\left\lbrace  \alpha \in (E_{8,\sH})^C
\relmiddle{|}\alpha 1_-=1_- \right\rbrace.
\end{align*}

Then we prove the following lemma.

\begin{lemma}\label{lemma 8.4}


    The Lie algebra $ ((\mathfrak{e}_{8,\sH})^C)_{1_-} $ of the
    group $ ((E_{8,\sH})^C)_{1_-} $ is given by
    \begin{align*}
    ((\mathfrak{e}_{8,\sH})^C)_{1_-}
    &=\left\lbrace R \in (\mathfrak{e}_{8,\sH})^C \relmiddle{|} [R, 1_-]=0
    \right\rbrace
    \\
    &=\left\lbrace (\varPhi, 0, Q, 0, 0, t) \in (\mathfrak{e}_{8,\sH})^C
    \relmiddle{|}
    \begin{array}{l}
    \varPhi \in (\mathfrak{e}_{7,\sH})^C,
    Q \in (\mathfrak{P}_{\sH})^C,
    t \in C
    \end{array} \right\rbrace .
    \end{align*}

    In particular, we have $ \dim_C(((\mathfrak{e}_{8,\sH})^C)_{1_-}) = 66+32+1 = 99 $.
\end{lemma}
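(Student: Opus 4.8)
The plan is to treat the two asserted equalities separately, the first conceptually and the second by a short direct computation.

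For the left-hand equality I would invoke the standard description of the Lie algebra of an isotropy subgroup. Since $(E_{8,\sH})^C=\Aut((\mathfrak{e}_{8,\sH})^C)$ and $(\mathfrak{e}_{8,\sH})^C$ is simple (as remarked when the bracket was introduced), every derivation of $(\mathfrak{e}_{8,\sH})^C$ is inner; hence the Lie algebra of $(E_{8,\sH})^C$ is $\ad((\mathfrak{e}_{8,\sH})^C)\cong(\mathfrak{e}_{8,\sH})^C$, with $R$ acting as $\ad R=[R,\,\cdot\,]$. Differentiating the defining relation $\alpha 1_-=1_-$ of $((E_{8,\sH})^C)_{1_-}$ then shows that its Lie algebra consists of exactly those $R$ with $[R,1_-]=0$, which is the first equality.

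For the second equality I would compute $[R,1_-]$ explicitly. Writing $R=(\varPhi,P,Q,r,s,t)$ and $1_-=(0,0,0,0,0,1)$, substitution into the Lie bracket on $(\mathfrak{e}_{8,\sH})^C$ (defined as in ${\mathfrak{e}_8}^C$) causes most terms to drop, since only the last slot of $1_-$ is nonzero. A direct check of all six components gives
\begin{align*}
[R,1_-]=(0,0,-P,s,0,-2r).
\end{align*}
Thus $[R,1_-]=0$ is equivalent to $P=0$, $r=0$, and $s=0$, so $R$ is forced into the form $(\varPhi,0,Q,0,0,t)$ with $\varPhi\in(\mathfrak{e}_{7,\sH})^C$, $Q\in(\mathfrak{P}_{\sH})^C$, $t\in C$; conversely every such element visibly annihilates $1_-$. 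The dimension count is then immediate from this parametrization: $\dim_C((\mathfrak{e}_{7,\sH})^C)=66$, $\dim_C((\mathfrak{P}_{\sH})^C)=32$, and the scalar $t$ contributes $1$, for a total of $66+32+1=99$.

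I do not anticipate a real obstacle: the argument is one application of the stabilizer--subalgebra principle followed by a purely mechanical specialization of the bracket. The only places deserving care are the appeal to simplicity of $(\mathfrak{e}_{8,\sH})^C$ (needed so that the Lie algebra of $\Aut$ is genuinely $\ad((\mathfrak{e}_{8,\sH})^C)$, and hence the isotropy subalgebra is cut out precisely by $[R,1_-]=0$) and tracking the signs in the $Q$- and $t$-components when specializing the general bracket formula.
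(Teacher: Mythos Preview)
Your proposal is correct and follows the same approach as the paper, which simply states ``By the straightforward computation, we can easily obtain the required results.'' Your explicit computation $[R,1_-]=(0,0,-P,s,0,-2r)$ matches exactly the formula the paper later uses in the proof of Proposition~\ref{proposition 8.5}, and your justification of the first equality via simplicity of $(\mathfrak{e}_{8,\sH})^C$ (so that $\mathrm{Der}=\ad$) is the standard argument the paper leaves implicit.
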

\begin{proof}

     By the straightforward computation, we can easily obtain the required results.
\end{proof}

We prove the following proposition needed in the proof of connectedness.

\begin{proposition}\label{proposition 8.5}
    The group $((E_{8,\sH})^C)_{1_-}$ is a semi-direct product of groups
    $\exp(\ad(((\mathfrak{P}_{\sH})^C)_- \oplus C_- ))$ and $
    (E_{7,\sH})^C ${\rm:}
    \begin{align*}
    ((E_{8,\sH})^C)_{1_-}=\exp(\ad(((\mathfrak{P}_{\sH})^C)_- \oplus C_-
    )) \rtimes (E_{7,\sH})^C.
    \end{align*}

    In particular, the group $((E_{8,\sH})^C)_{1_-}$ is connected.
\end{proposition}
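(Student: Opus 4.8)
The plan is to exhibit $((E_{8,\sH})^C)_{1_-}$ as the product of a unipotent normal subgroup built from the negatively graded part of its Lie algebra and the Levi factor $(E_{7,\sH})^C$, and then to deduce connectedness from this decomposition. I would organize the work around the grading of $(\mathfrak{e}_{8,\sH})^C$ determined by $\ad\tilde{1}$, under which $(\mathfrak{e}_{8,\sH})^C$ splits into eigenspaces with $[\tilde{1},1^-]=2\cdot 1^-$, $[\tilde{1},1_-]=-2\cdot 1_-$ and $[1^-,1_-]=\tilde{1}$, so that $\{\tilde{1},1^-,1_-\}$ is an $\mathfrak{sl}(2)$-triple and $\mathfrak{n}:=((\mathfrak{P}_{\sH})^C)_-\oplus C_-$ is exactly the strictly negatively graded part.

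First I would settle the Lie algebra picture. By Lemma \ref{lemma 8.4}, $((\mathfrak{e}_{8,\sH})^C)_{1_-}=\{(\varPhi,0,Q,0,0,t)\}$, which I would split as the vector space sum of $\{(\varPhi,0,0,0,0,0)\mid\varPhi\in(\mathfrak{e}_{7,\sH})^C\}\cong(\mathfrak{e}_{7,\sH})^C$ and $\mathfrak{n}=\{(0,0,Q,0,0,t)\}$. Evaluating the bracket of Section $8$ directly gives
\begin{align*}
[(0,0,Q_1,0,0,t_1),(0,0,Q_2,0,0,t_2)]&=(0,0,0,0,0,-\tfrac14\{Q_1,Q_2\}),\\
[(\varPhi,0,0,0,0,0),(0,0,Q,0,0,t)]&=(0,0,\varPhi Q,0,0,0),
\end{align*}
so $[\mathfrak{n},\mathfrak{n}]\subset C_-$ is central in $\mathfrak{n}$ (hence $\mathfrak{n}$ is a two-step nilpotent ideal) and $(\mathfrak{e}_{7,\sH})^C$ acts on $\mathfrak{n}$; thus $((\mathfrak{e}_{8,\sH})^C)_{1_-}=(\mathfrak{e}_{7,\sH})^C\ltimes\mathfrak{n}$ as Lie algebras.

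Next I would pass to the group. Since $\ad(0,0,Q,0,0,t)$ lowers the grading it is nilpotent, so $\exp(\ad(0,0,Q,0,0,t))$ is an automorphism of $(\mathfrak{e}_{8,\sH})^C$, i.e.\ lies in $(E_{8,\sH})^C$; as $[(0,0,Q,0,0,t),1_-]=0$ it fixes $1_-$, whence $N:=\exp(\ad\mathfrak{n})\subset((E_{8,\sH})^C)_{1_-}$, and $N$ is connected and diffeomorphic to the affine space $\mathfrak{n}$. The embedding $(E_{7,\sH})^C\hookrightarrow(E_{8,\sH})^C$ by $\beta\mapsto\tilde{\beta}$, $\tilde{\beta}(\varPhi,P,Q,r,s,t)=(\beta\varPhi\beta^{-1},\beta P,\beta Q,r,s,t)$, fixes $\tilde{1},1^-,1_-$, so it lands in $((E_{8,\sH})^C)_{\tilde{1},1^-,1_-}\cong(E_{7,\sH})^C$ (Proposition \ref{proposition 8.3}). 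Moreover $\tilde{\beta}\exp(\ad(0,0,Q,0,0,t))\tilde{\beta}^{-1}=\exp(\ad(0,0,\beta Q,0,0,t))\in N$, so $(E_{7,\sH})^C$ normalizes $N$, and $N\cap(E_{7,\sH})^C=\{1\}$ because an element of $N$ that preserves the grading (fixes $\tilde{1}$) must be the identity. The crucial and hardest step is surjectivity of the product map: given $\alpha\in((E_{8,\sH})^C)_{1_-}$, applying $\alpha$ to the triple shows $\{\alpha\tilde{1},\alpha 1^-,1_-\}$ is again an $\mathfrak{sl}(2)$-triple with the \emph{same} nilnegative element $1_-$, and the triples sharing $1_-$ form a single orbit under the unipotent radical $N$ of the centralizer $((\mathfrak{e}_{8,\sH})^C)_{1_-}$. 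Thus there is $n\in N$ with $n\alpha\tilde{1}=\tilde{1}$ and $n\alpha 1^-=1^-$; since $n\alpha$ also fixes $1_-$, we get $n\alpha\in((E_{8,\sH})^C)_{\tilde{1},1^-,1_-}=(E_{7,\sH})^C$, i.e.\ $\alpha=n^{-1}\tilde{\beta}$. Together with the trivial intersection and the normalization property this makes $N$ normal and yields $((E_{8,\sH})^C)_{1_-}=N\rtimes(E_{7,\sH})^C$.

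The main obstacle is precisely this orbit statement, which I would carry out explicitly in Yokota's style rather than by quoting a structure theorem: writing $\alpha\tilde{1}$ in graded components one must check that its degree-$0$ part is exactly $\tilde{1}$ (carrying no $(\mathfrak{e}_{7,\sH})^C$-component) and then solve for $Q_0,t_0$ so that $n=\exp(\ad(0,0,Q_0,0,0,t_0))$ cancels the $((\mathfrak{P}_{\sH})^C)_-$ and $C_-$ deviations of $\alpha\tilde{1}$ and $\alpha 1^-$ simultaneously; this is the computational heart, and it parallels the corresponding argument in \cite{iy0} with $\mathfrak{C}$ replaced by $\H$. Finally, connectedness is immediate from the decomposition: $N$ is connected (being diffeomorphic to $\mathfrak{n}$) and $(E_{7,\sH})^C$ is connected by Theorem \ref{theorem 7.8} (via Theorem \ref{theorem 7.6}), so the continuous image $N\rtimes(E_{7,\sH})^C=((E_{8,\sH})^C)_{1_-}$ is connected.
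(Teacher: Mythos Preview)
Your proposal is correct and follows essentially the same approach as the paper: both show that any $\alpha\in((E_{8,\sH})^C)_{1_-}$ factors as an element of $N=\exp(\ad\mathfrak{n})$ times an element of $(E_{7,\sH})^C$ by exploiting the bracket relations among $\alpha\tilde{1}$, $\alpha 1^-$, and $1_-$, then deduce normality of $N$ and connectedness from Theorem~\ref{theorem 7.8}. The paper carries out exactly the explicit computation you anticipate as the ``main obstacle'' --- it applies $[\alpha\tilde{1},1_-]=-2\cdot 1_-$, $[\alpha 1^-,1_-]=\alpha\tilde{1}$, and $[\alpha\tilde{1},\alpha 1^-]=2\alpha 1^-$ to pin down all components of $\alpha\tilde{1}$ and $\alpha 1^-$ in terms of a single pair $(Q_1,t_1)$, then sets $\delta=\exp(\ad((t_1/2)_-))\exp(\ad({Q_1}_-))$ and verifies $\delta^{-1}\alpha$ fixes $\tilde{1},1^-,1_-$; your framing via $\mathfrak{sl}(2)$-triples sharing a common nilnegative element is a clean conceptual wrapper for the same calculation.
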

\begin{proof}
     Let $((\mathfrak{P}_{\sH})^C)_- \oplus C_- = \{(0, 0, L, 0, 0, v) \,
     | \, L \in (\mathfrak{P}_{\sH})^C, v \in C\}$ be a Lie subalgebra of
     the Lie algebra
    $((\mathfrak{e}_{8,\sH})^C)_{1_-}$ (Lemma \ref{lemma 8.4}).
    Since it follows from $[L_-, v_-] = 0$ that $\ad(L_-)$ commutes with
    $\ad(v_-)$, we have $\exp(\ad(L_- + v_-)) =
    \exp(\ad(L_-))\exp(\ad(v_-))$. Hence the group
    $\exp(\ad(((\mathfrak{P}_{\sH})^C)_- \oplus C_-))$ is the connected
    subgroup of the group $((E_{8,\sH})^C)_{1-}$.

\if0
    Let $((\mathfrak{P}_{\sR})^C)_- \oplus C_- = \{(0, 0, Q, 0, 0, t) \, |
    \, Q \in (\mathfrak{P}_{\sR})^C, t \in C\}$ be a Lie subalgebra of the
    Lie algebra
    $((\mathfrak{e}_{8,\sR})^C)_{1_-}$ (Lemma \ref{lemma 7.0.5} (2)).
    Since it follows from $[Q_-, t_-] = 0$ that $\ad(Q_-)$ commutes with
    $\ad(t_-)$, we have $\exp(\ad(Q_- + t_-)) =
    \exp(\ad(Q_-))\exp(\ad(t_-))$. Hence the group
    $\exp(\ad(((\mathfrak{P}_{\sR})^C)_- \oplus C_-))$ is the connected
    subgroup of the group $((E_{8,\sR})^C)_{1-}$.
\fi

    Now, let $\alpha \in ((E_{8,\sH})^C)_{1-}$ and set $ \alpha\tilde{1}:=
    (\varPhi_1, P_1, Q_1, r_1, s_1, t_1), \alpha1^-:= (\varPhi_2, P_2, Q_2, r_2, s_2,t_2) $. Then, it follows from
    \begin{align*}
    [\alpha\tilde{1}, 1_-]&= \alpha[\tilde{1}, 1_-] = -2\alpha 1_- =
    -21_-=(0,0,0,0,0,-2),
    \\
    [\alpha\tilde{1},
    1_-]&=[(\varPhi_1,P_1,Q_1,r_1,s_1,t_1),(0,0,0,0,0,1)]=(0,0,-P_1,s_1,0,-2r_1)
    \end{align*}
    that $ P_1=0, r_1=1, s_1=0 $. Similarly, from $ [\alpha1^-, 1_-] =
    \alpha[1^-, 1_-] = \alpha\tilde{1} $, we have $ \varPhi_1=0, P_2=-Q_1,
    s_2=r_1,  s_1=0,t_1=-2r_2$. Hence we have
    \begin{align*}
    \alpha \tilde{1}=(0,0,Q_1,1,0,t_1),\quad \alpha
    1^-=(\varPhi_2,-Q_1,Q_2,-\dfrac{t_1}{2},1,t_2).
    \end{align*}
    Moreover, from $[\alpha\tilde{1}, \alpha1^-] = \alpha[\tilde{1}, 1^-]
    = 2\alpha1^-$, we obtain the following
\begin{align*}
\varPhi_2 = \dfrac{1}{2}Q_1 \times Q_1, \; Q_2 = - \dfrac{t}{2}Q_1 -\dfrac{1}{3}\varPhi_2Q_1, \; t_2 = -\dfrac{{t_1}^2}{4} - \dfrac{1}{16}\{Q_1,Q_2\},
\end{align*}
that is,
\begin{align*}
\alpha 1^-=(\dfrac{1}{2}Q_1 \times Q_1,-Q_1,-\dfrac{t}{2}Q_1 -\dfrac{1}{6}(Q_1\times Q_1)Q_1 ,-\dfrac{t_1}{2},1,-\dfrac{{t_1}^2}{4} + \dfrac{1}{96}\{Q_1, (Q_1\times Q_1)Q_1).
\end{align*}
\if0
    Hence $\alpha$ is of the form
    $$
    \alpha = \begin{pmatrix} * & * & * & 0 & \dfrac{1}{2}Q \times Q & 0
    \\
    * & * & * & 0 & -Q & 0 \\
    * & * & * & Q & -\dfrac{t}{2}Q -\dfrac{1}{6}(Q \times Q)Q & 0 \\
    * & * & * & 1 & -\dfrac{t}{2} & 0 \\
    * & * & * & 0 & 1 & 0 \\
    * & * & * & t & -\dfrac{t^2}{4} + \dfrac{1}{96}\{Q, (Q \times Q)Q\} &
    1
    \end{pmatrix}.
    $$
\fi
    On the other hand, for $ \exp(\ad((v/2)_-))\exp(\ad(L_-)) \in
    \exp(\ad(((\mathfrak{P}_{\sH})^C)_- \oplus C_-)) $, we have
    \begin{align*}
    &\quad \exp\Big(\ad(\Big(\dfrac{v}{2}\Big)_-)\Big)\exp(\ad(L_-))1^-
    \\
    &=(\dfrac{1}{2}L \times L,- L,
    - \dfrac{t}{2}L - \dfrac{1}{6}(L \times L)L ,-\dfrac{v}{2}, 1,
    -\dfrac{v^2}{4} + \dfrac{1}{96}\{L, (L \times L)L\}),
    \end{align*}
\if0
    \begin{align*}
    \delta 1^- &=
    \exp\Big(\ad(\Big(\dfrac{v}{2}\Big)_-)\Big)\exp(\ad(L_-))1^-
    \\
    &= \begin{pmatrix} \dfrac{1}{2}L \times L
    \vspace{0mm}\\
    - L
    \vspace{0mm}\\
    - \dfrac{t}{2}L - \dfrac{1}{6}(L \times L)L
    \vspace{0mm}\\
    -\dfrac{v}{2}
    \vspace{0mm}\\
    1
    \vspace{0mm}\\
    -\dfrac{v^2}{4} + \dfrac{1}{96}\{L, (L \times L)L\}.
    \end{pmatrix}
    \end{align*}
\fi
so that set $ \delta:= \exp(\ad((t_1/2)_-))\exp(\ad({Q_1}_-)) $, then we see
\begin{align*}
(\delta^{-1}\alpha) 1^-=1^-.
\end{align*}
In addition, we can confirm
\begin{align*}
  (\delta^{-1}\alpha) \tilde{1} = \tilde{1},\,\, (\delta^{-1}\alpha) 1_-
  =1_-.
\end{align*}
    Indeed, it follows that
    \begin{align*}
    \exp\Big(\ad(\Big(\dfrac{v}{2}\Big)_-)\Big)\exp(\ad(L_-))\tilde{1}&=\exp\Big(\ad(\Big(\dfrac{v}{2}\Big)_-)\Big)(0,0,L,1,0,0)=(0,0,L,1,0,v),
    \\
    \exp\Big(\ad(\Big(\dfrac{v}{2}\Big)_-)\Big)\exp(\ad(L_-))1_-&=\exp\Big(\ad(\Big(\dfrac{v}{2}\Big)_-)\Big)1_-=1_-,
    \end{align*}
    so that we have $ (\delta^{-1}\alpha)\tilde{1}=\delta^{-1}(0,0,P,1,0,t)=\tilde{1},(\delta^{-1}\alpha)1_-=\delta^{-1}1_-=1_-$.

\noindent Hence, $ \delta^{-1}\alpha \in
    ((E_{8,\sH})^C)_{\tilde{1},1^-,1_-} \cong (E_{7,\sH})^C $ follows from Proposition \ref{proposition 8.3},
    so that we obtain
\begin{align*}
((E_{8,\sH})^C)_{1_-} = \exp(\ad(((\mathfrak{P}_{\sH})^C)_- \oplus
    C_-))(E_{7,\sH})^C.
\end{align*}

    Furthermore, for $\beta \in (E_{7,\sH})^C$, it is easy to verify that
\begin{align*}
\beta(\exp(\ad(L_-)))\beta^{-1} = \exp(\ad(\beta L_-)),\quad
    \beta((\exp(\ad(v_-)))\beta^{-1} = \exp(\ad(v_-)).
\end{align*}
    Indeed, for $(\varPhi, P, Q, r, s, t) \in (\mathfrak{e}_{8,\sH})^C$,
    by doing simple computation, we have the following
    \begin{align*}
    \beta \ad (L_-) \beta^{-1}(\varPhi, P, Q, r, s, t)&= \beta [L_-,
    \beta^{-1}(\varPhi, P, Q, r, s, t)]
    \\[1mm]
    &= [\beta L_-,\beta \beta^{-1}(\varPhi, P, Q, r, s, t)]\,(\beta \in
    (E_{7,\sH})^C \subset (E_{8,\sH})^C )
    \\[1mm]
    &= [\beta L_-,(\varPhi, P, Q, r, s, t)]
    \\[1mm]
    &= \ad (\beta L_-) (\varPhi, P, Q, r, s, t),
    \end{align*}
    that is, $\beta \ad (L_-) \beta^{-1}= \ad (\beta L_-)$. It is clear that the latter formula holds.

    \noindent Hence we have
    \begin{align*}
    \beta(\exp(\ad(L_-)))\beta^{-1}&=\beta \,
    \Bigl(\displaystyle{\sum_{n=0}^{\infty}\dfrac{1}{n!}\ad (L_-)^n}
    \Bigr)\,\beta^{-1}
    \\[2mm]
    &=\displaystyle{\sum_{n=0}^{\infty}\dfrac{1}{n!}(\beta\ad
    (L_-)\beta^{-1})^n}\,\,\,(\, \beta \ad (L_-) \beta^{-1}= \ad (\beta
    L_-))
    \\[2mm]
    &=\displaystyle{\sum_{n=0}^{\infty}\dfrac{1}{n!}(\ad (\beta L_-))^n}
    \\[2mm]
    &= \exp(\ad(\beta L_-)),
    \end{align*}
that is, $ \beta(\exp(\ad(L_-)))\beta^{-1}=\exp(\ad(\beta L_-)) $.
    By the argument similar to above, we also see that
    $\beta((\exp(\ad(v_-))) \beta^{-1} = \exp(\ad(v_-)$.
Thus, for $ \alpha \in ((E_{8,\sH})^C)_{1_-} $, it follows from $ \alpha=\exp(N_-+u_-)\beta \in \exp(\ad(((\mathfrak{P}_{\sH})^C)_- \oplus
    C_-))\oplus (E_{7,\sH})^C=((E_{8,\sH})^C)_{1_-} $ that
\begin{align*}
&\quad \alpha\exp(\ad(L_-+v_-))\alpha^{-1}
\\
&=(\exp(N_-+u_-)\beta)\exp(\ad(L_-+v_-))(\exp(N_-+u_-)\beta)^{-1}
\\
&=(\exp(\ad(N_-))\exp(\ad(u_-))\beta)(\exp(\ad(L_-))\exp(\ad(v_-)))(\exp(\ad(N_-))\exp(\ad(u_-))\beta)^{-1}
\\
&=(\exp(\ad(N_-))\exp(\ad(u_-))\beta)(\exp(\ad(L_-))\exp(\ad(v_-)))(\beta^{-1}\exp(\ad(u_-))^{-1}\exp(\ad(N_-))^{-1})
\\
&=(\exp(\ad(N_-))\exp(\ad(u_-)))(\exp(\ad(\beta L_-))\exp(\ad(v_-)))(\exp(\ad(u_-))^{-1}\exp(\ad(N_-))^{-1})
\\
&=(\exp(\ad(N_-))\exp(\ad(\beta L_-))\exp(\ad(N_-))^{-1})(\exp(\ad(u_-))\exp(\ad(v_-))\exp(\ad(u_-))^{-1})
\\
&\in \exp(\ad(((\mathfrak{P}_{\sH})^C)_-)\exp(\ad(C_-))=\exp(\ad(((\mathfrak{P}_{\sH})^C)_- \oplus C_-)).
\end{align*}

\noindent This shows that $\exp(\ad(((\mathfrak{P}_{\sH})^C)_- \oplus
C_-)) = \exp(\ad(((\mathfrak{P}_{\sH})^C)_-)\exp(\ad(C_-))$ is a normal \vspace{0.5mm}subgroup of the group$((E_{8,\sH})^C)_{1_-}$.
In addition, we have a split exact sequence
\begin{align*}
1 \to \exp(\ad((\mathfrak{P}_{\sH})^C)_- \oplus C_-))
 \overset{j}{\longrightarrow}((E_{8,\sH})^C)_{1_-}
\overset{\overset{p}{\scalebox{1.0}{$\longrightarrow$}}}{\underset{s}{\longleftarrow}} (E_{7,\sH})^C \to 1.
\end{align*}
Indeed, first we define a mapping $ j $ by $ j(\delta)=\delta $. Then it is
clear that $ j $ is a injective homomorphism, and subsequently, define a
mapping $ p $ by $ p(\alpha):=p(\delta\beta)=\beta $. Then, since  it
follows that
\begin{align*}
p(\alpha_1\alpha_2)&=p((\delta_1\beta_1)(\delta_2\beta_2))
\\
&=p(\delta_1(\beta_1\delta_2{\beta_1}^{-1})\beta_1\beta_2)
\;\;(\beta_1\delta_2{\beta_1}^{-1}=:{\delta_2}' \in \exp(\ad((\mathfrak{P}_{\sH})^C)_- \oplus C_-))
\\
&=p((\delta_1{\delta_2}')(\beta_1\beta_2))
\\
&=\beta_1\beta_2
\\
&=p(\alpha_1)p(\alpha_2),
\end{align*}
$ p $ is a homomorphism. Subsequently, let $ \beta \in (E_{7,\sH})^C $. We
choose $ \delta \in \exp(\ad((\mathfrak{P}_{\sH})^C)_- \oplus C_-))$, then there exists $ \alpha \in ((E_{8,\sH})^C)_{1_-} $ such that $
\alpha=\delta\beta $. This implies that $ p $ is surjective. Finally, we
define a mapping $ s $ by $ s(\beta)=\beta$. Then it is clear that $ s $
is also a injective homomorphism and we have
\begin{align*}
    ps(\beta)=p(\beta)=\beta,
\end{align*}
that is, $ ps=1 $. With above, the short sequence is a split exact
sequence.

Hence the group  $((E_{8,\sH})^C)_{1_-}$ is a semi-direct product of
$\exp(\ad(((\mathfrak{P}_{\sH})^C)_- \oplus C_-))$ and $(E_{7,\sH})^C$:
\begin{align*}
((E_{8,\sH})^C)_{1_-} =
    \exp(\ad(((\mathfrak{P}_{\sH})^C)_- \oplus C_-))\rtimes (E_{7,\sH})^C.
\end{align*}

Finally, the connectedness of $((E_{8,\sH})^C)_{1_-}$ follows from
the connectedness of  $\exp(\ad(((\mathfrak{P}_{\sH})^C)_- \allowbreak \oplus C_-))$
and $(E_{7,\sH})^C $(Theorem \ref{theorem 7.8}).
\end{proof}

Before proving the connectedness of the group $ (E_{8,\sH})^C $, we will make some preparations.

For $R \in (\mathfrak{e}_{8,\sH})^C$, we define a $C$-linear mapping $R
\times R : (\mathfrak{e}_{8,\sH})^C \to (\mathfrak{e}_{8,\sH})^C $ by
\begin{align*}
(R \times R)R_1 = [R, [R, R_1]\,] + \dfrac{1}{18}B_{8,\sH}(R, R_1)R,\,\,
R_1 \in (\mathfrak{e}_{8,\sH})^C,
\end{align*}
where $B_{8,\sH}$ is the Killing form of $ (\mathfrak{e}_{8,\sH})^C $ (Theorem \ref{theorem 8.1}).
Using this mapping, we define a space $\mathfrak{W}_{\sH}$ by
\begin{align*}
(\mathfrak{W}_{\sH})^C = \{R \in (\mathfrak{e}_{8,\sH})^C \, | \, R \times R =
0, R \not= 0\}.
\end{align*}


\begin{lemma}\label{lemma 8.6}
    For $R = (\varPhi, P, Q, r, s, t) \in (\mathfrak{e}_{8,\sH})^C$,
    $R \not=0$ belongs to $(\mathfrak{e}_{8,\sH})^C$ if and only if $R$
    satisfies the following conditions
    \vspace{3mm}

    {\rm (1)} $2s\varPhi - P \times P = 0$ \quad {\rm (2)} $2t\varPhi + Q
    \times Q = 0$ \quad {\rm (3)} $2r\varPhi + P \times Q = 0$
    \vspace{1mm}

    {\rm (4)} $\varPhi P -3rP - 3sQ = 0 $ \quad
    {\rm (5)} $\varPhi Q + 3rQ - 3tP = 0 $ \quad {\rm (6)} $\{P, Q\} -
    16(st + r^2) = 0$
    \vspace{1mm}

    {\rm (7)} $2(\varPhi P \times Q_1 + 2P \times \varPhi Q_1 - rP \times
    Q_1 - sQ \times Q_1) - \{P, Q_1\}\varPhi = 0$
    \vspace{1mm}

    {\rm (8)} $2(\varPhi Q \times P_1 + 2Q \times \varPhi P_1 + rQ \times
    P_1 - tP \times P_1) \!- \{Q, P_1\}\varPhi = 0$
    \vspace{1mm}

    {\rm (9)} $8((P \times Q_1)Q - stQ_1 - r^2Q_1 - \varPhi^2Q_1 +
    2r\varPhi Q_1) + 5\{P, Q_1\}Q
    -2\{Q, Q_1\}P = 0$
    \vspace{1mm}

    \hspace*{-1.7mm}{\rm (10)} $8((Q \times P_1)P + stP_1 + r^2P_1 +
    \varPhi^2P_1 + 2r\varPhi P_1) \,+ \,5\{Q, P_1\}P
    -2\{P, P_1\}Q= 0$
    \vspace{1mm}

    \hspace*{-1.7mm}{\rm (11)} $10(\ad\,\varPhi)^2\varPhi_1 + Q \times
    \varPhi_1P - P \times \varPhi_1Q) +   B_{7,\sH}(\varPhi,
    \varPhi_1)\varPhi = 0$
    \vspace{1mm}

    \hspace*{-1.7mm}{\rm (12)} $10(\varPhi_1\varPhi P -2\varPhi\varPhi_1P
    - r\varPhi_1P - s\varPhi_1Q) + B_{7,\sH}(\varPhi, \varPhi_1)P = 0$
    \vspace{1mm}

    \hspace*{-1.7mm}{\rm (13)} $10(\varPhi_1\varPhi Q -2\varPhi\varPhi_1Q
    + r\varPhi_1Q - t\varPhi_1P) + B_{7,\sH}(\varPhi, \varPhi_1)Q = 0,$

    \vspace{1mm}
    \noindent for all $\varPhi_1 \in(\mathfrak{e}_{7,\sH})^C, P_1, Q_1 \in
    (\mathfrak{P}_{\sH})^C$, where $B_{7,\sH}$ is the Killing form of the
    Lie algebra $(\mathfrak{e}_{7,\sH})^C$.
\end{lemma}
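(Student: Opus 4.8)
The plan is to compute the linear operator $R \times R$ explicitly and to read off its vanishing componentwise; the statement should of course be read as ``$R \neq 0$ belongs to $(\mathfrak{W}_{\sH})^C$'', that is, $R \times R = 0$. Since $(\mathfrak{e}_{8,\sH})^C = (\mathfrak{e}_{7,\sH})^C \oplus (\mathfrak{P}_{\sH})^C \oplus (\mathfrak{P}_{\sH})^C \oplus C \oplus C \oplus C$ and $(R \times R)R_1$ depends $C$-linearly on $R_1 = (\varPhi_1, P_1, Q_1, r_1, s_1, t_1)$, the single operator equation $R \times R = 0$ is equivalent to the vanishing of each of its six components as $R_1$ runs through the six summands separately. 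So I would first fix a general $R_1$, compute the inner bracket $[R, R_1]$ from the Lie bracket of $(\mathfrak{e}_{8,\sH})^C$, apply $\ad R$ once more to obtain $[R,[R,R_1]]$, evaluate $B_{8,\sH}(R, R_1)$ by Theorem \ref{theorem 8.1}, and finally assemble $(R \times R)R_1 = [R,[R,R_1]] + \tfrac{1}{18}B_{8,\sH}(R,R_1)R$.

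Feeding the three scalar directions produces the first six relations. Reading the $(\mathfrak{e}_{7,\sH})^C$-component of the evaluations on $(0,0,0,0,0,t_1)$, $(0,0,0,0,s_1,0)$ and $(0,0,0,r_1,0,0)$ gives $2s\varPhi - P \times P = 0$, $2t\varPhi + Q \times Q = 0$ and $2r\varPhi + P \times Q = 0$, i.e. conditions (1)--(3), where I use that the Freudenthal cross operation is symmetric, $P \times Q = Q \times P$; the $(\mathfrak{P}_{\sH})^C$- and $C$-components of the last evaluation give (4)--(6). For instance $[R,(0,0,0,r_1,0,0)] = (0, -r_1 P, r_1 Q, 0, -2r_1 s, 2r_1 t)$, a further bracket contributes the $(\mathfrak{e}_{7,\sH})^C$-term $2r_1(P \times Q)$, and the Killing correction $\tfrac{1}{18}\,(72\,r r_1)\,R$ adds $4 r r_1 \varPhi$, which yields (3).

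Feeding the three operator/vector directions produces the remaining relations. On $R_1 = (\varPhi_1,0,0,0,0,0)$ the $(\mathfrak{e}_{7,\sH})^C$- and two $(\mathfrak{P}_{\sH})^C$-components give (11)--(13); here the decisive input is the derivation identity $[\varPhi, P \times Q] = \varPhi P \times Q + P \times \varPhi Q$, obtained by differentiating $\alpha(P \times Q)\alpha^{-1} = \alpha P \times \alpha Q$, which recasts $(\ad \varPhi)^2\varPhi_1 + Q \times \varPhi_1 P - P \times \varPhi_1 Q$ into the stated shape, while the correction $\tfrac{1}{18}B_{8,\sH}(R,R_1)R = \tfrac{1}{10}B_{7,\sH}(\varPhi,\varPhi_1)R$ --- using $B_{8,\sH} = -9(\,\cdot\,)_8$ and $B_{7,\sH} = -5(\,\cdot\,)_7$ from Theorems \ref{theorem 8.1} and \ref{theorem 7.1} --- reproduces exactly the factor $10$ and the term $B_{7,\sH}(\varPhi,\varPhi_1)\varPhi$. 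Likewise, $R_1 = (0,P_1,0,0,0,0)$ yields (8) and (10), and $R_1 = (0,0,Q_1,0,0,0)$ yields (7) and (9).

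The main obstacle is the bookkeeping of the second bracket rather than any one identity: since $[R,R_1]$ already occupies all six slots, $\ad R$ applied to it produces many cross terms ($\varPhi^2 Q_1$, $(P \times Q_1)Q$, $\{P, Q_1\}$, $(\ad\varPhi)^2\varPhi_1$, and so on) that must be collapsed into the compact forms (1)--(13) by repeated use of the Freudenthal identities for $\times$, $\vee$ and the skew form $\{\,,\,\}$ on $(\mathfrak{P}_{\sH})^C$. One must also check completeness: a priori $R \times R = 0$ furnishes one block for each of the $6 \times 6$ pairs of summands, and it has to be verified that every block either vanishes identically on $(\mathfrak{e}_{8,\sH})^C$ or coincides with one of the thirteen listed relations, so that (1)--(13) are genuinely equivalent to $R \times R = 0$. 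Since $(\mathfrak{e}_{8,\sH})^C$ arises from ${\mathfrak{e}_8}^C$ by the formal replacement of $\mathfrak{C}$ by $\H$, each of these bracket identities coincides with its counterpart for ${\mathfrak{e}_8}^C$ in \cite{iy0}, so the computation can be carried out exactly as there.
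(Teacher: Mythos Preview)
Your proposal is correct and follows essentially the same approach as the paper: both compute $(R \times R)R_1 = 0$ for arbitrary $R_1 = (\varPhi_1, P_1, Q_1, r_1, s_1, t_1)$ and extract conditions (1)--(13) componentwise, with the paper's proof consisting of little more than the instruction ``by doing simple computation of $(R \times R)R_1 = 0$'' together with the identity $(\varPhi, P \times Q)_7 = \{\varPhi P, Q\}$. Your roadmap is considerably more detailed than the paper's and your sample verifications (in particular for condition~(3) and the factor~$10$ in (11)--(13) via $\tfrac{1}{18}B_{8,\sH} = \tfrac{1}{10}B_{7,\sH}$ on the $(\mathfrak{e}_{7,\sH})^C$-part) are correct.
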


\begin{proof}
   Note that $ (\varPhi,P \times Q)_7=\{\varPhi P,Q\} $ holds for $ \varPhi \in (\mathfrak{e}_{7,\sH})^C, P,Q \in (\mathfrak{P}_{\sH})^C $\!.  For $R = (\varPhi, P, Q, r, s, t) \allowbreak \in (\mathfrak{e}_{8,\sH})^C$, by
    doing simple computation of $(R\, \times\, R)R_1=0$ for all $
    R_1=(\varPhi_1, P_1, Q_1, \allowbreak r_1, s_1, t_1) \in
    (\mathfrak{e}_{8,\sH})^C$, we have the required relational formulas
    above.
\end{proof}

\begin{proposition}\label{proposition 8.7}
    The group $((E_{8,\sH})^C)_0$ acts on $(\mathfrak{W}_{\sH})^C$
    transitively.
\end{proposition}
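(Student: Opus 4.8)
The plan is to fix the base point $1_-=(0,0,0,0,0,1)$, to check that it lies in $(\mathfrak{W}_{\sH})^C$, and then to carry an arbitrary $R=(\varPhi,P,Q,r,s,t)\in(\mathfrak{W}_{\sH})^C$ to $1_-$ by a product of one-parameter subgroups $\exp(\ad N)$ lying in the identity component $((E_{8,\sH})^C)_0$. That $1_-\times 1_-=0$ is immediate either from Lemma \ref{lemma 8.6} (for $R=1_-$ every one of the relations $(1)$--$(13)$ collapses to $0=0$) or by the direct computation $[1_-,[1_-,R_1]]=-2s_1\,1_-$ together with $\tfrac{1}{18}B_{8,\sH}(1_-,R_1)\,1_-=2s_1\,1_-$, using Theorem \ref{theorem 8.1}. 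Since an automorphism preserves the bracket and the Killing form, it preserves the operator $R\times R$, so every such $\exp(\ad N)$ maps $(\mathfrak{W}_{\sH})^C$ into itself; hence it suffices to reach the single element $1_-$. Throughout I would exploit the $\Z$-grading of $(\mathfrak{e}_{8,\sH})^C$ given by $\ad\tilde 1$, whose eigenvalues are $+2,+1,0,-1,-2$ on the $t$-, $Q$-, $(\varPhi,r)$-, $P$- and $s$-slots respectively, with $1_-$ spanning the top line.

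The easy half is the reduction once $t\neq 0$. Here I would first apply $\exp(\ad M^-)$ with $M$ a suitable multiple of $Q/t$: since $\ad M^-$ lowers the grading by one, its only contribution to the $Q$-slot comes from the degree-two part $t_-$ of $R$, so the new $Q$-component is $Q-tM=0$ while $t$ is unchanged. The transformed element is again in $(\mathfrak{W}_{\sH})^C$ with $Q=0$, $t\neq 0$, so Lemma \ref{lemma 8.6}$(2),(5),(6)$ force $\varPhi=0$, $P=0$ and $s=-r^2/t$; that is, $R$ has become $(0,0,0,r,-r^2/t,t)$. Applying $\exp(\ad(cs^-))$ with $c=-r/t$ (note $\ad(s^-)$ lowers the grading, hence is nilpotent, so the series terminates) then clears both $r$ and the residual $s$, leaving $t\,1_-$. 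Finally $\exp(\ad(\theta\tilde 1))$ multiplies the $t$-slot by $e^{2\theta}$, so choosing $\theta$ with $e^{2\theta}=t^{-1}$ gives $1_-$ exactly. All three maps are exponentials of inner derivations and so lie in $((E_{8,\sH})^C)_0$.

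It remains to move an arbitrary nonzero $R$ into the region $t\neq 0$, and this is where the real work sits. The idea is a short cascade of translations $\exp(\ad N_-)$ in the $Q$-slot, each raising the top nonvanishing graded component of $R$ by one step in the order $s\rightsquigarrow P\rightsquigarrow(\varPhi,r)\rightsquigarrow Q\rightsquigarrow t$: for a $Q$-part one uses that $[N_-,R]$ has $t$-component $-\tfrac14\{N,Q\}$ and the nondegeneracy of $\{\,,\}$; for a degree-zero part one uses $[N_-,\varPhi]\ni-\varPhi N$ and $[N_-,\tilde r]\ni rN$; for the $P$- and $s$-parts one uses $[N_-,P^-]\ni-P\times N$ and $[N_-,s^-]\ni-sN$. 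Selecting $N$ appropriately at each stage (again invoking nondegeneracy and the nontriviality of the relevant maps) produces a strictly higher component, and after at most four steps a nonzero $t$-component appears, whereupon the previous paragraph finishes the job. The main obstacle I anticipate is exactly the bookkeeping of these degenerate sub-cases—verifying that at every stage some admissible $N$ genuinely creates the next component and that the element never collapses to $0$—together with confirming that the relations of Lemma \ref{lemma 8.6} really do reduce $R$ to the single $t$-line once $t\neq 0$. Because every automorphism used is inner and connected to the identity, the transitivity is realized inside $((E_{8,\sH})^C)_0$; the connectedness statements of Proposition \ref{proposition 8.5} and Theorem \ref{theorem 7.8} guarantee that these exponentials remain in the identity component, as required.
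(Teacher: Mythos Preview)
Your approach is correct and follows the same overall strategy as the paper—fix $1_-$ as base point, invoke Lemma~\ref{lemma 8.6}, cascade an arbitrary $R$ into the region $t\neq 0$ via one-parameter subgroups, then reduce—but the execution differs enough to be worth noting. For $t\neq 0$ the paper works in the opposite direction: it sets $\varTheta=\ad(0,P_1,0,r_1,s_1,0)$, computes $(\exp\varTheta)1_-$ by an explicit closed-form expression for $\varTheta^n 1_-$, and then solves for $(P_1,r_1,s_1)$ so that the result equals the given $R$. Your route (kill $Q$ with a $P$-slot translation, let Lemma~\ref{lemma 8.6}(2),(5),(6) collapse $\varPhi,P,s$, clear $r,s$ with an $s$-slot translation, rescale $t$) is more conceptual and avoids that long calculation entirely. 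For the cascade the paper runs six explicit cases, mixing $P$-slot and $Q$-slot translations and, in case $s\neq 0$, the special element $\exp(\ad(0,0,0,0,\pi/2,-\pi/2))$ to swap $s$ and $t$; your uniform use of $Q$-slot translations $N_-$ along the grading is tidier in principle but, as you acknowledge, still needs the case-by-case verification that the paper writes out in full. Two minor slips: your grading is by $-\ad\tilde 1$ rather than $\ad\tilde 1$ (so $\exp(\ad(\theta\tilde 1))$ scales $t$ by $e^{-2\theta}$, not $e^{2\theta}$), and the closing appeal to Proposition~\ref{proposition 8.5} and Theorem~\ref{theorem 7.8} is unnecessary since $t\mapsto\exp(t\,\ad N)$ already connects each exponential to the identity.
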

\begin{proof}
    Since $\alpha \in (E_{8,\sH})^C$ leaves the Killing form $B_{8,\sH}$
    invariant $: B_{8,\sH}(\alpha R,$ $\alpha R') = B_8(R, R'), \allowbreak R, R' \in
    (\mathfrak{e}_{8,\sH})^C$, the group $(E_{8,\sH})^C$ acts on
    $(\mathfrak{W}_{\sH})^C$.
    Indeed, let $ R \in (\mathfrak{W}_{\sH})^C $. Then, for all $  R_1 \in
    (\mathfrak{e}_{8,\sH})^C $, it follows that
    \begin{align*}
    (\alpha R \times \alpha R)R_1 &= [\alpha R, [\alpha R, \alpha R_1]\,]
    +(1/18) B_{8,\sR}(\alpha R, R_1)\alpha R
    \\[0mm]
    &= \alpha[\,[R, [R, \alpha^{-1}R_1]\,] +
    (1/18)B_{8,\sR}(R, \alpha^{-1}R_1)\alpha R
    \\[0mm]
    &= \alpha((R \times R)\alpha^{-1}R_1
    \\[0mm]
    &= 0.
    \end{align*}
    \vspace{-5mm}

    \noindent Hence the group $(E_{8,\sH})^C$ acts on
    $(\mathfrak{W}_{\sH})^C$, that is, $ \alpha R \in (E_{8,\sH})^C $. We will
    show that this action is transitive. First, for all $  R_1 \in
    (\mathfrak{e}_{8,\sH})^C $, it follows from
    \begin{align*}
    (1_- \times 1_-)R_1 &= [1_-,[1_-, (\varPhi_1, P_1, Q_1, r_1, s_1,
    t_1)] \, ] +
    (1/18)B_8(1_-, R_1)1_-
    \\[0mm]
    &= [1_-, (0, 0, P_1, -s_1, 0, 2r_1)] + 2s_11_-
    \\[0mm]
    &= (0, 0, 0, 0, -2s_1) + 2s_11_-
    \\[0mm]
    &= 0
    \end{align*}
    that $1_- \in (\mathfrak{W}_{\sH})^C$. Then, any element $R \in
    (\mathfrak{W}_{\sH})^C$ can be transformed to $1_- \in (\mathfrak{W}_{\sH})^C$
    by some $\alpha \in ((E_{8,\sH})^C)_0$. We will prove this below.
    \vspace{1mm}

    Case (i) where $R = (\varPhi, P, Q, r, s,t), t \not= 0$.

    From Lemma \ref{lemma 8.6} (2),(5) and (6), we see
    $$
    \varPhi = -\dfrac{1}{2t}Q \times Q, \; P = \dfrac{r}{t}Q -
    \dfrac{1}{6t^2}(Q \times Q)Q, \; s = -\dfrac{r^2}{t} +
    \dfrac{1}{96t^3}\{Q, (Q \times Q)Q\}.
    $$
    Now, let $\varTheta:= \ad(0, P_1, 0, r_1, s_1, 0) \in
    \ad((\mathfrak{e}_{8,\sH})^C), r_1\not=0 $, then we compute $\varTheta^n1_-$:
    \begin{align*}
    &\quad \varTheta^n1_-
    \\
    &= \begin{pmatrix} ((-2)^{n-1} + (-1)^n){r_1}^{n-2}P_1 \times P_1
    \vspace{0.5mm}\\
    \Big((-2)^{n-1} - \dfrac{1 + (-1)^{n-1}}{2}\Big){r_1}^{n-2}s_1P_1 +
    \Big(\dfrac{1 - (-2)^n}{6} + \dfrac{(-1)^n}{2}\Big){r_1}^{n-3}(P_1
    \times P_1)P_1
    \vspace{0.5mm}\\
    ((-2)^n + (-1)^{n-1}){r_1}^{n-1}P_1
    \vspace{0.5mm}\\
    (-2)^{n-1}{r_1}^{n-1}s_1
    \vspace{0.5mm}\\
    -((-2)^{n-2} + 2^{n-2}){r_1}^{n-2}{s_1}^2 + \dfrac{2^{n-2} +
    (-2)^{n-2} - (-1)^{n-1}-1}{24}{r_1}^{n-4}\{P_1,(P_1 \times P_1)P_1\}
    \vspace{0.5mm}\\
    (-2)^n{r_1}^n \end{pmatrix}.
    \end{align*}
    Hence, by doing straightforward computation, we have
    \begin{align*}
    &\quad (\exp\varTheta)1_- = \Big(\dsum_{n=
    0}^\infty\dfrac{1}{n!}\varTheta^n \Big)1_-
    \vspace{0.5mm}\\
    &= \begin{pmatrix}   -\dfrac{1}{2{r_1}^2}(e^{-2r_1} -2e^{-r_1} +
    1)P_1 \times P_1
    \vspace{0.5mm}\\
    \dfrac{s_1}{2{r_1}^2}(-e^{-2r_1} - e^{r_1} + e^{-r_1} + 1)P_1 +
    \dfrac{1}{6{r_1}^3}(-e^{-2r_1} + e^{r_1} + 3e^{-r_1} - 3)(P_1 \times
    P_1)P_1
    \vspace{0.5mm}\\
    \dfrac{1}{r_1}(e^{-2r_1} - e^{-r_1})P_1
    \vspace{0.5mm}\\
    \dfrac{s_1}{2r_1}(1 - e^{-2r_1})
    \vspace{0.5mm}\\
    -\dfrac{{s_1}^2}{4{r_1}^2}(e^{-2r_1} + e^{2r_1} -2) +
    \dfrac{1}{96{r_1}^4}(e^{2r_1} + e^{-2r_1} - 4e^{r_1} - 4e^{-r_1} +
    6)\{P_1, (P_1 \times P_1)P_1\}
    \vspace{0.5mm}\\
    e^{-2r_1} \end{pmatrix}.
    \end{align*}
In the case where $ t\not=1 $. For a given $ R=(\varPhi,P,Q,r,s,t),t\not=0 $,
we can choose $ P_1 \in (\mathfrak{P}_{\sH})^C,r_1,s_1 \in C $ such that
\begin{align*}
\dfrac{1}{r_1}(e^{-2r_1} - e^{-r_1})P_1=Q, \;\; \dfrac{s_1}{2r_1}(1 -
e^{-2r_1})=r, \;\; e^{-2r_1}=t.
\end{align*}
 Indeed,
we choose some one value of $ \sqrt{t} $ satisfying $ (\sqrt{t})^2=t $ and
some one value of $ \log t $ for $ t \in C $, respectively. Then because
of $ t-t^2\not=0, t-1\not=0 $, we can get
\begin{align*}
P_1=\dfrac{(\sqrt{t}+t)\log t}{2(t-t^2)}Q,\;\; r_1=-\dfrac{\log
t}{2},\;\;  s_1=\dfrac{\log t}{t-1}r.
\end{align*}
\if0
    Here set
    $$
    Q: = \dfrac{1}{r_1}(e^{-2r_1} - e^{-r_1})P_1, \;\; r: =
    \dfrac{s_1}{2r_1}(1 - e^{-2r_1}), \;\; t: = e^{-2r_1}\not=0,
    $$
\fi
    Hence, by using these $ P_1,r_1,s_1 $, we obtain
    \begin{align*}
    (\exp\varTheta)1_- = \begin{pmatrix} -\dfrac{1}{2t}Q \times Q
    \vspace{1mm}\\
    \dfrac{r}{t}Q - \dfrac{1}{6t^2}(Q \times Q)Q
    \vspace{0mm}\\
    Q
    \vspace{0mm}\\
    r
    \vspace{0mm}\\
    -\dfrac{r^2}{t} + \dfrac{1}{96 t^3}\{Q, (Q \times Q)Q\}
    \vspace{0mm}\\
    t
    \end{pmatrix}
    =R.
    \end{align*}
    In the case where $ t=1 $. Then, a given $
    R=(\varPhi,P,Q,r,s,t),t\not=0 $ is of the form
    \begin{align*}
    \varPhi=-\dfrac{1}{2}Q \times Q,\;\;P=rQ-\dfrac{1}{6}(Q\times
    Q)Q,\;\;s=-r^2+\dfrac{1}{96}\{Q,(Q \times Q)Q\},
    \end{align*}
    so let $\varTheta:= \ad(0, P_1, 0, 0, s_1, 0) \in
    \ad((\mathfrak{e}_{8,\sH})^C) $, we have
    \begin{align*}
    (\exp\varTheta)1_-
    = \Big(\dsum_{n= 0}^\infty\dfrac{1}{n!}\varTheta^n \Big)1_-
    = \begin{pmatrix}
    -\dfrac{1}{2}P_1 \times P_1
    \vspace{1mm}\\
    -s_1P_1+ \dfrac{1}{6}(P_1 \times P_1)P_1
    \vspace{0mm}\\
    -P_1
    \vspace{0mm}\\
    s_1
    \vspace{0mm}\\
    -{s_1}^2+\dfrac{1}{96}\{P_1, (P_1 \times P_1)P_1\}
    \vspace{0mm}\\
    1
    \end{pmatrix}.
    \end{align*}
    Here, as in the case $ t\not=1 $, we choose $ P_1 \in
    (\mathfrak{J}_{\sH})^C, s_1 \in C $ such that $ -P_1=Q, s_1=r $.
    Hence, by using these $ P_1,s_1 $, we obtain
    \begin{align*}
    (\exp\varTheta)1_- = \begin{pmatrix} -\dfrac{1}{2}Q \times Q
    \vspace{1mm}\\
    rQ-\dfrac{1}{6}(Q \times Q)Q
    \vspace{0mm}\\
    Q
    \vspace{0mm}\\
    r
    \vspace{0mm}\\
    -r^2 + \dfrac{1}{96}\{Q, (Q \times Q)Q\}
    \vspace{0mm}\\
    1
    \end{pmatrix}
    =R.
    \end{align*}
    Thus $R$ is transformed to $1_-$ by $(\exp \varTheta)^{-1} \in
    ((E_{8,\sH})^C)_0$.
\if0
    \noindent In the case where $ r_1=n\pi i\not=0,n \in \Z $. For
    $\varTheta:= \ad(0, P_1, 0, n\pi i, s_1, 0) \in
    \ad((\mathfrak{e}_{8,\sR})^C) $, as in the case above, we have the
    following
    \begin{align*}
    (\exp\varTheta)1_-=\begin{pmatrix}
    \dfrac{2}{(n\pi)^2}P_1 \times P_1
    \vspace{1mm}\\
    \dfrac{4}{3(n\pi)^3 i}(P_1 \times P_1)P_1
    \vspace{1mm}\\
    \dfrac{2}{n\pi i}P_1
    \vspace{1mm}\\
    0
    \vspace{1mm}\\
    \dfrac{1}{6(n\pi)^4}\{P_1, (P_1 \times P_1)P_1\}
    \vspace{1mm}\\
    1
    \end{pmatrix}
    \end{align*}
    Here, for the element $ Q $ of $ R=(\varPhi,P,Q,r,s,t),t\not=0 $ which
    is given in Case (i) above, we can choose $ P_1 \in
    (\mathfrak{J}_{\sR})^C $ satisfying the condition $ (2/n\pi i)P_1 =Q
    $. Indeed, $ P_1 $ can be obtained as $ P_1=(n\pi i/2)Q $. Hence we
    obtain
    \begin{align*}
    (\exp\varTheta)1_- = \begin{pmatrix}
    -\dfrac{1}{2}Q \times Q
    \vspace{1mm}\\
     - \dfrac{1}{6}(Q \times Q)Q
    \vspace{0mm}\\
    Q
    \vspace{0mm}\\
    0
    \vspace{0mm}\\
    \dfrac{1}{96}\{Q, (Q \times Q)Q\}
    \vspace{0mm}\\
    1
    \end{pmatrix}.
    \end{align*}
\fi
\vspace{2mm}

   Case (ii) where  $R = (\varPhi, P, Q, r, s, 0), s \not= 0$.

    Let $\varTheta:=\ad(0, 0, 0, 0, {\pi}/{2}, -{\pi}/{2})) \in
    \ad((\mathfrak{e}_{8,\sH})^C)$. Then we have
    $$
    (\exp\varTheta) R =(\varPhi, Q,-P, -r, 0, -s), \;\;\; -s \not= 0.
    $$
    Hence this case is reduced to Case (i).
 \vspace{2mm}

    Case (iii) where  $R = (\varPhi, P, Q, r, 0, 0), r \not= 0$.

    Again, from Lemma \ref{lemma 8.6} (2),(5) and (6), we have
    $$
    Q \times Q = 0, \;\; \varPhi Q = -3rQ, \;\; \{P, Q\} = 16r^2.
    $$
    Then, let $\varTheta:=\ad(0, Q, 0, 0, 0, 0) \in \ad((\mathfrak{e}_{8,\sH})^C)$, we have
    $$
    (\exp\varTheta)R = (\varPhi, P + 2rQ, Q, r, -4r^2, 0),
    \;\; -4r^2 \not= 0.
    $$
    Hence this case is reduced to Case (ii).
 \vspace{2mm}

   Case (iv) where $R = (\varPhi, P, Q, 0, 0, 0), Q \not= 0$.

    We can choose $P_1 \in (\mathfrak{P}_{\sH})^C$ such that $\{P_1, Q\}
    \not= 0$. Indeed, $ \{P_1,Q\}=0 $ for all $ P_1 \in (\mathfrak{P}_{\sH})^C $ implies $ Q=0 $, so that
    there exists $ P_1 \in (\mathfrak{P}_{\sH})^C $ such that $\{P_1, Q\}
    \not= 0$.

    Now, let $\varTheta:= \ad(0, P_1, 0, 0, 0, 0) \in
    \ad((\mathfrak{e}_{8,\sH})^C)$, we have
    \begin{align*}
     (\exp \varTheta)R
    = \left( \begin{array}{c}
    \varPhi+P_1 \times Q
    \vspace{1mm}\\
    \,P-\varPhi P_1+\dfrac{1}{2}(P_1 \times Q)P_1+\dfrac{1}{16}\{P_1,Q\}P_1
    \vspace{1mm}\\
    Q
    \vspace{1mm}\\
    -\dfrac{1}{8}\{P_1, Q\}
    \vspace{1mm}\\
    \dfrac{1}{4}\{P_1, P\}+\dfrac{1}{8}\{P_1, -\varPhi
    P_1\}+\dfrac{1}{24}\{P_1,( P_1 \times Q)P_1\}
    \vspace{1mm}\\
    0
     \end{array} \right),\;\;  -\dfrac{1}{8}\{P_1, Q\} \not =0.
    \end{align*}
    Hence this case is reduced to Case (iii).
\vspace{2mm}

   Case (v) where $R = (\varPhi, P, 0, 0, 0, 0), P \not= 0$.

    As in Case (iv), we choose $Q_1 \in (\mathfrak{P}_{\sH})^C$ such that
    $\{P, Q_1\} \not= 0$. Then, let $\varTheta:= \ad(0, 0, Q_1, 0, 0,\allowbreak  0)
    \in \ad((\mathfrak{e}_{8,\sH})^C)$, we have
    \begin{align*}
    (\exp \varTheta)R
    =\left( \begin{array}{c}
    \varPhi-P \times Q_1
    \vspace{1mm}\\
     P
    \vspace{1mm}\\
     -\varPhi Q_1+\dfrac{1}{2}(P \times Q_1)Q_1+\dfrac{1}{16}\{P,Q_1\}Q_1
    \vspace{1mm}\\
    \dfrac{1}{8}\{P, Q_1\}
    \vspace{1mm}\\
    0
     \vspace{1mm}\\
     -\dfrac{1}{8}\{Q_1, -\varPhi Q_1\} -\dfrac{1}{24}\{Q_1, -(P \times
     Q_1)Q_1  \}
    \end{array}\right),\;\; \dfrac{1}{8}\{P, Q_1\} \not=0.
    \end{align*}
    Hence this case is also reduced to Case (iii).
\vspace{2mm}

    Case (vi) where $R = (\varPhi, 0, 0, 0, 0, 0), \varPhi \not= 0.$
    We can choose $P_1 \in (\mathfrak{P}_{\sH})^C$ such that $\varPhi P_1 \not
    = 0$. Indeed, $\varPhi P_1=0 $ for all $ P_1 \in (\mathfrak{P}_{\sH})^C $ implies $ \varPhi=0 $, so that there exists $ P_1 \in (\mathfrak{P}_{\sH})^C $ such that $\varPhi P_1 \not= 0$.
    Then, let $ \varTheta:=\ad(0,P_1,0,0,0,0) \in \ad((\mathfrak{e}_{8,\sH})^C)$, we have
    $$
    (\exp\ad(0, P_1, 0, 0, 0, 0))R = \Big(\varPhi, -\varPhi P_1, 0, 0,
    \dfrac{1}{8}\{\varPhi P_1, P_1\}, 0 \Big),\;\; -\varPhi P_1\not=0.
    $$
    Hence this case is reduced to Case (v).

    With above, the proof of this proposition is completed.
\end{proof}

Now, we will prove the theorem as the immediate aim .

\begin{theorem}\label{theorem 8.8}
    The homogeneous space $(E_{8,\sH})^C/((E_{8,\sH})^C)_{1_-}$ is
    homeomorphic to the space $(\mathfrak{W}_{\sH})^C${\rm : }
    $(E_{8,\sH})^C/((E_{8,\sH})^C)_{1_-} \simeq (\mathfrak{W}_{\sH})^C$.

    In particular, the group $(E_{8,\sH})^C$ is connected.
\end{theorem}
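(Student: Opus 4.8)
The plan is to exhibit $(\mathfrak{W}_{\sH})^C$ as a single orbit of $(E_{8,\sH})^C$ and then invoke the standard orbit--stabilizer homeomorphism. First I would record that $(E_{8,\sH})^C$ genuinely acts on $(\mathfrak{W}_{\sH})^C$: since every $\alpha \in (E_{8,\sH})^C = \Aut((\mathfrak{e}_{8,\sH})^C)$ preserves both the Lie bracket and the Killing form $B_{8,\sH}$, it commutes with the operation $R \times R$, which is precisely the computation already carried out at the beginning of the proof of Proposition \ref{proposition 8.7}; whence $\alpha R \in (\mathfrak{W}_{\sH})^C$ for $R \in (\mathfrak{W}_{\sH})^C$. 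Transitivity of the action of the identity component $((E_{8,\sH})^C)_0$ is Proposition \ref{proposition 8.7}, and the isotropy subgroup of $(E_{8,\sH})^C$ at the base point $1_- \in (\mathfrak{W}_{\sH})^C$ is, by definition, the group $((E_{8,\sH})^C)_{1_-}$.

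Next I would define the orbit map
\[
\pi : (E_{8,\sH})^C \to (\mathfrak{W}_{\sH})^C, \qquad \pi(\alpha) = \alpha\,1_-,
\]
which is continuous and, by the transitivity in Proposition \ref{proposition 8.7}, surjective; its fibres are the cosets of $((E_{8,\sH})^C)_{1_-}$. Hence $\pi$ factors through a continuous bijection
\[
\bar{\pi} : (E_{8,\sH})^C/((E_{8,\sH})^C)_{1_-} \to (\mathfrak{W}_{\sH})^C.
\]
Because $(E_{8,\sH})^C$ is a second countable, locally compact Lie group, being an algebraic subgroup of $GL(133,C)$, acting continuously and transitively on the locally compact Hausdorff space $(\mathfrak{W}_{\sH})^C \subset (\mathfrak{e}_{8,\sH})^C$, the standard theorem on transitive actions shows that $\bar{\pi}$ is in fact a homeomorphism. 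This yields the asserted homeomorphism $(E_{8,\sH})^C/((E_{8,\sH})^C)_{1_-} \simeq (\mathfrak{W}_{\sH})^C$.

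For the connectedness statement I would argue as follows. Since $((E_{8,\sH})^C)_0$ acts transitively on $(\mathfrak{W}_{\sH})^C$ (Proposition \ref{proposition 8.7}), we have $(\mathfrak{W}_{\sH})^C = ((E_{8,\sH})^C)_0\,1_-$, so $(\mathfrak{W}_{\sH})^C$ is the continuous image of the connected group $((E_{8,\sH})^C)_0$ and is therefore connected. Now take any $\alpha \in (E_{8,\sH})^C$. By transitivity of the identity component there is $\beta \in ((E_{8,\sH})^C)_0$ with $\beta\,1_- = \alpha\,1_-$, so that $\beta^{-1}\alpha$ fixes $1_-$, i.e. $\beta^{-1}\alpha \in ((E_{8,\sH})^C)_{1_-}$. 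The latter group is connected (Proposition \ref{proposition 8.5}) and contains the identity, hence lies in $((E_{8,\sH})^C)_0$; consequently $\alpha = \beta(\beta^{-1}\alpha) \in ((E_{8,\sH})^C)_0$. Thus $(E_{8,\sH})^C = ((E_{8,\sH})^C)_0$ is connected.

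The genuinely hard work, namely the transitivity established by the six-case reduction in Proposition \ref{proposition 8.7} and the semi-direct product decomposition giving connectedness of the isotropy group in Proposition \ref{proposition 8.5}, is already in hand, so the present theorem is largely a matter of assembly. The one point requiring care is the passage from a continuous transitive action to an actual homeomorphism for $\bar{\pi}$; I expect this to be the main, though standard, obstacle, and it is handled by the usual second countability and local compactness argument rather than by any computation specific to $\H$.
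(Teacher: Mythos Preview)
Your proof is correct and follows essentially the same approach as the paper, which simply invokes Proposition~\ref{proposition 8.7} for the transitive action to obtain the homeomorphism and then deduces connectedness from the connectedness of $((E_{8,\sH})^C)_{1_-}$ (Proposition~\ref{proposition 8.5}) together with $(\mathfrak{W}_{\sH})^C = ((E_{8,\sH})^C)_0\,1_-$. You have spelled out in more detail the orbit--stabilizer homeomorphism and the connectedness assembly that the paper leaves implicit, but the underlying argument is the same.
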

\begin{proof}
    Since the group $(E_{8,\sH})^C$ acts on the space $(\mathfrak{W}_{\sH})^C$ transitively (Proposition \ref{proposition 8.7}), the former half of this theorem is proved.
The latter half was shown as follows. The connectedness of the group $(E_{8,\sH})^C$ follows from the connectedness of the group $((E_{8,\sH})^C)_{1_-}$ and the space $(\mathfrak{W}_{\sH})^C=((E_{8,\sH})^C)_0 1_-$ (Propositions \ref{proposition 8.5}, \ref{proposition 8.7}).
\end{proof}

In order to give a polar decomposition of the group $ (E_{8,\sH})^C $, we prove the following lemma.

\begin{lemma}\label{lemma 8.9}
 The group $  (E_{8,\sH})^C $ is an algebraic subgroup of the general linear group $ GL(133,C) \allowbreak =\Iso_{C}((\mathfrak{c}_{8,\sH})^C) $ and satisfies the condition that $ \alpha \in (E_{8,\sH})^C $ implies $ \alpha^*=(\tau\lambda_\omega)\alpha^{-1}(\lambda_\omega\tau) \in (E_{8,\sH})^C $, where $ \alpha^* $ in the transpose of $ \alpha $ with respect to the Hermite inner product $ \langle R_1,R_2 \rangle ${\rm :} $ \langle\alpha^* R_1, R_2 \rangle=\langle R_1,\alpha R_2 \rangle $.
\end{lemma}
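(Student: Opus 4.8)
The plan is to follow verbatim the strategy of Lemma \ref{lemma 7.7}, replacing the skew-symmetric form $\{\,,\,\}$ and the involution $\tau\lambda$ of the $E_7$-setting by the Killing form $B_{8,\sH}$ and the involution $\tau\lambda_\omega$. There are three things to establish: first the explicit shape $\alpha^*=(\tau\lambda_\omega)\alpha^{-1}(\lambda_\omega\tau)$ of the adjoint, second the closure $\alpha^*\in(E_{8,\sH})^C$, and third the algebraicity of $(E_{8,\sH})^C$ in $GL(133,C)$. The ingredients I would use throughout are: that every $\alpha\in(E_{8,\sH})^C=\Aut((\mathfrak{e}_{8,\sH})^C)$ leaves the Killing form invariant, i.e.\ $B_{8,\sH}(\alpha R,\alpha R')=B_{8,\sH}(R,R')$ (exactly the invariance already exploited in the proof of Proposition \ref{proposition 8.7}); that $B_{8,\sH}$ is non-degenerate because $(\mathfrak{e}_{8,\sH})^C$ is simple (Theorem \ref{theorem 8.1}); and that $\tau$ and $\lambda_\omega$ are commuting involutions which are Lie-algebra automorphisms of $(\mathfrak{e}_{8,\sH})^C$ ($\tau$ conjugate-linear, $\lambda_\omega$ $C$-linear), so that $(\tau\lambda_\omega)^2=1$ and $\tau\lambda_\omega=\lambda_\omega\tau$.

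For the adjoint formula I would start from the defining identity $\langle\alpha^*R_1,R_2\rangle=\langle R_1,\alpha R_2\rangle$ and unfold the Hermite product via $\langle R_1,R_2\rangle=(-1/15)B_{8,\sH}((\tau\lambda_\omega)R_1,R_2)$. The right-hand side becomes $(-1/15)B_{8,\sH}((\tau\lambda_\omega)R_1,\alpha R_2)$, and moving $\alpha$ across the invariant Killing form turns it into $(-1/15)B_{8,\sH}(\alpha^{-1}(\tau\lambda_\omega)R_1,R_2)$; the left-hand side is $(-1/15)B_{8,\sH}((\tau\lambda_\omega)\alpha^*R_1,R_2)$. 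Non-degeneracy of $B_{8,\sH}$ then forces $(\tau\lambda_\omega)\alpha^*=\alpha^{-1}(\tau\lambda_\omega)$, and since $(\tau\lambda_\omega)^{-1}=\tau\lambda_\omega=\lambda_\omega\tau$ this rearranges to $\alpha^*=(\tau\lambda_\omega)\alpha^{-1}(\lambda_\omega\tau)$, as claimed. The only bookkeeping here is to keep straight that $\tau\lambda_\omega$ is conjugate-linear, so that both elements being compared under $B_{8,\sH}$ depend conjugate-linearly on $R_1$ and the resulting operator identity is legitimate.

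To show $\alpha^*\in(E_{8,\sH})^C$ I would verify directly that $\alpha^*$ is a Lie-algebra automorphism. Writing $\alpha^*=(\tau\lambda_\omega)\alpha^{-1}(\tau\lambda_\omega)$ and using in turn that $\tau\lambda_\omega$ is an automorphism, that $\alpha^{-1}\in\Aut((\mathfrak{e}_{8,\sH})^C)$, and that $\tau\lambda_\omega$ is an automorphism again, one computes $\alpha^*[R,R']=[\alpha^*R,\alpha^*R']$ for all $R,R'\in(\mathfrak{e}_{8,\sH})^C$; this is the same three-fold conjugation argument as in Lemma \ref{lemma 7.7}, carried out with the bracket $[\,,\,]$ in place of the cross operation $\times$. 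Finally, the algebraicity is immediate: $(E_{8,\sH})^C=\Aut((\mathfrak{e}_{8,\sH})^C)$ is cut out of $GL(133,C)=\Iso_C((\mathfrak{e}_{8,\sH})^C)$ by the relations $\alpha[R,R']=[\alpha R,\alpha R']$, which are polynomial (indeed bilinear) in the matrix entries of $\alpha$ once $R,R'$ range over a fixed basis.

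The step I expect to require the most care is the one auxiliary fact feeding into the second part, namely that $\lambda_\omega$ --- and hence $\tau\lambda_\omega$ --- is genuinely a bracket automorphism of $(\mathfrak{e}_{8,\sH})^C$; this is the exact analogue over $\H$ of the corresponding property underlying the definition $E_8=({E_8}^C)^{\tau\lambda_\omega}$, and I would either quote it from the parallel $E_8$ development or check it by the routine, if lengthy, componentwise computation on $R=(\varPhi,P,Q,r,s,t)$ using $\lambda_\omega^{\,2}=1$ and $\tau(P\times Q)=\tau P\times\tau Q$. Everything else is a faithful transcription of the $E_7$ proof.
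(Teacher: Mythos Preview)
Your proposal is correct and follows essentially the same approach as the paper's proof: derive the adjoint formula from Killing-form invariance and non-degeneracy, verify $\alpha^*\in(E_{8,\sH})^C$ by unwinding the composition $(\tau\lambda_\omega)\alpha^{-1}(\lambda_\omega\tau)$ through the bracket using that $\tau$ and $\lambda_\omega$ are each bracket-preserving, and note algebraicity from the defining relation. The paper's proof is line-for-line the same, and it explicitly invokes $[\tau R_1,\tau R_2]=\tau[R_1,R_2]$ together with $\lambda_\omega\in(E_{8,\sH})^C$ at exactly the point you flagged as requiring the most care.
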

\begin{proof}
First, it is easy to verify that $ \alpha^*=(\tau\lambda_\omega)\alpha^{-1}(\lambda_\omega\tau) $. Indeed, it follows from
\begin{align*}
\langle\alpha^* R_1, R_2 \rangle&=\langle R_1,\alpha R_2 \rangle=-\dfrac{1}{15}B_{8,\sH}((\tau\lambda_\omega)R_1,\alpha R_2)
\\
&=-\dfrac{1}{15}B_{8,\sH}(\alpha(\alpha^{-1}(\tau\lambda_\omega)R_1),\alpha R_2)
\\
&=-\dfrac{1}{15}B_{8,\sH}(\alpha^{-1}(\tau\lambda_\omega)R_1, R_2)
\\
&=-\dfrac{1}{15}B_{8,\sH}((\tau\lambda_\omega)((\lambda_\omega\tau)\alpha^{-1}(\tau\lambda_\omega)R_1), R_2)
\\
&=\langle (\lambda_\omega\tau)\alpha^{-1}(\tau\lambda_\omega)R_1, R_2\rangle
\end{align*}
that $ \alpha^*=(\lambda_\omega\tau)\alpha^{-1}(\tau\lambda_\omega) $, that is, $ \alpha^*=(\tau\lambda_\omega)\alpha^{-1}(\lambda_\omega\tau) $.

Subsequently, we will show $ \alpha^* \in (E_{8,\sH})^C $. It is clear that $ \alpha^* \in \Iso_{C}((\mathfrak{e}_{8,\sH})^C) $ and note that $ [\tau R_1,\tau R_2]=\tau[R_1,R_2],R_i \in (\mathfrak{e}_{8,\sH})^C $ and $ \lambda_\omega \in (E_{8,\sH})^C $, it follows that
\begin{align*}
[\alpha^* R_1, \alpha^* R_2]&=[(\tau\lambda_\omega)\alpha^{-1}(\lambda_\omega\tau)R_1, (\tau\lambda_\omega)\alpha^{-1}(\lambda_\omega\tau)R_2]
\\
&=\tau[\lambda_\omega\alpha^{-1}(\lambda_\omega\tau)R_1, \lambda_\omega\alpha^{-1}(\lambda_\omega\tau)R_2]
\\
&=(\tau\lambda_\omega)[\alpha^{-1}(\lambda_\omega\tau)R_1, \alpha^{-1}(\lambda_\omega\tau)R_2]
\\
&=(\tau\lambda_\omega)\alpha^{-1}[(\lambda_\omega\tau)R_1,(\lambda_\omega\tau)R_2]
\\
&=(\tau\lambda_\omega)\alpha^{-1}(\lambda_\omega\tau)[R_1,R_2]
\\
&=\alpha^*[R_1,R_2].
\end{align*}
Hence we have $ \alpha^* \in (E_{8,\sH})^C $.

Finally, the group $ (E_{8,\sH})^C $ is defined by the algebraic relation $ \alpha[R_1,R_2]=[\alpha R_1. \alpha R_2] $, so that $ (E_{8,\sH})^C $
is algebraic.
\end{proof}

Let $ O((\mathfrak{e}_{8,\sH})^C) $ be the orthogonal subgroup $ GL(133,C)=\Iso_{C}((\mathfrak{e}_{8,\sH})^C) $:
\begin{align*}
O(133,C)=O((\mathfrak{e}_{8,\sH})^C)=\left\lbrace \alpha \in \Iso_{C}((\mathfrak{e}_{8,\sH})^C) \relmiddle{|}\langle \alpha R_1,\alpha R_2 \rangle=\langle R_1,R_2\rangle \right\rbrace.
\end{align*}
Then we have the following
\begin{align*}
((E_{8,\sH})^C  \cap O((\mathfrak{e}_{8,\sH})^C)&=\left\lbrace \alpha \in \Iso_{C}((\mathfrak{e}_{8,\sH})^C) \relmiddle{|}
\begin{array}{l}
\alpha[R_1,R_2]=[\alpha R_1. \alpha R_2],
\\
\langle \alpha R_1,\alpha R_2 \rangle=\langle R_1,R_2\rangle
\end{array}
\right\rbrace
\\
&=E_{8,\sH}.
\end{align*}

Using Chevally's lemma(\cite[Lemma 2]{che}), from Lemma \ref{lemma 8.9} we have a homeomorphism
\begin{align*}
(E_{8,\sH})^C & \simeq ((E_{8,\sH})^C  \cap O((\mathfrak{e}_{8,\sH})^C)) \times \R^d
\\
&=E_{8,\sH} \times \R^d,
\end{align*}
where the dimension $ d $ of the Euclidian part is computed from Lemma \ref{lemma 8.2} as follows:
\begin{align*}
d=\dim((E_{8,\sH})^C) - \dim(E_{8,\sH})
=133 \times 2-133
=133.
\end{align*}

With above, we have the following theorem.

\begin{theorem}\label{theorem 8.10}
The group $ (E_{8,\sH})^C $ is homeomorphic to the topological product of the group $ E_{8,\sH} $ and a $ 133 $-dimensional Euclidian space $ \R^{133} ${\rm :}
\begin{align*}
(E_{8,\sH})^C \simeq E_{8,\sH} \times \R^{133}.
\end{align*}

In particular, the group $ E_{8,\sH} $ is connected.
\end{theorem}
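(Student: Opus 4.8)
The plan is to recognize that the homeomorphism has essentially been assembled in the discussion immediately preceding the statement, so the proof reduces to citing that assembly and then deducing connectedness formally. First I would invoke Lemma \ref{lemma 8.9}, which shows that $(E_{8,\sH})^C$ is an algebraic subgroup of $GL(133,C)=\Iso_{C}((\mathfrak{e}_{8,\sH})^C)$ that is closed under the operation $\alpha \mapsto \alpha^{*}=(\tau\lambda_\omega)\alpha^{-1}(\lambda_\omega\tau)$. This is precisely the hypothesis needed to apply Chevalley's lemma (\cite[Lemma 2]{che}), which yields a polar-type decomposition $(E_{8,\sH})^C \simeq ((E_{8,\sH})^C \cap O((\mathfrak{e}_{8,\sH})^C)) \times \R^{d}$ as a topological product of the compact factor cut out by the orthogonality condition and a Euclidean space.

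Next I would identify the compact factor and pin down $d$. By the very definitions, $(E_{8,\sH})^C \cap O((\mathfrak{e}_{8,\sH})^C)$ consists of exactly those $\alpha$ preserving both the Lie bracket and the Hermite inner product $\langle R_1,R_2\rangle$, which is the group $E_{8,\sH}$; this is the identity $(E_{8,\sH})^C \cap O((\mathfrak{e}_{8,\sH})^C)=E_{8,\sH}$ recorded above. The dimension of the Euclidean factor is then read off from Lemma \ref{lemma 8.2}: since $(E_{8,\sH})^C$ has real dimension $2\dim_{C}((\mathfrak{e}_{8,\sH})^C)=133\times 2$ while $\dim(E_{8,\sH})=\dim(\mathfrak{e}_{8,\sH})=133$, one gets $d=133\times 2-133=133$. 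Together these give the asserted homeomorphism $(E_{8,\sH})^C \simeq E_{8,\sH}\times \R^{133}$.

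For the connectedness claim I would argue in the direction opposite to the analogous $E_7$ statement (Theorem \ref{theorem 7.8}): there connectedness of the complexification was derived from that of the compact form, whereas here the connectedness of $(E_{8,\sH})^C$ is already in hand from Theorem \ref{theorem 8.8}, whose proof realizes $(E_{8,\sH})^C$ as acting transitively on $(\mathfrak{W}_{\sH})^C$ with connected isotropy $((E_{8,\sH})^C)_{1_-}$. Since $E_{8,\sH}\times \R^{133}$ is homeomorphic to the connected space $(E_{8,\sH})^C$ and $\R^{133}$ is connected, the factor $E_{8,\sH}$ must itself be connected, completing the proof.

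As for the main obstacle: for this particular theorem the substantive work has all been discharged earlier, and the present argument is a formal assembly. The genuinely hard inputs were the long list of algebraic identities characterizing $(\mathfrak{W}_{\sH})^C$ in Lemma \ref{lemma 8.6} together with the case-by-case transitivity argument of Proposition \ref{proposition 8.7} (which underpin Theorem \ref{theorem 8.8}), and the verification in Lemma \ref{lemma 8.9} that $\alpha^{*}$ again lies in $(E_{8,\sH})^C$. Granting those, the only point here requiring genuine care is the bookkeeping of real versus complex dimensions, so that the Euclidean part comes out as $\R^{133}$ and not some other size.
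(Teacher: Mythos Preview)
Your proposal is correct and follows essentially the same approach as the paper: the homeomorphism is assembled from Lemma \ref{lemma 8.9} and Chevalley's lemma exactly as you describe, with the compact factor identified as $E_{8,\sH}$ and $d=133$ computed from Lemma \ref{lemma 8.2}; the connectedness of $E_{8,\sH}$ is then deduced from that of $(E_{8,\sH})^C$ (Theorem \ref{theorem 8.8}) via the product decomposition. Your observation that the direction of the connectedness argument is reversed compared to Theorem \ref{theorem 7.8} is also on point.
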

\begin{proof}
The former half has been already proved above. As for the latter half, the connectedness of the group $ E_{8,\sH} $ follows from the connectedness of the group $ (E_{8,\sH})^C $ (Theorem \ref{theorem 8.8})
\end{proof}

Next, we move on to study a group $ ({E_8}^C)^{\varepsilon_1,\varepsilon_2} $ defined below.

We define a subgroup $ ({E_8}^C)^{\varepsilon_1,\varepsilon_2} $ of $ E_8 $ by
\begin{align*}
({E_8}^C)^{\varepsilon_1,\varepsilon_2}:=\left\lbrace \alpha \in E_8 \relmiddle{|} \varepsilon_1\alpha=\alpha\varepsilon_1,\varepsilon_2\alpha=\alpha\varepsilon_2 \right\rbrace.
\end{align*}

The immediate aim is to prove the connectedness of the group $ ({E_8}^C)^{\varepsilon_1,\varepsilon_2} $ by the same procedure as the proof of the connectedness of the group $ (E_{8,\sH})^C $.

First, we consider a subgroup $
(({E_8}^C)^{\varepsilon_1,\varepsilon_2})_{\tilde{1},1^-,1_-} $ of $ ({E_8}^C)^{\varepsilon_1,\varepsilon_2} $:
\begin{align*}
(({E_8}^C)^{\varepsilon_1,\varepsilon_2})_{\tilde{1},1^-,1_-}=\left\lbrace \alpha \in ({E_8}^C)^{\varepsilon_1,\varepsilon_2}
\relmiddle{|} \alpha \tilde{1}=\tilde{1},\alpha 1^-=1^-, \alpha
1_-=1_-\right\rbrace.
\end{align*}

Then we have the following proposition.

\begin{proposition}\label{proposition 8.11}
    The group $ (({E_8}^C)^{\varepsilon_1,\varepsilon_2})_{\tilde{1},1^-,1_-} $ is isomorphic to the
    group $ ({E_7}^C)^{\varepsilon_1,\varepsilon_2} ${\rm :} \\ $ (({E_8}^C)^{\varepsilon_1,\varepsilon_2})_{\tilde{1},1^-,1_-}
    \cong ({E_7}^C)^{\varepsilon_1,\varepsilon_2} $.
\end{proposition}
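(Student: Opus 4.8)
The plan is to reduce everything to the identification $ {E_7}^C = ({E_8}^C)_{\tilde{1},1^-,1_-} $ already recorded in Subsection 3.5, and then to restrict it to the subgroups of elements commuting with $ \varepsilon_1,\varepsilon_2 $. Recall that for $ \alpha \in {E_7}^C $ the map $ \tilde{\alpha} $ acts on $ {\mathfrak{e}_8}^C $ by $ \tilde{\alpha}(\varPhi,P,Q,r,s,t)=(\alpha\varPhi\alpha^{-1},\alpha P,\alpha Q,r,s,t) $, that $ \alpha \mapsto \tilde{\alpha} $ is an injective homomorphism $ {E_7}^C \hookrightarrow {E_8}^C $, and that $ \varepsilon_1,\varepsilon_2 $ are regarded as elements of $ {E_8}^C $ through the chain $ {F_4}^C \subset {E_7}^C \subset {E_8}^C $, i.e.\ as $ \tilde{\varepsilon}_1,\tilde{\varepsilon}_2 $.

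First I would define a map
\begin{align*}
\Psi:({E_7}^C)^{\varepsilon_1,\varepsilon_2} \to (({E_8}^C)^{\varepsilon_1,\varepsilon_2})_{\tilde{1},1^-,1_-},\quad \Psi(\beta)=\tilde{\beta},
\end{align*}
and check that it is well defined: since $ \tilde{\beta} $ alters only the first three slots of $ (\varPhi,P,Q,r,s,t) $, it fixes $ \tilde{1}=(0,0,0,1,0,0) $, $ 1^-=(0,0,0,0,1,0) $ and $ 1_-=(0,0,0,0,0,1) $; and because $ \alpha \mapsto \tilde{\alpha} $ is a homomorphism, $ \tilde{\varepsilon}_i\tilde{\beta}=\widetilde{\varepsilon_i\beta}=\widetilde{\beta\varepsilon_i}=\tilde{\beta}\tilde{\varepsilon}_i $, so $ \tilde{\beta} $ commutes with $ \varepsilon_1,\varepsilon_2 $. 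That $ \Psi $ is a homomorphism is immediate from $ \widetilde{\beta\gamma}=\tilde{\beta}\tilde{\gamma} $, and injectivity follows from injectivity of $ \alpha \mapsto \tilde{\alpha} $, since one recovers $ \beta $ from the action of $ \tilde{\beta} $ on the $ P $-slot, $ \tilde{\beta}(0,P,0,0,0,0)=(0,\beta P,0,0,0,0) $.

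The core of the argument is surjectivity. Given $ \alpha \in (({E_8}^C)^{\varepsilon_1,\varepsilon_2})_{\tilde{1},1^-,1_-} $, the identification $ {E_7}^C=({E_8}^C)_{\tilde{1},1^-,1_-} $ produces a unique $ \beta \in {E_7}^C $ with $ \alpha=\tilde{\beta} $. Then the commuting relations $ \tilde{\varepsilon}_i\tilde{\beta}=\tilde{\beta}\tilde{\varepsilon}_i $ read $ \widetilde{\varepsilon_i\beta}=\widetilde{\beta\varepsilon_i} $, and injectivity of $ \alpha \mapsto \tilde{\alpha} $ forces $ \varepsilon_i\beta=\beta\varepsilon_i $ for $ i=1,2 $; hence $ \beta \in ({E_7}^C)^{\varepsilon_1,\varepsilon_2} $ and $ \alpha=\Psi(\beta) $. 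This yields the desired isomorphism $ (({E_8}^C)^{\varepsilon_1,\varepsilon_2})_{\tilde{1},1^-,1_-} \cong ({E_7}^C)^{\varepsilon_1,\varepsilon_2} $.

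Since each step is essentially formal, I do not expect a genuine obstacle; the one place deserving care is the passage between the two commutation conditions, which relies squarely on $ \alpha \mapsto \tilde{\alpha} $ being an injective group homomorphism compatible with the chosen realization of $ \varepsilon_1,\varepsilon_2 $ inside $ {E_8}^C $. If one preferred not to invoke the ambient identification $ {E_7}^C=({E_8}^C)_{\tilde{1},1^-,1_-} $ as a black box, the alternative is to repeat the matrix-block analysis used for $ ((E_{8,\sH})^C)_{\tilde{1},1^-,1_-} $ in Proposition \ref{proposition 8.3}, carrying along the extra constraint that $ \alpha $ commutes with $ \varepsilon_1,\varepsilon_2 $ throughout; this is longer but entirely routine.
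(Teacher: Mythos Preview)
Your proof is correct and follows essentially the same approach as the paper: both arguments reduce to the known identification $({E_8}^C)_{\tilde{1},1^-,1_-}\cong{E_7}^C$ and then check that it is compatible with conjugation by $\varepsilon_1,\varepsilon_2$. The paper phrases this more tersely as the equality $(({E_8}^C)^{\varepsilon_1,\varepsilon_2})_{\tilde{1},1^-,1_-}=(({E_8}^C)_{\tilde{1},1^-,1_-})^{\varepsilon_1,\varepsilon_2}$ together with the remark that $\varepsilon_i$ fixes $\tilde{1},1^-,1_-$ (hence induces an automorphism of the stabilizer), which is exactly what your injectivity/homomorphism argument for $\alpha\mapsto\tilde{\alpha}$ unpacks in detail.
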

\begin{proof}
Since $ \varepsilon_i,i=1,2 $ satisfies $ \varepsilon_i\tilde{1}=\tilde{1}, \varepsilon_i1^-=1^-, \varepsilon_i1_-=1_-  $, $ \varepsilon_i $ induces an automorphism of the group $ ({E_8}^C)_{\tilde{1},1^-,1_-} $.
Hence, note that $ ({E_8}^C)_{\tilde{1},1^-,1_-} \cong {E_7}^C $ (\cite[Proposition 5.7.1]{iy0}), we have
\begin{align*}
 (({E_8}^C)^{\varepsilon_1,\varepsilon_2})_{\tilde{1},1^-,1_-}=(({E_8}^C)_{\tilde{1},1^-,1_-})^{\varepsilon_1,\varepsilon_2} \cong ({E_7}^C)^{\varepsilon_1,\varepsilon_2}.
\end{align*}
\end{proof}

Subsequently, we consider a subgroup $ (({E_8}^C)^{\varepsilon_1,\varepsilon_2})_{1_-}$ of $ ({E_8}^C)^{\varepsilon_1,\varepsilon_2} $:
\begin{align*}
(({E_8}^C)^{\varepsilon_1,\varepsilon_2})_{1_-}=\left\lbrace  \alpha \in ({E_8}^C)^{\varepsilon_1,\varepsilon_2} \relmiddle{|}\alpha 1_-=1_- \right\rbrace.
\end{align*}

Then we prove the following lemma.

\begin{lemma}\label{lemma 8.12}
    {\rm (1)} The Lie algebra $ ({\mathfrak{e}_8}^C)^{\varepsilon_1,\varepsilon_2} $ of the group $({E_8}^C)^{\varepsilon_1,\varepsilon_2} $ is given by
    \begin{align*}
    ({\mathfrak{e}_8}^C)^{\varepsilon_1,\varepsilon_2}=
    \left\lbrace R=(\varPhi,P,Q,r,s,t) \relmiddle{|}
    \begin{array}{l}
    \varPhi \in ({\mathfrak{e}_7}^C)^{\varepsilon_1,\varepsilon_2}, P,Q \in (\mathfrak{P}_{\sH})^C, r,s,t \in C
    \end{array}
    \right\rbrace.
    \end{align*}

    In particular, we have $ \dim_C(({\mathfrak{e}_8}^C)^{\varepsilon_1,\varepsilon_2})=66+32\times 2+3=133 $.
    \vspace{2mm}

    {\rm (2)}
    The Lie algebra $ (({\mathfrak{e}_8}^C)^{\varepsilon_1,\varepsilon_2})_{1_-} $ of the group $ (({E_8}^C)^{\varepsilon_1,\varepsilon_2})_{1_-} $ is given by
    \begin{align*}
    (({\mathfrak{e}_8}^C)^{\varepsilon_1,\varepsilon_2})_{1_-}
    &=\left\lbrace R \in ({\mathfrak{e}_8}^C)^{\varepsilon_1,\varepsilon_2} \relmiddle{|} [R, 1_-]=0
    \right\rbrace
    \\
    &=\left\lbrace (\varPhi, 0, Q, 0, 0, t) \in({\mathfrak{e}_8}^C)^{\varepsilon_1,\varepsilon_2}
    \relmiddle{|}
    \begin{array}{l}
    \varPhi \in ({\mathfrak{e}_7}^C)^{\varepsilon_1,\varepsilon_2},
    Q \in (\mathfrak{P}_{\sH})^C,
    t \in C
    \end{array} \right\rbrace .
    \end{align*}

    In particular, we have $ \dim_C((({\mathfrak{e}_8}^C)^{\varepsilon_1,\varepsilon_2})_{1_-}) = 66+32+1 = 99 $.
\end{lemma}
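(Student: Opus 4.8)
The plan is to prove both parts by passing to the infinitesimal version of the defining conditions and then reading off components in $(\mathfrak{e}_8{}^C)$, exactly as was done for the analogous Lie algebras in Lemmas \ref{lemma 5.2}, \ref{lemma 6.2}, \ref{lemma 7.13} and \ref{lemma 8.4}.

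For (1), the starting observation is that $\varepsilon_1,\varepsilon_2 \in {F_4}^C \subset {E_7}^C$ act on ${\mathfrak{e}_8}^C$ through the tilde embedding, that is $\tilde{\varepsilon_i}(\varPhi,P,Q,r,s,t)=(\varepsilon_i\varPhi\varepsilon_i^{-1},\varepsilon_iP,\varepsilon_iQ,r,s,t)$. Hence $({E_8}^C)^{\varepsilon_1,\varepsilon_2}$ is the fixed-point subgroup of the two commuting inner automorphisms $\tilde{\varepsilon_1},\tilde{\varepsilon_2}$, and its Lie algebra is the common fixed-point subalgebra $\{R \in {\mathfrak{e}_8}^C \mid \tilde{\varepsilon_i}(R)=R,\ i=1,2\}$. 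Reading off components, the condition $\tilde{\varepsilon_i}(R)=R$ decouples into $\varepsilon_i\varPhi\varepsilon_i^{-1}=\varPhi$, $\varepsilon_iP=P$, $\varepsilon_iQ=Q$, with $r,s,t$ unconstrained. The first of these says precisely $\varPhi \in ({\mathfrak{e}_7}^C)^{\varepsilon_1,\varepsilon_2}$, whose explicit form and complex dimension $66$ are furnished by Lemma \ref{lemma 7.13}.

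It then remains to identify the common fixed-point space $(\mathfrak{P}^C)^{\varepsilon_1,\varepsilon_2}$. Since $\varepsilon_i$ acts on $\mathfrak{P}^C$ entrywise through its action on the off-diagonal entries of $\mathfrak{J}^C$, this reduces to determining $(\mathfrak{C}^C)^{\varepsilon_1,\varepsilon_2}$. From $\varepsilon_i=\varphi_{{}_{G_2}}(e_i,1)$, so that $\varepsilon_i(m+ne_4)=m+(e_in)e_4$, the fixed-point condition forces $e_in=n$ and hence $n=0$; thus the common fixed space in $\mathfrak{C}^C$ is exactly $\H^C$. Consequently $(\mathfrak{J}^C)^{\varepsilon_1,\varepsilon_2}=(\mathfrak{J}_{\sH})^C$ and therefore $(\mathfrak{P}^C)^{\varepsilon_1,\varepsilon_2}=(\mathfrak{P}_{\sH})^C$, giving $P,Q \in (\mathfrak{P}_{\sH})^C$. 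This yields the asserted description, with dimension $66+32\times 2+3=133$.

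For (2), the subgroup $(({E_8}^C)^{\varepsilon_1,\varepsilon_2})_{1_-}$ is the isotropy subgroup at $1_-$, so its Lie algebra consists of those $R \in ({\mathfrak{e}_8}^C)^{\varepsilon_1,\varepsilon_2}$ with $[R,1_-]=0$. Substituting $R=(\varPhi,P,Q,r,s,t)$ and $1_-=(0,0,0,0,0,1)$ into the Lie bracket formula of Subsection 2.5, a direct computation gives $[R,1_-]=(0,0,-P,s,0,-2r)$; setting this to zero forces $P=0$, $r=0$, $s=0$, leaving $R=(\varPhi,0,Q,0,0,t)$ with $\varPhi \in ({\mathfrak{e}_7}^C)^{\varepsilon_1,\varepsilon_2}$, $Q \in (\mathfrak{P}_{\sH})^C$ and $t \in C$, of dimension $66+32+1=99$. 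Neither part presents a genuine obstacle; the one point needing care is the identification $(\mathfrak{C}^C)^{\varepsilon_1,\varepsilon_2}=\H^C$ underlying (1), but this is the same fact already used to define $(\mathfrak{J}_{\sH})^C$ and $(\mathfrak{P}_{\sH})^C$ in Sections 4--6, so it may simply be invoked, and the rest is the routine bracket evaluation carried out exactly as in Lemma \ref{lemma 8.4}.
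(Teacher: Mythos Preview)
Your proof is correct and follows the same approach as the paper, which merely asserts that ``By the straightforward computation, we can easily obtain the required results.'' You have simply filled in those details: the componentwise action of $\tilde{\varepsilon_i}$ on ${\mathfrak{e}_8}^C$, the identification $(\mathfrak{P}^C)^{\varepsilon_1,\varepsilon_2}=(\mathfrak{P}_{\sH})^C$, and the explicit bracket $[R,1_-]=(0,0,-P,s,0,-2r)$, exactly as in the parallel Lemma~\ref{lemma 8.4}.
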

\begin{proof}

     By the straightforward computation, we can easily obtain the required results.
\end{proof}

We prove the following proposition needed in the proof of connectedness.

\begin{proposition}\label{proposition 8.13}
    The group $(({E_8}^C)^{\varepsilon_1,\varepsilon_2})_{1_-}$ is a semi-direct product of groups \\
    $\exp(\ad(((\mathfrak{P}_{\sH})^C)_- \allowbreak \oplus C_- ))$ and $
    ({E_7}^C)^{\varepsilon_1,\varepsilon_2} ${\rm:}
    \begin{align*}
    (({E_8}^C)^{\varepsilon_1,\varepsilon_2})_{1_-}=\exp(\ad(((\mathfrak{P}_{\sH})^C)_- \oplus C_-
    )) \rtimes ({E_7}^C)^{\varepsilon_1,\varepsilon_2}.
    \end{align*}

    In particular, the group $(({E_8}^C)^{\varepsilon_1,\varepsilon_2})_{1_-}$ is connected.
\end{proposition}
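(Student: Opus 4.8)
The plan is to follow the proof of Proposition~\ref{proposition 8.5} essentially verbatim, replacing $(E_{7,\sH})^C$ by $({E_7}^C)^{\varepsilon_1,\varepsilon_2}$ throughout and invoking Proposition~\ref{proposition 8.11} in place of Proposition~\ref{proposition 8.3}; the only genuinely new task is to check at each step that $\varepsilon_1,\varepsilon_2$-invariance is preserved. The key preliminary observation is that $\varepsilon_1$ and $\varepsilon_2$ fix every element of $(\mathfrak{P}_{\sH})^C$ (since both act as the identity on the quaternionic entries of $(\mathfrak{J}_{\sH})^C$), so $((\mathfrak{P}_{\sH})^C)_- \oplus C_-$ is an $\varepsilon_1,\varepsilon_2$-stable Lie subalgebra of $(({\mathfrak{e}_8}^C)^{\varepsilon_1,\varepsilon_2})_{1_-}$ by Lemma~\ref{lemma 8.12}(2). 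Exactly as in Proposition~\ref{proposition 8.5}, the relation $[L_-,v_-]=0$ gives $\exp(\ad(L_-+v_-))=\exp(\ad(L_-))\exp(\ad(v_-))$, so $\exp(\ad(((\mathfrak{P}_{\sH})^C)_-\oplus C_-))$ is a connected subgroup sitting inside $(({E_8}^C)^{\varepsilon_1,\varepsilon_2})_{1_-}$.

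Next I would take $\alpha\in(({E_8}^C)^{\varepsilon_1,\varepsilon_2})_{1_-}$ and repeat the bracket computations of Proposition~\ref{proposition 8.5}, using $[\tilde{1},1_-]=-2\cdot 1_-$, $[1^-,1_-]=\tilde{1}$ and $[\alpha\tilde{1},\alpha 1^-]=2\alpha 1^-$, to pin down the shape of $\alpha\tilde{1}$ and $\alpha 1^-$. Since $\alpha$ commutes with $\varepsilon_1,\varepsilon_2$ and these fix $\tilde{1},1^-,1_-$, the images $\alpha\tilde{1},\alpha 1^-$ lie in $({\mathfrak{e}_8}^C)^{\varepsilon_1,\varepsilon_2}$, so the element $Q_1$ appearing in $\alpha\tilde{1}$ automatically belongs to $(\mathfrak{P}_{\sH})^C$. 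Setting $\delta:=\exp(\ad((t_1/2)_-))\exp(\ad({Q_1}_-))$ as before, this $\delta$ lies in $\exp(\ad(((\mathfrak{P}_{\sH})^C)_-\oplus C_-))$ and commutes with $\varepsilon_1,\varepsilon_2$ (because $\varepsilon_iQ_1=Q_1$), whence $\delta\in({E_8}^C)^{\varepsilon_1,\varepsilon_2}$ and $\delta^{-1}\alpha$ fixes $\tilde{1},1^-,1_-$. Then by Proposition~\ref{proposition 8.11} one gets $\delta^{-1}\alpha\in(({E_8}^C)^{\varepsilon_1,\varepsilon_2})_{\tilde{1},1^-,1_-}\cong({E_7}^C)^{\varepsilon_1,\varepsilon_2}$, which yields
\begin{align*}
(({E_8}^C)^{\varepsilon_1,\varepsilon_2})_{1_-}=\exp(\ad(((\mathfrak{P}_{\sH})^C)_-\oplus C_-))\,({E_7}^C)^{\varepsilon_1,\varepsilon_2}.
\end{align*}

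To upgrade this product to a semi-direct product I would verify normality exactly as in Proposition~\ref{proposition 8.5}: for $\beta\in({E_7}^C)^{\varepsilon_1,\varepsilon_2}$ one has $\beta\exp(\ad(L_-))\beta^{-1}=\exp(\ad(\beta L_-))$ and $\beta\exp(\ad(v_-))\beta^{-1}=\exp(\ad(v_-))$, where now $\beta L_-\in((\mathfrak{P}_{\sH})^C)_-$ since $\beta$ preserves $(\mathfrak{P}_{\sH})^C$. This would show the exponential factor is normal, and building the split exact sequence with $j$, $p$, $s$ as before delivers the semi-direct decomposition. Connectedness would then follow from the connectedness of $\exp(\ad(((\mathfrak{P}_{\sH})^C)_-\oplus C_-))$ together with the connectedness (indeed simple connectedness) of $({E_7}^C)^{\varepsilon_1,\varepsilon_2}\cong Spin(12,C)$.

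The main obstacle is not any single hard computation, since all the bracket identities coincide with those already established in Proposition~\ref{proposition 8.5}; it is rather the bookkeeping of verifying that every object produced along the way — the subalgebra $((\mathfrak{P}_{\sH})^C)_-\oplus C_-$, the correcting element $\delta$, and the splitting $s$ — remains inside the $\varepsilon_1,\varepsilon_2$-fixed subgroup. The linchpin making this automatic is that $\varepsilon_1,\varepsilon_2$ act trivially on $(\mathfrak{P}_{\sH})^C$, so that passing to $\varepsilon_1,\varepsilon_2$-invariants is compatible with the whole decomposition and replaces $(E_{7,\sH})^C$ cleanly by $({E_7}^C)^{\varepsilon_1,\varepsilon_2}$.
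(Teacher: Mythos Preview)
Your proposal is correct and follows essentially the same approach as the paper: the paper's own proof simply says to replace $(E_{8,\sH})^C,(E_{7,\sH})^C,(\mathfrak{e}_{8,\sH})^C$ in the proof of Proposition~\ref{proposition 8.5} by $({E_8}^C)^{\varepsilon_1,\varepsilon_2},({E_7}^C)^{\varepsilon_1,\varepsilon_2},({\mathfrak{e}_8}^C)^{\varepsilon_1,\varepsilon_2}$ via Proposition~\ref{proposition 8.11} and Lemma~\ref{lemma 8.12}, and to use $({E_7}^C)^{\varepsilon_1,\varepsilon_2}\cong Spin(12,C)$ for connectedness. Your write-up is in fact more explicit than the paper's, spelling out why the $\varepsilon_1,\varepsilon_2$-invariance is automatic at each step.
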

\begin{proof}
Using Proposition \ref{proposition 8.11} and Lemma \ref{lemma 8.12}, as in the proof of Proposition \ref{proposition 8.5}, this proposition is proved by replacing $ (E_{8,\sH})^C, (E_{7,\sH})^C, (\mathfrak{e}_{8,\sH})^C $ of the proof of Proposition \ref{proposition 8.5} with $ ({E_8}^C)^{\varepsilon_1,\varepsilon_2}, \allowbreak ({E_7}^C)^{\varepsilon_1,\varepsilon_2}, ({\mathfrak{e}_8}^C)^{\varepsilon_1,\varepsilon_2} $, respectively. Note that the connectedness of the group $ ({E_7}^C)^{\varepsilon_1,\varepsilon_2} $ follows from $ ({E_7}^C)^{\varepsilon_1,\varepsilon_2} \cong Spin(12,C) $ (\cite[Proposition 1.1.7]{miya1}).

\if0
     Let $((\mathfrak{P}_{\sH})^C)_- \oplus C_- = \{(0, 0, L, 0, 0, v) \,
     | \, L \in (\mathfrak{P}_{\sH})^C, v \in C\}$ be a Lie subalgebra of
     the Lie algebra $({\mathfrak{e}_8}^C)^{\varepsilon_1,\varepsilon_2})_{1_-}$ (Lemma \ref{lemma 8.12}).
    Since it follows from $[L_-, v_-] = 0$ that $\ad(L_-)$ commutes with
    $\ad(v_-)$, we have $\exp(\ad(L_- + v_-)) =
    \exp(\ad(L_-))\exp(\ad(v_-))$. Hence the group
    $\exp(\ad(((\mathfrak{P}_{\sH})^C)_- \oplus C_-))$ is the connected
    subgroup of the group $(({E_8}^C)^{\varepsilon_1,\varepsilon_2})_{1_-}$.

\if0
    Let $((\mathfrak{P}_{\sR})^C)_- \oplus C_- = \{(0, 0, Q, 0, 0, t) \, |
    \, Q \in (\mathfrak{P}_{\sR})^C, t \in C\}$ be a Lie subalgebra of the
    Lie algebra
    $((\mathfrak{e}_{8,\sR})^C)_{1_-}$ (Lemma \ref{lemma 7.0.5} (2)).
    Since it follows from $[Q_-, t_-] = 0$ that $\ad(Q_-)$ commutes with
    $\ad(t_-)$, we have $\exp(\ad(Q_- + t_-)) =
    \exp(\ad(Q_-))\exp(\ad(t_-))$. Hence the group
    $\exp(\ad(((\mathfrak{P}_{\sR})^C)_- \oplus C_-))$ is the connected
    subgroup of the group $((E_{8,\sR})^C)_{1-}$.
\fi

    Now, let $\alpha \in ((E_{8,\sH})^C)_{1-}$ and set $ \alpha\tilde{1}:=
    (\varPhi_1, P_1, Q_1, r_1, s_1, t_1), \alpha1^-:= (\varPhi_2, P_2, Q_2, r_2, s_2,t_2) $. Then, it follows from
    \begin{align*}
    [\alpha\tilde{1}, 1_-]&= \alpha[\tilde{1}, 1_-] = -2\alpha 1_- =
    -21_-=(0,0,0,0,0,-2),
    \\
    [\alpha\tilde{1},
    1_-]&=[(\varPhi_1,P_1,Q_1,r_1,s_1,t_1),(0,0,0,0,0,1)]=(0,0,-P_1,s_1,0,-2r_1)
    \end{align*}
    that $ P_1=0, r_1=1, s_1=0 $. Similarly, from $ [\alpha1^-, 1_-] =
    \alpha[1^-, 1_-] = \alpha\tilde{1} $, we have $ \varPhi_1=0, P_2=-Q_1,
    s_2=r_1,  s_1=0,t_1=-2r_2$. Hence we have
    \begin{align*}
    \alpha \tilde{1}=(0,0,Q_1,1,0,t_1),\quad \alpha
    1^-=(\varPhi_2,-Q_1,Q_2,-\dfrac{t_1}{2},1,t_2).
    \end{align*}
    Moreover, from $[\alpha\tilde{1}, \alpha1^-] = \alpha[\tilde{1}, 1^-]
    = 2\alpha1^-$, we obtain the following
\begin{align*}
\varPhi_2 = \dfrac{1}{2}Q_1 \times Q_1, \; Q_2 = - \dfrac{t}{2}Q_1 -\dfrac{1}{3}\varPhi_2Q_1, \; t_2 = -\dfrac{{t_1}^2}{4} - \dfrac{1}{16}\{Q_1,Q_2\},
\end{align*}
that is,
\begin{align*}
\alpha 1^-=(\dfrac{1}{2}Q_1 \times Q_1,-Q_1,-\dfrac{t}{2}Q_1 -\dfrac{1}{6}(Q_1\times Q_1)Q_1 ,-\dfrac{t_1}{2},1,-\dfrac{{t_1}^2}{4} + \dfrac{1}{96}\{Q_1, (Q_1\times Q_1)Q_1).
\end{align*}
\if0
    Hence $\alpha$ is of the form
    $$
    \alpha = \begin{pmatrix} * & * & * & 0 & \dfrac{1}{2}Q \times Q & 0
    \\
    * & * & * & 0 & -Q & 0 \\
    * & * & * & Q & -\dfrac{t}{2}Q -\dfrac{1}{6}(Q \times Q)Q & 0 \\
    * & * & * & 1 & -\dfrac{t}{2} & 0 \\
    * & * & * & 0 & 1 & 0 \\
    * & * & * & t & -\dfrac{t^2}{4} + \dfrac{1}{96}\{Q, (Q \times Q)Q\} &
    1
    \end{pmatrix}.
    $$
\fi
    On the other hand, for $ \exp(\ad((v/2)_-))\exp(\ad(L_-)) \in
    \exp(\ad(((\mathfrak{P}_{\sR})^C)_- \oplus C_-)) $, we have
    \begin{align*}
    &\quad \exp\Big(\ad(\Big(\dfrac{v}{2}\Big)_-)\Big)\exp(\ad(L_-))1^-
    \\
    &=(\dfrac{1}{2}L \times L,- L,
    - \dfrac{t}{2}L - \dfrac{1}{6}(L \times L)L ,-\dfrac{v}{2}, 1,
    -\dfrac{v^2}{4} + \dfrac{1}{96}\{L, (L \times L)L\}),
    \end{align*}
\if0
    \begin{align*}
    \delta 1^- &=
    \exp\Big(\ad(\Big(\dfrac{v}{2}\Big)_-)\Big)\exp(\ad(L_-))1^-
    \\
    &= \begin{pmatrix} \dfrac{1}{2}L \times L
    \vspace{0mm}\\
    - L
    \vspace{0mm}\\
    - \dfrac{t}{2}L - \dfrac{1}{6}(L \times L)L
    \vspace{0mm}\\
    -\dfrac{v}{2}
    \vspace{0mm}\\
    1
    \vspace{0mm}\\
    -\dfrac{v^2}{4} + \dfrac{1}{96}\{L, (L \times L)L\}.
    \end{pmatrix}
    \end{align*}
\fi
so that set $ \delta:= \exp(\ad((t_1/2)_-))\exp(\ad({Q_1}_-)) $, then we see
\begin{align*}
(\delta^{-1}\alpha) 1^-=1^-.
\end{align*}
In addition, we can confirm
\begin{align*}
  (\delta^{-1}\alpha) \tilde{1} = \tilde{1},\,\, (\delta^{-1}\alpha) 1_-
  =1_-.
\end{align*}
    Indeed, it follows that
    \begin{align*}
    \exp\Big(\ad(\Big(\dfrac{v}{2}\Big)_-)\Big)\exp(\ad(L_-))\tilde{1}&=\exp\Big(\ad(\Big(\dfrac{v}{2}\Big)_-)\Big)(0,0,L,1,0,0)=(0,0,L,1,0,v),
    \\
    \exp\Big(\ad(\Big(\dfrac{v}{2}\Big)_-)\Big)\exp(\ad(L_-))1_-&=\exp\Big(\ad(\Big(\dfrac{v}{2}\Big)_-)\Big)1_-=1_-,
    \end{align*}
    so that we have $ (\delta^{-1}\alpha)\tilde{1}=\delta^{-1}(0,0,P,1,0,t)=\tilde{1},(\delta^{-1}\alpha)1_-=\delta^{-1}1_-=1_-$.

\noindent Hence, $ \delta^{-1}\alpha \in
    ((E_{8,\sR})^C)_{\tilde{1},1^-,1_-} \cong (E_{7,\sR})^C $ follows from Proposition \ref{proposition 8.3},
    so that we obtain
\begin{align*}
((E_{8,\sH})^C)_{1_-} = \exp(\ad(((\mathfrak{P}_{\sH})^C)_- \oplus
    C_-))(E_{7,\sH})^C.
\end{align*}

    Furthermore, for $\beta \in (E_{7,\sH})^C$, it is easy to verify that
\begin{align*}
\beta(\exp(\ad(L_-)))\beta^{-1} = \exp(\ad(\beta L_-)),\quad
    \beta((\exp(\ad(v_-)))\beta^{-1} = \exp(\ad(v_-)).
\end{align*}
    Indeed, for $(\varPhi, P, Q, r, s, t) \in (\mathfrak{e}_{8,\sR})^C$,
    by doing simple computation, we have the following
    \begin{align*}
    \beta \ad (L_-) \beta^{-1}(\varPhi, P, Q, r, s, t)&= \beta [L_-,
    \beta^{-1}(\varPhi, P, Q, r, s, t)]
    \\[1mm]
    &= [\beta L_-,\beta \beta^{-1}(\varPhi, P, Q, r, s, t)]\,(\beta \in
    (E_{7,\sR})^C \subset (E_{8,\sR})^C )
    \\[1mm]
    &= [\beta L_-,(\varPhi, P, Q, r, s, t)]
    \\[1mm]
    &= \ad (\beta L_-) (\varPhi, P, Q, r, s, t),
    \end{align*}
    that is, $\beta \ad (L_-) \beta^{-1}= \ad (\beta L_-)$. It is clear that the latter formula holds.

    \noindent Hence we have
    \begin{align*}
    \beta(\exp(\ad(L_-)))\beta^{-1}&=\beta \,
    \Bigl(\displaystyle{\sum_{n=0}^{\infty}\dfrac{1}{n!}\ad (L_-)^n}
    \Bigr)\,\beta^{-1}
    \\[2mm]
    &=\displaystyle{\sum_{n=0}^{\infty}\dfrac{1}{n!}(\beta\ad
    (L_-)\beta^{-1})^n}\,\,\,(\, \beta \ad (L_-) \beta^{-1}= \ad (\beta
    L_-))
    \\[2mm]
    &=\displaystyle{\sum_{n=0}^{\infty}\dfrac{1}{n!}(\ad (\beta L_-))^n}
    \\[2mm]
    &= \exp(\ad(\beta L_-)),
    \end{align*}
that is, $ \beta(\exp(\ad(L_-)))\beta^{-1}=\exp(\ad(\beta L_-)) $.
    By the argument similar to above, we also see that
    $\beta((\exp(\ad(v_-))) \beta^{-1} = \exp(\ad(v_-)$.
Thus, for $ \alpha \in ((E_{8,\sH})^C)_{1_-} $, it follows from $ \alpha=\exp(N_-+u_-)\beta \in \exp(\ad(((\mathfrak{P}_{\sH})^C)_- \oplus
    C_-))\oplus (E_{7,\sH})^C=((E_{8,\sH})^C)_{1_-} $ that
\begin{align*}
&\quad \alpha\exp(\ad(L_-+v_-))\alpha^{-1}
\\
&=(\exp(N_-+u_-)\beta)\exp(\ad(L_-+v_-))(\exp(N_-+u_-)\beta)^{-1}
\\
&=(\exp(\ad(N_-))\exp(\ad(u_-))\beta)(\exp(\ad(L_-))\exp(\ad(v_-)))(\exp(\ad(N_-))\exp(\ad(u_-))\beta)^{-1}
\\
&=(\exp(\ad(N_-))\exp(\ad(u_-))\beta)(\exp(\ad(L_-))\exp(\ad(v_-)))(\beta^{-1}\exp(\ad(u_-))^{-1}\exp(\ad(N_-))^{-1})
\\
&=(\exp(\ad(N_-))\exp(\ad(u_-)))(\exp(\ad(\beta L_-))\exp(\ad(v_-)))(\exp(\ad(u_-))^{-1}\exp(\ad(N_-))^{-1})
\\
&=(\exp(\ad(N_-))\exp(\ad(\beta L_-))\exp(\ad(N_-))^{-1})(\exp(\ad(u_-))\exp(\ad(v_-))\exp(\ad(u_-))^{-1})
\\
&\in \exp(\ad(((\mathfrak{P}_{\sH})^C)_-)\exp(\ad(C_-))=\exp(\ad(((\mathfrak{P}_{\sH})^C)_- \oplus C_-)).
\end{align*}

\noindent This shows that $\exp(\ad(((\mathfrak{P}_{\sH})^C)_- \oplus
C_-)) = \exp(\ad(((\mathfrak{P}_{\sH})^C)_-)\exp(\ad(C_-))$ is a normal \vspace{0.5mm}subgroup of the group$((E_{8,\sH})^C)_{1_-}$.
In addition, we have a split exact sequence
\begin{align*}
1 \to \exp(\ad((\mathfrak{P}_{\sH})^C)_- \oplus C_-))
 \overset{j}{\longrightarrow}((E_{8,\sH})^C)_{1_-}
\overset{\overset{p}{\scalebox{1.0}{$\longrightarrow$}}}{\underset{s}{\longleftarrow}} (E_{7,\sH})^C \to 1.
\end{align*}
Indeed, first we define a mapping $ j $ by $ j(\delta)=\delta $. Then it is
clear that $ j $ is a injective homomorphism, and subsequently, define a
mapping $ p $ by $ p(\alpha):=p(\delta\beta)=\beta $. Then, since  it
follows that
\begin{align*}
p(\alpha_1\alpha_2)&=p((\delta_1\beta_1)(\delta_2\beta_2))
\\
&=p(\delta_1(\beta_1\delta_2{\beta_1}^{-1})\beta_1\beta_2)
\;\;(\beta_1\delta_2{\beta_1}^{-1}=:{\delta_2}' \in \exp(\ad((\mathfrak{P}_{\sH})^C)_- \oplus C_-))
\\
&=p((\delta_1{\delta_2}')(\beta_1\beta_2))
\\
&=\beta_1\beta_2
\\
&=p(\alpha_1)p(\alpha_2),
\end{align*}
$ p $ is a homomorphism. Subsequently, let $ \beta \in (E_{7,\sH})^C $. We
choose $ \delta \in \exp(\ad((\mathfrak{P}_{\sH})^C)_- \oplus C_-))$, then there exists $ \alpha \in ((E_{8,\sH})^C)_{1_-} $ such that $
\alpha=\delta\beta $. This implies that $ p $ is surjective. Finally, we
define a mapping $ s $ by $ s(\beta)=\beta$. Then it is clear that $ s $
is also a injective homomorphism and we have
\begin{align*}
    ps(\beta)=p(\beta)=\beta,
\end{align*}
that is, $ ps=1 $. With above, the short sequence is a split exact
sequence.

Hence the group  $((E_{8,\sH})^C)_{1_-}$ is a semi-direct product of
$\exp(\ad(((\mathfrak{P}_{\sH})^C)_- \oplus C_-))$ and $(E_{7,\sH})^C$:
\begin{align*}
((E_{8,\sH})^C)_{1_-} =
    \exp(\ad(((\mathfrak{P}_{\sH})^C)_- \oplus C_-))\rtimes (E_{7,\sH})^C.
\end{align*}

Finally, the connectedness of $((E_{8,\sH})^C)_{1_-}$ follows from
the connectedness of  $\exp(\ad(((\mathfrak{P}_{\sH})^C)_- \oplus C_-))$
and $(E_{7,\sH})^C $(Theorem \ref{theorem 7.8}).
\fi
\end{proof}

Before proving the connectedness of the group $ ({E_8}^C)^{\varepsilon_1,\varepsilon_2} $, we will make some preparations.

For $R \in {\mathfrak{e}_8}^C $, we define a $C$-linear mapping $R
\times R : {\mathfrak{e}_8}^C \to {\mathfrak{e}_8}^C $ by
\begin{align*}
(R \times R)R_1 = [R, [R, R_1]\,] + \dfrac{1}{30}B_8(R, R_1)R,\,\,
R_1 \in {\mathfrak{e}_8}^C,
\end{align*}
where $B_8$ is the Killing form of $ {\mathfrak{e}_8}^C $ (\cite[Theorem 5.3.2]{iy0}).
Using this mapping, we define a space $ (\mathfrak{W}^C)_{\varepsilon_1,\varepsilon_2}$ by
\begin{align*}
 (\mathfrak{W}^C)_{\varepsilon_1,\varepsilon_2}= \left\lbrace R \in {\mathfrak{e}_8}^C \relmiddle{|} R \times R =0, R \not= 0, \varepsilon_1 R=R, \varepsilon_2 R=R \right\rbrace,
\end{align*}
where the action of $ \varepsilon_i,i=1,2 $ to $ {\mathfrak{e}_8}^C $ is given by
\begin{align*}
\varepsilon_i(\varPhi,P,Q,r,s,t)=((\ad\varepsilon_i)\varPhi,\varepsilon_iP,\varepsilon_iQ,r,s,t),\;\;(\varPhi,P,Q,r,s,t) \in {\mathfrak{e}_8}^C,
\end{align*}
and note that  $ P,Q $-part of $ R=(\varPhi,P,Q,r,s,t) \in (\mathfrak{W}^C)_{\varepsilon_1,\varepsilon_2} $ belong to $ (\mathfrak{P}_{\sH})^C$: $ P,Q \in (\mathfrak{P}_{\sH})^C $.


\begin{lemma}\label{lemma 8.14}
    For $R = (\varPhi, P, Q, r, s, t) \in {\mathfrak{e}_8}^C $,
    $R \not=0$ belongs to $(\mathfrak{W}^C)_{\varepsilon_1,\varepsilon_2} $ if and only if $R$
    satisfies the following conditions
    \vspace{1mm}

    {\rm (1)} $2s\varPhi - P \times P = 0$ \quad {\rm (2)} $2t\varPhi + Q
    \times Q = 0$ \quad {\rm (3)} $2r\varPhi + P \times Q = 0$
    \vspace{1mm}

    {\rm (4)} $\varPhi P - 3rP - 3sQ = 0 $
    {\rm (5)} $\varPhi Q + 3rQ - 3tP = 0 $ \quad {\rm (6)} $\{P, Q\} -
    16(st + r^2) = 0$
    \vspace{1mm}

    {\rm (7)} $2(\varPhi P \times Q_1 + 2P \times \varPhi Q_1 - rP \times
    Q_1 - sQ \times Q_1) - \{P, Q_1\}\varPhi = 0$
    \vspace{1mm}

    {\rm (8)} $2(\varPhi Q \times P_1 + 2Q \times \varPhi P_1 + rQ \times
    P_1 - tP \times P_1) \!- \{Q, P_1\}\varPhi = 0$
    \vspace{1mm}

    {\rm (9)} $8((P \times Q_1)Q - stQ_1 - r^2Q_1 - \varPhi^2Q_1 +
    2r\varPhi Q_1) + 5\{P, Q_1\}Q
    -2\{Q, Q_1\}P = 0$
    \vspace{1mm}

    \hspace*{-1.7mm}{\rm (10)} $8((Q \times P_1)P + stP_1 + r^2P_1 +
    \varPhi^2P_1 + 2r\varPhi P_1) \,+ \,5\{Q, P_1\}P
    -2\{P, P_1\}Q= 0$
    \vspace{1mm}

    \hspace*{-1.7mm}{\rm (11)} $18(\ad\,\varPhi)^2\varPhi_1 + Q \times
    \varPhi_1P - P \times \varPhi_1Q) +   B_7(\varPhi,
    \varPhi_1)\varPhi = 0$
    \vspace{1mm}

    \hspace*{-1.7mm}{\rm (12)} $18(\varPhi_1\varPhi P -2\varPhi\varPhi_1P
    - r\varPhi_1P - s\varPhi_1Q) + B_7(\varPhi, \varPhi_1)P = 0$
    \vspace{1mm}

    \hspace*{-1.7mm}{\rm (13)} $18(\varPhi_1\varPhi Q -2\varPhi\varPhi_1Q
    + r\varPhi_1Q - t\varPhi_1P) + B_7(\varPhi, \varPhi_1)Q = 0,$

    \vspace{1mm}
    \noindent for all $\varPhi_1 \in ({\mathfrak{e}_7}^C)^{\varepsilon_1,\varepsilon_2}  P_1, Q_1 \in
    (\mathfrak{P}_{\sH})^C$, where $B_7$ is the Killing form of the
    Lie algebra ${\mathfrak{e}_7}^C $.
\end{lemma}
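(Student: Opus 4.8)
The plan is to prove the equivalence by a direct componentwise evaluation of the defining equation $(R \times R)R_1 = 0$, in exactly the spirit of the proof of Lemma~\ref{lemma 8.6}, the only genuinely new feature being the invariance $\varepsilon_1 R = \varepsilon_2 R = R$, which I would use to cut the set of test vectors $R_1$ down to the invariant subalgebra. First I would record the structural facts needed: each $\varepsilon_i$ acts as an automorphism of ${\mathfrak{e}_8}^C$ leaving the Killing form $B_8$ invariant, and by Lemma~\ref{lemma 8.12}~(1) the invariance of $R$ is equivalent to $\varPhi \in ({\mathfrak{e}_7}^C)^{\varepsilon_1,\varepsilon_2}$ and $P, Q \in (\mathfrak{P}_{\sH})^C$ (the scalars $r, s, t$ being automatically fixed). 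I would also note the closure properties $P \times Q,\ P \times P \in ({\mathfrak{e}_7}^C)^{\varepsilon_1,\varepsilon_2}$ and $\varPhi P \in (\mathfrak{P}_{\sH})^C$, so that every quantity occurring in (1)--(13) stays inside the invariant subalgebra and the relations make sense there.

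For the forward implication I would restrict the identity $R \times R = 0$, valid on all of ${\mathfrak{e}_8}^C$, to test vectors $R_1 = (\varPhi_1, P_1, Q_1, r_1, s_1, t_1)$ with $\varPhi_1 \in ({\mathfrak{e}_7}^C)^{\varepsilon_1,\varepsilon_2}$ and $P_1, Q_1 \in (\mathfrak{P}_{\sH})^C$. Expanding $[R, [R, R_1]\,] + (1/30)B_8(R, R_1)R$ by the explicit $E_8$-bracket of Subsection~3.5 and collecting the six slots of the output yields precisely the thirteen relations, grouped as follows: the $\varPhi$-slot gives (1),(2),(3),(7),(8),(11); the $P$- and $Q$-slots give (4),(5),(9),(10),(12),(13); and the scalar slot gives (6). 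Here I would use the identity $(\varPhi, P \times Q)_7 = \{\varPhi P, Q\}$ already exploited in the proof of Lemma~\ref{lemma 8.6}, together with the explicit forms of $B_8$ on ${\mathfrak{e}_8}^C$ and of $B_7$ on ${\mathfrak{e}_7}^C$ (\cite[Theorem 5.3.2]{iy0}, \cite[Theorem 4.5.2]{iy0}), to pin down the numerical coefficient $18$ in (11)--(13) and the constants elsewhere; this replaces the coefficient $10$ of Lemma~\ref{lemma 8.6}, the difference coming only from the normalization $1/30$ in place of $1/18$ and from using the full Killing forms rather than $B_{8,\sH}, B_{7,\sH}$.

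The substantive direction is the converse, and its one delicate point is to pass from ``(1)--(13) hold, i.e.\ $R \times R$ annihilates $({\mathfrak{e}_8}^C)^{\varepsilon_1,\varepsilon_2}$'' back to ``$R \times R = 0$ on all of ${\mathfrak{e}_8}^C$''. The governing observation is that, $R$ and $B_8$ both being $\varepsilon_i$-invariant, the endomorphism $R \times R$ commutes with each $\varepsilon_i$; hence it preserves every joint eigenspace $V_\chi$ of $\langle \varepsilon_1, \varepsilon_2 \rangle$, and on each nontrivial $V_\chi$ the Killing term drops out, since $B_8(R, V_\chi) = 0$, so that there $R \times R$ reduces to $(\ad R)^2$. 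I expect the main obstacle to be verifying that $(\ad R)^2$ already vanishes on the spaces $V_\chi$ with $\chi \neq 0$, so that the invariant-subalgebra conditions (1)--(13) do force the full vanishing.

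I would dispose of this obstacle by exploiting the invariant bracket formula: for $R \in V_0$ the $V_\chi$-component of $[R, [R, R_1]\,]$ depends only on the $V_\chi$-part of $R_1$ and is controlled by the very same expressions appearing in (1)--(13), whose left-hand sides are $\varepsilon$-invariant identities in ${\mathfrak{e}_7}^C$, $(\mathfrak{P}_{\sH})^C$ and $C$. Equivalently, and more concretely, I would check the vanishing at the base point $1_-$ directly, where the computation in the proof of Proposition~\ref{proposition 8.7} shows that $[1_-, [1_-, R_1]\,]$ depends only on the scalar slot $s_1 \in V_0$, hence vanishes on all $V_\chi$ with $\chi \neq 0$, and then transport this along the $({E_8}^C)^{\varepsilon_1,\varepsilon_2}$-orbit, which permutes nothing and preserves the eigenspace decomposition. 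The remaining clauses $R \neq 0$ and the $\varepsilon_1, \varepsilon_2$-invariance are carried through verbatim, which completes the equivalence.
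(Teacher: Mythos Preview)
Your forward direction is exactly what the paper does: it simply expands $(R\times R)R_1=0$ componentwise, using the identity $(\varPhi,P\times Q)_7=\{\varPhi P,Q\}$ and the explicit Killing forms, for test vectors $R_1=(\varPhi_1,P_1,Q_1,r_1,s_1,t_1)$ with $\varPhi_1\in({\mathfrak{e}_7}^C)^{\varepsilon_1,\varepsilon_2}$ and $P_1,Q_1\in(\mathfrak{P}_{\sH})^C$, and reads off (1)--(13). That part of your proposal is correct and matches the paper.

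Where you diverge from the paper is in the converse. The paper does not prove it at all; its proof sentence addresses only the implication from $R\in(\mathfrak{W}^C)_{\varepsilon_1,\varepsilon_2}$ to (1)--(13), and in the sequel (Propositions~\ref{proposition 8.15} and~\ref{proposition 8.19}) only this forward direction is ever used. Your attempt to supply the converse has a genuine gap. The first reduction is fine: since $R$ is $\varepsilon_i$-invariant, $R\times R$ commutes with each $\varepsilon_i$ and, on a nontrivial joint eigenspace $V_\chi$, reduces to $(\ad R)^2$. But your mechanism for killing $(\ad R)^2$ on $V_\chi$ is circular: you propose to verify it at $1_-$ and then transport along the $({E_8}^C)^{\varepsilon_1,\varepsilon_2}$-orbit, yet at this stage you do not know that $R$ lies in that orbit --- transitivity is Proposition~\ref{proposition 8.15}, whose proof invokes precisely this lemma. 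The alternative you sketch (``controlled by the very same expressions appearing in (1)--(13)'') does not close the gap either: those expressions are $\varepsilon_i$-equivariant linear maps in $\varPhi_1,P_1,Q_1$, and vanishing on the invariant eigenspace does not by itself force vanishing on the other eigenspaces. In short, your forward argument is sound and identical to the paper's; your converse argument, which the paper does not attempt and does not need, is not complete as written.
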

\begin{proof}
   Note that $ (\varPhi,P \times Q)_7=\{\varPhi P,Q\} $ holds for $ \varPhi \in ({\mathfrak{e}_7}^C)^{\varepsilon_1,\varepsilon_2}, P,Q \in (\mathfrak{P}_{\sH})^C $\!.  For $R = (\varPhi, P, \allowbreak Q, r, s, t) \allowbreak \in (\mathfrak{W}^C)_{\varepsilon_1,\varepsilon_2}$, by
    doing simple computation of $(R\, \times\, R)R_1=0$ for all $
    R_1=(\varPhi_1, P_1, Q_1, r_1, s_1, \allowbreak t_1) \in
    (\mathfrak{W}^C)_{\varepsilon_1,\varepsilon_2}$, we have the required relational formulas
    above.
\end{proof}

\begin{proposition}\label{proposition 8.15}
    The group $(({E_8}^C)^{\varepsilon_1,\varepsilon_2})_0$ acts on $(\mathfrak{W}^C)_{\varepsilon_1,\varepsilon_2}$ transitively.
\end{proposition}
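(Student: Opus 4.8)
The plan is to transfer the proof of Proposition~\ref{proposition 8.7} almost verbatim, replacing $(E_{8,\sH})^C$, $(\mathfrak{e}_{8,\sH})^C$, $(E_{7,\sH})^C$ and $(\mathfrak{W}_{\sH})^C$ by $({E_8}^C)^{\varepsilon_1,\varepsilon_2}$, ${\mathfrak{e}_8}^C$, $({E_7}^C)^{\varepsilon_1,\varepsilon_2}$ and $(\mathfrak{W}^C)_{\varepsilon_1,\varepsilon_2}$, and using Lemma~\ref{lemma 8.14} in place of Lemma~\ref{lemma 8.6}. What makes this legitimate is that conditions (1)--(10) of Lemma~\ref{lemma 8.14} are literally identical to conditions (1)--(10) of Lemma~\ref{lemma 8.6}; only the relations (11)--(13), which carry the Killing-form constants and are never invoked in the transitivity argument, differ. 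Thus every algebraic identity used in the descent of Proposition~\ref{proposition 8.7} remains available here.

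First I would record that $1_- = (0,0,0,0,0,1)$ lies in $(\mathfrak{W}^C)_{\varepsilon_1,\varepsilon_2}$: the computation $(1_- \times 1_-)R_1 = 0$ of Proposition~\ref{proposition 8.7} uses only the bracket and the Killing form, and $1_-$ is plainly fixed by $\varepsilon_1$ and $\varepsilon_2$. It then suffices to show that an arbitrary $R = (\varPhi, P, Q, r, s, t) \in (\mathfrak{W}^C)_{\varepsilon_1,\varepsilon_2}$ can be carried to $1_-$ by some element of $(({E_8}^C)^{\varepsilon_1,\varepsilon_2})_0$. I would then run the identical six-step descent of Proposition~\ref{proposition 8.7} (Case (i): $t \neq 0$; (ii): $s \neq 0$; (iii): $r \neq 0$; (iv): $Q \neq 0$; (v): $P \neq 0$; (vi): $\varPhi \neq 0$), each case reduced to the previous one by a one-parameter subgroup $\exp\varTheta$ with $\varTheta = \ad(0, P_1, Q_1, r_1, s_1, t_1)$.

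The one point that requires genuine checking, rather than mere copying, is that each transforming element $\exp\varTheta$ actually lies in $(({E_8}^C)^{\varepsilon_1,\varepsilon_2})_0$ and not merely in ${E_8}^C$. Since $\varepsilon_i$ acts by $(\varPhi, P, Q, r, s, t) \mapsto ((\ad\varepsilon_i)\varPhi, \varepsilon_i P, \varepsilon_i Q, r, s, t)$ and fixes $(\mathfrak{P}_{\sH})^C$ pointwise, the generator $\varTheta$ is $\varepsilon_i$-invariant as soon as its $P_1, Q_1$ belong to $(\mathfrak{P}_{\sH})^C$. Because membership of $R$ in $(\mathfrak{W}^C)_{\varepsilon_1,\varepsilon_2}$ forces $P, Q \in (\mathfrak{P}_{\sH})^C$, every $P_1, Q_1$ that Proposition~\ref{proposition 8.7} builds out of $P$, $Q$ (e.g.\ the scalar multiple of $Q$ in Case (i), or $P_1 = Q$ in Case (iii)) stays in $(\mathfrak{P}_{\sH})^C$ automatically; and the three existence choices in Cases (iv), (v), (vi) can be made inside $(\mathfrak{P}_{\sH})^C$ as well, by nondegeneracy of $\{\,\cdot\,,\,\cdot\,\}$ on $(\mathfrak{P}_{\sH})^C$ (for $\{P_1, Q\} \neq 0$ and $\{P, Q_1\} \neq 0$) and by $\varPhi \in ({\mathfrak{e}_7}^C)^{\varepsilon_1,\varepsilon_2}$ acting nontrivially on $(\mathfrak{P}_{\sH})^C$ when $\varPhi \neq 0$ (for $\varPhi P_1 \neq 0$). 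Hence each $\exp\varTheta$ preserves $\varepsilon_1,\varepsilon_2$-invariance and lies in the identity component.

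The main obstacle is therefore bookkeeping rather than computation: one must verify at every step of the descent that the transformation never leaves the $\varepsilon_1,\varepsilon_2$-fixed locus, so that the full chain of reductions is realized by elements of $(({E_8}^C)^{\varepsilon_1,\varepsilon_2})_0$. Once this is confirmed, transitivity is immediate, since it yields $(\mathfrak{W}^C)_{\varepsilon_1,\varepsilon_2} = (({E_8}^C)^{\varepsilon_1,\varepsilon_2})_0\, 1_-$.
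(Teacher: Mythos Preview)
Your proposal is correct and follows the same approach as the paper: the paper's proof is literally a two-line remark that Proposition~\ref{proposition 8.7} goes through verbatim once $(E_{8,\sH})^C$ and $(\mathfrak{e}_{8,\sH})^C$ are replaced by $({E_8}^C)^{\varepsilon_1,\varepsilon_2}$ and $({\mathfrak{e}_8}^C)^{\varepsilon_1,\varepsilon_2}$, invoking Lemmas~\ref{lemma 8.12}(1) and~\ref{lemma 8.14}. Your write-up is in fact more careful than the paper's, since you explicitly justify that each $\exp\varTheta$ stays in the $\varepsilon_i$-fixed group and that the existence choices of $P_1,Q_1$ in Cases (iv)--(vi) can be made inside $(\mathfrak{P}_{\sH})^C$; the paper leaves these verifications implicit.
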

\begin{proof}
Using Lemmas \ref{lemma 8.12} (1), \ref{lemma 8.14}, as in the proof of Proposition \ref{proposition 8.7}, this proposition is proved by replacing $ (E_{8,\sH})^C, (\mathfrak{e}_{8,\sH})^C $ of the proof of Proposition \ref{proposition 8.7} with $ ({E_8}^C)^{\varepsilon_1,\varepsilon_2}, ({\mathfrak{e}_8}^C)^{\varepsilon_1,\varepsilon_2} $, respectively.

\if0

    Since $\alpha \in (E_{8,\sH})^C$ leaves the Killing form $B_{8,\sH}$
    invariant $: B_{8,\sH}(\alpha R,$ $\alpha R') = B_8(R, R'), R, R' \in
    (\mathfrak{e}_{8,\sH})^C$, the group $(E_{8,\sH})^C$ acts on
    $(\mathfrak{W}_{\sH})^C$.
    Indeed, let $ R \in (\mathfrak{W}_{\sH})^C $. Then, for all $  R_1 \in
    (\mathfrak{e}_{8,\sH})^C $, it follows that
    \begin{align*}
    (\alpha R \times \alpha R)R_1 &= [\alpha R, [\alpha R, \alpha R_1]\,]
    +
    \dfrac{1}{18}
    B_{8,\sR}(\alpha R, R_1)\alpha R
    \\[0mm]
    &= \alpha[\,[R, [R, \alpha^{-1}R_1]\,] +
    \dfrac{1}{18}
    B_{8,\sR}(R, \alpha^{-1}R_1)\alpha R
    \\[0mm]
    &= \alpha((R \times R)\alpha^{-1}R_1
    \\[0mm]
    &= 0.
    \end{align*}
    \vspace{-5mm}

    \noindent Hence the group $(E_{8,\sH})^C$ acts on
    $(\mathfrak{W}_{\sH})^C$, that is, $ \alpha R \in (E_{8,\sH})^C $. We will
    show that this action is transitive. First, for all $  R_1 \in
    (\mathfrak{e}_{8,\sH})^C $, it follows from
    \begin{align*}
    (1_- \times 1_-)R_1 &= [1_-,[1_-, (\varPhi_1, P_1, Q_1, r_1, s_1,
    t_1)] \, ] +
    \dfrac{1}{18}
    B_8(1_-, R_1)1_-
    \\[0mm]
    &= [1_-, (0, 0, P_1, -s_1, 0, 2r_1)] + 2s_11_-
    \\[0mm]
    &= (0, 0, 0, 0, -2s_1) + 2s_11_-
    \\[0mm]
    &= 0
    \end{align*}
    that $1_- \in (\mathfrak{W}_{\sH})^C$. Then, any element $R \in
    (\mathfrak{W}_{\sH})^C$ can be transformed to $1_- \in (\mathfrak{W}_{\sH})^C$
    by some $\alpha \in ((E_{8,\sH})^C)_0$. We will prove this below.
    \vspace{1mm}

    (i) Case where $R = (\varPhi, P, Q, r, s,t), t \not= 0$.

    From Lemma \ref{lemma 8.6} (2),(5) and (6), we see
    $$
    \varPhi = -\dfrac{1}{2t}Q \times Q, \; P = \dfrac{r}{t}Q -
    \dfrac{1}{6t^2}(Q \times Q)Q, \; s = -\dfrac{r^2}{t} +
    \dfrac{1}{96t^3}\{Q, (Q \times Q)Q\}.
    $$
    Now, let $\varTheta:= \ad(0, P_1, 0, r_1, s_1, 0) \in
    \ad((\mathfrak{e}_{8,\sH})^C), r_1\not=0 $, then we compute $\varTheta^n1_-$:
    \begin{align*}
    &\quad \varTheta^n1_-
    \\
    &= \begin{pmatrix} ((-2)^{n-1} + (-1)^n){r_1}^{n-2}P_1 \times P_1
    \vspace{0.5mm}\\
    \Big((-2)^{n-1} - \dfrac{1 + (-1)^{n-1}}{2}\Big){r_1}^{n-2}s_1P_1 +
    \Big(\dfrac{1 - (-2)^n}{6} + \dfrac{(-1)^n}{2}\Big){r_1}^{n-3}(P_1
    \times P_1)P_1
    \vspace{0.5mm}\\
    ((-2)^n + (-1)^{n-1}){r_1}^{n-1}P_1
    \vspace{0.5mm}\\
    (-2)^{n-1}{r_1}^{n-1}s_1
    \vspace{0.5mm}\\
    -((-2)^{n-2} + 2^{n-2}){r_1}^{n-2}{s_1}^2 + \dfrac{2^{n-2} +
    (-2)^{n-2} - (-1)^{n-1}-1}{24}{r_1}^{n-4}\{P_1,(P_1 \times P_1)P_1\}
    \vspace{0.5mm}\\
    (-2)^n{r_1}^n \end{pmatrix}.
    \end{align*}
    Hence, by doing straightforward computation, we have
    \begin{align*}
    &\quad (\exp\varTheta)1_- = \Big(\dsum_{n=
    0}^\infty\dfrac{1}{n!}\varTheta^n \Big)1_-
    \vspace{0.5mm}\\
    &= \begin{pmatrix}   -\dfrac{1}{2{r_1}^2}(e^{-2r_1} -2e^{-r_1} +
    1)P_1 \times P_1
    \vspace{0.5mm}\\
    \dfrac{s_1}{2{r_1}^2}(-e^{-2r_1} - e^{r_1} + e^{-r_1} + 1)P_1 +
    \dfrac{1}{6{r_1}^3}(-e^{-2r_1} + e^{r_1} + 3e^{-r_1} - 3)(P_1 \times
    P_1)P_1
    \vspace{0.5mm}\\
    \dfrac{1}{r_1}(e^{-2r_1} - e^{-r_1})P_1
    \vspace{0.5mm}\\
    \dfrac{s_1}{2r_1}(1 - e^{-2r_1})
    \vspace{0.5mm}\\
    -\dfrac{{s_1}^2}{4{r_1}^2}(e^{-2r_1} + e^{2r_1} -2) +
    \dfrac{1}{96{r_1}^4}(e^{2r_1} + e^{-2r_1} - 4e^{r_1} - 4e^{-r_1} +
    6)\{P_1, (P_1 \times P_1)P_1\}
    \vspace{0.5mm}\\
    e^{-2r_1} \end{pmatrix}.
    \end{align*}
In the case where $ t\not=1 $. For a given $ R=(\varPhi,P,Q,r,s,t),t\not=0 $,
we can choose $ P_1 \in (\mathfrak{P}_{\sH})^C,r_1,s_1 \in C $ such that
\begin{align*}
\dfrac{1}{r_1}(e^{-2r_1} - e^{-r_1})P_1=Q, \;\; \dfrac{s_1}{2r_1}(1 -
e^{-2r_1})=r, \;\; e^{-2r_1}=t.
\end{align*}
 Indeed,
we choose some one value of $ \sqrt{t} $ satisfying $ (\sqrt{t})^2=t $ and
some one value of $ \log t $ for $ t \in C $, respectively. Then because
of $ t-t^2\not=0, t-1\not=0 $, we can get
\begin{align*}
P_1=\dfrac{(\sqrt{t}+t)\log t}{2(t-t^2)}Q,\;\; r_1=-\dfrac{\log
t}{2},\;\;  s_1=\dfrac{\log t}{t-1}r.
\end{align*}
\if0
    Here set
    $$
    Q: = \dfrac{1}{r_1}(e^{-2r_1} - e^{-r_1})P_1, \;\; r: =
    \dfrac{s_1}{2r_1}(1 - e^{-2r_1}), \;\; t: = e^{-2r_1}\not=0,
    $$
\fi
    Hence, by using these $ P_1,r_1,s_1 $, we obtain
    \begin{align*}
    (\exp\varTheta)1_- = \begin{pmatrix} -\dfrac{1}{2t}Q \times Q
    \vspace{1mm}\\
    \dfrac{r}{t}Q - \dfrac{1}{6t^2}(Q \times Q)Q
    \vspace{0mm}\\
    Q
    \vspace{0mm}\\
    r
    \vspace{0mm}\\
    -\dfrac{r^2}{t} + \dfrac{1}{96 t^3}\{Q, (Q \times Q)Q\}
    \vspace{0mm}\\
    t
    \end{pmatrix}
    =R.
    \end{align*}
    In the case where $ t=1 $. Then, a given $
    R=(\varPhi,P,Q,r,s,t),t\not=0 $ is of the form
    \begin{align*}
    \varPhi=-\dfrac{1}{2}Q \times Q,\;\;P=rQ-\dfrac{1}{6}(Q\times
    Q)Q,\;\;s=-r^2+\dfrac{1}{96}\{Q,(Q \times Q)Q\},
    \end{align*}
    so let $\varTheta:= \ad(0, P_1, 0, 0, s_1, 0) \in
    \ad((\mathfrak{e}_{8,\sH})^C) $, we have
    \begin{align*}
    (\exp\varTheta)1_-
    &= \Big(\dsum_{n= 0}^\infty\dfrac{1}{n!}\varTheta^n \Big)1_-
    \\
    &= \begin{pmatrix}
    -\dfrac{1}{2}P_1 \times P_1
    \vspace{1mm}\\
    -s_1P_1+ \dfrac{1}{6}(P_1 \times P_1)P_1
    \vspace{0mm}\\
    -P_1
    \vspace{0mm}\\
    s_1
    \vspace{0mm}\\
    -{s_1}^2+\dfrac{1}{96}\{P_1, (P_1 \times P_1)P_1\}
    \vspace{0mm}\\
    1
    \end{pmatrix}.
    \end{align*}
    Here, as in the case $ t\not=1 $, we choose $ P_1 \in
    (\mathfrak{J}_{\sH})^C, s_1 \in C $ such that $ -P_1=Q, s_1=r $.
    Hence, by using these $ P_1,s_1 $, we obtain
    \begin{align*}
    (\exp\varTheta)1_- = \begin{pmatrix} -\dfrac{1}{2}Q \times Q
    \vspace{1mm}\\
    rQ-\dfrac{1}{6}(Q \times Q)Q
    \vspace{0mm}\\
    Q
    \vspace{0mm}\\
    r
    \vspace{0mm}\\
    -r^2 + \dfrac{1}{96}\{Q, (Q \times Q)Q\}
    \vspace{0mm}\\
    1
    \end{pmatrix}
    =R.
    \end{align*}
    Thus $R$ is transformed to $1_-$ by $(\exp \varTheta)^{-1} \in
    ((E_{8,\sH})^C)_0$.
\if0
    \noindent In the case where $ r_1=n\pi i\not=0,n \in \Z $. For
    $\varTheta:= \ad(0, P_1, 0, n\pi i, s_1, 0) \in
    \ad((\mathfrak{e}_{8,\sR})^C) $, as in the case above, we have the
    following
    \begin{align*}
    (\exp\varTheta)1_-=\begin{pmatrix}
    \dfrac{2}{(n\pi)^2}P_1 \times P_1
    \vspace{1mm}\\
    \dfrac{4}{3(n\pi)^3 i}(P_1 \times P_1)P_1
    \vspace{1mm}\\
    \dfrac{2}{n\pi i}P_1
    \vspace{1mm}\\
    0
    \vspace{1mm}\\
    \dfrac{1}{6(n\pi)^4}\{P_1, (P_1 \times P_1)P_1\}
    \vspace{1mm}\\
    1
    \end{pmatrix}
    \end{align*}
    Here, for the element $ Q $ of $ R=(\varPhi,P,Q,r,s,t),t\not=0 $ which
    is given in Case (i) above, we can choose $ P_1 \in
    (\mathfrak{J}_{\sR})^C $ satisfying the condition $ (2/n\pi i)P_1 =Q
    $. Indeed, $ P_1 $ can be obtained as $ P_1=(n\pi i/2)Q $. Hence we
    obtain
    \begin{align*}
    (\exp\varTheta)1_- = \begin{pmatrix}
    -\dfrac{1}{2}Q \times Q
    \vspace{1mm}\\
     - \dfrac{1}{6}(Q \times Q)Q
    \vspace{0mm}\\
    Q
    \vspace{0mm}\\
    0
    \vspace{0mm}\\
    \dfrac{1}{96}\{Q, (Q \times Q)Q\}
    \vspace{0mm}\\
    1
    \end{pmatrix}.
    \end{align*}
\fi
\vspace{2mm}

   (ii) Case where  $R = (\varPhi, P, Q, r, s, 0), s \not= 0$.

    Let $\varTheta:=\ad(0, 0, 0, 0, {\pi}/{2}, -{\pi}/{2})) \in
    \ad((\mathfrak{e}_{8,\sH})^C)$. Then we have
    $$
    (\exp\varTheta) R =(\varPhi, Q,-P, -r, 0, -s), \;\;\; -s \not= 0.
    $$
    Hence this case is reduced to Case (i).
 \vspace{2mm}

    (iii) Case where  $R = (\varPhi, P, Q, r, 0, 0), r \not= 0$.

    Again, from Lemma \ref{lemma 8.6} (2),(5) and (6), we have
    $$
    Q \times Q = 0, \;\; \varPhi Q = -3rQ, \;\; \{P, Q\} = 16r^2.
    $$
    Then, let $\varTheta:=\ad(0, Q, 0, 0, 0, 0) \in \ad((\mathfrak{e}_{8,\sH})^C)$, we have
    $$
    (\exp\varTheta)R = (\varPhi, P + 2rQ, Q, r, -4r^2, 0),
    \;\; -4r^2 \not= 0.
    $$
    Hence this case is reduced to Case (ii).
 \vspace{2mm}

   (iv) Case where $R = (\varPhi, P, Q, 0, 0, 0), Q \not= 0$.

    We can choose $P_1 \in (\mathfrak{P}_{\sH})^C$ such that $\{P_1, Q\}
    \not= 0$. Indeed, $ \{P_1,Q\}=0 $ for all $ P_1 \in (\mathfrak{P}_{\sH})^C $ implies $ Q=0 $, so that
    there exists $ P_1 \in (\mathfrak{P}_{\sH})^C $ such that $\{P_1, Q\}
    \not= 0$.

    Now, let $\varTheta:= \ad(0, P_1, 0, 0, 0, 0) \in
    \ad((\mathfrak{e}_{8,\sH})^C)$, we have
    \begin{align*}
     (\exp \varTheta)R
    = \left( \begin{array}{c}
    \varPhi+P_1 \times Q
    \vspace{1mm}\\
    \,P-\varPhi P_1+\dfrac{1}{2}(P_1 \times Q)P_1+\dfrac{1}{16}\{P_1,Q\}P_1
    \vspace{1mm}\\
    Q
    \vspace{1mm}\\
    -\dfrac{1}{8}\{P_1, Q\}
    \vspace{1mm}\\
    \dfrac{1}{4}\{P_1, P\}+\dfrac{1}{8}\{P_1, -\varPhi
    P_1\}+\dfrac{1}{24}\{P_1,( P_1 \times Q)P_1\}
    \vspace{1mm}\\
    0
     \end{array} \right),\;\;  -\dfrac{1}{8}\{P_1, Q\} \not =0.
    \end{align*}
    Hence this case is reduced to Case (iii).
\vspace{2mm}

   (v) Case where $R = (\varPhi, P, 0, 0, 0, 0), P \not= 0$.

    As in Case (iv), we choose $Q_1 \in (\mathfrak{P}_{\sH})^C$ such that
    $\{P, Q_1\} \not= 0$. Then, let $\varTheta:= \ad(0, 0, Q_1, 0, 0, 0)
    \in \ad((\mathfrak{e}_{8,\sH})^C)$, we have
    \begin{align*}
    (\exp \varTheta)R
    =\left( \begin{array}{c}
    \varPhi-P \times Q_1
    \vspace{1mm}\\
     P
    \vspace{1mm}\\
     -\varPhi Q_1+\dfrac{1}{2}(P \times Q_1)Q_1+\dfrac{1}{16}\{P,Q_1\}Q_1
    \vspace{1mm}\\
    \dfrac{1}{8}\{P, Q_1\}
    \vspace{1mm}\\
    0
     \vspace{1mm}\\
     -\dfrac{1}{8}\{Q_1, -\varPhi Q_1\} -\dfrac{1}{24}\{Q_1, -(P \times
     Q_1)Q_1  \}
    \end{array}\right),\;\; \dfrac{1}{8}\{P, Q_1\} \not=0.
    \end{align*}
    Hence this case is also reduced to Case (iii).
\vspace{2mm}

    (vi) Case where $R = (\varPhi, 0, 0, 0, 0, 0), \varPhi \not= 0.$
    We can choose $P_1 \in (\mathfrak{P}_{\sH})^C$ such that $\varPhi P_1 \not
    = 0$. Indeed, $\varPhi P_1=0 $ for all $ P_1 \in (\mathfrak{P}_{\sH})^C $ implies $ \varPhi=0 $, so that there exists $ P_1 \in (\mathfrak{P}_{\sH})^C $ such that $\varPhi P_1 \not= 0$.
    Then, let $ \varTheta:=\ad(0,P_1,0,0,0,0) \in \ad((\mathfrak{e}_{8,\sH})^C)$, we have
    $$
    (\exp\ad(0, P_1, 0, 0, 0, 0))R = \Big(\varPhi, -\varPhi P_1, 0, 0,
    \dfrac{1}{8}\{\varPhi P_1, P_1\}, 0 \Big),\;\; -\varPhi P_1\not=0.
    $$
    Hence this case is reduced to Case (v).

    With above, the proof of this proposition is completed．
\fi
\end{proof}

Now, we will prove the theorem as the immediate aim .

\begin{theorem}\label{theorem 8.16}
    The homogeneous space $({E_8}^C)^{\varepsilon_1,\varepsilon_2}/(({E_8}^C)^{\varepsilon_1,\varepsilon_2})_{1_-}$ is
    homeomorphic to the space $(\mathfrak{W}^C)_{\varepsilon_1,\varepsilon_2}${\rm : }
    $({E_8}^C)^{\varepsilon_1,\varepsilon_2}/(({E_8}^C)^{\varepsilon_1,\varepsilon_2})_{1_-} \simeq (\mathfrak{W}^C)_{\varepsilon_1,\varepsilon_2}$.

    In particular, the group $({E_8}^C)^{\varepsilon_1,\varepsilon_2}$ is connected.
\end{theorem}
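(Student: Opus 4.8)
The plan is to follow verbatim the argument already used for Theorem~\ref{theorem 8.8} in the $\H$-setting, since all the substantive work has been front-loaded into Propositions~\ref{proposition 8.13} and~\ref{proposition 8.15}. First I would record that the base point $1_-=(0,0,0,0,0,1)$ lies in $(\mathfrak{W}^C)_{\varepsilon_1,\varepsilon_2}$: the computation $1_-\times 1_-=0$ is the same one carried out inside the proof of Proposition~\ref{proposition 8.7}, and $\varepsilon_i 1_-=1_-$ for $i=1,2$ is immediate from the prescribed action $\varepsilon_i(\varPhi,P,Q,r,s,t)=((\ad\varepsilon_i)\varPhi,\varepsilon_iP,\varepsilon_iQ,r,s,t)$. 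By definition, the isotropy subgroup of $({E_8}^C)^{\varepsilon_1,\varepsilon_2}$ at $1_-$ is exactly $(({E_8}^C)^{\varepsilon_1,\varepsilon_2})_{1_-}$.

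For the homeomorphism, Proposition~\ref{proposition 8.15} tells me that $({E_8}^C)^{\varepsilon_1,\varepsilon_2}$ (indeed already its identity component) acts transitively on $(\mathfrak{W}^C)_{\varepsilon_1,\varepsilon_2}$. Hence the orbit map $\alpha\mapsto\alpha 1_-$ descends to a continuous bijection
\begin{align*}
({E_8}^C)^{\varepsilon_1,\varepsilon_2}/(({E_8}^C)^{\varepsilon_1,\varepsilon_2})_{1_-}\longrightarrow (\mathfrak{W}^C)_{\varepsilon_1,\varepsilon_2},
\end{align*}
which is a homeomorphism by the usual orbit-space argument for a Lie group acting transitively on a locally compact space (as in \cite{iy7}). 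This settles the first assertion.

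For connectedness, I would argue as follows. Because the identity component $(({E_8}^C)^{\varepsilon_1,\varepsilon_2})_0$ already acts transitively (Proposition~\ref{proposition 8.15}), we have $(\mathfrak{W}^C)_{\varepsilon_1,\varepsilon_2}=(({E_8}^C)^{\varepsilon_1,\varepsilon_2})_0\,1_-$, so the space $(\mathfrak{W}^C)_{\varepsilon_1,\varepsilon_2}$ is connected as the continuous image of a connected group. The isotropy subgroup $(({E_8}^C)^{\varepsilon_1,\varepsilon_2})_{1_-}$ is connected by Proposition~\ref{proposition 8.13}. Since the total space of the fibration $(({E_8}^C)^{\varepsilon_1,\varepsilon_2})_{1_-}\to ({E_8}^C)^{\varepsilon_1,\varepsilon_2}\to (\mathfrak{W}^C)_{\varepsilon_1,\varepsilon_2}$ has connected fibre and connected base, it is itself connected; thus $({E_8}^C)^{\varepsilon_1,\varepsilon_2}$ is connected.

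There is essentially no obstacle left at this stage: the entire difficulty has been absorbed into Proposition~\ref{proposition 8.15} (the explicit case-by-case reduction of an arbitrary $R\in(\mathfrak{W}^C)_{\varepsilon_1,\varepsilon_2}$ to $1_-$, inherited from Proposition~\ref{proposition 8.7}) and Proposition~\ref{proposition 8.13} (the semidirect-product decomposition yielding connectedness of the isotropy group). The one point worth double-checking is that the $P,Q$-components of every $R$ in $(\mathfrak{W}^C)_{\varepsilon_1,\varepsilon_2}$ genuinely lie in $(\mathfrak{P}_{\sH})^C$, so that the reduction steps of Proposition~\ref{proposition 8.15} stay inside the $\varepsilon_1,\varepsilon_2$-fixed space; this is precisely the remark accompanying the definition of $(\mathfrak{W}^C)_{\varepsilon_1,\varepsilon_2}$.
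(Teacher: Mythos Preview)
Your proposal is correct and follows exactly the same approach as the paper: the homeomorphism comes directly from the transitive action established in Proposition~\ref{proposition 8.15}, and connectedness is deduced from the connectedness of the base $(\mathfrak{W}^C)_{\varepsilon_1,\varepsilon_2}=(({E_8}^C)^{\varepsilon_1,\varepsilon_2})_0\,1_-$ together with that of the isotropy group $(({E_8}^C)^{\varepsilon_1,\varepsilon_2})_{1_-}$ from Proposition~\ref{proposition 8.13}. Your extra remarks verifying $1_-\in(\mathfrak{W}^C)_{\varepsilon_1,\varepsilon_2}$ and that the $P,Q$-components lie in $(\mathfrak{P}_{\sH})^C$ are helpful sanity checks, but the paper's own proof simply cites the two propositions and is done.
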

\begin{proof}
    Since the group $({E_8}^C)^{\varepsilon_1,\varepsilon_2}$ acts on the space $(\mathfrak{W}^C)_{\varepsilon_1,\varepsilon_2}$
    transitively (Proposition \ref{proposition 8.15}), the former half of
    this theorem is proved.

The latter half was shown as follows. The connectedness of the group $({E_8}^C)^{\varepsilon_1,\varepsilon_2}$ follows from the connectedness of the group $(({E_8}^C)^{\varepsilon_1,\varepsilon_2})_{1_-}$ and the space $(\mathfrak{W}^C)_{\varepsilon_1,\varepsilon_2}=(({E_8}^C)^{\varepsilon_1,\varepsilon_2})_0 1_-$ (Propositions \ref{proposition 8.13}, \ref{proposition 8.15}).
\end{proof}

In order to determine the type of the group $ ({E_8}^C)^{\varepsilon_1,\varepsilon_2} $ as Lie algebras, we move the determination of the root system and the Dynkin diagram of the Lie algebra $ ({\mathfrak{e}_8}^C)^{\varepsilon_1,\varepsilon_2} $.

Here, we define a Lie subalgebra $ \mathfrak{h}_8 $ of $ ({\mathfrak{e}_8}^C)^{\varepsilon_1,\varepsilon_2} $ by
\begin{align*}
\mathfrak{h}_8:=\left\lbrace R_8=(\varPhi,0,0,w,0,0) \relmiddle{|}
\begin{array}{l}
\varPhi:=\varPhi(\phi,0,0,\mu) \in \mathfrak{h}_7, \\
\quad \phi=\delta +(\mu_1E_1+\mu_2E_2+\mu_3E_3)^\sim
\\
\qquad \delta:=(L_1,L_2,L_3),
\\
\qquad\;\; L_1=\lambda_0(iG_{01})+\lambda_1(iG_{23})+\lambda_2(i(G_{45}+G_{67})),
\\
\qquad\;\;  L_2=\pi\kappa L_1, L_3=\kappa\pi L_1,\lambda_i \in C,
\\
\qquad \mu_k \in C, \mu_1+\mu_2+\mu_3=0,
\mu \in C,
\\
w \in C
\end{array}
\right\rbrace.
\end{align*}

Then $ \mathfrak{h}_8 $ is a Cartan subalgebra of $ ({\mathfrak{e}_8}
^C)^{\varepsilon_1,\varepsilon_2} $. Indeed, it is clear that $
({\mathfrak{e}_8}^C)^{\varepsilon_1,\varepsilon_2} $ is abelian. Next, by
doing straightforward computation, we can confirm that $ [R,
R_8]\in \mathfrak{h}_8, R \in ({\mathfrak{e}_8}^C)^{\varepsilon_1,\varepsilon_2} $ implies $ R \in \mathfrak{h}_8 $
for any $ R_8 \in \mathfrak{h}_8 $.

\begin{theorem}\label{theorem 8.17}
    The rank of the Lie algebra $ ({\mathfrak{e}_8}^C)^{\varepsilon_1,
    \varepsilon_2} $ is seven. The roots $ \varDelta $ of $ ({\mathfrak{e}_8}^C)^{\varepsilon_1,\varepsilon_2} $ relative to $ \mathfrak{h}_8 $ are given by
    \begin{align*}
        \varDelta=\left\lbrace
        \begin{array}{l}
            \pm(\lambda_0-\lambda_1), \;
            \pm(\lambda_0+\lambda_1),\; \pm 2\lambda_2, \;\pm(1/2)(-2\lambda_0+\mu_2-\mu_3),
            \vspace{0.5mm}\\
            \pm(1/2)(-2\lambda_0-\mu_2+\mu_3),\;
            \pm(1/2)(-2\lambda_1+\mu_2-\mu_3),\;
            \pm(1/2)(-2\lambda_1-\mu_2+\mu_3),
            \vspace{0.5mm}\\
            \pm(1/2)(\lambda_0-\lambda_1-2\lambda_2-\mu_1+\mu_3),\;
            \pm(1/2)(\lambda_0-\lambda_1-2\lambda_2+\mu_1-\mu_3),
            \vspace{0.5mm}\\
            \pm(1/2)(\lambda_0-\lambda_1+2\lambda_2-\mu_1+\mu_3),\;
            \pm(1/2)(\lambda_0-\lambda_1+2\lambda_2+\mu_1-\mu_3),
            \vspace{0.5mm}\\
            \pm(1/2)(\lambda_0+\lambda_1+2\lambda_2+\mu_1-\mu_2),\;
            \pm(1/2)(\lambda_0+\lambda_1+2\lambda_2-\mu_1+\mu_2),
            \vspace{0.5mm}\\
            \pm(1/2)(-\lambda_0-\lambda_1+2\lambda_2+\mu_1-\mu_2),\;
            \pm(1/2)(-\lambda_0-\lambda_1+2\lambda_2-\mu_1+\mu_2),
\vspace{0.5mm}\\
\pm(\mu_1+(2/3)\mu),\;\pm(\mu_2+(2/3)\mu),\;\pm(\mu_3+(2/3)\mu),\;\pm(-\lambda_0-(1/2)\mu_1+(2/3)\mu),
\vspace{0.5mm}\\
\pm(\lambda_0-(1/2)\mu_1+(2/3)\mu),\;
\pm(-\lambda_1-(1/2)\mu_1+(2/3)\mu),\;
\pm(\lambda_1-(1/2)\mu_1+(2/3)\mu),
\vspace{0.5mm}\\
\pm(-(1/2)(-\lambda_0+\lambda_1+2\lambda_2)-(1/2)\mu_2+(2/3)\mu),
\vspace{0.5mm}\\
\pm((1/2)(-\lambda_0+\lambda_1+2\lambda_2)-(1/2)\mu_2+(2/3)\mu),
\vspace{0.5mm}\\
\pm(-(1/2)(-\lambda_0+\lambda_1-2\lambda_2)-(1/2)\mu_2+(2/3)\mu),
\vspace{0.5mm}\\
\pm((1/2)(-\lambda_0+\lambda_1-2\lambda_2)-(1/2)\mu_2+(2/3)\mu),
\vspace{0.5mm}\\
\pm(-(1/2)(-\lambda_0-\lambda_1-2\lambda_2)-(1/2)\mu_3+(2/3)\mu),
\vspace{0.5mm}\\
\pm((1/2)(-\lambda_0-\lambda_1-2\lambda_2)-(1/2)\mu_3+(2/3)\mu),
\vspace{0.5mm}\\
\pm(-(1/2)(\lambda_0+\lambda_1-2\lambda_2)-(1/2)\mu_3+(2/3)\mu),
\vspace{0.5mm}\\
\pm((1/2)(\lambda_0+\lambda_1-2\lambda_2)-(1/2)\mu_3+(2/3)\mu),
\vspace{0.5mm}\\
\pm(\mu_1-(1/3)\mu+w),\;\pm(\mu_2-(1/3)\mu+w),\;\pm(\mu_3-(1/3)\mu+w),
\vspace{0.5mm}\\
\pm(-\lambda_0-(1/2)\mu_1-(1/3)\mu+w ),\;\pm(\lambda_0-(1/2)\mu_1-(1/3)\mu+w ),
\vspace{0.5mm}\\
\pm(-\lambda_1-(1/2)\mu_1-(1/3)\mu+w ),\;\pm(\lambda_1-(1/2)\mu_1-(1/3)\mu+w ),
\vspace{0.5mm}\\
\pm((-1/2)(-\lambda_0+\lambda_1+2\lambda_2)-(1/2)\mu_2-(1/3)\mu+w),
\vspace{0.5mm}\\
\pm((1/2)(-\lambda_0+\lambda_1+2\lambda_2)-(1/2)\mu_2-(1/3)\mu+w),
\vspace{0.5mm}\\
\pm((-1/2)(-\lambda_0+\lambda_1-2\lambda_2)-(1/2)\mu_2-(1/3)\mu+w),
\vspace{0.5mm}\\
\pm((1/2)(-\lambda_0+\lambda_1-2\lambda_2)-(1/2)\mu_2-(1/3)\mu+w),
\vspace{0.5mm}\\
\pm((-1/2)(-\lambda_0-\lambda_1-2\lambda_2)-(1/2)\mu_3-(1/3)\nu+w),
\vspace{0.5mm}\\
\pm((1/2)(-\lambda_0-\lambda_1-2\lambda_2)-(1/2)\mu_3-(1/3)\nu+w),
\vspace{0.5mm}\\
\if0
\pm((-1/2)(\lambda_0+\lambda_1-2\lambda_2)-(1/2)\mu_3-(1/3)\nu+w),
\vspace{0.5mm}\\
\pm((1/2)(\lambda_0+\lambda_1-2\lambda_2)-(1/2)\mu_3-(1/3)\nu+w),
\vspace{0.5mm}\\
\pm(-\mu_1+(1/3)\mu+w),\;\pm(-\mu_2+(1/3)\mu+w),\;\pm(-\mu_3+(1/3)\mu+w),
\vspace{0.5mm}\\
\pm(-\lambda_0+(1/2)\mu_1+(1/3)\mu+w ),\;\pm(\lambda_0+(1/2)\mu_1+(1/3)\mu+w ),
\vspace{0.5mm}\\
\pm(-\lambda_1+(1/2)\mu_1+(1/3)\mu+w ),\;\pm(\lambda_1+(1/2)\mu_1+(1/3)\mu+w ),
\vspace{0.5mm}\\
\pm((-1/2)(-\lambda_0+\lambda_1+2\lambda_2)+(1/2)\mu_2+(1/3)\mu+w),
\vspace{0.5mm}\\
\pm((1/2)(-\lambda_0+\lambda_1+2\lambda_2)+(1/2)\mu_2+(1/3)\mu+w),
\vspace{0.5mm}\\
\pm((-1/2)(-\lambda_0+\lambda_1-2\lambda_2)+(1/2)\mu_2+(1/3)\mu+w),
\vspace{0.5mm}\\
\pm((1/2)(-\lambda_0+\lambda_1-2\lambda_2)+(1/2)\mu_2+(1/3)\mu+w),
\vspace{0.5mm}\\
\pm((-1/2)(-\lambda_0-\lambda_1-2\lambda_2)+(1/2)\mu_3+(1/3)\nu+w),
\vspace{0.5mm}\\
\pm((1/2)(-\lambda_0-\lambda_1-2\lambda_2)+(1/2)\mu_3+(1/3)\nu+w),
\vspace{0.5mm}\\
\pm((-1/2)(\lambda_0+\lambda_1-2\lambda_2)+(1/2)\mu_3+(1/3)\nu+w),
\vspace{0.5mm}\\
\pm((1/2)(\lambda_0+\lambda_1-2\lambda_2)+(1/2)\mu_3+(1/3)\nu+w),
\vspace{0.5mm}\\
\pm(\mu+w),\;\pm(-\mu+w),\;\pm(2w)
\fi
 \end{array}
        \right\rbrace.
\end{align*}

\begin{align*}
\varDelta=\left\lbrace
\begin{array}{l}
\pm((-1/2)(\lambda_0+\lambda_1-2\lambda_2)-(1/2)\mu_3-(1/3)\nu+w),
\vspace{0.5mm}\\
\pm((1/2)(\lambda_0+\lambda_1-2\lambda_2)-(1/2)\mu_3-(1/3)\nu+w),
\vspace{0.5mm}\\
\pm(-\mu_1+(1/3)\mu+w),\;\pm(-\mu_2+(1/3)\mu+w),\;\pm(-\mu_3+(1/3)\mu+w),
\vspace{0.5mm}\\
\pm(-\lambda_0+(1/2)\mu_1+(1/3)\mu+w ),\;\pm(\lambda_0+(1/2)\mu_1+(1/3)\mu+w ),
\vspace{0.5mm}\\
\pm(-\lambda_1+(1/2)\mu_1+(1/3)\mu+w ),\;\pm(\lambda_1+(1/2)\mu_1+(1/3)\mu+w ),
\vspace{0.5mm}\\
\pm((-1/2)(-\lambda_0+\lambda_1+2\lambda_2)+(1/2)\mu_2+(1/3)\mu+w),
\vspace{0.5mm}\\
\pm((1/2)(-\lambda_0+\lambda_1+2\lambda_2)+(1/2)\mu_2+(1/3)\mu+w),
\vspace{0.5mm}\\
\pm((-1/2)(-\lambda_0+\lambda_1-2\lambda_2)+(1/2)\mu_2+(1/3)\mu+w),
\vspace{0.5mm}\\
\pm((1/2)(-\lambda_0+\lambda_1-2\lambda_2)+(1/2)\mu_2+(1/3)\mu+w),
\vspace{0.5mm}\\
\pm((-1/2)(-\lambda_0-\lambda_1-2\lambda_2)+(1/2)\mu_3+(1/3)\nu+w),
\vspace{0.5mm}\\
\pm((1/2)(-\lambda_0-\lambda_1-2\lambda_2)+(1/2)\mu_3+(1/3)\nu+w),
\vspace{0.5mm}\\
\pm((-1/2)(\lambda_0+\lambda_1-2\lambda_2)+(1/2)\mu_3+(1/3)\nu+w),
\vspace{0.5mm}\\
\pm((1/2)(\lambda_0+\lambda_1-2\lambda_2)+(1/2)\mu_3+(1/3)\nu+w),
\vspace{0.5mm}\\
\pm(\mu+w),\;\pm(-\mu+w),\;\pm(2w)
\end{array}
\right\rbrace
\end{align*}
\end{theorem}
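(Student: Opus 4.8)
The plan is to run exactly the procedure already used for $({\mathfrak{f}_4}^C)^{\varepsilon_1,\varepsilon_2}$, $({\mathfrak{e}_6}^C)^{\varepsilon_1,\varepsilon_2}$ and $({\mathfrak{e}_7}^C)^{\varepsilon_1,\varepsilon_2}$: write $({\mathfrak{e}_8}^C)^{\varepsilon_1,\varepsilon_2}$ as the sum of the Cartan subalgebra $\mathfrak{h}_8$ and explicit root vectors, and read off each root as the eigenvalue of $\ad R_8$ on its root vector, where $R_8=(\varPhi,0,0,w,0,0)\in\mathfrak{h}_8$ and $\varPhi=\varPhi(\phi,0,0,\mu)$. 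Since $\mathfrak{h}_8$ has already been shown to be a Cartan subalgebra just above the statement, and its free parameters are $\lambda_0,\lambda_1,\lambda_2$, the traceless triple $\mu_1,\mu_2,\mu_3$, the scalar $\mu$ and the scalar $w$ — seven in all — the rank assertion will follow the instant we check that $\mathfrak{h}_8$ together with the root vectors produced below spans the whole algebra.

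First I would dispose of the roots inherited from $({\mathfrak{e}_7}^C)^{\varepsilon_1,\varepsilon_2}$. Under the inclusion $\varPhi\mapsto(\varPhi,0,0,0,0,0)$ one has, for a root $\alpha$ of $({\mathfrak{e}_7}^C)^{\varepsilon_1,\varepsilon_2}$ with root vector $V$,
\[
[R_8,(V,0,0,0,0,0)]=([\varPhi,V],0,0,0,0,0)=\alpha(\varPhi)\,(V,0,0,0,0,0),
\]
because the $r$-slot $w$ contributes nothing to the $\varPhi$-component of the bracket. Hence every root of Theorem \ref{theorem 7.14} is again a root here, with the same root vector and no $w$-dependence; this accounts for the $60$ roots in the first six blocks of $\varDelta$.

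The remaining roots come from the directions $(0,P,0,0,0,0)$, $(0,0,Q,0,0,0)$, $(0,0,0,0,s,0)$, $(0,0,0,0,0,t)$. Using the Lie bracket of $(\mathfrak{e}_{8,\sH})^C$ one computes directly
\[
[R_8,(0,P,0,0,0,0)]=(0,(\varPhi+w)P,0,0,0,0),\qquad
[R_8,(0,0,Q,0,0,0)]=(0,0,(\varPhi-w)Q,0,0,0),
\]
\[
[R_8,(0,0,0,0,s,0)]=2w\,(0,0,0,0,s,0),\qquad
[R_8,(0,0,0,0,0,t)]=-2w\,(0,0,0,0,0,t),
\]
so that $(0,0,0,0,1,0)$ and $(0,0,0,0,0,1)$ are at once root vectors for $\pm 2w$. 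For the $P$- and $Q$-directions I would diagonalize $\varPhi=\varPhi(\phi,0,0,\mu)$ on $(\mathfrak{P}_{\sH})^C$. From $\varPhi(\phi,0,0,\mu)(X,Y,\xi,\eta)=(\phi X-\tfrac13\mu X,\,-{}^t\phi Y+\tfrac13\mu Y,\,\mu\xi,\,-\mu\eta)$ the needed weights are the eigenvalues of $\phi=\delta+(\mu_1E_1+\mu_2E_2+\mu_3E_3)^\sim$ on $(\mathfrak{J}_{\sH})^C$, shifted by $\mp\tfrac13\mu$, together with $\pm\mu$ on the two scalar slots. These eigenvalues of $\phi$ are already at hand: on $E_1,E_2,E_3$ they are $\mu_1,\mu_2,\mu_3$, and on $F_k(1\pm ie_1),F_k(e_2\pm ie_3)$ they equal the corresponding $L_k$-eigenvalues of Theorem \ref{theorem 5.3} plus $\tfrac12(\mu_{k+1}+\mu_{k+2})=-\tfrac12\mu_k$ (cf. the action of $\delta$ and $\tilde T$ computed in Theorem \ref{theorem 6.3}). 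Adjoining $+w$ to each of these $32$ weights yields the $(0,P,0,0,0,0)$-roots, adjoining $-w$ yields the $(0,0,Q,0,0,0)$-roots, and since no two of the $32$ weights differ by a multiple of $w$, writing all of them with a leading $\pm$ reproduces precisely the $w$-dependent entries of $\varDelta$.

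Finally I would count dimensions: the inherited part contributes $60$ root vectors, and the new directions contribute $32+32+1+1=66$ further root vectors, so together with the $7$-dimensional $\mathfrak{h}_8$ the total is $7+60+66=133=\dim_C(({\mathfrak{e}_8}^C)^{\varepsilon_1,\varepsilon_2})$ (Lemma \ref{lemma 8.12}). Thus $\mathfrak{h}_8$ and the exhibited root vectors span the entire algebra, the listed roots are all of them, and the rank equals $\dim_C\mathfrak{h}_8=7$. The only genuine labor is the second half of the third step — diagonalizing the $32$-dimensional module $(\mathfrak{P}_{\sH})^C$ and matching each eigenvector to its listed root — but this is entirely parallel to, and largely reuses, the eigenvector bookkeeping of Theorem \ref{theorem 7.14}; the sole novelty is tracking the extra $\pm w$ shift, so no new idea is required and the main obstacle is purely organizational.
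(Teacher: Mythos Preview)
Your proposal is correct and follows essentially the same route as the paper: inherit the $60$ roots of $({\mathfrak{e}_7}^C)^{\varepsilon_1,\varepsilon_2}$ via $[R_8,(V,0,0,0,0,0)]=([\varPhi,V],0,0,0,0,0)$, then diagonalize $\ad R_8$ on the $P$-, $Q$-, $s$-, $t$-slots to obtain the remaining $66$ roots, and finish by the dimension count $7+60+66=133$. The paper carries out a couple of sample bracket computations (e.g.\ with $R^{-}_{\dot E_1}$ and $R^{-}_{\dot F_1(1+ie_1)}$) and then tabulates the full list of root vectors, whereas you phrase the same bookkeeping as ``eigenvalues of $\varPhi(\phi,0,0,\mu)$ on $(\mathfrak{P}_{\sH})^C$ shifted by $\pm w$''; the content is identical.
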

\begin{proof}
The roots of $ ({\mathfrak{e}_7}^C)^{\varepsilon_1,\varepsilon_2} $ are also the roots of $ ({\mathfrak{e}_8}^C)^{\varepsilon_1,\varepsilon_2} $. Indeed, let the root $ \alpha $ of $ ({\mathfrak{e}_7}^C)^{\varepsilon_1,\varepsilon_2} $ and its associated root vector $ \varPhi_s \in ({\mathfrak{e}_7}^C)^{\varepsilon_1,\varepsilon_2} \subset ({\mathfrak{e}_8}^C)^{\varepsilon_1,\varepsilon_2} $. Then, for $ R_8 \in \mathfrak{h}_8 $, we have
    \begin{align*}
    [R_8, \varPhi_s]&=[(\varPhi,0,0,w,0,0) (\varPhi_s,0,0,0,0,0)]
    \\
    &=([\varPhi,\varPhi_s],0,0,0,0,0)
    \\
    &=(\alpha(\varPhi)\varPhi_s,0,0,0,0,0)
    \\
    &=\alpha(\varPhi)(\varPhi_s,0,0,0,0,0)
    \\
    &=\alpha(R_8)\varPhi_s.
    \end{align*}

    We will determine the remainders of roots. We will show a few examples.

    \noindent First, let $ R_8=(\varPhi,0,0,w,0,0) \in \mathfrak{h}_8 $ and $ {R^-}_{\dot{E}_1}:=(0,\dot{E}_1,0,0,0,0) \in ({\mathfrak{e}_8}^C)^{\varepsilon_1,\varepsilon_2} $, where $ \dot{E}_1:=(E_1,0,0,0) \in (\mathfrak{P}_{\sH})^C $. Then it follows that
    \begin{align*}
    [R_8,{R^-}_{\dot{E}_1}]&=[(\varPhi,0,0,w,0,0),(0,\dot{E}_1,0,0,0,0)]
    \\
    &=(0,\varPhi\dot{E}_1+r\dot{E}_1,0,0,0,0)
    \\
    &=(0,\varPhi(\phi,0,0,\mu)(E_1,0,0,0)+w(E_1,0,0,0),0,0,0,0)
    \\
    &=(0,(\phi-(1/3)\mu+w)E_1,0,0,0),0,0,0,0),\,\,(\phi=\tilde{T}_0)
    \\
    &=(0,((\mu_1-(1/3)\mu+w)E_1,0,0,0),0,0,0,0)
    \\
    &=(\mu_1-(1/3)\mu+w)(0,\dot{E}_1,0,0,0,0)
    \\
    &=(\mu_1-(1/3)\mu+w){R^-}_{\dot{E}_1},
    \end{align*}
    that is, $ [R_8,{R^-}_{\dot{E}_1}]=(\mu_1-(1/3)\mu+w){R^-}_{\dot{E}_1} $. Hence we see that $ \mu_1-(1/3)\mu+w $ is a root and $ (0,\dot{E}_1,0,0,0,0)$ is an associated root vector.
    Next, let $ {R^-}_{\dot{F}_1(1+ie_1)}:=(0,\dot{F}_1(1+ie_1),0,0,0,0) \in ({\mathfrak{e}_8}^C)^{\varepsilon_1,\varepsilon_2} $, where $ \dot{F}_1(1+ie_1):=(F_1(1+ie_1),0,0,0) \in \mathfrak{P}^C $. Then it follows that
\begin{align*}
[R_8,{R^-}_{\dot{F}_1(1+ie_1)}]&=[(\varPhi,0,0,w,0,0),(0,\dot{F}_1(1+ie_1),0,0,0,0)]
\\
&=(0,\varPhi\dot{F}_1(1+ie_1)+r\dot{F}_1(1+ie_1),0,0,0,0)
\\
&=(0,\varPhi(\phi,0,0,\mu)(F_1(1+ie_1),0,0,0)+r(F_1(1+ie_1),0,0,0),0,0,0,0)
\\
&=(0,(\phi-(1/3)\mu+w)F_1(1+ie_1),0,0,0),0,0,0,0),\,\,(\phi=(L_1,L_2,L_3)+\tilde{T}_0)
\\
&=(0,((-\lambda_0+(1/2)(\mu_2+\mu_3)-(1/3)\mu+w)F_1(1+ie_1),0,0,0),0,0,0,0)
\\
&=(0,(-\lambda_0-(1/2)\mu_1-(1/3)\mu+w)\dot{F}_1(1+ie_1),0,0,0,0)
\\
&=(-\lambda_0-(1/2)\mu_1-(1/3)\mu+w){R^-}_{\dot{F}_1(1+ie_1)},
\end{align*}
that is, $ [R_8,{R^-}_{\dot{F}_1(1+ie_1)}]=(-\lambda_0-(1/2)\mu_1-(1/3)\mu+w){R^-}_{\dot{F}_1(1+ie_1)} $. Hence we see that $ -\lambda_0-(1/2)\mu_1-(1/3)\mu+w $ is a root and $ (0,\dot{F}_1(1+ie_1),0,0,0,0) $ is its associated root vector. All of roots and these associated root vectors except ones of the Lie algebra $ ({\mathfrak{e}_7}^C)^{\varepsilon_1,\varepsilon_2} $ above are obtained as follows:
\begin{longtable}[c]{cl}
	\hspace{2mm}
	$ \text{roots}  $
	& \hspace{-5mm}
	$ \text{associated root vectors} $
	\cr
   $ \mu_1-(1/3)\mu+w $ 
	&
	$ (0,\dot{E}_1,0,0,0,0) $
	\cr
	$ -(\mu_1-(1/3)\mu+w) $
	&
	$ (0,0,\text{\d{$ E $}}_1,0,0,0) $
	\vspace{1.5mm}\cr
	$ \mu_2-(1/3)\mu+w $ 
	&
	$ (0,\dot{E}_2,0,0,0,0) $
	\cr
	$ -(\mu_2-(1/3)\mu+w) $
	&
	$ (0,0,\text{\d{$ E $}}_2,0,0,0) $
	\vspace{1.5mm}\cr
	$ \mu_3-(1/3)\mu+w $ 
	&
	$ (0,\dot{E}_3,0,0,0,0) $
	\cr
	$ -(\mu_3-(1/3)\mu+w) $
	&
	$ (0,0,\text{\d{$ E $}}_3,0,0,0) $
	\vspace{1.5mm}\cr
	$ -\lambda_0-(1/2)\mu_1-(1/3)\mu+w $ 
	&
	$ (0,\dot{F_1}(1+ie_1),0,0,0,0) $
	\cr
   $ -(-\lambda_0-(1/2)\mu_1-(1/3)\mu+w) $
	&
	$ (0,0,\text{\d{$ F_1 $}}(1-ie_1),0,0,0) $
	\vspace{1.5mm}\cr
	$ \lambda_0-(1/2)\mu_1-(1/3)\mu+w $ 
	&
	$ (0,\dot{F_1}(1-ie_1),0,0,0,0) $
	\cr
	$ -(\lambda_0-(1/2)\mu_1-(1/3)\mu+w) $
	&
	$ (0,0,\text{\d{$ F_1 $}}(1+ie_1),0,0,0) $
	\vspace{1.5mm}\cr
   $ -\lambda_1-(1/2)\mu_1-(1/3)\mu+w $ 
	&
	$ (0,\dot{F_1}(e_2+ie_3),0,0,0,0) $
	\cr
   $ -(-\lambda_0-(1/2)\mu_1-(1/3)\mu+w) $
	&
	$ (0,0,\text{\d{$ F_1 $}}(e_2-ie_3),0,0,0) $
	\vspace{1.5mm}\cr
	$ \lambda_1-(1/2)\mu_1-(1/3)\mu+w $ 
	&
	$ (0,\dot{F_1}(e_2-ie_3),0,0,0,0) $
	\cr
	$ -(\lambda_1-(1/2)\mu_1-(1/3)\mu+w) $
	&
	$ (0,0,\text{\d{$ F_1 $}}(e_2+ie_3),0,0,0) $
	\vspace{1.5mm}\cr
	$ (-1/2)(-\lambda_0+\lambda_1+2\lambda_2)-(1/2)\mu_2-(1/3)\mu+w $ 
	&
	$ (0,\dot{F_2}(1+ie_1),0,0,0,0) $
	\cr
	$ -((-1/2)(-\lambda_0+\lambda_1+2\lambda_2)-(1/2)\mu_2-(1/3)\mu+w) $
	&
	$ (0,0,\text{\d{$ F_2 $}}(1-ie_1),0,0,0) $
	\vspace{1.5mm}\cr
	$ (1/2)(-\lambda_0+\lambda_1+2\lambda_2)-(1/2)\mu_2-(1/3)\mu+w $ 
	&
	$ (0,\dot{F_2}(1-ie_1),0,0,0,0) $
	\cr
	$ -((-1/2)(-\lambda_0+\lambda_1+2\lambda_2)-(1/2)\mu_2-(1/3)\mu+w) $
	&
	$ (0,0,\text{\d{$ F_2 $}}(1+ie_1),0,0,0) $
	\vspace{1.5mm}\cr
   $ (-1/2)(-\lambda_0+\lambda_1-2\lambda_2)-(1/2)\mu_2-(1/3)\mu+w $ 
	&
	$ (0,\dot{F_2}(e_2+ie_3),0,0,0,0) $
	\cr
	$ -((-1/2)(-\lambda_0+\lambda_1-2\lambda_2)-(1/2)\mu_2-(1/3)\mu+w) $
	&
	$ (0,0,\text{\d{$ F_2 $}}(e_2-ie_3),0,0,0) $
	\vspace{1.5mm}\cr
   $ (1/2)(-\lambda_0+\lambda_1-2\lambda_2)-(1/2)\mu_2-(1/3)\mu+w $ 
	&
	$ (0,\dot{F_2}(e_2-ie_3),0,0,0,0) $
	\cr
	$ -((-1/2)(-\lambda_0+\lambda_1-2\lambda_2)-(1/2)\mu_2-(1/3)\mu+w) $
	&
	$ (0,0,\text{\d{$ F_2 $}}(e_2+ie_3),0,0,0) $
	\vspace{1.5mm}\cr
	$(-1/2)(-\lambda_0-\lambda_1-2\lambda_2)-(1/2)\mu_3-(1/3)\nu+w $ 
	&
	$ (0,\dot{F_3}(1+ie_1),0,0,0,0) $
	\cr
	$ -((-1/2)(-\lambda_0-\lambda_1-2\lambda_2)-(1/2)\mu_3-(1/3)\nu+w) $
	&
	$ (0,0,\text{\d{$ F_3 $}}(1-ie_1),0,0,0) $
	\vspace{1.5mm}\cr
	$ (1/2)(-\lambda_0-\lambda_1-2\lambda_2)-(1/2)\mu_3-(1/3)\nu+w $ 
	&
	$ (0,\dot{F_3}(1-ie_1),0,0,0,0) $
	\cr
	$ -((1/2)(-\lambda_0-\lambda_1-2\lambda_2)-(1/2)\mu_3-(1/3)\nu+w) $
	&
	$ (0,0,\text{\d{$ F_3 $}}(1+ie_1),0,0,0) $
	\vspace{1.5mm}\cr
   $(-1/2)(\lambda_0+\lambda_1-2\lambda_2)-(1/2)\mu_3-(1/3)\nu+w $ 
	&
	$ (0,\dot{F_3}(e_2+ie_3),0,0,0,0) $
	\cr
	$ -((-1/2)(-\lambda_0-\lambda_1-2\lambda_2)-(1/2)\mu_3-(1/3)\nu+w) $
	&
	$ (0,0,\text{\d{$ F_3 $}}(e_2-ie_3),0,0,0) $
	\vspace{1.5mm}\cr
	$ (1/2)(\lambda_0+\lambda_1-2\lambda_2)-(1/2)\mu_3-(1/3)\nu+w $ 
	&
	$ (0,\dot{F_3}(e_2-ie_3),0,0,0,0) $
	\cr
	$ -((1/2)(\lambda_0+\lambda_1-2\lambda_2)-(1/2)\mu_3-(1/3)\nu+w) $
	&
	$ (0,0,\text{\d{$ F_3 $}}(e_2+ie_3),0,0,0) $
	\vspace{1.5mm}\cr
   $ -\mu_1+(1/3)\mu+w $   
	&
	$ (0,\text{\d{$ E $}}_1,0,0,0,0) $
	\cr
	$ -(-\mu_1+(1/3)\mu+w) $
	&
	$ (0,0,\dot{E}_1,0,0,0) $
	\vspace{1.5mm}\cr
	$ -\mu_2+(1/3)\mu+w $ 
	&
	$ (0,\text{\d{$ E $}}_2,0,0,0,0) $
	\cr
	$ -(-\mu_2+(1/3)\mu+w) $
	&
	$ (0,0,\dot{E}_2,0,0,0) $
	\vspace{1.5mm}\cr
	$ -\mu_3+(1/3)\mu+w $ 
	&
	$ (0,\text{\d{$ E $}}_3,0,0,0,0) $
	\cr
	$ -(-\mu_3+(1/3)\mu+w) $
	&
	$ (0,0,\dot{E}_3,0,0,0) $
	\vspace{1.5mm}\cr
	$ -\lambda_0+(1/2)\mu_1+(1/3)\mu+w $ 
	&
	$ (0,\text{\d{$ F_1 $}}(1+ie_1),0,0,0,0) $
	\cr
	$ -(-\lambda_0+(1/2)\mu_1+(1/3)\mu+w) $
	&
	$ (0,0,\dot{F_1}(1-ie_1),0,0,0) $
	\vspace{1.5mm}\cr
	$ \lambda_0+(1/2)\mu_1+(1/3)\mu+w $ 
	&
	$ (0,\text{\d{$ F_1 $}}(1-ie_1),0,0,0,0) $
	\cr
	$ -(\lambda_0+(1/2)\mu_1+(1/3)\mu+w) $
	&
	$ (0,0,\dot{F_1}(1+ie_1),0,0,0) $
	\vspace{1.5mm}\cr
   $ -\lambda_1+(1/2)\mu_1+(1/3)\mu+w $ 
	&
	$ (0,\text{\d{$ F_1 $}}(e_2+ie_3),0,0,0,0) $
	\cr
	$ -(-\lambda_0+(1/2)\mu_1+(1/3)\mu+w) $
	&
	$ (0,0,\dot{F_1}(e_2-ie_3),0,0,0) $
	\vspace{1.5mm}\cr
	$ \lambda_1+(1/2)\mu_1+(1/3)\mu+w $ 
	&
	$ (0,\text{\d{$ F_1 $}}(e_2-ie_3),0,0,0,0) $
	\cr
	$ -(\lambda_1+(1/2)\mu_1+(1/3)\mu+w) $
	&
	$ (0,0,\dot{F_1}(e_2+ie_3),0,0,0) $
	\vspace{1.5mm}\cr
	$ (-1/2)(-\lambda_0+\lambda_1+2\lambda_2)+(1/2)\mu_2+(1/3)\mu+w $ 
	&
	$ (0,\text{\d{$ F_2 $}}(1+ie_1),0,0,0,0) $
	\cr
	$ -((-1/2)(-\lambda_0+\lambda_1+2\lambda_2)+(1/2)\mu_2+(1/3)\mu+w) $
	&
	$ (0,0,\dot{F_2}(1-ie_1),0,0,0) $
	\vspace{1.5mm}\cr
	$ (1/2)(-\lambda_0+\lambda_1+2\lambda_2)+(1/2)\mu_2+(1/3)\mu+w $ 
	&
	$ (0,\text{\d{$ F_2 $}}(1-ie_1),0,0,0,0) $
	\cr
	$ -((1/2)(-\lambda_0+\lambda_1+2\lambda_2)+(1/2)\mu_2+(1/3)\mu+w) $
	&
	$ (0,0,\dot{F_2}(1+ie_1),0,0,0) $
	\vspace{1.5mm}\cr
   $ (-1/2)(-\lambda_0+\lambda_1-2\lambda_2)+(1/2)\mu_2+(1/3)\mu+w $ 
	&
	$ (0,\text{\d{$ F_2 $}}(e_2+ie_3),0,0,0,0) $
	\cr
	$ -((-1/2)(-\lambda_0+\lambda_1-2\lambda_2)+(1/2)\mu_2+(1/3)\mu+w) $
	&
	$ (0,0,\dot{F_2}(e_2-ie_3),0,0,0) $
	\vspace{1.5mm}\cr
	$ (1/2)(-\lambda_0+\lambda_1-2\lambda_2)+(1/2)\mu_2+(1/3)\mu+w $ 
	&
	$ (0,\text{\d{$ F_2 $}}(e_2-ie_3),0,0,0,0) $
	\cr
	$ -((1/2)(-\lambda_0+\lambda_1-2\lambda_2)+(1/2)\mu_2+(1/3)\mu+w) $
	&
	$ (0,0,\dot{F_2}(e_2+ie_3),0,0,0) $
	\vspace{1.5mm}\cr
	$(-1/2)(-\lambda_0-\lambda_1-2\lambda_2)+(1/2)\mu_3+(1/3)\mu+w $ 
	&
	$ (0,\text{\d{$ F_3 $}}(1+ie_1),0,0,0,0) $
	\cr
	$ -((-1/2)(-\lambda_0-\lambda_1-2\lambda_2)+(1/2)\mu_3+(1/3)\mu+w)  $
	&
	$ (0,0,\dot{F_3}(1-ie_1),0,0,0) $
	\vspace{1.5mm}\cr
	$ (1/2)(-\lambda_0-\lambda_1-2\lambda_2)+(1/2)\mu_3+(1/3)\mu+w $ 
	&
	$ (0,\text{\d{$ F_3 $}}(1-ie_1),0,0,0,0) $
	\cr
	$ -((1/2)(-\lambda_0-\lambda_1-2\lambda_2)+(1/2)\mu_3+(1/3)\mu+w) $
	&
	$ (0,0,\dot{F_3}(1+ie_1),0,0,0) $
	\vspace{1.5mm}\cr
   $(-1/2)(\lambda_0+\lambda_1-2\lambda_2)+(1/2)\mu_3+(1/3)\mu+w $ 
	&
	$ (0,\text{\d{$ F_3 $}}(e_2+ie_3),0,0,0,0) $
	\cr
	$ -((-1/2)(\lambda_0+\lambda_1-2\lambda_2)+(1/2)\mu_3+(1/3)\mu+w)  $
	&
	$ (0,0,\dot{F_3}(e_2-ie_3),0,0,0) $
	\vspace{1.5mm}\cr
	$ (1/2)(\lambda_0+\lambda_1-2\lambda_2)+(1/2)\mu_3+(1/3)\mu+w $ 
	&
	$ (0,\text{\d{$ F_3 $}}(e_2-ie_3),0,0,0,0) $
	\cr
	$ -((1/2)(\lambda_0+\lambda_1-2\lambda_2)+(1/2)\mu_3+(1/3)\mu+w) $
	&
	$ (0,0,\dot{F_3}(e_2+ie_3),0,0,0) $
	\vspace{1.5mm}\cr
	$ \mu+w $  
	&
	$ (0,\dot{1},0,0,0,0) $
	\cr
	$ -(\mu+w) $
	&
	$ (0,0,\underset{\dot{}}{1},0,0,0) $
	\cr
	$ -\mu+w $
	&
	$ (0,\text{\d{$ 1 $}},0,0,0,0) $
	\cr
	$ -(-\mu+w) $
	&
	$ (0,0,\dot{1},0,0,0) $
	\vspace{1.5mm}\cr
	$ 2w $ 
	&
	$ (0,0,0,0,1,0)  $
	\cr
	$ -2w $
	&
	$ (0,0,0,0,0,1)  $,
	\cr
\end{longtable}
where $ \dot{F_k}(x):=(F_k(x),0,0,0),\underset{\dot{}}{F_k}(x):=(0,F_k(x),0,0),\dot{1}:=(0,0,1,0), \underset{\dot{}}{1}:=(0,0,0,1) $ in $ \mathfrak{P}^C $.

Thus, since $ ({\mathfrak{e}_8}^C)^{\varepsilon_1,\varepsilon_2} $ is spanned by $ \mathfrak{h}_8 $ and associated root vectors above, the roots obtained above are all.
\end{proof}

Subsequently, we prove the following theorem.

\begin{theorem}\label{theorem 8.18}
In the root system $ \varDelta $ of Theorem {\rm \ref{theorem 8.17}}
 \begin{align*}
      \varPi=\left\lbrace \alpha_1,\alpha_2, \alpha_3, \alpha_4,\alpha_5,
      \alpha_6, \alpha_7 \right\rbrace
  \end{align*}
is a fundamental root system of $  ({\mathfrak{e}_8}^C)^{\varepsilon_1,
\varepsilon_2} $, where
$
\alpha_1=\mu+w,
\alpha_2=-2w,
\alpha_3=\mu_1-(1/3)\mu+w,
\alpha_4=-(1/2)(\lambda_0+\lambda_1+2\lambda_2+\mu_1-\mu_2),
\alpha_5=-(1/2)(-2\lambda_0+\mu_2-\mu_3),
\alpha_6=-(\lambda_0-\lambda_1),
\alpha_7=2\lambda_2
 $.
The Dynkin diagram of $ ({\mathfrak{e}_8}^C)^{\varepsilon_1,\varepsilon_2}
$ is given by
\vspace{-2mm}
\begin{center}
\setlength{\unitlength}{1mm}
  \scalebox{1.0}
	{\setlength{\unitlength}{1mm}
\begin{picture}(100,20)
\put(20,10){\circle{2}} \put(19,6){$\alpha_1$}
\put(21,10){\line(1,0){8}}
\put(30,10){\circle{2}} \put(29,6){$\alpha_2$}
\put(31,10){\line(1,0){8}}
\put(40,10){\circle{2}} \put(39,6){$\alpha_3$}
\put(41,10){\line(1,0){8}}
\put(50,10){\circle{2}} \put(51,6){$\alpha_4$}
\put(51,10){\line(1,0){8}}
\put(50,8.8){\line(0,-1){7.8}}
\put(50,0){\circle{2}} \put(52,-1){$\alpha_7$}
\put(51,10){\line(1,0){8}}
\put(60,10){\circle{2}} \put(59,6){$\alpha_5$}
\put(61,10){\line(1,0){8}}
\put(70,10){\circle{2}} \put(69,6){$\alpha_6$}
\end{picture}}

\end{center}

In particular, we have that the type of the group $ ({E_8}^C)^{\varepsilon_1,\varepsilon_2} $  as Lie algebras is $ E_7 $.
\end{theorem}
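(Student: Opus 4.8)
The plan is to follow verbatim the procedure already used for Theorems \ref{theorem 5.4}, \ref{theorem 6.4} and \ref{theorem 7.15}. First I would verify that $\varPi=\{\alpha_1,\ldots,\alpha_7\}$ is a fundamental root system by writing each of the $63$ positive roots listed in Theorem \ref{theorem 8.17} as a linear combination of $\alpha_1,\ldots,\alpha_7$ with nonnegative integer coefficients. Since the seven forms $\alpha_1,\ldots,\alpha_7$ are linearly independent on $\mathfrak{h}_8$ (the rank is $7$) and every root turns out to be a $\Z$-combination of them whose coefficients all have one sign, this exhibits $\varPi$ as a base of $\varDelta$. This step is pure bookkeeping; I would record the expansions in a single display, as in the earlier sections, and not dwell on the arithmetic.

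Next I would equip $\mathfrak{h}_8$ with the inner product coming from the Killing form $B_8$ of ${\mathfrak{e}_8}^C$ (the form cited through \cite[Theorem 5.3.2]{iy0} and already used in Lemma \ref{lemma 8.14}). Restricting $B_8$ to $\mathfrak{h}_8$, and reusing the restriction of $B_7$ to $\mathfrak{h}_7$ computed inside the proof of Theorem \ref{theorem 7.15}, I obtain an explicit quadratic form in the coordinates $\lambda_0,\lambda_1,\lambda_2,\mu_1,\mu_2,\mu_3,\mu,w$, subject to the constraint $\mu_1+\mu_2+\mu_3=0$, which I handle by eliminating one $\mu_k$. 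From this I solve, for each $i$, the linear condition $B_8(R_{\alpha_i},R_8)=\alpha_i(R_8)$ for all $R_8\in\mathfrak{h}_8$, producing the canonical elements $R_{\alpha_1},\ldots,R_{\alpha_7}\in\mathfrak{h}_8$; their $E_k^\sim$-components will automatically be trace-free, exactly as the canonical elements in Theorems \ref{theorem 6.4} and \ref{theorem 7.15}.

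With the canonical elements in hand I would compute the $21$ pairwise values $(\alpha_i,\alpha_j)=B_8(R_{\alpha_i},R_{\alpha_j})$, then the angles $\cos\theta_{ij}=(\alpha_i,\alpha_j)/\sqrt{(\alpha_i,\alpha_i)(\alpha_j,\alpha_j)}$ and the squared-length ratios. I expect all simple roots to have equal length and $\cos\theta_{ij}\in\{0,-1/2\}$, with $\cos\theta_{ij}=-1/2$ exactly for the adjacent pairs $(\alpha_1,\alpha_2),(\alpha_2,\alpha_3),(\alpha_3,\alpha_4),(\alpha_4,\alpha_5),(\alpha_5,\alpha_6)$ and $(\alpha_4,\alpha_7)$, and $0$ otherwise. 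This yields a simply-laced diagram consisting of the chain $\alpha_1-\alpha_2-\alpha_3-\alpha_4-\alpha_5-\alpha_6$ with a single node $\alpha_7$ attached at $\alpha_4$, whose three arms from the branch node $\alpha_4$ have lengths $3,2,1$; that is precisely the Dynkin diagram of type $E_7$.

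Finally, since the restriction of $B_8$ to $\mathfrak{h}_8$ is nondegenerate, $({\mathfrak{e}_8}^C)^{\varepsilon_1,\varepsilon_2}$ is semisimple, and its root system is irreducible with connected diagram of type $E_7$, so it is the complex simple Lie algebra of type $E_7$. The dimension count $\dim_C(({\mathfrak{e}_8}^C)^{\varepsilon_1,\varepsilon_2})=133=\dim_C{\mathfrak{e}_7}^C$, together with rank $7$ and $126$ roots, confirms that no roots are missing, whence the group $({E_8}^C)^{\varepsilon_1,\varepsilon_2}$ is of type $E_7$. The main obstacle is not conceptual but the sheer volume and fragility of the computation: expanding all $63$ positive roots with the correct nonnegative integer coefficients and evaluating the $21$ Killing-form pairings without sign errors. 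The one genuinely delicate point is restricting $B_8$ (and with it the trace form on the $\mu_k$) correctly under the constraint $\mu_1+\mu_2+\mu_3=0$, which must be respected when solving for the canonical elements.
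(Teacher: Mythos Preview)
Your proposal is correct and follows essentially the same approach as the paper: express each positive root as a nonnegative integer combination of $\alpha_1,\ldots,\alpha_7$, restrict the Killing form $B_8$ (via the formula $B_8(R_8,R_8')=\tfrac{5}{3}B_7(\varPhi,\varPhi')+120ww'$) to $\mathfrak{h}_8$, solve for the canonical elements $R_{\alpha_i}$, and compute the $21$ pairings to find all $(\alpha_i,\alpha_i)=\tfrac{1}{30}$ and $\cos\theta_{ij}=-\tfrac{1}{2}$ exactly on the six pairs you list. The paper does not eliminate a $\mu_k$ but simply works with the constrained coordinates directly, producing trace-free $E_k^\sim$-parts such as $(\tfrac{1}{45}E_1-\tfrac{1}{90}E_2-\tfrac{1}{90}E_3)^\sim$ for $R_{\alpha_3}$; otherwise the two arguments are identical.
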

\begin{proof}
The all positive roots are expressed by $ \alpha_1,\alpha_2,\alpha_3,
\alpha_4,\alpha_5,\alpha_6 $ as follows:
   \begin{align*}
-(\lambda_0-\lambda_1)&=\alpha_6,   
\\
-(\lambda_0+\lambda_1)&=\alpha_1+2\alpha_2+3\alpha_3+4\alpha_4+2\alpha_5+\alpha_6+2\alpha_7,
\\
2\lambda_2&=\alpha_7,
\\
-(1/2)(-2\lambda_0+\mu_2-\mu_3)&=\alpha_5,
\\
(1/2)(-2\lambda_0-\mu_2+\mu_3)&=\alpha_1+2\alpha_2+3\alpha_3+4\alpha_4+3\alpha_5+2\alpha_6+2\alpha_7,
\\
-(1/2)(-2\lambda\_1+\mu_2-\mu_3)&=\alpha_5+\alpha_6,
\\
(1/2)(-2\lambda_1-\mu_2+\mu_3)&=\alpha_1+2\alpha_2+3\alpha_3+4\alpha_4+3\alpha_5+\alpha_6+2\alpha_7,
\\
(1/2)(\lambda_0-\lambda_1-2\lambda_2-\mu_1+\mu_3)&=\alpha_4+\alpha_5,
\\
-(1/2)(\lambda_0-\lambda_1-2\lambda_2+\mu_1-\mu_3)&=\alpha_4+\alpha_5+\alpha_6+\alpha_7,
\\
(1/2)(\lambda_0-\lambda_1+2\lambda_2-\mu_1+\mu_3)&=\alpha_4+\alpha_5+\alpha_7,
\\
-(1/2)(\lambda_0-\lambda_1+2\lambda_2+\mu_1-\mu_3)&=\alpha_4+\alpha_5+\alpha_6,
\\
-(1/2)(\lambda_0+\lambda_1+2\lambda_2+\mu_1-\mu_2)&=\alpha_4,
\\
-(1/2)(\lambda_0+\lambda_1+2\lambda_2-\mu_1+\mu_2)&=\alpha_1+2\alpha_2+3\alpha_3+3\alpha_4+2\alpha_5+\alpha_6+\alpha_7,
\\
(1/2)(-\lambda_0-\lambda_1+2\lambda_2+\mu_1-\mu_2)&=\alpha_1+2\alpha_2+3\alpha_3+3\alpha_4+2\alpha_5+\alpha_6+2\alpha_7,
\\
(1/2)(-\lambda_0-\lambda_1+2\lambda_2-\mu_1+\mu_2)&=\alpha_4+\alpha_7 
\\
\mu_1+(2/3)\mu&=\alpha_1+\alpha_2+\alpha_3,
\\
-(\mu_2+(2/3)\mu)&=\alpha_2+2\alpha_3+2\alpha_4+2\alpha_5+\alpha_6+\alpha_7,
\\
\mu_3+(2/3)\mu&=\alpha_1+\alpha_2+\alpha_3+2\alpha_4+2\alpha_5+
\alpha_6+\alpha_7,
\\
-\lambda_0-(1/2)\mu_1+(2/3)\mu&=\alpha_1+\alpha_2+\alpha_3+2\alpha_4+\alpha_5+\alpha_6+\alpha_7,
\\
-(\lambda_0-(1/2)\mu_1+(2/3)\mu)&=\alpha_2+2\alpha_3+2\alpha_4+\alpha_5+
\alpha_6+\alpha_7,
\\
-\lambda_1-(1/2)\mu_1+(2/3)\mu&=\alpha_1+\alpha_2+\alpha_3+2\alpha_4+\alpha_5+\alpha_7,
\\
-(\lambda_1-(1/2)\mu_1+(2/3)\mu)&=\alpha_2+2\alpha_3+\alpha_4+\alpha_5+\alpha_7,
\\
-(1/2)(-\lambda_0+\lambda_1+2\lambda_2)-(1/2)\mu_2+(2/3)\mu&=\alpha_1+
\alpha_2+\alpha_3+\alpha_4+\alpha_5,
\\
(1/2)(-\lambda_0+\lambda_1+2\lambda_2)-(1/2)\mu_2+(2/3)\mu&=\alpha_1+\alpha_2+\alpha_3+\alpha_4+\alpha_5+\alpha_6+\alpha_7,
\\
-(1/2)(-\lambda_0+\lambda_1-2\lambda_2)-(1/2)\mu_2+(2/3)\mu
&=\alpha_1+\alpha_2+\alpha_3+\alpha_4+\alpha_5+\alpha_7,
\\
(1/2)(-\lambda_0+\lambda_1-2\lambda_2)-(1/2)\mu_2+(2/3)\mu
&=\alpha_1+\alpha_2+\alpha_3+\alpha_4+\alpha_5+\alpha_6,
\\
-((-1/2)(-\lambda_0-\lambda_1-2\lambda_2)-(1/2)\mu_3+(2/3)\mu)
&=\alpha_2+2\alpha_3+\alpha_4+2\alpha_5+\alpha_6+\alpha_7,
\\
(1/2)(-\lambda_0-\lambda_1-2\lambda_2)-(1/2)\mu_3+(2/3)\mu
&=\alpha_1+\alpha_2+\alpha_3+\alpha_4,
\\
-(1/2)(\lambda_0+\lambda_1-2\lambda_2)-(1/2)\mu_3+(2/3)\mu
&=\alpha_1+\alpha_2+\alpha_3+\alpha_4+\alpha_7,
\\
-((1/2)(\lambda_0+\lambda_1-2\lambda_2)-(1/2)\mu_3+(2/3)\mu)
&=\alpha_2+2\alpha_3+\alpha_4+2\alpha_5+\alpha_6+2\alpha_7,
\\
\mu_1-(1/3)\mu+w
&=\alpha_3,
\\
-(\mu_2-(1/3)\mu+w)
&=\alpha_1+2\alpha_2+2\alpha_3+2\alpha_4+2\alpha_5+\alpha_6+\alpha_7,
\\
\mu_3-(1/3)\mu+w
&=\alpha_3+2\alpha_4+2\alpha_5+\alpha_6+\alpha_7,
\\
-\lambda_0-(1/2)\mu_1-(1/3)\mu+w
&=\alpha_3+2\alpha_4+\alpha_5+\alpha_6+\alpha_7,
\\
-(\lambda_0-(1/2)\mu_1-(1/3)\mu+w)
&=\alpha_1+2\alpha_2+2\alpha_3+2\alpha_4+\alpha_5+\alpha_6+\alpha_7,
\\
-\lambda_1-(1/2)\mu_1-(1/3)\mu+w
&=\alpha_3+2\alpha_4+\alpha_5+\alpha_7,
\\
-(\lambda_1-(1/2)\mu_1-(1/3)\mu+w)
&=\alpha_1+2\alpha_2+2\alpha_3+2\alpha_4+\alpha_5+\alpha_7,
\\
-(1/2)(-\lambda_0+\lambda_1+2\lambda_2)-(1/2)\mu_2-(1/3)\mu+w
&=\alpha_3+\alpha_4+\alpha_5,
\\
(1/2)(-\lambda_0+\lambda_1+2\lambda_2)-(1/2)\mu_2-(1/3)\mu+w
&=\alpha_3+\alpha_4+\alpha_5+\alpha_6+\alpha_7,
\\
-(1/2)(-\lambda_0+\lambda_1-2\lambda_2)-(1/2)\mu_2-(1/3)\mu+w
&=\alpha_3+\alpha_4+\alpha_5+\alpha_7,
\\
(1/2)(-\lambda_0+\lambda_1+2\lambda_2)-(1/2)\mu_2-(1/3)\mu+w
&=\alpha_3+\alpha_4+\alpha_5+\alpha_6+\alpha_7,
\\
-((-1/2)(-\lambda_0-\lambda_1-2\lambda_2)-(1/2)\mu_3-(1/3)\mu+w)
&=\alpha_1+2\alpha_2+2\alpha_3+3\alpha_4+2\alpha_5+\alpha_6+2\alpha_7,
\\
(1/2)(-\lambda_0-\lambda_1-2\lambda_2)-(1/2)\mu_3-(1/3)\mu+w
&=\alpha_3+\alpha_4,
\\
(-1/2)(\lambda_0+\lambda_1-2\lambda_2)-(1/2)\mu_3-(1/3)\mu+w
&=\alpha_3+\alpha_4+\alpha_7,
\\
-((1/2)(\lambda_0+\lambda_1-2\lambda_2)-(1/2)\mu_3-(1/3)\mu+w)
&=\alpha_1+2\alpha_2+2\alpha_3+3\alpha_4+2\alpha_5+\alpha_6+2\alpha_7,
\\
-(-\mu_1+(1/3)\mu+w)
&=\alpha_2+\alpha_3,
\\
-\mu_2+(1/3)\mu+w
&=\alpha_1+\alpha_2+2\alpha_3+2\alpha_4+2\alpha_5+\alpha_6+\alpha_7,
\\
-(-\mu_3+(1/3)\mu+w)
&=\alpha_2+\alpha_3+2\alpha_4+2\alpha_5+\alpha_6+\alpha_7,
\\
-\lambda_0+(1/2)\mu_1+(1/3)\mu+w
&=\alpha_1+\alpha_2+2\alpha_3+2\alpha_4+\alpha_5+\alpha_6+\alpha_7,
\\
-(\lambda_0+(1/2)\mu_1+(1/3)\mu+w)
&=\alpha_2+\alpha_3+2\alpha_4+\alpha_5+\alpha_6+\alpha_7,
\\
-\lambda_1+(1/2)\mu_1+(1/3)\mu+w
&=\alpha_1+\alpha_2+2\alpha_3+2\alpha_4+\alpha_5+\alpha_7,
\\
-(\lambda_1+(1/2)\mu_1+(1/3)\mu+w)
&=\alpha_2+\alpha_3+2\alpha_4+\alpha_5+\alpha_7,
\\
-((-1/2)(-\lambda_0+\lambda_1+2\lambda_2)+(1/2)\mu_2+(1/3)\mu+w)
&=\alpha_2+\alpha_3+\alpha_4+\alpha_5+\alpha_6+\alpha_7,
\\
-((1/2)(-\lambda_0+\lambda_1+2\lambda_2)+(1/2)\mu_2+(1/3)\mu+w)
&=\alpha_2+\alpha_3+\alpha_4+\alpha_5,
\\
-((-1/2)(-\lambda_0+\lambda_1-2\lambda_2)+(1/2)\mu_2+(1/3)\mu+w)
&=\alpha_2+\alpha_3+\alpha_4+\alpha_5+\alpha_6,
\\
-((1/2)(-\lambda_0+\lambda_1-2\lambda_2)+(1/2)\mu_2+(1/3)\mu+w)
&=\alpha_2+\alpha_3+\alpha_4+\alpha_5+\alpha_7,
\\
-((-1/2)(-\lambda_0-\lambda_1-2\lambda_2)+(1/2)\mu_3+(1/3)\mu+w)
&=\alpha_2+\alpha_3+\alpha_4,
\\
(1/2)(-\lambda_0-\lambda_1-2\lambda_2)+(1/2)\mu_3+(1/3)\mu+w
&=\alpha_1+\alpha_2+2\alpha_3+3\alpha_4+2\alpha_5+\alpha_6+\alpha_7,
\\
(-1/2)(\lambda_0+\lambda_1-2\lambda_2)+(1/2)\mu_3+(1/3)\mu+w
&=\alpha_1+\alpha_2+2\alpha_3+3\alpha_4+2\alpha_5+\alpha_6+2\alpha_7,
\\
-((1/2)(\lambda_0+\lambda_1-2\lambda_2)+(1/2)\mu_3+(1/3)\mu+w)
&=\alpha_2+\alpha_3+\alpha_4+\alpha_7,
\\
\mu+w
&=\alpha_1,
\\
-(-\mu+w)
&=\alpha_1+\alpha_2,
\\
-2w
&=\alpha_2.
\end{align*}
Hence $ \varPi $ is a fundamental root system of $  ({\mathfrak{e}_8}
^C)^{\varepsilon_1,\varepsilon_2} $.

Then, for $ R, R' \in {\mathfrak{e}_8}^C$, the Killing form $ B_8 $ of $ {\mathfrak{e}_8}^C $ is given by
\begin{align*}
B_8(R,R')&=B_8((\varPhi,P,Q,r,s,t),(\varPhi',P',Q',r',s',t'))
\\
&=\dfrac{5}{3} B_7(\varPhi,\varPhi')+15\{Q,P'\}-15\{P,Q'\}+120rr'+60ts'+60st'
\end{align*}
(\cite[Theorem 5.3.2]{iy0}), so that for $
R_8:=(\varPhi,0,0,w,0,0),
{R_8}':=(\varPhi',0,0,w',0,0) \in \mathfrak{h}_8$, we have
\begin{align*}
 B_8(R_7,{R_8}')&=\dfrac{5}{3}B_7(\varPhi,\varPhi')+120ww'
\\
&=\dfrac{5}{3}(36(\lambda_0{\lambda_0}'+\lambda_1{\lambda_1}'
  +2\lambda_2{\lambda_2}')+18(\mu_1{\mu_1}'+\mu_2{\mu_2}'+\mu_3{\mu_3}')
  +24\mu\mu')+120ww'
\\
&=60(\lambda_0{\lambda_0}'+\lambda_1{\lambda_1}'
  +2\lambda_2{\lambda_2}')+30(\mu_1{\mu_1}'+\mu_2{\mu_2}'+\mu_3{\mu_3}')
  +40\mu\mu'+120ww',
\end{align*}
where $ \varPhi:=\varPhi(\lambda_0(iG_{01})+
 \lambda_1(iG_{23})+\lambda_2(i(G_{45}+G_{67})+(\mu_1E_1+\mu_2E_2+
 \mu_3E_3)^\sim,0,0,\mu ,\varPhi':=\varPhi({\lambda_0}'(iG_{01})+{\lambda_1}'(iG_{23})+
{\lambda_2}'(i(G_{45}+G_{67})+({\mu_1}'E_1+{\mu_2}'E_2+{\mu_3}'E_3)^\sim ,0,0,\mu')$.

Now, the canonical elements $ R_{\alpha_1},R_{\alpha_2}, R_{\alpha_3}, R_{\alpha_4},R_{\alpha_5},R_{\alpha_6},R_{\alpha_7} $ corresponding to
$ \alpha_1, \alpha_2,\alpha_3,\alpha_4, \allowbreak \alpha_5, \alpha_6, \alpha_7$ are determined as follows:
\begin{align*}
R_{\alpha_1}&=(\varPhi(0,0,0,\dfrac{1}{40}),0,0,\dfrac{1}{120},0,0,),
\\
R_{\alpha_2}&=(0,0,0,-\dfrac{1}{60},0,0),
\\
R_{\alpha_3}&=(\varPhi((\dfrac{1}{45}E_1-\dfrac{1}{90}E_2-\dfrac{1}{90}E_3)^\sim,0,0,-\dfrac{1}{120}),0,0,\dfrac{1}{120},0,0),
\\
R_{\alpha_4}&=(\varPhi(-\dfrac{1}{120}(iG_{01})-\!\dfrac{1}{120}(iG_{23})-
\dfrac{1}{120}(i(G_{45}+G_{67}))+(-\dfrac{1}{60}E_1+\dfrac{1}{60}E_2)^\sim,0,0,0),0,0,0,0,0),
\\
R_{\alpha_5}&=(\varPhi(\dfrac{1}{60}(iG_{01})+(-\dfrac{1}{60}E_2+\dfrac{1}{60}E_3)^\sim,0,0,0),0,0,0,0,0),
\\
R_{\alpha_6}&=(\varPhi(-\dfrac{1}{60}(iG_{01})+\dfrac{1}{60}(iG_{23}),0,0,0),0,0,0,0,0),
\\
R_{\alpha_7}&=(\varPhi(-\dfrac{1}{60}(i(G_{45}+G_{67})),0,0,0),0,0,0,0,0).
\end{align*}

Hence we have the following
\begin{align*}
(\alpha_1,\alpha_1)&=B_8(R_{\alpha_1},R_{\alpha_1})=40\cdot\left( -\dfrac{1}{40}\right)^2 +120\cdot\left(\dfrac{1}{120} \right)^2=\dfrac{1}{30},
\\
(\alpha_1,\alpha_2)&=B_8(R_{\alpha_1},R_{\alpha_2})=120\cdot\left( \dfrac{1}{120}\right)\left(-\dfrac{1}{60} \right)=-\dfrac{1}{60},
\\
(\alpha_1,\alpha_3)&=(\alpha_1,\alpha_4)=(\alpha_1,\alpha_5)=(\alpha_1,\alpha_6)=(\alpha_1,\alpha_7)=0,
\\
(\alpha_2,\alpha_2)&=B_8(R_{\alpha_2},R_{\alpha_2})=120\cdot\left( -\dfrac{1}{60}\right)=\dfrac{1}{30},
\\
(\alpha_2,\alpha_3)&=B_8(R_{\alpha_2},R_{\alpha_3})=120\cdot\left( -\dfrac{1}{60}\right)\left( \dfrac{1}{120}\right)=-\dfrac{1}{60},
\\
(\alpha_2,\alpha_4)&=(\alpha_2,\alpha_5)=(\alpha_2,\alpha_6)=(\alpha_2,\alpha_7)=0,
\\
(\alpha_3,\alpha_3)
&=B_8(R_{\alpha_3},R_{\alpha_3})=30\cdot\left(\left(\dfrac{1}{45} \right)^2+\left(-\dfrac{1}{90} \right)^2+\left(-\dfrac{1}{90} \right)^2   \right)+40\cdot\left( -\dfrac{1}{120}\right)^2+120\cdot\left( \dfrac{1}{120}\right)^2=\dfrac{1}{30},
\\
(\alpha_3,\alpha_4)&=B_8(R_{\alpha_3},R_{\alpha_4})=30\cdot\left( \left( \dfrac{1}{45}\right)\left(-\dfrac{1}{60} \right)+\left(-\dfrac{1}{90} \right)\left( \dfrac{1}{60}\right)    \right)=-\dfrac{1}{60},
\\
(\alpha_3,\alpha_5)&=(\alpha_3,\alpha_6)=(\alpha_3,\alpha_7)=0,
\\
(\alpha_4,\alpha_4)
&=B_8(R_{\alpha_4},R_{\alpha_4})=\!60\cdot \left(\left(-\dfrac{1}{120} \right)^2+\left(-\dfrac{1}{120} \right)^2+2\left(-\dfrac{1}{120} \right)^2   \right)+30\cdot\left( \left( -\dfrac{1}{60}\right)^2+\cdot\left( \dfrac{1}{60}\right)^2\right) \!=\!\dfrac{1}{30},
\\
(\alpha_4,\alpha_5)&=B_8(R_{\alpha_4},R_{\alpha_5})=60\cdot \left(-\dfrac{1}{120} \right)\left(\dfrac{1}{60} \right)+30\cdot \left(\dfrac{1}{60} \right) \left(-\dfrac{1}{60} \right)=-\dfrac{1}{60},
\\
(\alpha_4,\alpha_6)&=0,
\\
(\alpha_4,\alpha_7)&=60\cdot2\cdot \left(-\dfrac{1}{120} \right)\left( \dfrac{1}{60}\right)=-\dfrac{1}{60},
\\
(\alpha_5,\alpha_5)&=B_8(R_{\alpha_5},R_{\alpha_5})=60\cdot \left(\dfrac{1}{60} \right)^2+30\cdot \left(\left(-\dfrac{1}{60} \right)^2+\left(\dfrac{1}{60} \right)^2  \right)=\dfrac{1}{30},
\\
(\alpha_5,\alpha_6)&=B_8(R_{\alpha_5},R_{\alpha_6})=60\cdot\left(\dfrac{1}{60} \right)\left(-\dfrac{1}{60} \right) =-\dfrac{1}{60},
\\
(\alpha_5,\alpha_7)&=0,
\\
(\alpha_6,\alpha_6)&=B_8(R_{\alpha_6},R_{\alpha_6})=60\cdot \left(\left(-\dfrac{1}{60} \right)^2+\left(\dfrac{1}{60} \right)^2  \right)=\dfrac{1}{30},
\\
(\alpha_6,\alpha_7)&=0,
\\
(\alpha_7,\alpha_7)&=B_8(R_{\alpha_7},R_{\alpha_7})=60\cdot 2\cdot \left(\dfrac{1}{60} \right)^2=\dfrac{1}{30}.
\end{align*}

Thus, using the inner product above, we have
\begin{align*}
    \cos\theta_{12}&=\dfrac{(\alpha_1,\alpha_2)}{\sqrt{(\alpha_1,\alpha_1)(\alpha_2,\alpha_2)}}=-\dfrac{1}{2},\quad
    \cos\theta_{13}=\dfrac{(\alpha_1,\alpha_3)}{\sqrt{(\alpha_1,\alpha_1)
    (\alpha_3,\alpha_3)}}=0,
    \\
    \cos\theta_{14}&=\dfrac{(\alpha_1,\alpha_4)}{\sqrt{(\alpha_1,\alpha_1)(\alpha_4,\alpha_4)}}=0,\quad
    \cos\theta_{15}=\dfrac{(\alpha_1,\alpha_5)}{\sqrt{(\alpha_1,\alpha_1)
    (\alpha_5,\alpha_5)}}=0,
    \\
    \cos\theta_{16}&=\dfrac{(\alpha_1,\alpha_6)}{\sqrt{(\alpha_1,\alpha_1)(\alpha_6,\alpha_6)}}=0, \quad
    \cos\theta_{17}=\dfrac{(\alpha_1,\alpha_7)}{\sqrt{(\alpha_1,\alpha_1)
    (\alpha_7,\alpha_7)}}=0,
\\
    \cos\theta_{23}&=\dfrac{(\alpha_2,\alpha_3)}{\sqrt{(\alpha_2,\alpha_2)(\alpha_3,\alpha_3)}}=-\dfrac{1}{2}, \quad
    \cos\theta_{24}=\dfrac{(\alpha_2,\alpha_4)}{\sqrt{(\alpha_2,\alpha_2)
    (\alpha_4,\alpha_4)}}=0,
\\
    \cos\theta_{25}&=\dfrac{(\alpha_2,\alpha_5)}{\sqrt{(\alpha_2,\alpha_2)(\alpha_5,\alpha_5)}}=0, \quad
    \cos\theta_{26}=\dfrac{(\alpha_2,\alpha_6)}{\sqrt{(\alpha_2,\alpha_2)(\alpha_6,\alpha_6)}}=0,
\\
    \cos\theta_{27}&=\dfrac{(\alpha_2,\alpha_7)}{\sqrt{(\alpha_2,\alpha_2)(\alpha_7,\alpha_7)}}=0, \quad
    \cos\theta_{34}=\dfrac{(\alpha_3,\alpha_4)}{\sqrt{(\alpha_3,\alpha_3)
    (\alpha_4,\alpha_4)}}=-\dfrac{1}{2},
\\
    \cos\theta_{35}&=\dfrac{(\alpha_3,\alpha_5)}{\sqrt{(\alpha_3,\alpha_3)(\alpha_5,\alpha_5)}}=0,\quad
    \cos\theta_{36}=\dfrac{(\alpha_3,\alpha_6)}{\sqrt{(\alpha_3,\alpha_3)
    (\alpha_6,\alpha_6)}}=0,
\\
   \cos\theta_{37}&=\dfrac{(\alpha_3,\alpha_7)}{\sqrt{(\alpha_3,\alpha_3)
    (\alpha_7,\alpha_7)}}=0,\quad
   \cos\theta_{45}=\dfrac{(\alpha_4,\alpha_5)}{\sqrt{(\alpha_4,\alpha_4)(\alpha_5,\alpha_5)}}=-\dfrac{1}{2},
\\
    \cos\theta_{46}&=\dfrac{(\alpha_4,\alpha_6)}{\sqrt{(\alpha_4,\alpha_4)(\alpha_6,\alpha_6)}}=0, \quad
    \cos\theta_{47}=\dfrac{(\alpha_4,\alpha_7)}{\sqrt{(\alpha_4,\alpha_4)(\alpha_7,\alpha_7)}}=-\dfrac{1}{2},
\\
    \cos\theta_{56}&=\dfrac{(\alpha_5,\alpha_6)}{\sqrt{(\alpha_5,\alpha_5)(\alpha_6,\alpha_6)}}=-\dfrac{1}{2},\quad
     \cos\theta_{57}=\dfrac{(\alpha_5,\alpha_7)}{\sqrt{(\alpha_5,\alpha_5)(\alpha_7,\alpha_7)}}=0,
\\
     \cos\theta_{67}&=\dfrac{(\alpha_6,\alpha_7)}{\sqrt{(\alpha_6,\alpha_6)(\alpha_7,\alpha_7)}}=0.
\end{align*}
so that we can draw the required Dynkin diagram.

Therefore we have that the type of the group $ ({E_8}^C)^{\varepsilon_1,\varepsilon_2} $ as Lie algebras is $ E_7 $.
\end{proof}

In order to determine the structure of the group $ ({E_8}^C)^{\varepsilon_1,\varepsilon_2} $, we determine the center \\ $ z(({E_8}^C)^{\varepsilon_1,\varepsilon_2}) $ of $ ({E_8}^C)^{\varepsilon_1,\varepsilon_2} $.

\begin{proposition}\label{proposition 8.19}
The center $ z(({E_8}^C)^{\varepsilon_1,\varepsilon_2}) $ of the group $ ({E_8}^C)^{\varepsilon_1,\varepsilon_2} $ is isomorphic to a cyclic group of order two {\rm :}
\begin{align*}
z(({E_8}^C)^{\varepsilon_1,\varepsilon_2})=\left\lbrace1,\gamma \right\rbrace  \cong \Z_2.
\end{align*}
\end{proposition}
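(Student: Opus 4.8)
The plan is to determine the center by establishing two opposite bounds and matching them. First I would record that $({E_8}^C)^{\varepsilon_1,\varepsilon_2}$ is a connected complex Lie group (Theorem \ref{theorem 8.16}) whose Lie algebra $({\mathfrak{e}_8}^C)^{\varepsilon_1,\varepsilon_2}$ is simple of type $E_7$ (Theorem \ref{theorem 8.18}). For any connected Lie group $G$ with Lie algebra $\mathfrak{g}$, the center coincides with the kernel of the adjoint representation, and $\Ad$ factors through the simply connected covering $\widetilde{G}$; consequently $z(G)$ is a quotient of $z(\widetilde{G})$. Since the center of the simply connected group of type $E_7$ is $\Z_2$, this yields at once the upper bound $|z(({E_8}^C)^{\varepsilon_1,\varepsilon_2})| \le 2$.

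For the lower bound I would exhibit $\gamma$ explicitly as a nontrivial central element. The key algebraic fact, already recorded in Section 5, is ${\varepsilon_1}^2={\varepsilon_2}^2=\gamma$. Because $\gamma={\varepsilon_1}^2={\varepsilon_2}^2$ commutes with both $\varepsilon_1$ and $\varepsilon_2$, and $\gamma \in G_2 \subset E_8 \subset {E_8}^C$ through the chain of inclusions, we have $\gamma \in ({E_8}^C)^{\varepsilon_1,\varepsilon_2}$. Moreover, for every $\alpha \in ({E_8}^C)^{\varepsilon_1,\varepsilon_2}$ the defining relation $\alpha\varepsilon_1 = \varepsilon_1\alpha$ gives
\begin{align*}
\alpha\gamma = \alpha{\varepsilon_1}^2 = \varepsilon_1\alpha\varepsilon_1 = {\varepsilon_1}^2\alpha = \gamma\alpha,
\end{align*}
so $\gamma$ lies in $z(({E_8}^C)^{\varepsilon_1,\varepsilon_2})$. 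It then remains to check $\gamma \neq 1$ in ${E_8}^C$: since $\gamma$ negates $\H^C e_4 \subset \mathfrak{C}^C$, it acts nontrivially on $\mathfrak{J}^C$ and hence on $\mathfrak{P}^C$, so the induced transformation $\tilde{\gamma}$ of ${\mathfrak{e}_8}^C$ moves, for instance, an element $P^-$ whose $\mathfrak{P}^C$-component has a nonzero $\H^C e_4$-part. Thus $\{1,\gamma\} \cong \Z_2 \subseteq z(({E_8}^C)^{\varepsilon_1,\varepsilon_2})$.

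Combining the two bounds then forces $z(({E_8}^C)^{\varepsilon_1,\varepsilon_2}) = \{1,\gamma\} \cong \Z_2$, as claimed. The part I expect to require the most care is the upper bound: one must legitimately invoke that the connected group in question is covered by the simply connected group of type $E_7$, whose center is exactly $\Z_2$, and that the center of a connected quotient is itself a quotient of that $\Z_2$, hence of order at most two. By contrast the exhibition of $\gamma$ and the verification of its centrality are purely formal via the commutator identity above, and its nontriviality is immediate from the explicit action of $\gamma$ on $\mathfrak{C}^C$.
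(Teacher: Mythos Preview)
Your proof is correct but follows a genuinely different route from the paper. The paper gives a long direct computation: starting from an arbitrary $\alpha$ in the center, it analyzes $\alpha 1_-$ by commuting $\alpha$ with many explicit elements of $({E_8}^C)^{\varepsilon_1,\varepsilon_2}$ (including elements coming from an $SU(2)$-embedding $\varphi$), works through several case distinctions to force $\alpha 1_- = t1_-$, $\alpha 1^- = t1^-$, $\alpha\tilde{1}=\tilde{1}$ with $t^2=1$, rules out the sign case $t=-1$, and thereby lands $\alpha$ in $(({E_8}^C)^{\varepsilon_1,\varepsilon_2})_{\tilde{1},1^-,1_-}\cong ({E_7}^C)^{\varepsilon_1,\varepsilon_2}$, whose center $\{1,\gamma,-1,-\gamma\}$ is already known; a final check that $-1,-\gamma\notin {E_8}^C$ finishes. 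Your argument replaces all of this by structure theory: connectedness (Theorem~\ref{theorem 8.16}) and the $E_7$ Dynkin diagram (Theorem~\ref{theorem 8.18}) force the group to be a quotient of the simply connected complex $E_7$, whose center has order two, giving the upper bound in one stroke; the lower bound is the neat observation that $\gamma=\varepsilon_1^{\,2}$ is automatically central in the $\varepsilon_1,\varepsilon_2$-fixed subgroup. Your approach is far shorter and more conceptual, at the cost of invoking the classification fact $z({E_7}^C)\cong\Z_2$ and leaning on the root computation of Theorem~\ref{theorem 8.18}; since the paper itself uses $z({E_7}^C)\cong\Z_2$ in the very next Theorem~\ref{theorem 8.20}, this is entirely in keeping with the paper's toolbox. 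The paper's computational proof, by contrast, is independent of Theorems~\ref{theorem 8.16} and~\ref{theorem 8.18} and of the $E_7$ center fact, so it could in principle stand alone earlier in the logical flow.
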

\begin{proof}
Let $ \alpha \in z(({E_8}^C)^{\varepsilon_1, \varepsilon_2}) $.
Then, since $ 1_- \in  (\mathfrak{W}^C)_{ \varepsilon_1, \varepsilon_2}$, we can set $ \alpha 1_-:=(\varPhi,P,Q,r,s,t) \in (\mathfrak{W}^C)_{ \varepsilon_1, \varepsilon_2} $.
For $ \beta \in ({E_7}^C)^{\varepsilon_1, \varepsilon_2} \subset ({E_8}^C)^{\varepsilon_1, \varepsilon_2}$, we see
\begin{align*}
\beta(\alpha 1_-)=\alpha(\beta 1_-)=\alpha 1_-(=(\varPhi,P,Q,r,s,t)),
\end{align*}
on the other hand, we see
\begin{align*}
\beta(\alpha 1_-)=\beta(\varPhi,P,Q,r,s,t)=(\beta\varPhi\beta^{-1},\beta P,\beta Q,r,s,t).
\end{align*}
Hence we have
\begin{align*}
    \left\lbrace
    \begin{array}{l}
    \beta \varPhi\beta^{-1}=\varPhi,
    \\
    \beta P=P,  \quad\qquad {\text{for all}}\,\, \beta \in ({E_7}^C)^{ \varepsilon_1, \varepsilon_2}
    \\
    \beta Q=Q.
    \end{array}\right.
\end{align*}
Here, let the element $ -1 \in ({E_7}^C)^{\varepsilon_1, \varepsilon_2} $ as $ \beta $. Then we have $ P=Q=0 $. Moreover, let the element $ \omega 1 \in ({E_6}^C)^{\varepsilon_1, \varepsilon_2} \subset ({E_7}^C)^{\varepsilon_1, \varepsilon_2} $ as $ \beta $, where $ \omega \in C,\omega^3=1, \omega \not=1 $. Then, for $ \varPhi:=\varPhi(\phi,A,B,\nu) $, we have $ A=B=0 $. Hence $ \alpha 1_- $ is of the form $ (\varPhi(\phi,0,0,\nu),0,0,r,s,t) $: $ \alpha 1_- =(\varPhi(\phi,0,0,\nu),0,0,r,s,t)  $, moreover we see $ t\not=0 $ for $ \alpha 1_-=(\varPhi(\phi,0,0,\nu),0,0,r,s,t) $.

\noindent Indeed, assume $ t=0 $, then we have $ r=0 $ from Lemma \ref{lemma 8.14} (6).
Hence $ \alpha 1_- $ is of the form $ ((\varPhi(\phi,0,0,\nu),0,0,0,s,0)$. In addition, $\varPhi(\phi,0,0,\nu) $ satisfies the condition
$ \varPhi Q'=0 $ for all $ Q' \in (\mathfrak{P}_{\sH})^C $ from Lemma \ref{lemma 8.14} (5), so set $ Q':=(0,0,0,1) $, then we have $ \nu=0 $. Moreover set $ Q':=(E,0,0,0) $ and $ Q':=(F_k(x), 0,0,0),x \in \H^C,k=1,2 $, then we have $ T=0  $ and $ D_i=0,i=1,2,3, a_i=0 $, where $ \phi:=(D_1,D_2,D_3)+\tilde{A}_1(a_1)+ \tilde{A}_2(a_2)+\tilde{A}_3(a_3)+\tilde{T} \in ({\mathfrak{e}_6}^C)^{\varepsilon_1,\varepsilon_2} $, that is, $ \phi=0 $.
Thus $ \alpha 1_- $ is of the form $ s1^-$: $ \alpha 1_-=s1^-, s\not=0 $.
Let the mapping $ \varphi: SU(2) \to E_8 \subset {E_8}^C $ which is defined in \cite[Theorem 5.7.4]{iy0} as $ \varphi_3 $. Then, for $ A_1:=
    \begin{pmatrix}
    \,0\, & \,i\, \\
    i & 0
    \end{pmatrix} \in SU(2)=\{A \in M(2,C)\,|\, (\tau\,{}^t\!A)A=E, \det A=1\} $, we choose the element $ \varphi(A_4)=\exp((\pi/2)\ad(0,0,0,0,i,i)) \in ({E_8}^C)^{\varepsilon_1,\varepsilon_2} $ (Lemma \ref{lemma 8.12} (1)).
    It follows that
    \begin{align*}
    \varphi(A_1)(\alpha 1_-)=\alpha(\varphi(A_1) 1_-)=\alpha 1^-,
    \end{align*}
    on the other hand, it follows that
    \begin{align*}
    \varphi(A_1)(\alpha 1_-)=\varphi(A_1)(s1^-)=s1_-.
    \end{align*}
    Thus we have $ \alpha 1^-=s1_- $.

    \noindent In addition, from $ B_8(\alpha 1_-, \alpha 1^-)=B_8(1_-,1^-) $, we have $ s^2=1 $. Moreover, it follows that
    \begin{align*}
    [\alpha 1_-, \alpha 1^-]=[s1^-,s1_-]=s^2[1^-,1_-]=s^2\tilde{1},
    \end{align*}
    on the other hand, it follows that
    \begin{align*}
    [\alpha 1_-, \alpha 1^-]=\alpha [1_-,1^-]=\alpha(-\tilde{1})=-\alpha \tilde{1}.
    \end{align*}
    Thus we have $ \alpha \tilde{1}=-\tilde{1} $ from $ s^2=1 $. Consequently, we obtain the following
    \begin{align*}
    \alpha \tilde{1}=-\tilde{1}, \quad \alpha 1^-=s1_-, \quad  \alpha 1_-=s1^-
    \end{align*}
    with $ s^2=1 $, that is,
    \begin{align*}
    {\rm (i)}
    \left\lbrace
    \begin{array}{l}
    \alpha \tilde{1}=-\tilde{1}
    \\
    \alpha 1^-=1_-
    \\
    \alpha 1_-=1^-,\,\,
    \end{array}
    \right.
    {\rm (ii)}
    \left\lbrace
    \begin{array}{l}
    \alpha \tilde{1}=-\tilde{1}
    \\
    \alpha 1^-=-1_-
    \\
    \alpha 1_-=1^-,\,\,
    \end{array}
    \right.
    {\rm (iii)}
    \left\lbrace
    \begin{array}{l}
    \alpha \tilde{1}=-\tilde{1}
    \\
    \alpha 1^-=1_-
    \\
    \alpha 1_-=-1^-,\,\,
    \end{array}
    \right.
    {\rm (iv)}
    \left\lbrace
    \begin{array}{l}
    \alpha \tilde{1}=-\tilde{1}
    \\
    \alpha 1^-=-1_-
    \\
    \alpha 1_-=-1^-.
    \end{array}
    \right.
    \end{align*}

Case (i). Again let the mapping $ \varphi $ be used above. \vspace{1mm} Then, for $ A_2:=\begin{pmatrix}
    \;\dfrac{i}{\sqrt{2}}\; & \;\dfrac{i}{\sqrt{2}}\;
    \vspace{1mm}\\
    \dfrac{i}{\sqrt{2}} & -\dfrac{i}{\sqrt{2}}
    \end{pmatrix} \in SU(2)$, \vspace{1mm} we choose the element $ \varphi(A_2)=\exp((\pi/2)\ad(0,0,0,-i/\sqrt{2},i/\sqrt{2},i/\sqrt{2} )) \in ({E_8}^C)^{\varepsilon_1,\varepsilon_2} $ (Lemma \ref{lemma 8.12}(1)). It follows from $ \varphi(A_1)\alpha=\alpha\varphi(A_1) $ that
    \begin{align*}
    \varphi(A_2)(\alpha \tilde{1})=\alpha\varphi(A_1)\tilde{1}=\alpha(0,0,0,0,1,1)=1^-+1_-,
    \end{align*}
    on the other hand, it follows that
    \begin{align*}
    \varphi(A_2)(\alpha \tilde{1})=\varphi(A_1)(-\tilde{1})=(0,0,0,0,-1,-1)=-(1^-+1_-).
    \end{align*}
    Hence this result is contradiction, so that this case is impossible.

    Case (ii). It follows that
    \begin{align*}
    [\alpha 1^-, \alpha 1_-]=[-1_-, 1^-]=-[1_-, 1^-]=-(2\tilde{1})=2\tilde{1},
    \end{align*}
    on the other hand, it follows that
    \begin{align*}
    [\alpha 1^-, \alpha 1_-]=\alpha [1_-, 1^-]=\alpha[1_-, 1^-]=\alpha(2\tilde{1})=2\alpha\tilde{1}
    \end{align*}
    Hence this results are contrary to $ \alpha\tilde{1}=-\tilde{1} $, so that this case is also impossible.

    Case (iii). The same result as in Case (ii) is obtained, so that this case is also impossible.

    Case (iv). As in Case (i), let the mapping $ \varphi $. Then, for $ A_3:=\begin{pmatrix}
    \;\dfrac{1}{\sqrt{2}}\; & \;\dfrac{i}{\sqrt{2}}\;
    \vspace{1mm} \\
    \dfrac{i}{\sqrt{2}} & \dfrac{1}{\sqrt{2}}
    \end{pmatrix} \in SU(2)$, \vspace{1mm} we choose the element $ \varphi(A_3)=\exp((\pi/4)\ad(0,0,0,0,i,i )) \in ({E_8}^C)^{\varepsilon_1,\varepsilon_2} $ (Lemma \ref{lemma 8.12} (1)). It follows that
    \begin{align*}
    \varphi(A_3)(\alpha \tilde{1})=\alpha\varphi(A_3)\tilde{1}=\alpha(0,0,0,0,-i,i)=(-i)1^-+i1_-,
    \end{align*}
    on the other hand, it follows from $ \varphi(A_3)\alpha=\alpha\varphi(A_3) $ that
    \begin{align*}
    \varphi(A_3)(\alpha \tilde{1})=\varphi(A_3)(-\tilde{1})=-(0,0,0,0,-i,i)=i1^-+(-i)1_-.
    \end{align*}
    Hence this result is contradiction, so that this case is impossible.

    With above, the assumption as $ t=0 $ is unreasonable. Thus we obtain
    \begin{align*}
    \alpha 1_-=(\varPhi, 0,0,r,s,t),\,\, t\not=0.
    \end{align*}
    Here, from Lemma \ref{lemma 8.14} (2), we have
    \begin{align*}
    \varPhi=-\dfrac{1}{2t}Q \times Q,
    \end{align*}
so that we have $ \varPhi=0 $  from $ Q=0 $. Therefore $ \alpha 1_- $ is of the form
\begin{align*}
\alpha 1_-=(0, 0,0,r,s,t),\,\, t\not=0.
\end{align*}

Now, let the mapping $ \varphi $. Then, for $ A_4:=
    \begin{pmatrix}
    \,0\, & \,-1\, \\
    1 & 0
    \end{pmatrix} \in SU(2) $, we choose the elements $ \varphi(A_4)=\exp((\pi/2)\ad(0,0,0,0,1,-1)) \in ({E_8}^C)^{\varepsilon_1,\varepsilon_2 } $ (Lemma \ref{lemma 8.12}(1)). Using $ \varphi(A_1) $ defined above together, it follows that
    \begin{align*}
    \varphi(A_1)\varphi(A_4)(\alpha 1_-)=\alpha(\varphi(A_1)\varphi(A_4)1_-)=\alpha(-1_-)=-\alpha 1_-=(0,0,0,-r,-s,-t),
    \end{align*}
    on the other hand, it follows that
    \begin{align*}
    \varphi(A_1)\varphi(A_4)(\alpha 1_-)
    &= \varphi(A_1)\varphi(A_4)(0,0,0,r,s,t)=\varphi(A_1)(0,0,0,-r,-t,-s)
    \\
    &=(0,0,0,r-s,-t).
    \end{align*}
    Hence we have $ r=0 $, so $ \alpha 1_-=(0,0,0,0,s,t), t\not=0 $. Here, since $ \alpha 1_- \in \mathfrak{W}_{\varepsilon_1,\varepsilon_2} $ again, we see $ st=0 $ from Lemma \ref{lemma 8.14} (6). Thus we have $ s=0 $ because of $ t\not=0 $.
     Namely, we have
    \begin{align*}
     \alpha 1_-=t1_- ,t\not=0.
    \end{align*}

   Again, choose the element $ \varphi(A_1) $. Then it follows that
    \begin{align*}
    \varphi(A_1)(\alpha 1_-)=\alpha \varphi(A_1)1_-=\alpha 1^-,
    \end{align*}
    on the other hand, it follows that
    \begin{align*}
    \varphi(A_1)(\alpha 1_-)=\varphi(A_1)(t1_-)=t1^-,
    \end{align*}
    that is, $ \alpha 1^-=t1^-,t\not=0 $.

    \noindent Here, using the Killing form $ B_8 $, it follows that
    \begin{align*}
    B_8(\alpha 1^-, \alpha 1_-)=B_8(1^-,1_-)=60,
    \end{align*}
    on the other hand, it follows that
    \begin{align*}
    B_8(\alpha 1^-, \alpha 1_-)=B_8(t1^-,t1_-)=60t^2.
    \end{align*}
    Hence we have $ t^2=1 $,

    \noindent so that it follows from $ \alpha 1_-=t1_- , \alpha 1^-=t1^-,t\not=0 $ that
     \begin{align*}
     [\alpha 1^-, \alpha 1_-]&=\alpha[1^-,1_-]=\alpha\tilde{1},
     \\
     [\alpha 1^-, \alpha 1_-]&=[t1^-,t1_-]=t^2[1^-,1_-]=t^2\tilde{1}=\tilde{1},
     \end{align*}
    that is, $ \alpha \tilde{1}=\tilde{1}$.

    Thus, from $ t^2=1 $, we have the following
    \begin{align*}
    \left\lbrace
    \begin{array}{l}
    \alpha \tilde{1}=\tilde{1}
    \\
    \alpha 1^-=1^-
    \\
    \alpha 1_-=1_-
    \end{array}
    \right. \qquad {\text{or}} \qquad
    \left\lbrace
    \begin{array}{l}
    \alpha \tilde{1}=\tilde{1}
    \\
    \alpha 1^-=-1^-
    \\
    \alpha 1_-=-1_-.
    \end{array}
    \right.
    \end{align*}

In the latter case, we choose the element $ \varphi(A_2) \in ({E_8}^C)^{\varepsilon_1,\varepsilon_2} $ again, then it follows that
    \begin{align*}
    \varphi(A_2)(\alpha 1^-)=\alpha \varphi(A_2) 1^-=\alpha (1/2)(-\tilde{1}-1^-+1_-)=(1/2)(-\tilde{1}+1^--1_-),
    \end{align*}
    on the other hand, it follows that
    \begin{align*}
     \varphi(A_2)(\alpha 1^-)=\varphi(A_2)(-1^-)=-(1/2)(-\tilde{1}-1^-+1_-)=(1/2)(\tilde{1}+1^--1_-).
    \end{align*}
   Hence this result is contradiction, so that the latter case is impossible.

   In the former case, note that $ (({E_8}^C)^{\varepsilon_1,\varepsilon_2})_{\tilde{1},1^-,1_-}=(({E_8}^C)_{\tilde{1},1^-,1_-})^{\varepsilon_1,\varepsilon_2} \cong ({E_7}^C)^{\varepsilon_1,\varepsilon_2}$, we see $ \alpha \in ({E_7}^C)^{\varepsilon_1,\varepsilon_2} $.

   Thus we obtain $ \alpha \in z(({E_7}^C)^{\varepsilon_1,\varepsilon_2})=\{1, \gamma ,-1,-\gamma \}=\{1, \gamma\} \times \{1,-\gamma \} \cong \Z_2 \times \Z_2$ (\cite[Proposition 1.1.6]{miya1}). However, since it is easy to verify that $ -1, -\gamma \not\in {E_8}^C $, we have $  z(({E_8}^C)^{\varepsilon_1,\varepsilon_2}) \subset \{1, \gamma \} $ and vice versa.

   Therefore we have the required result
   \begin{align*}
   z(({E_8}^C)^{\varepsilon_1,\varepsilon_2})=\{1,\gamma \} \cong \Z_2.
   \end{align*}
\end{proof}

As for the group $ ({E_8}^C)^{\varepsilon_1,\varepsilon_2} $, the following is a summary of what we have obtained up to this point in this section.

\begin{itemize}
\setlength{\leftskip}{3mm}
\item The type of the group $ ({E_8}^C)^{\varepsilon_1,\varepsilon_2} $ as Lie algebras is $ E_7 $.
\item The group $ ({E_8}^C)^{\varepsilon_1,\varepsilon_2} $ is connected.
\item The center $ z(({E_8}^C)^{\varepsilon_1,\varepsilon_2}) $ is isomorphic to a cyclic group of order two: $ z(({E_8}^C)^{\varepsilon_1,\varepsilon_2}) \cong \Z_2 $.
\end{itemize}

Now, we determine the structure of the group $ ({E_8}^C)^{\varepsilon_1,\varepsilon_2} $.

\begin{theorem}\label{theorem 8.20}
The group $ ({E_8}^C)^{\varepsilon_1,\varepsilon_2} $ is isomorphic to the group $ {E_7}^C ${\rm :} $ ({E_8}^C)^{\varepsilon_1,\varepsilon_2} \cong {E_7}^C $.
\end{theorem}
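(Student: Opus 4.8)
The plan is to combine the three structural facts about $ ({E_8}^C)^{\varepsilon_1,\varepsilon_2} $ collected in the summary immediately preceding this theorem with the classification of connected complex semisimple Lie groups of a fixed type by their centers. No further computation is required; the argument is a bookkeeping step that pins down which complex form of type $ E_7 $ our group is.

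First I would recall the three ingredients established in this section: by Theorem \ref{theorem 8.18} the Lie algebra $ ({\mathfrak{e}_8}^C)^{\varepsilon_1,\varepsilon_2} $ is of type $ E_7 $; by Theorem \ref{theorem 8.16} the group $ ({E_8}^C)^{\varepsilon_1,\varepsilon_2} $ is connected; and by Proposition \ref{proposition 8.19} its center is $ z(({E_8}^C)^{\varepsilon_1,\varepsilon_2})=\{1,\gamma\}\cong\Z_2 $.

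Next I would use that the simply connected complex group of type $ E_7 $, namely $ {E_7}^C $, has center isomorphic to $ \Z_2 $, and that $ \Z_2 $ has exactly two subgroups. Consequently there are, up to isomorphism, precisely two connected complex Lie groups whose Lie algebra is of type $ E_7 $: the simply connected group $ {E_7}^C $, whose center is $ \Z_2 $, and the adjoint group $ {E_7}^C/\Z_2 $, whose center is trivial. Since $ ({E_8}^C)^{\varepsilon_1,\varepsilon_2} $ is connected of type $ E_7 $ with nontrivial center $ \Z_2 $, it cannot be the adjoint group, and therefore it must be isomorphic to the simply connected group $ {E_7}^C $, giving the asserted isomorphism $ ({E_8}^C)^{\varepsilon_1,\varepsilon_2}\cong{E_7}^C $.

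Because the substantive work—the determination of the Dynkin diagram in Theorem \ref{theorem 8.18}, the transitivity-and-connectedness argument of Theorem \ref{theorem 8.16}, and the explicit center computation of Proposition \ref{proposition 8.19}—is already in hand, the only point deserving care is the final matching: one must confirm that the nontrivial center genuinely distinguishes the simply connected form from the adjoint form. This is immediate here, since $ \Z_2 $ admits no proper nontrivial subgroup, so the mere nonvanishing of $ z(({E_8}^C)^{\varepsilon_1,\varepsilon_2}) $ already forces the simply connected alternative. Thus I expect no genuine obstacle beyond correctly citing the three prior results and recording the elementary classification.
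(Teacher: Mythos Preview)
Your proposal is correct and follows essentially the same approach as the paper: both combine connectedness (Theorem \ref{theorem 8.16}), the Dynkin diagram computation (Theorem \ref{theorem 8.18}), and the center determination (Proposition \ref{proposition 8.19}) with the fact that $z({E_7}^C)\cong\Z_2$ to reduce to the two alternatives $ {E_7}^C $ or $ {E_7}^C/\Z_2 $, then use the nontrivial center to select the simply connected form.
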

\begin{proof}
Since the group $ ({E_8}^C)^{\varepsilon_1,\varepsilon_2} $ is connected (Theorem \ref{theorem 8.16}) and the type of the group $ ({E_8}^C)^{\varepsilon_1,\varepsilon_2} $ as Lie algebras is $ E_7 $ (Theorem \ref{theorem 8.18}), note that $ z({E_7}^C)=\{1,-1\} \cong \Z_2 $, we have
\begin{align*}
({E_8}^C)^{\varepsilon_1,\varepsilon_2} \cong {E_7}^C \qquad \text{ or } \qquad ({E_8}^C)^{\varepsilon_1,\varepsilon_2} \cong {E_7}^C /\Z_2.
\end{align*}
Then, since the center $ z(({E_8}^C)^{\varepsilon_1,\varepsilon_2}) $ is isomorphic to a cyclic group of order two (Proposition \ref{proposition 8.19}), we have the required isomorphism
\begin{align*}
({E_8}^C)^{\varepsilon_1,\varepsilon_2} \cong {E_7}^C.
\end{align*}
\end{proof}

We consider a real form $ (E_8)^{\varepsilon_1,\varepsilon_2} $ of the group $ ({E_8}^C)^{\varepsilon_1,\varepsilon_2} $:
\begin{align*}
(E_8)^{\varepsilon_1,\varepsilon_2}=\left\lbrace \alpha \in ({E_8}^C)^{\varepsilon_1,\varepsilon_2} \relmiddle{|} \langle \alpha R_1,\alpha
 R_2 \rangle=\langle R_1,R_2 \rangle \right\rbrace,
\end{align*}
where the Hermite inner product $ \langle R_1, R_2 \rangle $ is defined by $ (-1/15)B_8(\tau\lambda_\omega R_1, R_2) $: $  \langle R_1, R_2 \rangle =(-1/15)\allowbreak B_8(\tau\lambda_\omega R_1, R_2)$. Then, since $ \alpha \in (E_8)^{\varepsilon_1,\varepsilon_2} $ commutes with $ \varepsilon_i, i=1,2 $, $ \alpha $ induces a $ C $-linear isomorphism of $ ({\mathfrak{e}_8}^C)^{\varepsilon_1,\varepsilon_2} $. Hence
the group $ (E_8)^{\varepsilon_1,\varepsilon_2} $ is a compact Lie group as the closed subgroup of the unitary group $ U(133)=\left\lbrace \alpha \in \Iso_C(({\mathfrak{e}_8}^C)^{\varepsilon_1,\varepsilon_2})\relmiddle{|} \langle \alpha R_1, \alpha R_2 \rangle=\langle R_1, R_2 \rangle \right\rbrace  $. \vspace{1mm}

Then we have the following lemma.

\begin{lemma}\label{lemma 8.21}
The Lie algebra $ (\mathfrak{e}_8)^{\varepsilon_1,\varepsilon_2} $ of the group $  (E_8)^{\varepsilon_1,\varepsilon_2} $ is given by
\begin{align*}
    (\mathfrak{e}_8)^{\varepsilon_1,\varepsilon_2}&=
    \left\lbrace R=(\varPhi,P,-\tau\lambda P,r,s,-\tau s) \relmiddle{|}
    \begin{array}{l}
    \varPhi \in (\mathfrak{e}_7)^{\varepsilon_1,\varepsilon_2}, P \in (\mathfrak{P}_{\sH})^C, r \in i\R, s\in C
    \end{array}
    \right\rbrace,
    \end{align*}
where $ (\mathfrak{e}_7)^{\varepsilon_1,\varepsilon_2} $ is defined by
\begin{align*}
(\mathfrak{e}_7)^{\varepsilon_1,\varepsilon}
=\left\lbrace \varPhi(\phi,A,-\tau A ,\nu) \relmiddle{|}
\begin{array}{l}
\phi=(D_1,D_2,D_3) \vspace{1mm}\\
            \quad\;\; +\tilde{A}_1(a_1)+\tilde{A}_2(a_2)+\tilde{A}_3(a_3)\\
            \quad\;\; +i(\tau_1E_1+\tau_2E_2+\tau_3E_3+F_1(t_1) \\
            \quad\;\; +F_2(t_2)+F_3(t_3))^\sim,
            \\
            \qquad D_1=d_{01}G_{01}+d_{02}G_{02}+d_{03}G_{03} \\
            \qquad +d_{12}G_{12}+d_{13}G_{13}+d_{23}G_{23} \\
            \qquad +d_{45}(G_{45}+G_{67})+d_{46}(G_{46}-G_{57}) \\
            \qquad +d_{47}(G_{47}+G_{56}),d_{ij} \in \R, \\
            \qquad D_2=\pi\kappa D_1, D_3=\kappa\pi D_1,\\
            \qquad a_k, t_k \in \H ,\\
            \qquad \tau_k \in \R, \tau_1+\tau_2+\tau_3=0,\\
         \quad A \in (\mathfrak{J}_{\sH})^C,
         \nu \in i\R \\
\end{array}
 \right\rbrace.
\end{align*}

    In particular, we have $ \dim((\mathfrak{e}_8)^{\varepsilon_1,\varepsilon_2})=66+64+3=133 $.
\end{lemma}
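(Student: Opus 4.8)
The plan is to realize $(\mathfrak{e}_8)^{\varepsilon_1,\varepsilon_2}$ as a fixed-point subalgebra, exactly as $\mathfrak{e}_{8,\sH}$ was obtained in Lemma \ref{lemma 8.2}. Since $E_8=({E_8}^C)^{\tau\lambda_\omega}$ and $(E_8)^{\varepsilon_1,\varepsilon_2}$ consists of those $\alpha\in({E_8}^C)^{\varepsilon_1,\varepsilon_2}$ preserving the Hermite inner product, one has $(E_8)^{\varepsilon_1,\varepsilon_2}=(({E_8}^C)^{\varepsilon_1,\varepsilon_2})^{\tau\lambda_\omega}$, so at the Lie-algebra level I would establish
\[
(\mathfrak{e}_8)^{\varepsilon_1,\varepsilon_2}=(({\mathfrak{e}_8}^C)^{\varepsilon_1,\varepsilon_2})^{\tau\lambda_\omega}=\{R\in({\mathfrak{e}_8}^C)^{\varepsilon_1,\varepsilon_2}\mid (\tau\lambda_\omega)R=R\}.
\]
This reduction is legitimate only because $\tau\lambda_\omega$ commutes with each $\varepsilon_i$ on ${\mathfrak{e}_8}^C$, so that $\tau\lambda_\omega$ preserves the subspace $({\mathfrak{e}_8}^C)^{\varepsilon_1,\varepsilon_2}$; I would record this first, extending the identity $(\tau\lambda)\varepsilon_i=\varepsilon_i(\tau\lambda)$ of Proposition \ref{proposition 7.10} from $\mathfrak{P}^C$ to all six slots of ${\mathfrak{e}_8}^C$ via the definitions of the $\varepsilon_i$- and $\tau\lambda_\omega$-actions on $(\varPhi,P,Q,r,s,t)$.

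Then I would carry out the fixed-point computation. Starting from a general $R=(\varPhi,P,Q,r,s,t)\in({\mathfrak{e}_8}^C)^{\varepsilon_1,\varepsilon_2}$ as described in Lemma \ref{lemma 8.12} (1), I apply $\lambda_\omega(\varPhi,P,Q,r,s,t)=(\lambda\varPhi\lambda^{-1},\lambda Q,-\lambda P,-r,-t,-s)$ followed by the conjugation $\tau$, and impose $(\tau\lambda_\omega)R=R$ slot by slot. Using $\lambda^2=-1$ (so $\lambda^{-1}=-\lambda$) together with $\tau\lambda=\lambda\tau$, the $P,Q$-slots give $Q=-\tau\lambda P$, the scalar slots give $r\in i\R$ and $t=-\tau s$ with $s\in C$ free, and the $\varPhi$-slot forces $\varPhi$ to be fixed by $\tau\lambda_\omega$, i.e. $\varPhi\in(\mathfrak{e}_7)^{\varepsilon_1,\varepsilon_2}=\mathfrak{e}_7\cap({\mathfrak{e}_7}^C)^{\varepsilon_1,\varepsilon_2}$. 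This yields precisely the asserted shape $(\varPhi,P,-\tau\lambda P,r,s,-\tau s)$.

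For the description of the $\varPhi$-slot I would simply invoke Lemma \ref{lemma 7.9}, which already gives the explicit form of $(\mathfrak{e}_7)^{\varepsilon_1,\varepsilon_2}$; the expansion $i\tilde{T}=i(\tau_1E_1+\tau_2E_2+\tau_3E_3+F_1(t_1)+F_2(t_2)+F_3(t_3))^\sim$ with $T\in(\mathfrak{J}_{\sH})_0$ written in the statement is nothing but the componentwise rewriting of that lemma ($\tau_1+\tau_2+\tau_3=0$ encoding $\tr T=0$, the $t_k\in\H$ being the off-diagonal entries). The dimension then adds up as $66+64+1+2=133$, the summands being $\dim((\mathfrak{e}_7)^{\varepsilon_1,\varepsilon_2})=66$ from Lemma \ref{lemma 7.9}, the real dimension $64$ of the $P$-slot $(\mathfrak{P}_{\sH})^C$, and $1$ and $2$ coming from $r\in i\R$ and $s\in C$, in agreement with the stated $66+64+3$. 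I expect no conceptual difficulty; the only points requiring genuine care are the two preliminary checks — that $\tau\lambda_\omega$ really commutes with both $\varepsilon_i$ on every slot, and that the $\lambda$-twist in the $P,Q$-slots is handled correctly so that $\tau\lambda Q=P$ becomes $Q=-\tau\lambda P$ rather than $Q=\tau\lambda P$.
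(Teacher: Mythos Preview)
Your proposal is correct and follows exactly the paper's approach: the paper's proof consists of the single observation $(\mathfrak{e}_8)^{\varepsilon_1,\varepsilon_2}=(({\mathfrak{e}_8}^C)^{\varepsilon_1,\varepsilon_2})^{\tau\lambda_\omega}$ followed by a reference to Lemma~\ref{lemma 8.12}(1) and ``straightforward computations,'' and you have spelled out precisely those computations. Your additional invocation of Lemma~\ref{lemma 7.9} to unpack the $\varPhi$-slot is appropriate and implicit in the paper's argument.
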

\begin{proof}
Note that $ (\mathfrak{e}_8)^{\varepsilon_1,\varepsilon_2}=(({\mathfrak{e}_8}^C)^{\varepsilon_1,\varepsilon_2})^{\tau\lambda_\omega} $, using the result of Lemma \ref{lemma 8.12}(1), we can obtain the required result by straightforward computations.
\end{proof}

We determine the structure of the group $ (E_8)^{\varepsilon_1,\varepsilon_2} $.

\begin{theorem}\label{theorem 8.22}
The group $ (E_8)^{\varepsilon_1,\varepsilon_2} $ is isomorphic to the group $ E_7 ${\rm :} $ (E_8)^{\varepsilon_1,\varepsilon_2} \cong E_7 $.
\end{theorem}
\begin{proof}
First, it is easy to verify that $ z(({E_8}^C)^{\varepsilon_1,\varepsilon_2}) \! \subset\! z((E_8)^{\varepsilon_1,\varepsilon_2}) $. Indeed,
let $ \delta \! \in \! z(({E_8}^C)^{\varepsilon_1,\varepsilon_2}) $. Then,
since $ (E_8)^{\varepsilon_1,\varepsilon_2} \subset ({E_8}^C)^{\varepsilon_1,\varepsilon_2} $, $ \delta $ commutes with any elements $ \beta \in (E_8)^{\varepsilon_1,\varepsilon_2} $. Hence we have $ \delta \in z((E_8)^{\varepsilon_1,\varepsilon_2}) $, that is, $ z(({E_8}^C)^{\varepsilon_1,\varepsilon_2}) \subset z((E_8)^{\varepsilon_1,\varepsilon_2}) $. Here, from $ ({E_8}^C)^{\varepsilon_1,\varepsilon_2} \cong {E_7}^C $ (Theorem \ref{theorem 8.20}), the group $ ({E_8}^C)^{\varepsilon_1,\varepsilon_2} $ is simply connected. Moreover, it follows from $ \varepsilon_i(\tau\lambda_\omega)=(\tau\lambda_\omega)\varepsilon_i,i=1,2 $ that
\begin{align*}
(E_8)^{\varepsilon_1,\varepsilon_2}=(({E_8}^C)^{\tau\lambda_\omega})^{\varepsilon_1,\varepsilon_2}=(({E_8}^C)^{\varepsilon_1,\varepsilon_2})^{\tau\lambda_\omega}.
\end{align*}
Hence the group $ (E_8)^{\varepsilon_1,\varepsilon_2} $ is connected. In addition, since the group $ (E_8)^{\varepsilon_1,\varepsilon_2} $ is a real form of the group $ ({E_8}^C)^{\varepsilon_1,\varepsilon_2} $, the type of $ (E_8)^{\varepsilon_1,\varepsilon_2} $ as Lie algebras is $ E_7 $. Thus, together with the fact that the group $ (E_8)^{\varepsilon_1,\varepsilon_2} $ is compact, we have the following
\begin{align*}
(E_8)^{\varepsilon_1,\varepsilon_2} \cong E_7 \quad \text{or} \quad (E_8)^{\varepsilon_1,\varepsilon_2} \cong E_7/\Z_2.
\end{align*}
In the latter case, since the center $ z(E_7/\Z_2) $ of the group $ E_7/\Z_2 $ is trivial, the center $ z((E_8)^{\varepsilon_1,\varepsilon_2}) $ of the group $ (E_8)^{\varepsilon_1,\varepsilon_2} $ is also trivial. However, it follows from $ z(({E_8}^C)^{\varepsilon_1,\varepsilon_2}) \subset z((E_8)^{\varepsilon_1,\varepsilon_2}) $ as mentioned above and  $ z(({E_8}^C)^{\varepsilon_1,\varepsilon_2})=\{1,\gamma \} $ (Proposition \ref{proposition 8.19}), we see $ \gamma \in z((E_8)^{\varepsilon_1,\varepsilon_2}) $. This is contradiction, so that this case is impossible.

Therefore we have the required isomorphism
\begin{align*}
(E_8)^{\varepsilon_1,\varepsilon_2} \cong E_7.
\end{align*}
\end{proof}

We consider a subgroup $ ((F_4)^{\varepsilon_1,\varepsilon_2})_{E_1,E_2,E_3} $ of the group $ (F_4)^{\varepsilon_1,\varepsilon_2} $:
\begin{align*}
((F_4)^{\varepsilon_1,\varepsilon_2})_{E_1,E_2,E_3}=\left\lbrace \alpha \in (F_4)^{\varepsilon_1,\varepsilon_2} \relmiddle{|} \alpha E_1=E_1, \alpha E_2=E_2, \alpha E_3=E_3 \right\rbrace .
\end{align*}

Then, in order to prove  the main theorem, we need the following proposition.

\begin{proposition}\label{proposition 8.23}
The group $ ((F_4)^{\varepsilon_1,\varepsilon_2})_{E_1,E_2,E_3} $ is isomorphic to the group $ Sp(1)\times Sp(1)\times Sp(1) ${\rm :} $ ((F_4)^{\varepsilon_1,\varepsilon_2})_{E_1,E_2,E_3} \cong Sp(1)\times Sp(1)\times Sp(1) $.
\end{proposition}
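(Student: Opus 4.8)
The plan is to realise the group through the principle of triality on $SO(8)$ (see \cite{iy0}), which identifies $(F_4)_{E_1,E_2,E_3}$ with $Spin(8)$. Recall that an automorphism $\alpha \in F_4$ fixing $E_1,E_2,E_3$ acts only on the off-diagonal parts and is given by a triple $(\beta_1,\beta_2,\beta_3) \in SO(8)^3$ via $\alpha F_k(x)=F_k(\beta_k x)$, $x \in \mathfrak{C}$, where the $\beta_k$ are tied together by the triality relation $(\beta_1 x)(\beta_2 y)=\ov{\beta_3(\ov{xy})}$ and its cyclic companions. Since the extensions of $\varepsilon_1,\varepsilon_2$ to $\mathfrak{J}(3,\mathfrak{C})$ act by $\varepsilon_i F_k(x)=F_k(\varepsilon_i x)$, the condition $\varepsilon_i\alpha=\alpha\varepsilon_i$ is equivalent to $\beta_k\varepsilon_i=\varepsilon_i\beta_k$ for $k=1,2,3$ and $i=1,2$. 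First I would translate the whole statement into this language.

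Next I would determine the commutant. Writing $\mathfrak{C}=\H\oplus\H e_4$, the maps $\varepsilon_1,\varepsilon_2$ act trivially on $\H$ and as left multiplication by $e_1,e_2$ on $\H e_4$ (from $\varepsilon_i=\varphi_{{}_{G_2}}(e_i,1)$). Hence $\H$ is the common fixed space of $\varepsilon_1,\varepsilon_2$, so each $\beta_k$ preserves both $\H$ and $\H e_4$; and on $\H e_4$ it commutes with left multiplication by $e_1,e_2$, i.e. by all of $\H$, so it must be right multiplication $R_{s_k}$ by a unit quaternion $s_k$. Feeding $x\in\H$, $y\in\H e_4$ into the triality relation and expanding the two Cayley--Dickson products then forces the $\H$-component $\phi_k:=\beta_k|_{\H}$ to be $\phi_1(x)=\ov{s_2}\,x\,s_3$, and cyclically $\phi_2(x)=\ov{s_3}\,x\,s_1$, $\phi_3(x)=\ov{s_1}\,x\,s_2$. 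Thus every element of the group is parametrised by $(s_1,s_2,s_3)\in Sp(1)\times Sp(1)\times Sp(1)$, and I would define $\varphi:Sp(1)\times Sp(1)\times Sp(1)\to((F_4)^{\varepsilon_1,\varepsilon_2})_{E_1,E_2,E_3}$ by sending $(s_1,s_2,s_3)$ to the automorphism $\alpha$ carrying these $\beta_k$.

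Finally I would check that $\varphi$ is an isomorphism. Bi-multiplicativity of the defining formulas makes $\varphi$ a homomorphism. For injectivity, $\alpha=\mathrm{id}$ forces $\beta_k|_{\H e_4}=R_{s_k}=\mathrm{id}$, hence $s_k=1$; this is the decisive point, since it is precisely the action on $\H e_4$ (which is absent in the purely quaternionic Jordan algebra $\mathfrak{J}(3,\H)$) that detects each $s_k$ faithfully, and so produces the full $Sp(1)\times Sp(1)\times Sp(1)$ rather than a $\Z_2$-quotient of it. Surjectivity is exactly the classification of the commutant carried out in the previous step. The main obstacle is verifying that each constructed triple genuinely defines an element of $F_4$: one must confirm the triality relation and its cyclic companions for all four combinations $x,y\in\H$ or $\H e_4$, a chain of Cayley--Dickson computations, and it is here that the precise sign and normalisation conventions (together with the exact cyclic pairing of $s_k$ with $\phi_k$) must be pinned down. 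I expect this forward direction to be the laborious part, whereas the reverse constraints follow quickly once the block-diagonal structure of the $\beta_k$ is established.
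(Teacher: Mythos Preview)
Your proof is correct but takes a genuinely different route from the paper's. You work inside $(F_4)_{E_1,E_2,E_3}\cong Spin(8)$ and analyse the commutant of $\varepsilon_1,\varepsilon_2$ directly on the triality triples $(\beta_1,\beta_2,\beta_3)$, extracting the three $Sp(1)$ factors from the right-multiplication action of each $\beta_k$ on $\H e_4$. The paper instead starts from the known isomorphism $(F_4)^\gamma\cong (Sp(1)\times Sp(3))/\Z_2$ of \cite[Theorem~2.11.2]{iy0}: since $\varepsilon_i^2=\gamma$, every element of the group under study already lies in $(F_4)^\gamma$, so is of the form $\varphi_4(p,A)$ with $p\in Sp(1)$, $A\in Sp(3)$; fixing $E_1,E_2,E_3$ forces $A$ to be diagonal, and commutation with $\varepsilon_i$ forces $p=\pm 1$, with the $\Z_2$ kernel absorbing the sign. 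The three $Sp(1)$ factors are then simply the diagonal entries of $A$.

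The paper's argument is shorter because it outsources the hard work to the cited structure theorem for $(F_4)^\gamma$ and avoids any Cayley--Dickson verification; your approach is more self-contained and makes transparent \emph{why} injectivity holds (the $\H e_4$ component detects each $s_k$ faithfully), at the cost of the triality bookkeeping you flag as laborious. Both land on the same parametrisation up to relabelling: your $s_k$ correspond to the paper's diagonal entries $p_k$ of $A_3=\diag(p_1,p_2,p_3)$, with the $\H$-action $x\mapsto p_{k+1}\,x\,\ov{p_{k+2}}$ matching your $\phi_k$ up to the conventions for indexing and conjugation.
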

\begin{proof}
We define a mapping $ \varphi_{4,\varepsilon_i}: Sp(1)\times Sp(1)\times Sp(1) \to ((F_4)^{\varepsilon_1,\varepsilon_2})_{E_1,E_2,E_3} $ by
\begin{align*}
\varphi_{4,\varepsilon_i}(p_1,p_2,p_3)(M+\a)=A_3M{A_3}^*+\a {A_3}^*,\;\;M+\a \in \mathfrak{J}_{\sH}\oplus \H^3=\mathfrak{J},
\end{align*}
where $ A_3:=\diag(p_1,p_2,p_3) \in Sp(3) $. Note that $ \varphi_{4,\varepsilon_i} $ is the restriction of the mapping $ \varphi_4:Sp(1)\times Sp(3) \to (F_4)^\gamma $ (\cite[Theorem 2.11.2]{iy0}), that is, $ \varphi_{4,\varepsilon_i}(p_1,p_2,p_3)=\varphi_4(1,A_3) $.

First, we will prove that $ \varphi_{4,\varepsilon_i} $ is well-defined and a homomorphism. However, since $ \varphi_{4,\varepsilon_i} $ is the restriction of the mapping $ \varphi_4 $, it is clear.

Next, we will prove that $ \varphi_{4,\varepsilon_i} $ is surjective. Let $ \alpha \in ((F_4)^{\varepsilon_1,\varepsilon_2})_{E_1,E_2,E_3} $. Then, since $ {\varepsilon_i}^2=\gamma,i=1,2 $, we have $ \alpha \in ((F_4)^\gamma)_{E_1,E_2,E_3} \subset (F_4)^\gamma $. Hence there exist $ p \in Sp(1) $ and $ A \in Sp(3) $ such that $ \alpha=\allowbreak \varphi_4(p,A) $ (\cite[Theorem 2.11.2]{iy0}). First, it follows from $ \alpha E_i=E_i,i=1,2,3 $ that
\begin{align*}
E_i=\alpha E_i=\varphi_4(p,A)E_i=AE_iA^*,
\end{align*}
that is, $ E_i=AE_iA^*,i=1,2,3 $. Set $ A:=
\begin{pmatrix}
\; a_{11}\; & \; a_{12}\; & \; a_{13} \; \\
a_{21} & a_{22} & a_{23} \\
a_{31} & a_{32} & a_{33}
\end{pmatrix} \in Sp(3)$, then we have the following
\begin{itemize}
\setlength{\leftskip}{30mm}
\item [] $ a_{11}\ov{a_{11}}=1, a_{21}\ov{a_{21}}=a_{31}\ov{a_{31}}=0 $ ($ E_1=AE_1A^* $),
\item [] $ a_{22}\ov{a_{22}}=1, a_{12}\ov{a_{12}}=a_{32}\ov{a_{32}}=0 $ ($ E_2=AE_2A^* $),
\item [] $ a_{33}\ov{a_{33}}=1, a_{13}\ov{a_{13}}=a_{23}\ov{a_{23}}=0 $ ($ E_3=AE_3A^* $).
\end{itemize}
Hence we see $ a_{11}, a_{22},a_{33} \in Sp(1) $ and $ a_{ij}=0,i\not=j $, so that $ A $ is of the form
\begin{align*}
\diag(p_1,p_2,p_3)(=A_3),\;\,p_i \in Sp(1),i=1,2,3.
\end{align*}
Moreover, it follows from $ \varepsilon_i\alpha=\alpha\varepsilon_i, i=1,2 $ that
\begin{align*}
\varepsilon_i\alpha{\varepsilon_i}^{-1}(M+\a)
&=\varepsilon_i\varphi_4(p,A_3)(M+(-e_i)\a)=\varepsilon_i(A_3M{A_3}^*+p(-e_i\a){A_3}^*)
\\
&=A_3M{A_3}^*+e_i(p(-e_i\a){A_3}^*)=A_3M{A_3}^*+(-e_ipe_i)\a {A_3}^*
\\
&=\varphi_4(-e_ipe_i,A_3)(M+\a),\,\,M+\a \in \mathfrak{J}_{\sH}\oplus \H^3=\mathfrak{J},
\end{align*}
that is, $ \varphi_4(-e_ipe_i,A_3)=\varphi_4(p,A_3) $.

Hence we have the following
\begin{align*}
\left\lbrace
\begin{array}{l}
-e_ipe_i=p \\
A_3=A_3
\end{array}\right. \qquad \text{ or } \qquad
\left\lbrace
\begin{array}{l}
-e_ipe_i=-p \\
A_3=-A_3.
\end{array}\right.
\end{align*}
The latter case is impossible because of $ A_3\not=\O $ ($ \O $ is zero matrix). In the former case, from the first condition, we have $ p \in \R $, and together with $ p \in Sp(1) $, we see
\begin{align*}
p=1 \quad \text{ or }\quad p=-1.
\end{align*}
Thus we have
\begin{align*}
\alpha=\varphi_4(1,A_3) \quad \text{ or } \quad \alpha=\varphi_4(-1,A_3).
\end{align*}
However, as for $ \alpha=\varphi_4(-1,A_3) $, it follows from $ \varphi_4(-1,-E)=1 $ that
\begin{align*}
\alpha=\varphi_4(-1,A_3)\cdot 1=\varphi_4(-1,A_3)\varphi_4(-1,-E)=\varphi_4(1,-A_3),\;\;-A_3 \in Sp(3).
\end{align*}
With above, there exist $ A_3=\diag(p_1,p_2,p_3),p_i \in Sp(1),i=1,2,3 $ such that
\begin{align*}
\alpha=\varphi_4(1,A_3)=\varphi_{4,\varepsilon_i}(p_1,p_2,p_3).
\end{align*}
The proof of surjective is completed.

Finally, we will determine $ \Ker\,\varphi_{4,\varepsilon_i} $. Since $ \varphi_{4,\varepsilon_i} $ is the restriction of the mapping $ \varphi_4 $, it is clear $ \Ker\,\varphi_{4,\varepsilon_i}=\{E\} $.

Therefore we have the required isomorphism
\begin{align*}
((F_4)^{\varepsilon_1,\varepsilon_2})_{E_1,E_2,E_3} \cong Sp(1)\times Sp(1)\times Sp(1).
\end{align*}
\end{proof}

Now, we prove the main theorem in this section.

\begin{theorem}\label{theorem 8.24}
The group $ E_{8,\sH} $ is isomorphic to the group $ E_7/\Z_2, \Z_2=\{1,\gamma \}${\rm :} $ E_{8,\sH} \cong E_7/\Z_2 $.
\end{theorem}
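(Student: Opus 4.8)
The plan is to construct a restriction homomorphism $f_8 : (E_8)^{\varepsilon_1,\varepsilon_2} \to E_{8,\sH}$ and to identify it as a double covering with kernel $\Z_2$. The conceptual starting point is that the complex Lie algebra $(\mathfrak{e}_{8,\sH})^C$ is nothing but the fixed-point subspace $({\mathfrak{e}_8}^C)^{\varepsilon_1,\varepsilon_2}$: comparing the description of $({\mathfrak{e}_8}^C)^{\varepsilon_1,\varepsilon_2}$ in Lemma \ref{lemma 8.12} (1) with the definition of $(\mathfrak{e}_{8,\sH})^C$, and using $(\mathfrak{e}_{7,\sH})^C = ({\mathfrak{e}_7}^C)^{\varepsilon_1,\varepsilon_2}$ together with $(\mathfrak{P}_{\sH})^C = (\mathfrak{P}^C)^{\varepsilon_1,\varepsilon_2}$, these two $133$-dimensional spaces coincide. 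Since any $\alpha \in (E_8)^{\varepsilon_1,\varepsilon_2}$ commutes with $\varepsilon_1,\varepsilon_2$, it preserves this fixed-point space, and I would set $f_8(\alpha) := \alpha\big|_{(\mathfrak{e}_{8,\sH})^C}$.

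First I would check that $f_8$ is a well-defined homomorphism into $E_{8,\sH}$. Preservation of the Lie bracket is immediate, since $\alpha \in {E_8}^C = \Aut({\mathfrak{e}_8}^C)$ and $(\mathfrak{e}_{8,\sH})^C$ is a subalgebra on which the bracket is the restriction of that of ${\mathfrak{e}_8}^C$. For the Hermite inner product, I would use that an automorphism of the simple algebra $(\mathfrak{e}_{8,\sH})^C$ preserves its Killing form $B_{8,\sH}$, and that $\alpha$ commutes with $\tau\lambda_\omega$ (because $\alpha \in E_8 = ({E_8}^C)^{\tau\lambda_\omega}$), the latter preserving $(\mathfrak{e}_{8,\sH})^C$ since $\tau\lambda_\omega$ commutes with each $\varepsilon_i$; together these give $\langle f_8(\alpha)R_1, f_8(\alpha)R_2 \rangle = \langle R_1, R_2 \rangle$. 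Being a restriction, $f_8$ is clearly multiplicative.

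Next I would determine $\Ker f_8$. The differential $f_{8*}$ sends $R \in (\mathfrak{e}_8)^{\varepsilon_1,\varepsilon_2} \subset (\mathfrak{e}_{8,\sH})^C$ to $\ad(R)\big|_{(\mathfrak{e}_{8,\sH})^C}$, so its kernel consists of elements centralizing all of $(\mathfrak{e}_{8,\sH})^C$, which lie in the trivial center of the simple algebra $(\mathfrak{e}_{8,\sH})^C$; hence $\Ker f_{8*} = \{0\}$ and $\Ker f_8$ is discrete. A discrete normal subgroup of the connected group $(E_8)^{\varepsilon_1,\varepsilon_2}$ is central, so $\Ker f_8 \subseteq z((E_8)^{\varepsilon_1,\varepsilon_2})$, and combining Proposition \ref{proposition 8.19} with Theorem \ref{theorem 8.22} gives $z((E_8)^{\varepsilon_1,\varepsilon_2}) = \{1,\gamma\} \cong \Z_2$. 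Finally $\gamma \in \Ker f_8$, because $\gamma$ fixes $\H \subset \mathfrak{C}$ pointwise and therefore acts as the identity on everything built from $\H$, that is on $(\mathfrak{e}_{8,\sH})^C$, while being nontrivial on ${\mathfrak{e}_8}^C$. Hence $\Ker f_8 = \{1,\gamma\} \cong \Z_2$. The hands-on fact that the only transformations fixing the diagonal idempotents $E_1,E_2,E_3$ and acting trivially reduce to $\pm(1,1,1)$ can be extracted from the $Sp(1) \times Sp(1) \times Sp(1)$ description of Proposition \ref{proposition 8.23}.

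To finish, $E_{8,\sH}$ is connected (Theorem \ref{theorem 8.10}), $\Ker f_8$ is discrete, and $\dim((\mathfrak{e}_8)^{\varepsilon_1,\varepsilon_2}) = 133 = \dim(\mathfrak{e}_{8,\sH})$ (Lemmas \ref{lemma 8.21}, \ref{lemma 8.2}), so Lemma \ref{lemma 3.6.1} yields surjectivity of $f_8$. Therefore $E_{8,\sH} \cong (E_8)^{\varepsilon_1,\varepsilon_2}/\{1,\gamma\}$, and since $(E_8)^{\varepsilon_1,\varepsilon_2} \cong E_7$ (Theorem \ref{theorem 8.22}) carries $\{1,\gamma\} = z((E_8)^{\varepsilon_1,\varepsilon_2})$ onto $z(E_7) \cong \Z_2$, I conclude $E_{8,\sH} \cong E_7/\Z_2$. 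I expect the main obstacle to be the precise kernel determination: making rigorous both that $\gamma$ is nontrivial in $(E_8)^{\varepsilon_1,\varepsilon_2}$ yet dies under $f_8$, and that no further central element survives, whereas the well-definedness and surjectivity become routine once the identification $(\mathfrak{e}_{8,\sH})^C = ({\mathfrak{e}_8}^C)^{\varepsilon_1,\varepsilon_2}$ is established.
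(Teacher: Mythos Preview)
Your proof is correct and follows the same overall architecture as the paper's: the restriction map $(E_8)^{\varepsilon_1,\varepsilon_2}\to E_{8,\sH}$, well-definedness, kernel, and surjectivity via Lemma~\ref{lemma 3.6.1}. The substantive difference lies in the kernel computation. The paper pins down $\Ker g_8$ by an explicit descent through isotropy subgroups: fixing $1_-$, $\dot 1$, $E$, $E_i$, $F_k(1)$, $F_1(e_i)$ in turn drives $\alpha$ from $(E_8)^{\varepsilon_1,\varepsilon_2}$ down through $(E_7)^{\varepsilon_1,\varepsilon_2}$, $(E_6)^{\varepsilon_1,\varepsilon_2}$, $(F_4)^{\varepsilon_1,\varepsilon_2}$ into $((F_4)^{\varepsilon_1,\varepsilon_2})_{E_1,E_2,E_3}\cong Sp(1)^3$ (Proposition~\ref{proposition 8.23}), and then forces $p_1=p_2=p_3=\pm 1$. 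Your route is the conceptual one: the differential $f_{8*}$ has trivial kernel because $(\mathfrak{e}_{8,\sH})^C$ is simple, hence $\Ker f_8$ is discrete, hence central, and the center is already known to be $\{1,\gamma\}$ from Proposition~\ref{proposition 8.19} and Theorem~\ref{theorem 8.22}; then $\gamma\in\Ker f_8$ since $\gamma$ fixes $\H$ pointwise. This bypasses Proposition~\ref{proposition 8.23} entirely and is considerably shorter.

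One remark on well-definedness: your identification $(\mathfrak{e}_{8,\sH})^C=({\mathfrak{e}_8}^C)^{\varepsilon_1,\varepsilon_2}$ (via Lemma~\ref{lemma 8.12}(1)) is in fact the right way to make the restriction rigorous. The paper argues that $\alpha$ preserves $(\mathfrak{e}_{8,\sH})^C$ using only $\gamma R=R$ and $(E_8)^{\varepsilon_1,\varepsilon_2}\subset (E_8)^\gamma$, but $({\mathfrak{e}_8}^C)^\gamma$ is $136$-dimensional, so $\gamma$-invariance alone does not characterize the $133$-dimensional $(\mathfrak{e}_{8,\sH})^C$; commutation with both $\varepsilon_1,\varepsilon_2$, as you use, is what is actually needed. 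Your handling of the Hermitian inner product (automorphisms preserve $B_{8,\sH}$, and $\alpha$ commutes with $\tau\lambda_\omega$) is also cleaner than appealing to the ambient $\langle\,\cdot\,,\cdot\,\rangle$ on ${\mathfrak{e}_8}^C$.
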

\begin{proof}
Let $ E_7 $ as the group $ (E_8)^{\varepsilon_1,\varepsilon_2} $ (Theorem \ref{theorem 8.22}). then we define a mapping $ g_{{}_8}:(E_8)^{\varepsilon_1,\varepsilon_2} \allowbreak \to E_{8,\sH}$ by
\begin{align*}
g_{{}_8}(\alpha)=\alpha\big|_{(\mathfrak{e}_{8,\sH})^C}.
\end{align*}

First, we will prove that $ g_{{}_8} $ is well-defined. Let $ \alpha \in (E_8)^{\varepsilon_1,\varepsilon_2} $, then we have $ \alpha R \in (\mathfrak{e}_{8,\sH})^C $ for $ R \in (\mathfrak{e}_{8,\sH})^C $. Indeed, any elements $ R \in (\mathfrak{e}_{8,\sH})^C $ satisfies $ \gamma R=R $. Hence it follows from $ (E_8)^{\varepsilon_1,\varepsilon_2} \subset (E_8)^\gamma $ that
\begin{align*}
\alpha R=\alpha (\gamma R)=\gamma (\alpha R),\;\; R \in (\mathfrak{e}_{8,\sH})^C,
\end{align*}
that is, $ \alpha R \in (\mathfrak{e}_{8,\sH})^C $. Thus $ \alpha $ induces a $ C $-linear isomorphism of $ (\mathfrak{e}_{8,\sH})^C $, so that we have $ g_{{}_8}(\alpha) \in  E_{8,\sH} $, that is, $ g_{{}_8} $ is well-defined. It is clear that $ g_{{}_8} $ is a homomorphism.

Next we will determine $ \Ker\,g_{{}_8}$. Let $ \alpha \in \Ker\,g_{{}_8} $. For $ 1_-=(0,0,0,0,0,1) \in (\mathfrak{e}_{8,\sH})^C $, we have
\begin{align*}
\alpha 1_-=1_-.
\end{align*}
Hence we see $ \alpha \in ((E_8)^{\varepsilon_1,\varepsilon_2})_{1_-}=((E_8)_{1_-})^{\varepsilon_1,\varepsilon_2} \cong (E_7)^{\varepsilon_1,\varepsilon_2} $ (Subsection 2.5). Moreover, for $ {R^-}_{\dot{1}}:=(0,\dot{1},0,0,\allowbreak 0,0) \in (\mathfrak{e}_{8,\sH})^C, \dot{1}=(0,0,1,0) \in \mathfrak{P}^C $, we have
\begin{align*}
\alpha \dot{1}=\dot{1}.
\end{align*}
Hence we see $ \alpha \in ((E_7)^{\varepsilon_1,\varepsilon_2})_{\dot{1}} \subset ((E_7)_{\dot{1}})^{\varepsilon_1,\varepsilon_2} \cong (E_6)^{\varepsilon_1,\varepsilon_2} $ (Subsection 2.4). In addition, for $ {R^-}_{\dot{E}}:=(0,\dot{E},0,0,\allowbreak 0,0) \in (\mathfrak{e}_{8,\sH})^C, \dot{E}=(E,0,0,0) \in \mathfrak{P}^C $, we have
\begin{align*}
\alpha E= E.
\end{align*}
Hence we see $ \alpha \in ((E_6)^{\varepsilon_1,\varepsilon_2})_{E} \subset ((E_6)_{E})^{\varepsilon_1,\varepsilon_2} \cong (F_4)^{\varepsilon_1,\varepsilon_2} $ (Subsection 2.3). Moreover, for $ {R^-}_{\dot{E_i}}:=(0,\dot{E_i},0,0,\allowbreak 0,0) \in (\mathfrak{e}_{8,\sH})^C, \dot{E_i}=(E_i,0,0,0) \in \mathfrak{P}^C, i=1,2,3 $, we have
\begin{align*}
\alpha E_i=E_i,\;\;i=1,2,3.
\end{align*}
Hence we see $ \alpha \in ((F_4)^{\varepsilon_1,\varepsilon_2})_{E_1,E_2,E_3} $, so that there exists $ (p_1,p_2,p_3) \in Sp(1)\times Sp(1)\times Sp(1) $ such that $ \alpha=\varphi_{4,\varepsilon_i}(p_1,p_2,p_3) $ (Proposition \ref{proposition 8.23}).

\noindent Subsequently, for $ {R^-}_{\dot{F}_k(1)}=(0,\dot{F}_k(1),0,0,0,0) \in (\mathfrak{e}_{8,\sH})^C,\dot{F}_k(1)=(F_k(1),0,0,0),k=1,2,3 $, it follows from $ \alpha {R^-}_{\dot{F}_k(1)}={R^-}_{\dot{F}_k(1)} $ that
\begin{align*}
{R^-}_{\dot{F}_k(1)}&=\alpha {R^-}_{\dot{F}_k(1)}=\varphi_{4,\varepsilon_i}(p_1,p_2,p_3){R^-}_{\dot{F}_k(1)}=(0,\varphi_{4,\varepsilon_i}(p_1,p_2,p_3)\dot{F}_k(1),0,0,0,0)
\\
&=(0,\diag(p_1,p_2,p_3)\dot{F}_k(1)\diag(p_1,p_2,p_3)^*,0,0,0,0).
\end{align*}
Hence we have $ p_2\ov{p_3}=p_3\ov{p_1}=p_1\ov{p_2}=1 $, and together with $ p_i \in Sp(1) $, we see $ p_1=p_2=p_3:=p $. Thus $ \alpha $ is of the form
\begin{align*}
\alpha=\varphi_{4,\varepsilon_i}(p,p,p), \;\; p \in Sp(1).
\end{align*}
Moreover, for $ {R^-}_{\dot{F}_1(e_i)}=(0,\dot{F}_1(e_i),0,0,0,0) \in (\mathfrak{e}_{8,\sH})^C,\dot{F}_1(e_i)=(F_1(e_i),0,0,0),i=1,2 $, it follows from $ \alpha {R^-}_{\dot{F}_1(e_i)}={R^-}_{\dot{F}_1(e_i)} $ that
\begin{align*}
{R^-}_{\dot{F}_1(e_i)}&=\alpha {R^-}_{\dot{F}_1(e_i)}=\varphi_{4,\varepsilon_i}(p,p,p){R^-}_{\dot{F}_1(e_i)}=(0,\varphi_{4,\varepsilon_i}(p,p,p)\dot{F}_1(e_i),0,0,0,0)
\\
&=(0,\diag(p,p,p)\dot{F}_1(e_i)\diag(p,p,p)^*,0,0,0,0).
\end{align*}
Hence we have $ pe_i\ov{p}=1,i=1,2 $, and together with $ p \in Sp(1) $, we see $ p^2=1, p \in \R $, that is, $ p=1 $ or $ p=-1 $, so that we have $ \alpha=1 $ when $ p=1 $ or $ \alpha=\gamma $ when $ p=-1 $. With above, we see $ \Ker\, g_{{}_8} \subset \{1,\gamma \} $
and vice versa. Thus we obtain
\begin{align*}
\Ker\, g_{{}_8} = \{1,\gamma \} \cong \Z_2.
\end{align*}

Finally, we will prove that $ g_{{}_8} $ is surjective. Since the group $ E_{8,\sH} $ is connected (Theorem \ref{theorem 8.10}) and $ \Ker\, g_{{}_8}$ is discrete, and together with $ \dim((\mathfrak{e}_8)^{\varepsilon_1,\varepsilon_2}) =133=\dim(\mathfrak{e}_{8,\sH})$ (Lemmas \ref{lemma 8.2}, \ref{lemma 8.21}), $ g_{{}_8} $ is surjective.

Therefore, from Theorem \ref{theorem 8.22}, we have the required isomorphism
\begin{align*}
E_{8,\sH} \cong E_7/\Z_2.
\end{align*}
\end{proof}


\end{document}